\numberwithin{equation}{section}
\newcommand{\arc}{K}
\newcommand{\param}{z}
\renewcommand{\C}{\mathbb C}
\newcommand{\R}{\mathbb R}
\newcommand{\curv}{\mathrm{K}}
\newcommand{\N}{{\mathbb N}}
\newcommand{\grad}{\operatorname{grad}}
\newcommand{\res}{Reshetnyak }
\newcommand{\normal}{\nu}
\newcommand{\const}{-\frac{1}{2\pi}}
\newcommand{\sing}{\mathcal{I}_\mes}
\newcommand{\I}{\operatorname{i}}
\newcommand{\D}{\operatorname{d}}
\newcommand{\E}{\operatorname{e}}
\newcommand{\mes}{\omega}
\newcommand{\dc}{$\delta$-convex }
\newtheorem{definition}{Definition}[section]
\newtheorem{lemma}[definition]{Lemma}
\newtheorem{theorem}[definition]{Theorem}
\newtheorem{corollary}[definition]{Corollary}
\newtheorem{proposition}[definition]{Proposition}
\newtheorem{example}[definition]{Example}
\newtheorem{remark}[definition]{Remark}
 \newtheorem{question}[definition]{Question}
\theoremstyle{plain}
\newmdtheoremenv[
  linecolor=cyan,
  roundcorner=5pt,
  linewidth=1pt
]{theo}[definition]{Theorem}
\theoremstyle{nonumberplain}
\newmdtheoremenv[
  linecolor=cyan,
  roundcorner=5pt,
  linewidth=1pt
]{statment}{Statement}
\begin{document} 
\selectlanguage{english}
\setcounter{secnumdepth}{3}
\setcounter{tocdepth}{4}

\title{An introduction to Reshetnyak's theory of subharmonic distances}
\author{Fran\c{c}ois Fillastre}
\date{Version 3 \\ \today} 

\maketitle

\abstract{The aim of the present text is to provide some basics around Reshetnyak's theory of subharmonic distances, together with an overview of the main results. This text is intended to be a complement to the English translation of  \cite{R1954}, \cite{Res1959}, \cite{R60I}, \cite{R60II}, \cite{R61}, \cite{R61b}, \cite{R62}, \cite{R63}, \cite{R63III} and \cite{huber}. They all will be published in a same volume \cite{livre}. 
While subharmonic distances are often confused with distances of bounded curvature, we present them as a complete autonomous theory. In turn, there is no specific prerequisite for the present text.}

\tableofcontents

\section{Introduction}

Given a set of admissible curves in a plane domain $M$, and 
 a function $\lambda$ such that its square root is integrable along any admissible curve, 
 an intrinsic pseudo-distance can be defined over $M$. Indeed, 
the integration of  the square root of $\lambda$ over an admissible curve defines the length of the curve, and the infimum of these lengths between two points is defined as the distance between the points. If $\lambda$ is $C^\infty$, one obtains a Riemannian distance, whose metric tensor is conformal to the Euclidean one. Another basic famous examples are flat metrics with conical singularities, for which $\lambda$ is a sum of $|\cdot - z_0|^\gamma$, with $z_0$ any point in the plane, and $\gamma$ any real number. What should be the good regularity of $\lambda$ to develop a theory of ``Riemannian geometry with low regularity'' on surfaces, containing the Riemannian case and the case of flat metrics? This has been answered in a series of articles by Yu. G. Reshetnyak in the $60'$s: the logarithm of $\lambda$ is a difference of subharmonic functions, and admissible curves are  rectifiable arcs (for the Euclidean distance). In this case, the pseudo-distance is a distance, called a \emph{subharmonic distance}. 
 
 A subharmonic distance comes with a natural notion of curvature measure, which, in the Riemannian case, is the sectional curvature times the area measure. Actually, the $\lambda$ term in a subharmonic distance is better defined from the potential of a measure plus a harmonic mapping. 
  A central result in the theory relates the converge of the measures to  the (uniform) convergence of the distances. One implication of this theorem is  that, on a given domain, the set of subharmonic distances is exactly the set of the limits (for the uniform convergence) of  sequences of Riemannian distances, with uniformly bounded total variation of the curvature measure, preserving the topology.\footnote{Relaxing the curvature bound assumption would be too general, see e.g., \cite{cassorla}. See \cite{burago-limit} if the condition for the limit metric to have the topology of a surface is relaxed.} 
  
Y. Reshetnyak and A. Huber proved simultaneously that a distance preserving mapping between domains with  subharmonic distances is a conformal mapping, giving a bridge with the theory of Riemann surfaces. Also, using his convergence theorem, \res proved that  subharmonic distances and two-dimensional manifolds of bounded curvatures,  in the sense of A. D. Alexandrov, are (locally) the same thing, providing an analytical approach to a classical object from the theory of metric geometry of surfaces. 

Section~\ref{sec:curve} contains a very general presentation of the two extreme cases of subharmonic metrics on a plane domain: conformal Riemannian metrics and flat cones. It also contains an introduction to  the theory of curves of bounded rotation, that are the most helpful class of curves of the theory. Section~\ref{sec:sh} contains  basics of potential theory, that are needed in the sequel. It also contains a presentation of Reshetnyak's result about localization of $\delta$-subharmonic functions, that is one of the main technical tool used in the sequel. 

  Section~\ref{sec:distance} contains basic facts about metric spaces, which are mixed with results from Section~\ref{sec:sh} about subharmonic mappings, in order to define subharmonic metrics on plane domains.  Then, we present the heart of \res theory, that are convergence theorems. At the end of the section, we present a theorem about contraction onto a cone, that allows to deduce that subharmonic distances are two-dimensional manifolds of bounded curvature, in the sense of Alexandrov.
  
  In Section~\ref{sec:conforme}, we first present the striking result mentioned above, that says that a distance preserving mapping between two plane domains, endowed with subharmonic distance, is a conformal mapping. Moreover, from a formula of Huber, 
   subharmonic metrics can be defined over Riemann surface. Indeed, as it was noted by M. Troyanov \cite{troyanov-alexandrov}, there is a bijection between the set of Radon measure over a compact Riemann surface satisfying Gauss--Bonnet Formula, and subharmonic distances with a given total area on the surface.

In the  sections mentioned above, all the arguments should be complete, by referring to the original sources (with the exception of Theorem~\ref{thm uber chgt coord}, see Remark~\ref{rem:huber rem aires}). In Section~\ref{sec-other}, we present some related results and questions, mainly around relations between subharmonic distances and two-dimensional manifolds of bounded curvature. 
% We also ask some questions, about  statements that can be found in the literature,  but for which we were not able to find a proof. 

The present text is not a survey about two-dimensional manifolds of bounded curvature, nor Alexandrov surfaces.\footnote{The term ``Alexandrov surface''\index{Alexandrov surface} is equivocal in the literature. Sometimes, it means a two-dimensional manifold of bounded curvature, sometimes if means a metric surface with a curvature bound in the sense of Alexandrov, and sometimes a metric surface with a lower curvature bound. In turn, we won't use further this terminology.} Concerning surfaces with curvature bound, they are covered in many textbooks \cite{bbi}, \cite{BH}, \cite{alexander-kapovich-petrunin}, \cite{alexandrov-book}. Concerning two-dimensional manifolds of bounded curvature, we only know the original old-fashioned \cite{AZ}, together with \cite{AZ2}. The text \cite{R93} is a survey about both two-dimensional manifolds of bounded curvature and subharmonic distances, but it also contains  some  proofs, some of which we will refer to in the present text. The last pages of \cite{R93} presents the state of the art about two-dimensional manifolds of bounded curvature at the time. Let us mention another survey  related to the content of the present article: \cite{R01} ---concerning the specific question of curves of bounded rotation, see \cite{res2005}. And of course, the survey \cite{troyanov-alexandrov}, which renewed attraction to this theory.

The present text is also not a survey about the literature around subharmonic distances. For recent results about this theory, let us only mention \cite{richard}, \cite{debin}. For results using 
subharmonic distances, let us mention \cite{burago-buyalo}, \cite{tro-principe}, \cite{kokarev}, \cite{lytchak-wenger}, \cite{AB}, \cite{bartolucci}, \cite{Veronelli}.

\subsection*{Terminology}\label{sec:terminology}

In this section, we clarify some points about terminology and choices of translation.

A first comment is that, for a metric space $(X,d)$, the function $d:X\times X\to \R$ is always called a \emph{distance} in the present text, the term \emph{metric} is used for $(0,2)$-tensors. We apologize that we are not following the terminology used in the articles of Reshetnyak. In those articles, what we are calling a  distance is called a metric, and what we are calling a metric is called a \emph{linear element}\index{linear element}, as it was usual at the time. 
The term \emph{distance} is used by \res to speak about the value of $d$ between two given points.

We have kept the term \emph{polyhedron} from Reshetnyak's articles, even if nowadays, the objects called polyhedra here are more often called \emph{polyhedral surfaces}\index{polyhedral surface}, or more precisely in the present context, \emph{flat surfaces (with conical singularities)}\index{flat surfaces (with conical singularities)}.

A very confusing point is about rotation and turn of curves, which are defined in Section~\ref{sec:curve}.
There are many different names for the turn and the rotation in the literature (and also depending on the translation). We followed the terminology in Reshetnyak's articles. It is also the one used in \cite{R93}. It is summarized in Table~\ref{table turn}. In the case of a Riemannian metric, the turn defined in \eqref{eq:def turn r} is usually called 
\emph{integral curvature}\index{integral curvature}, \emph{total curvature}\index{total curvature} or  \emph{integral geodesic curvature}\index{integral geodesic curvature}, and this terminology may be used for curves in more general metric spaces.  The turn was also called \emph{swerve}\index{swerve} by H. Busemann, see the footnote page 26 in \cite{pogorelov}.

\begin{table}
\begin{center}
\begin{tabular}{ c | c }
 General metric & Euclidean metric \\ 
 \hline
turn & rotation \\  
$\kappa_\lambda$, $\kappa_l$, $\kappa_r$ & $\kappa$ \\
 \selectlanguage{russian}{поворот} &  \selectlanguage{russian}{вращение} \\
povorot  &   vrashchenie
\end{tabular}
\selectlanguage{english}
\end{center}
\caption{Turn and rotation.}\label{table turn}
\end{table}

For a measure $\mes$, points $z$ with $\mes(\{z\})\geq 2\pi$ will play a particular role. They can be called \emph{cusp points}\index{cusp}, \emph{peak points}\index{peak point}, \emph{return points}\index{return points}, \emph{spike points}\index{spike point}. However, as no special terminology is used in Reshetnyak's articles, we also won't name those points.

\begin{theo}
A statement inside a framed box has an equivalent version in  an article of Yu. Reshetnyak in \cite{livre}.
\end{theo}

  \paragraph{Acknowledgment}  The author is very grateful to Alexander Lytchak for his comments, which significantly enhanced the interest and the presentation of the present text.

\section{Curves in the plane}\label{sec:curve}

The first part of this section  is a kind of lumber room. It takes the opportunity of introducing the concepts of rotation and turn of curves,  to  give some flavor about  Reshetnyak's construction. For example, we encounter the two main examples of subharmonic distances, Riemannian conformal metrics and flat cones.

The second part of this section is intended to be a comprehensive introduction to plane curves of bounded rotation. They are one of the main object of the theory, and, despite the fact that most of all the needed results are contained in \cite{AR}, it seemed more convenient to give a detailed presentation.

\subsection{Rotation, turn and Laplacian}\label{sec regular arcs}

\subsubsection{Rotation of arcs in the plane}\label{sec:rot arc pla}

Let us recall some general definitions. Let $X$ be a Hausdorff topological space. A \emph{parameterized path}\index{parameterized path} in $X$ is a continuous mapping $\param:[a,b]\to X$, $a<b$.
We will restrict ourselves to the simpler case
of  \emph{parameterized arcs}\index{parameterized arc}, where $\param$ is asked to be injective.
Sometimes we will consider that $\param(a)=\param(b)$,  in this case we will say that the arc parameterizes a  \emph{simple closed curve}\index{simple closed curve}. The restriction to arcs will facilitate the following notion of equivalence, and is not so restrictive, as any path contains an arc connecting its endpoints \cite[Lemma 3.1]{Falconer1}. 
Two parameterized arcs $\param_1:[a,b]\to X$ and $\param_2:[c,d]\to X$ are said to be \emph{equivalent}, if there
exists a function $\psi:[c,d]\to [a,b]$ that is strictly monotonic and surjective (hence a homeomorphism), and
that satisfies $\param_2=\param_1\circ \psi$. This is an equivalence relation. A class of parameterized arcs will be called an \emph{arc}. We will confuse this equivalence class with the subset of the plane which is the  image of a parameterization. 
For an arc $\arc$, $\arc^\circ$\index{$\arc^\circ$} is $\arc$ minus its extremities.
 A choice of an element in the equivalence class is called a \emph{parameterization of the arc}.
Note that our arcs are oriented. 
Let $\param:[a,b]\to X$ be a parameterized arc.
As $\param([a,b])$ is compact hence closed, it is easy to see that the mapping $\param$ is indeed closed, hence a homeomorphism onto its image. 

A sequence of arcs $(\arc_n)_n$  \emph{converges}\index{convergence (arcs)} to an arc $\arc$, if there are parameterizations of these arcs on the same segment $[a,b]$,
 which uniformly converge as functions.

 We now look at arcs in the plane. For latter convenience, we will look at the complex plane $\C$.
  Let $L$ be a \emph{broken line}\index{broken line} in the plane, that is an arc which is a union of segments. Its (Euclidean) length is the sum of the lengths of the segments composing $L$, i.e., the sum of the distances between its vertices. 
A broken line $L$ is said to be \emph{inscribed}\index{inscribed} in an arc $K$, if the vertices of $L$ are in the interior of $K$,  the extremities of $K$ and $L$ coincide, and the vertices of $L$ are ordered for a parameterization of $K$. The \emph{(Euclidean) length}\index{length (Euclidean)} 
$s(\arc)$\index{$s(\arc)$} of $\arc$ is then the supremum of the lengths of all the broken lines inscribed in $\arc$. It is well-defined in $\R\cup \{+\infty\}$. Then $\arc$ is called \emph{rectifiable}\index{rectifiable} if its length is finite.
Let us recall some relevant properties of rectifiable arcs:
\begin{itemize}
\item the length $s$ is lower lower-semicontinuous on the set of arcs, i.e., if $\arc_n\to  \arc$, then  
$$s(\arc)\leq \liminf_n s(\arc_n)~; $$
\item if $\arc_n\to \arc$ and $s(\arc_n)$ are uniformly bounded from above, then $\arc$ is rectifiable; 
\item from any set of arcs with length uniformly bounded from above, and contained in a compact set, one can extract a converging subsequence.
\end{itemize}

If $\param:[a,b]\to\C$ is a parameterization of an arc $\arc$, let us denote by $\arc\vert_{ t_1,t_2}$ the arc parameterized by the restriction of $\param$ to $[t_1,t_2]$, and denote $ \arc\vert_{ t}:=\arc\vert_{ a,t}$. A parameterized arc  is \emph{parameterized by arc length}\index{parameterization by arc length}  if, for any $a\leq t_{i-1}\leq t_i\leq b$, we have $t_i-t_{i-1}=s(\arc|_{t_{i-1},t_i})$. Any rectifiable arc 
has a parameterization by arc length, defined on the interval $[0,s(\arc)]$, which is $1$-Lipschitz. By an affine change of variable, one obtains from a rectifiable arc $\arc$ a parameterization on $[0,1]$, which is $s(\arc)$-Lipschitz. Such parameterization is said \emph{proportional to the arc length}\index{parameterization proportional to the arc length}.

A parameterized arc $\param:[a,b]\to\C$ is \emph{regular} if it is  $C^2$ and  $\param'(t)\not= 0$ for all $t$. The \emph{tantrix}\index{tantrix} $T$ (tantrix is a contraction of tangent indicatrix) is  defined by 
$$T(t)=\frac{z'(t)}{|z'(t)|}~.$$

The \emph{curvature}\index{curvature (of an arc)} of a regular arc
is the function 
\begin{align}\label{eq:rotation coord cplxe}%%\label{eq:curvature}
 k(\param)  &= \left\langle \frac{T'}{|\param'|}, \nu\right\rangle =\frac{1}{|\param'|^3}\langle \param'',\nu \rangle \nonumber \\ 
 &= \frac{1}{|\param'|^3} \det(\param',\param'')=
 \frac{\param'_1\param''_2 - \param'_2\param''_1}{|\param'|^3}= \frac{\operatorname{Re}(\I\param'\bar{\param}'')}{|\param'|^3}~, 
\end{align}
where we denoted  by $\nu$\index{$\nu$} the oriented unit normal to $\param$, i.e.,
\begin{equation}\label{eq:normale}
\nu=\frac{1}{|\param'|}\param'^{\bot}
\end{equation}
where \index{$\param'^{\bot}$}
$$\param'^{\bot}=\binom{-\param_2'}{\param_1'}~,$$
and by $\langle\cdot,\cdot\rangle$\index{$\langle\cdot,\cdot\rangle$} the usual scalar product.

  If the arc is parameterized  by  arc length, i.e., if  $\langle \param',\param'\rangle=1$, then $\langle \param'',\param'\rangle=0$ and $z''$ and $\nu$ are colinear with the same norm, and we have
$k(\param)=\pm |\param''|$, depending on the direction of $\param''$.

A \emph{regular arc}\index{regular arc} is an arc whose parameterization by arc length is  a regular parameterization.
The \emph{rotation}\index{rotation (of a curve)} of a regular arc $\arc$ is 
\begin{equation}\label{eq:rotation}
\begin{split}
\ \kappa(K) & = \int_a^b k(\param)(t)|\param'(t)|\D t~, \\
\end{split}
\end{equation} 
as the definition above is independent of the choice of a regular parameterization.

In the case of a simple closed curve, it is easy to see that $ \kappa(K)=\pm 2\pi$, see e.g. \cite{docarmo}.

If $\arc_1$ and $\arc_2$ are two regular arcs, non-overlapping but with common extremities, then using the orientation, 
one can say when $\arc_2$ is \emph{on the left}  of $\arc_1$, see Figure~\ref{fig:left} for an heuristic illustration ($\arc_1$ is the plain line and $\arc_2$ the dotted one), 
and Section~\ref{left right turn sec} for more details.  Now, let us change the orientation of $\arc_2$ such that 
both arc have the same starting points and the same endpoints, and let 
$\alpha$ and $\beta$  be the inner angles of the open set enclosed by the curves at the extremities. 
We have    \cite{R63III}:
\begin{equation}\label{eq:def gen rot}\kappa(\arc_1)=\kappa(\arc_2)+\alpha+\beta~. \end{equation}

The formula above motivates the following definition.

\begin{definition}\label{def: tilde delta}
An arc is said to be of \emph{class $\tilde{\Delta}$}\index{$\tilde{\Delta}$ (class of arcs)} if it has a parameterization $z:[a,b]\to \C$ with
$z'_r(a)$ and $z'_l(b)$ that exist and are non-zero, where $z'_r$ and $z'_l$ are the right derivative and the left derivative respectively.
\end{definition}
In short, an arc is in $\tilde{\Delta}$ if it has a semi-tangent at its extremities. Not any rectifiable arc is in the class $\tilde{\Delta}$, as shows the graph of the function $f(x)=x\sin(-\ln(x))$, which has no right derivative at $0$, but which is Lipschitz.

 The inner angle at the extremities of two arcs of the class $\tilde{\Delta}$ bounding a domain can be defined. In turn, for any arc $\arc_1\in \tilde{\Delta}$, one can define its rotation  $\kappa(\arc_1)$  by the formula \eqref{eq:def gen rot}, by considering any suitable regular arc $\arc_2$ on the left of 
  $\arc_1$ and with the same extremities, and noting  that this definition actually does not depend on the choice of $\arc_2$. It is the way the rotation for an arc of the class $\tilde{\Delta}$ is defined in \cite{R63III}.

\subsubsection{Turn of arcs, Laplacian and curvature for Riemannian metric}\label{sec:Laplacian and curvature}

 We now define a  notion analogous to the rotation, when a subset of the plane is endowed with a conformal metric. We will use the standard notation $|\D z|^2$ for the Euclidean metric. Indeed, if 
we define the one-forms $\D z=\D x+\I\D y$ and $\D\bar{z}=\D x-\I \D y$, then $|\D z|^2$ is a notation for 
$\D z\D \bar{z}$, itself representing the symmetrization of the tensor product between 
$\D z$ and $\D \bar{z}$:
\index{$\vert\D z\vert^2$}
$$|\D z|^2=\frac{1}{2}\left( \D z\otimes \D \bar{z} + \D \bar{z}\otimes \D z\right) $$
and  an immediate computation shows that
$$|\D z|^2=\frac{1}{2}\left(\D x\otimes \D x + \D x\otimes \D y +\D y\otimes \D x+\D y\otimes \D y\right)~, $$ 
 and this last term is the usual Euclidean metric.
 
Let $M$ be a bounded \emph{domain}\index{domain} (i.e., a connected open set) in the plane, and let $g$ be a \emph{Riemannian metric}\index{Riemannian metric} on $M$, that is, for all $z\in M$, the data of a scalar product $g(z)$ on $T_zM\simeq \R^2$, such that $z\mapsto g(z)$ is $C^3$. The Riemannian metric $g$ is said to be \emph{conformal}\index{conformal} if there exists a $C^3$ function
$\lambda:M\to \R$ (necessarily positive) such that 
 $$g=g_\lambda:=\lambda|\D z|^2~. $$
We restrict ourselves to the $C^3$ regularity for Riemannian metrics,  in order to have a $C^1$ curvature, that would be a relevant assumption. See Remark~\ref{rem:C1C2curvature}.

\begin{remark}\label{pullback metric}%{\rm
As an example, let $U\subset \C$ be a domain, and let $f:U\to \C$ be complex differentiable, with non-zero derivative. As $$|f'|^2|X|^2=|\D f(X)|^2$$
it readily follows that the pull-back over $U$ by 
$f$ of the Euclidean metric is 
$$|f'|^2 |\D z|^2~. $$ 
%}
\end{remark}

Let $X,Y$ be smooth vector fields on $M$. If $D$ is the usual connection of the Euclidean space (the differentiation), and $\nabla$ is the Levi--Civita connection of $g_\lambda$, the Koszul formula gives
$$\nabla_XY= D_XY + \frac{1}{2\lambda} (\D\lambda(X) Y + \D\lambda(Y)X - \operatorname{grad}(\lambda)\langle X,Y\rangle)~, $$
 where $\operatorname{grad}$ is the usual gradient.
Let 
$(e_1,e_2)$ be the usual basis. Let $\param:[a,b]\to\C$ be a regular parameterized arc. The covariant derivative of a smooth vector field $X$ along $\param$ is given by
\begin{equation*}
\begin{split}
\ \frac{D}{\D t} X& = X' + X^1\nabla_{c'}e_1 + X^2\nabla_{c'} e_2 \\
\ &=X'+\frac{1}{2\lambda}\left( X^1\binom{D\lambda(c')}{-D\lambda (c'^{\bot})} +X^2\binom{D\lambda (c'^{\bot})}{D\lambda(c')}\right) \\
\end{split}
\end{equation*}
and the \emph{geodesic curvature}\index{geodesic curvature}\index{$k_\lambda(\param)$} of $\param$ is, if 
$\nu_\lambda=\frac{\nu}{\sqrt{\lambda}}$\index{$\nu_\lambda$} is the unit (for $g_\lambda$) normal to $\param$ and $\nu$ is defined in  \eqref{eq:normale}, 
\begin{equation*}\label{eq:geod curv}
\begin{split}
\ k_\lambda(\param) & = \frac{1}{\sqrt{g_\lambda(\param',\param')}}g_\lambda\left( \frac{D}{\D t} \param', \nu_\lambda \right)= \frac{1}{\lambda^{1/2}}k(\param) - \frac{1}{2\lambda^{3/2}}\frac{\partial\lambda}{\partial \nu}~,
\end{split}
\end{equation*}
 where $\frac{\partial }{\partial \nu}$\index{ $\frac{\partial }{\partial \nu}$} is the derivative 
in the direction of $\nu$, i.e., 
$$\frac{\partial f}{\partial \nu}  =\langle \grad f, \nu\rangle~.$$

Let $\arc$ be the arc parameterized by $\param$.
The \emph{turn} $\kappa_\lambda(\arc)$\index{turn}\index{ $\kappa_\lambda$} of  $\arc$ is
\begin{equation}\label{eq:def turn r}
\begin{split}
 \kappa_\lambda(\arc)& =\int_a^b k_\lambda(\param(t))\sqrt{g_\lambda(\param'(t),\param'(t))}\D t = \kappa(\arc) - \frac{1}{2}\int_\arc \frac{\partial \ln \lambda}{\partial \nu}~,
\end{split}
\end{equation}
where we used the definition of the rotation $\kappa(\arc)$, see \eqref{eq:rotation}. Note that the rotation of an arc is the turn for the Euclidean metric.

\begin{remark}[Length structure]\label{remark:def distance riemn}{\rm 
From the conformal Riemannian metric $\lambda|\D z|^2$ on $M$, for a broken line $K\subset M$, we may define its length

\begin{equation}\label{def length}\tilde{s}_\lambda(K)=\int_K \sqrt{\lambda}~, \end{equation}
and then define a \emph{Riemannian distance}\index{Riemannian distance} $\rho_\lambda$ between two points, as the infimum of the lengths of the broken line contained in $M$ joining them. 

For any non-negative function $\lambda$ (allowing the $+\infty$ value) that is integrable over any broken line, 
\eqref{def length} defines a 
 \emph{pseudo-distance} $\rho_\lambda$, i.e. a non-negative symmetric function on $M\times M$ satisfying the triangle inequality. In general, $\rho_\lambda$ is not a distance, in the sense that this function may be zero when evaluated at two distinct points. 
 
In the case of a conformal Riemannian metric, or, more generally, 
if $\lambda$ is finite, positive and continuous, then $\rho_\lambda$ is a distance. Indeed, for any $\arc$  from  a point $z_0$ to the boundary of a small Euclidean circle centered at this point, $\tilde{s}_\lambda(K)$ is greater than a positive constant times the  Euclidean length, where the positive constant is the minimum of $\lambda$ on the closed disc. 
 It follows that  for any $z$ belonging to the circle,  $\rho_\lambda(z,z_0)$ is positive, and then 
  any broken line to another point outside this disc will have larger length. Along the same line, it is easy to see that the topology induced by such a distance is the usual topology of the plane.
  \res found lower regularity assumptions on $\lambda$ in order for $\rho_\lambda$ to be a distance, see Section~\ref{sec:intrinsic length}.  } 
\end{remark}

\begin{remark}[Shortest arcs and turn]\label{rem:shortest path and turn}{\rm As by definition, a  regular  parameterized arc $\arc$ is a \emph{geodesic}\index{geodesic}
if  $\frac{D}{\D t}\param'=0$, it follows that a geodesic has zero turn.
Recall also that for a domain with a distance (see Remark~\ref{remark:def distance riemn}), a \emph{shortest arc}\index{shortest arc} between two points is an arc 
whose length is equal to the distance between the points, supposed this latter is finite. A shortest arc may not exist, or, if it exists, may not be unique. It is classical that for a Riemannian distance, the arc length parameterization of a shortest arc is a geodesic (note that to be a shortest arc is a property of an arc, while to be a geodesic is a property of a parameterized arc). It follows that for  a Riemannian distance, a shortest arc has zero turn. One of the main result of \cite{R63III} is a generalization of this fact to a larger class of metric spaces, see Theorem~\ref{thm:shortest boundedturn}. More precision about shortest arcs is given in Section~\ref{sec:intrinsic length}. 

We will say that an arc is a \emph{geodesic broken line}\index{geodesic broken line} is it can be decomposed as a union of shortest arcs.}\end{remark}

%\subsubsection{Laplacian and curvature}\label{sec:Laplacian and curvature}
Now let  $(\tilde{e}_1,\tilde{e}_2)=(e_1/\sqrt{\lambda}, e_2/\sqrt{\lambda})$
be an orthogonal basis for $\lambda|\D z|^2$.  
The \emph{sectional curvature} of $\lambda|\D z|^2$ is given by 
 \begin{equation}\label{eq:def sec curv}
\begin{split}
\ \curv_\lambda & = \lambda \langle \nabla_{\tilde{e}_2}(\nabla_{\tilde{e}_1}\tilde{e}_1) -\nabla_{\tilde{e}_1}(\nabla_{\tilde{e}_2}\tilde{e}_1)+\nabla_{[\tilde{e}_1,\tilde{e}_2]}\tilde{e}_1,\tilde{e}_2\rangle~, \\
\  & =-\frac{1}{2\lambda} \Delta \ln \lambda = -\frac{1}{\lambda} \Delta \ln \sqrt{\lambda}~,
\end{split}
\end{equation} 
where $\Delta$ is the usual Laplacian: $\Delta  = \frac{\partial^2 }{\partial x^2}+\frac{\partial^2}{\partial y^2} $ \index{$\Delta$ (Euclidean Laplacian)}.

 \begin{definition}
 For a conformal Riemannian metric $\lambda|\D z|^2$, the \emph{area measure}\index{area measure} $A_\lambda$\index{$A_\lambda$} is $\lambda \mathcal{L}$, where $\mathcal{L}$\index{$\mathcal{L}$} is the Lebesgue measure, and the \emph{curvature measure}\index{curvature measure}
 $\omega_\lambda$ is $\curv_\lambda A_\lambda$, i.e.
 \begin{equation}\label{def:mes riem}\mes_\lambda=\curv_\lambda \lambda \mathcal{L}=-\frac{1}{2} \Delta \ln \lambda \mathcal{L}~. \end{equation}
 \end{definition}
 
 Note that if $h$ is a constant function, or more generally a \emph{harmonic function}\index{harmonic function}, i.e., a $C^\infty$ function such that $\Delta h=0$, 
 if $\bar{\lambda}=\E^h\lambda$, then $$ \omega_{\bar{\lambda}}=\omega_\lambda \mbox{ but } \curv_{\bar{\lambda}}=\E^{-h}\curv_\lambda~.$$

 The main point of the theory is that, up to a harmonic function, $\lambda$ is determined by the curvature measure.  
 This fact is based on the famous  \emph{Green formula}\index{Green formula}, for $C^2$ functions on the plane
\begin{equation}\label{eq:green}\iint_{M} f\Delta g - g\Delta f  = \int_{\partial M} f\frac{\partial g}{\partial\nu} - g\frac{\partial f}{\partial\nu} \end{equation}
if $\partial M$ is a  piecewise $C^1$ simple closed curve, and $\nu$ is the unit outward normal to $M$.
It is worth noting that for  $z\in M$, the function  $\zeta\mapsto \ln|z-\zeta|$ is harmonic out of $\{z\}$.
This leads to the following classical result, see e.g. \cite[Lemma 3.4.1]{jost}.

\begin{lemma}\label{lem:poisson} Let $f$ be a $C^2$ function over the plane. Then for $z\in M$,
\begin{equation}\label{eq:poisson1}f(z)=\frac{1}{2\pi}\iint_M  \ln |z-\cdot| \Delta f +\frac{1}{2\pi} \int_{\partial M}  f\frac{\partial \ln|z- \cdot|}{\partial \nu}-\ln|z-\cdot| \frac{\partial f }{\partial  \nu}~.\end{equation}
\end{lemma}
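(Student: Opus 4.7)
The plan is to apply Green's formula \eqref{eq:green} not on $M$ directly (we cannot, as $\zeta \mapsto \ln|z-\zeta|$ is singular at $z$) but on the domain $M_\varepsilon := M \setminus \overline{B_\varepsilon(z)}$, where $B_\varepsilon(z)$ is the Euclidean disc of small radius $\varepsilon$ centered at $z$, and then let $\varepsilon \to 0$. Setting $g(\zeta) := \ln|\zeta - z|$, we use the fact, noted just before the statement, that $g$ is harmonic away from $z$, so $\Delta g \equiv 0$ on $M_\varepsilon$. Hence Green's formula applied to the pair $(f,g)$ on $M_\varepsilon$ reads
\begin{equation*}
-\iint_{M_\varepsilon} g\, \Delta f = \int_{\partial M} \left( f \frac{\partial g}{\partial \nu} - g \frac{\partial f}{\partial \nu}\right) + \int_{\partial B_\varepsilon(z)} \left( f \frac{\partial g}{\partial \nu_\varepsilon} - g \frac{\partial f}{\partial \nu_\varepsilon}\right),
\end{equation*}
where $\nu_\varepsilon$ is the outward unit normal to $M_\varepsilon$ along $\partial B_\varepsilon(z)$, hence points \emph{towards} $z$.

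The heart of the argument is to compute the limits of the two boundary terms on $\partial B_\varepsilon(z)$. Parametrizing $\partial B_\varepsilon(z)$ by $\zeta = z + \varepsilon e^{i\theta}$, a direct computation gives $\frac{\partial g}{\partial \nu_\varepsilon} = -\frac{1}{\varepsilon}$ on $\partial B_\varepsilon(z)$. Therefore
\begin{equation*}
\int_{\partial B_\varepsilon(z)} f \frac{\partial g}{\partial \nu_\varepsilon} = -\frac{1}{\varepsilon}\int_{\partial B_\varepsilon(z)} f \;\xrightarrow[\varepsilon\to 0]{}\; -2\pi f(z),
\end{equation*}
by continuity of $f$ at $z$ (the mean value over $\partial B_\varepsilon(z)$ converges to $f(z)$). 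For the second boundary integral, $|g| = |\ln \varepsilon|$ on $\partial B_\varepsilon(z)$, while $|\partial f/\partial \nu_\varepsilon|$ is bounded near $z$ since $f$ is $C^2$, so this contribution is bounded by $C |\ln \varepsilon|\cdot 2\pi \varepsilon \to 0$.

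For the interior term, since $\ln|\zeta - z|$ is locally integrable near $z$ (e.g., by a polar coordinates estimate $\int_0^\varepsilon |\ln r| \, r\, dr < \infty$) and $\Delta f$ is continuous hence bounded on $\overline M$, dominated convergence yields $\iint_{M_\varepsilon} g\, \Delta f \to \iint_M g\, \Delta f$. Passing to the limit and dividing by $2\pi$ isolates $f(z)$ and gives precisely \eqref{eq:poisson1}. The main obstacle---really the only subtle point---is the book-keeping on $\partial B_\varepsilon(z)$: one must remember that the outward normal to $M_\varepsilon$ points \emph{inward} with respect to the disc, which is what produces the minus sign that, after being moved to the other side of the identity, gives $+f(z)$ on the left-hand side of \eqref{eq:poisson1}.
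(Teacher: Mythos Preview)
Your proof is correct and is exactly the standard argument the paper is pointing to: the paper does not spell out a proof but simply notes that $\zeta\mapsto\ln|z-\zeta|$ is harmonic away from $z$ and refers to \cite[Lemma 3.4.1]{jost}, which is precisely the excise-a-small-disc-and-apply-Green argument you wrote. There is nothing to add.
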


Let  $\mes$ be a signed measure  with compact support in $M$, with $C^\infty$ Lebesgue density $k$.  The function $p(\mes)$ over $M$ is defined at a point $z$ by

\begin{equation}\label{eq:poisson lisse}p(z;\mes)=\frac{1}{2\pi}\iint \ln |z-\cdot| k=\frac{1}{2\pi}\iint \ln |z-\zeta| \D\mes(\zeta) \end{equation}
(if nothing is specified, the integration is made over the plane). By \eqref{eq:poisson1}, on $M$ we have
\begin{equation}\label{eq:lapl curv}k=\Delta p(\mes)~. \end{equation}

The function $p(\mes)$
 is the \emph{potential}\index{potential} of $\mes$. 
 Comparing \eqref{eq:def sec curv} and \eqref{eq:lapl curv}, it follows that if we define
 $$\lambda=\E^{-2p(\mes)}~, $$
  then the conformal Riemannian metric $\lambda|\D z|^2$
has sectional curvature $\curv=k/\lambda$, i.e., the curvature measure of $\lambda|\D z|^2$ is $\mes$.

\begin{remark}\label{rem:C1C2curvature}{\rm
Actually, if $k$ is $C^1$, then $p(\mes)$, and in turn $\lambda$, is $C^2$ \cite[Corollary 4.5.4]{AG}. We hence obtain a $C^2$ Riemannian metric with a $C^1$ curvature (in general, if the metric is $C^2$, the curvature is $C^0$).
 Actually, the class of $C^2$ Riemannian metrics on an open domain with a $C^1$ curvature is a relevant class for the problem of finding \emph{isothermal coordinates}\index{isothermal coordinates} \cite{wintner} ---i.e., finding a $C^1$ homeomorphism such that the pull-back of the metric is conformal.
  Weaker assumptions for the regularity of the Riemannian metric in order to be isometric (here an isometry is a $C^1$ mapping) to a conformal metric are given in \cite{chern-hartman-wintner}, \cite{chern}, \cite{hartman-wintner}.
See \cite[Proposition 4.1]{troyanov-alexandrov} for a proof of the existence of isothermal coordinates in the $C^\infty$ case. 
 
 However, the Riemannian metric must be more regular than only continuous. For a modern treatment, see \cite[Section 3.5]{jost}.  We will be concerned by conformal metrics with a different kind of regularity assumption: the logarithm of the factor $\lambda$ will be a difference of subharmonic functions. See Remark~\ref{remark:regularity} for the kind of regularity this implies.
}\end{remark}

If two $C^2$ functions $p_1$ and $p_2$ satisfy 
$k=\Delta p_i$,  then 
$p_1 - p_2$ is a harmonic function.
It follows that all the conformal Riemannian metrics $\lambda|\D z|^2$ with curvature measure  $\mes$ are given by 
\begin{equation}\label{eq:metrique conforme}\lambda =\E^{-2\left(p(\mes) + h\right)}~, \end{equation}
where $h$ is a harmonic function, and $p(\mes)$ is the potential of $\mes$. 

The turn  \eqref{eq:def turn r} of an arc $\arc$, for a Riemannian conformal metric with conformal factor given by \eqref{eq:metrique conforme}, is given by\footnote{There is a slight difference in notation with the corresponding formula in \cite{R60I}, because \res is considering $\lambda =\E^{-2p(\mes) + h}$.}

\begin{equation}\label{eq:turn riem harm}
\kappa_\lambda(\arc)=\kappa(\arc)+\int_\arc \frac{\partial  p(\mes)}{\partial \nu} +\int_K \frac{\partial h}{\partial \nu}~.
\end{equation}

We want to rewrite the expression above, in order to be able to define the turn for more general objects than a Riemannian conformal metric.  For this we need another notion, the notion of angle under which an arc is seen.

\subsubsection{Angle under which an arc is seen, left and right turn of arcs}\label{left right turn sec}

Let $\arc$ be an arc, and  $\zeta$ be a point in the plane that is not on $\arc$. 
 The \emph{angular function} of $\arc$ \index{angular function} is a continuous function 
$ \varphi(\arc,\zeta,\cdot) :[a,b]\to \R$ defined in the following way. If $\param:[a,b]\to\C$ is a parameterization of $\arc$,
\begin{equation}\label{eq:angular function} \varphi(\arc,\zeta,t)= \operatorname{arg} (\param(t)-\zeta)  - \operatorname{arg}(\param(a)-\zeta)~,\end{equation}
where $\operatorname{arg}(z)\in (-\pi, \pi]$\index{arg} is the principal value of the argument of $z$.

There is a priori an ambiguity in the choice of the argument.  Of course,  $\varphi(\arc,\zeta,a)=0$. 
For $t$ close to $a$, $\operatorname{arg}(z(t))\in(-\pi,\pi]$ is the principal value of the argument. Then, the argument is uniquely determined by the condition that 
$t\mapsto \varphi(\arc,\zeta,t)$ is continuous.

The \emph{angle under which $\arc$ is seen from the point $\zeta$}\index{angle under which an arc is seen from a point} is the function $\varphi(\arc): \C\setminus K \to \R $ defined by
$$\varphi(\arc,\zeta):=\varphi(\arc,\zeta,b)$$  and it clearly does not depend on the parameterization of $\arc$.
For example, if $\arc$ is a circle, then $\varphi(\arc,\zeta)=0$ if $\zeta$ is outside the disc enclosed by the circle, and $\varphi(\arc,\zeta)=\pm 2\pi$ otherwise, depending on the orientation of the circle.

\begin{lemma}\label{lem angle cas c1}
Let $\param$ be a regular parameterization of a regular arc $\arc$. If $\zeta\notin \arc$,
$$\varphi(\arc,\zeta) =-\int_a^b \frac{\partial \ln | \cdot -\zeta|}{\partial \nu}(\param(t)) |\param'(t)| \D t
=-\int_\arc  \frac{\partial \ln | \cdot -\zeta|}{\partial \nu}$$ 
and in particular, if $0\notin \arc$,
\begin{equation}\label{eq:angle c1 zero complexe}
\varphi(\arc,0)=\int_a^b \frac{\operatorname{Re}(\I
\param(t)\bar{\param}'(t))}{|\param(t)|^2}\D t~.
\end{equation}
\end{lemma}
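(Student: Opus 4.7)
My approach is to compute directly from the definition of the angular function, using a continuous branch of the argument and then translating the result into the required normal-derivative form.

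First, since $\zeta \notin K$, the function $t \mapsto z(t)-\zeta$ is continuous and nowhere zero on $[a,b]$, so it admits a unique continuous branch of the argument $\theta(t)$ with $\theta(a) = \arg(z(a)-\zeta) \in (-\pi,\pi]$. By the defining property that $t \mapsto \varphi(K,\zeta,t)$ is the unique continuous normalization with $\varphi(K,\zeta,a)=0$, we have $\varphi(K,\zeta,t) = \theta(t) - \theta(a)$, hence $\varphi(K,\zeta) = \theta(b)-\theta(a) = \int_a^b \theta'(t)\,dt$. Since the regular parameterization $z$ is $C^2$, so is $\theta$, and the formula $\ln(z(t)-\zeta) = \ln|z(t)-\zeta| + i\theta(t)$ (for any local branch of the logarithm) yields, upon differentiating,
\[
\theta'(t) \;=\; \operatorname{Im}\!\left(\frac{z'(t)}{z(t)-\zeta}\right) \;=\; \frac{\operatorname{Im}\bigl(z'(t)\,\overline{z(t)-\zeta}\bigr)}{|z(t)-\zeta|^2}.
\]

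Next I would translate this integrand into the normal-derivative form. Using $\nu = z'^\perp/|z'|$ from \eqref{eq:normale} and the identification $w^\perp = iw$ under $\R^2 \simeq \C$, one has $\nu(t) = iz'(t)/|z'(t)|$. The Euclidean gradient of $w \mapsto \ln|w-\zeta|$ at $w = z(t)$, viewed as a complex number, equals $(z(t)-\zeta)/|z(t)-\zeta|^2$. Hence, with $\langle a,b\rangle = \operatorname{Re}(a\bar b)$,
\[
\frac{\partial \ln|\cdot -\zeta|}{\partial \nu}(z(t))\,|z'(t)| \;=\; \operatorname{Re}\!\left(\frac{z(t)-\zeta}{|z(t)-\zeta|^2}\,\overline{iz'(t)}\right) \;=\; -\frac{\operatorname{Im}\bigl(z'(t)\,\overline{z(t)-\zeta}\bigr)}{|z(t)-\zeta|^2},
\]
using $\operatorname{Re}(-iw) = \operatorname{Im}(w)$. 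Comparing with the expression for $\theta'(t)$ above, I obtain
\[
\theta'(t) \;=\; -\frac{\partial \ln|\cdot -\zeta|}{\partial \nu}(z(t))\,|z'(t)|,
\]
and integrating from $a$ to $b$ yields the first claimed equality; the second equality (the integration being over $K$) is just the change-of-variable formula for a line integral against arc length, which is parameterization-independent.

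For the special case $\zeta = 0$, I substitute into the integrand and simplify: $\operatorname{Im}(z'\bar z)/|z|^2 = \operatorname{Re}(iz\bar z')/|z|^2$, using the identity $\operatorname{Re}(iw) = -\operatorname{Im}(w)$ applied to $w = z\bar z'$ and the fact that $\operatorname{Im}(\bar w) = -\operatorname{Im}(w)$. This gives \eqref{eq:angle c1 zero complexe} directly.

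The only subtle point is the existence and uniqueness of the continuous lift $\theta$, which is standard for continuous nowhere-zero curves in $\C^*$; the rest of the argument is a computation in which the main thing to keep straight is the complex encoding of $\perp$, of the gradient of $\ln|\cdot|$, and of the Euclidean inner product. No deep analytic tool is needed here, since the regularity of $z$ makes every step literal.
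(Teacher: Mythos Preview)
Your proof is correct and follows essentially the same approach as the paper: both differentiate the argument function along the curve and identify the result with the normal derivative of $\ln|\cdot-\zeta|$, the only difference being that you use a global continuous lift of the argument and complex notation, while the paper works locally with $\arctan((z_2-\zeta_2)/(z_1-\zeta_1))$ and real components. Your use of a global lift is slightly cleaner in that it sidesteps the ``locally'' caveat in the paper's computation.
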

 \begin{proof}
We have $\varphi(\arc,\zeta)=\int_a^b\varphi(\arc,\zeta,t)'\D t$, and locally, 
\begin{equation}\label{eq:der angle}\varphi(\arc,\zeta,t)'=\left(\arctan \left(\frac{\param_2(t)-\zeta_2}{\param_1(t)-\zeta_1}\right)\right)'
=-\frac{\langle \param(t)-\zeta, \param'^\bot(t)\rangle}{|\param(t)-\zeta|^2}~. \end{equation}

 On the other hand,
\begin{equation}\label{eq:grad ln}\operatorname{grad} (\ln |\cdot - \zeta|)(z)=\frac{z-\zeta}{|z-\zeta|^2} \end{equation}
hence
$$ \frac{\partial \ln |\cdot - \zeta|}{\partial \nu}(\param(t))= \frac{\langle \param(t)-\zeta , \nu\rangle}{|\param(t)-\zeta|^2}= \frac{\langle \param(t)-\zeta , \param'^\bot(t)\rangle}{|\param(t)-\zeta|^2|\param'(t)|}~.$$ 

Evidently, one immediately computes
$$- \frac{\partial \ln |\cdot|}{\partial \nu}(z(t))|\param'(t)|=\frac{\param_1(t)\param_2'(t)-\param_2(t)\param_1'(t)}{|\param(t)|^2}=\frac{\operatorname{Re}(\I
\param(t)\bar{\param}(t)')}{|\param(t)|^2}~.$$
~\end{proof}

There is another way to interpret  Lemma~\ref{lem angle cas c1}. Let $h:M\to \R$ be a harmonic function on $M$. A $C^\infty$ function $h^*:M\to \R$ is said to be \emph{the (harmonic) conjugate}\index{conjugate (harmonic)} of $h$ if 
\begin{equation}\label{eq:CR}
  \left\{
      \begin{aligned}
        \frac{\partial h}{\partial x}-\frac{\partial h^*}{\partial y}=0\\
   \frac{\partial h}{\partial y}+\frac{\partial h^*}{\partial x}=0\\
      \end{aligned}
    \right.
\end{equation}
i.e., $f=h+\I h^*$ is holomorphic, as \eqref{eq:CR} are the Cauchy--Riemann equations. The conjugate exists on a simply connected domain if the form $- \frac{\partial h}{\partial y}\D x +\frac{\partial h}{\partial x}\D y$ is closed, i.e. $\Delta h=0$, that is the case as $h$ is harmonic. From \eqref{eq:CR}, it follows that $h^*$ is also harmonic.
 Then we may compute that, for a parameterization by arc length,
$$\frac{\partial h}{\partial \normal}=\left( z_1'\frac{\partial h}{\partial y}-z'_2\frac{\partial h}{\partial x}\right)\circ z=-\D h^*(z)(z')=-(h^*\circ z)'~,$$
hence
\begin{equation}\label{eq:int conjugate}
\int_K \frac{\partial h}{\partial \normal}=h^*(z_1)-h^*(z_2)~,
%\begin{split}
%\int_K \frac{\partial h}{\partial \normal}
%& =\int_a^b z_1'(t)\frac{\partial h}{\partial %y}(z(t))-z'_2(t)\frac{\partial h}{\partial x}%%(z(t))dt \\ 
%& = -\int_a^b dg(z(t))(z'(t))dt =
%-\int_a^b (g\circ z)'(t) dt = g(a)-g(b)~. 
%\end{split}
\end{equation}
if $z_2$ is the endpoint of $\arc$, and $z_1$ its starting point.

The relation with Lemma~\ref{lem angle cas c1}
follows from the fact that on $D=\C\setminus\{\zeta +y, y<0 \}$, 
the function  $z\mapsto \ln|\zeta-z|$ is the real part of $z\mapsto \operatorname{Ln}(z-\zeta)$,
with $\operatorname{Ln}$ the principal determination of the complex logarithm, i.e.,
$$\operatorname{Ln}(z-\zeta)=\ln|z-\zeta| + \I \operatorname{arg}(z-\zeta)~, $$
so that on $D$, the harmonic conjugate of 
$\ln|\cdot-\zeta|$ is $\operatorname{arg}(\cdot-\zeta)$, and then Lemma~\ref{lem angle cas c1} is essentially \eqref{eq:int conjugate}.

From \eqref{eq:turn riem harm}  and Lemma~\ref{lem angle cas c1}, we obtain the following formula,
which  is basically the content of the  first section of \cite{R60I}. 
 \begin{lemma}\label{lem:smooth turn} 
Let $\mes$ be a smooth Lebesgue density with compact support, and let $h$ be a harmonic function over $M$.
For $\lambda$ given by \eqref{eq:metrique conforme},  
the turn of a regular arc $\arc \subset M$  can be computed as follows:
 \begin{equation}\label{eq:smooth turn} \kappa_\lambda(\arc)=\kappa(\arc)-\frac{1}{2\pi}\iint \varphi(\arc,\zeta) \D\omega(\zeta) +\int_\arc 
 \frac{\partial h}{\partial \normal}~.\end{equation}
 \end{lemma}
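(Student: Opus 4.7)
The plan is to start from formula \eqref{eq:turn riem harm}, which is already in the form
$$\kappa_\lambda(\arc) = \kappa(\arc) + \int_\arc \frac{\partial p(\mes)}{\partial \nu} + \int_\arc \frac{\partial h}{\partial \nu},$$
and to show that the middle term equals $-\frac{1}{2\pi}\iint \varphi(\arc,\zeta)\,d\mes(\zeta)$. The harmonic contribution is already in the desired form, and the rotation term $\kappa(\arc)$ is untouched, so the entire content of the lemma reduces to rewriting $\int_\arc \frac{\partial p(\mes)}{\partial \nu}$.

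First, I would substitute the integral definition \eqref{eq:poisson lisse} of $p(\mes)$ into $\frac{\partial p(\mes)}{\partial \nu}(z)$, differentiating under the integral sign (valid since $\mes$ has compact support, smooth density $k$, and $z\in K$ stays away from $\zeta$ on a set of full $\mes$-measure):
$$\frac{\partial p(\mes)}{\partial \nu}(z) = \frac{1}{2\pi}\iint \frac{\partial \ln|\cdot - \zeta|}{\partial \nu}(z)\, d\mes(\zeta).$$
Then I would integrate over $\arc$ (using a regular parameterization so that $\nu$ and the length element are well-defined) and apply Fubini to swap the integrals:
$$\int_\arc \frac{\partial p(\mes)}{\partial \nu} = \frac{1}{2\pi}\iint \left(\int_\arc \frac{\partial \ln|\cdot -\zeta|}{\partial \nu}\right) d\mes(\zeta).$$

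Next, for each $\zeta$ outside $\arc$, I would apply Lemma~\ref{lem angle cas c1} to identify the inner integral with $-\varphi(\arc,\zeta)$. Since $\arc$ has two-dimensional Lebesgue measure zero and $\mes$ is absolutely continuous with respect to $\mathcal{L}$, the points $\zeta\in\arc$ form a $\mes$-null set and may be ignored. This yields
$$\int_\arc \frac{\partial p(\mes)}{\partial \nu} = -\frac{1}{2\pi}\iint \varphi(\arc,\zeta)\, d\mes(\zeta),$$
which inserted into \eqref{eq:turn riem harm} gives \eqref{eq:smooth turn}.

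The only delicate point is justifying Fubini, since $\nabla \ln|\cdot -\zeta|$ is singular at $\zeta$. However, by \eqref{eq:grad ln}, $|\frac{\partial \ln|z-\zeta|}{\partial \nu}|\le |z-\zeta|^{-1}$, and for each fixed $z\in\arc$ the function $\zeta\mapsto |z-\zeta|^{-1}|k(\zeta)|$ is integrable over the plane (logarithmic-type singularity integrable in $\R^2$, with $k$ smooth and compactly supported); an analogous bound with the roles exchanged, together with the fact that $\arc$ is rectifiable and $k$ is bounded, provides the Tonelli domination needed to legitimize the exchange. This absolute integrability check is the only non-routine step; everything else is a direct assembly of the previously established formulas.
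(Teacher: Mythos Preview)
Your proof is correct and follows exactly the route the paper takes: the paper simply says that the formula is obtained ``from \eqref{eq:turn riem harm} and Lemma~\ref{lem angle cas c1}'', which is precisely your argument of inserting the definition of $p(\mes)$, swapping integrals, and identifying the inner integral via Lemma~\ref{lem angle cas c1}. You have merely spelled out the Fubini and null-set justifications that the paper leaves implicit.
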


\begin{remark}{\rm
In the case of a Riemannian conformal metric, we said in Remark~\ref{rem:shortest path and turn} that $\kappa_\lambda(\arc)=0$ if $\arc$ is a shortest arc. Hence in this case:
\begin{equation}\label{eq: rot shortest riem}
\kappa(\arc)=  \frac{1}{2\pi}\iint \varphi(\arc,\zeta) \D\omega(\zeta) -\int_\arc 
 \frac{\partial h}{\partial \normal}~.
\end{equation}}
\end{remark}

Before going on, let us mention here the following result, that is used in the proof of Theorem~9.1 in \cite{R60II}.
\begin{lemma}
We have, for a rectifiable arc $\arc$,  
if $\zeta$ is at distance $\geq \rho>0$ from $\arc$, $$\bigvee_a^b \varphi (\arc,\zeta)\leq \frac{s(\arc)}{\rho}~.$$
\end{lemma}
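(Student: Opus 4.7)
The plan is to reduce everything to an arc length parameterization and use the absolute continuity of $t\mapsto \log(z(t)-\zeta)$, where $z$ is the corresponding Lipschitz parameterization.

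First I would parameterize $\arc$ by arc length, obtaining $z:[0,s(\arc)]\to \C$, which is $1$-Lipschitz; total variation of $\varphi(\arc,\zeta,\cdot)$ does not depend on the choice of parameterization (as it is preserved under reparameterizations by homeomorphisms), so it is enough to bound $\bigvee_0^{s(\arc)}\varphi(\arc,\zeta,\cdot)$. Since $\zeta\notin \arc$, the function $w(t):=z(t)-\zeta$ is $1$-Lipschitz and satisfies $|w(t)|\geq \rho>0$.

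Next, I would pick a continuous branch of $\log w(t)$ on $[0,s(\arc)]$, which exists because $w$ is continuous and does not vanish. Its imaginary part is a continuous lift of $\operatorname{arg}(z(t)-\zeta)$, and by the definition of the angular function,
\[
\varphi(\arc,\zeta,t) = \operatorname{Im}\log w(t) - \operatorname{Im}\log w(0).
\]
Since $\log$ is holomorphic on $\C\setminus\{0\}$ (in a simply connected neighborhood of the compact set $w([0,s(\arc)])$) and $w$ is absolutely continuous, the composition $\log\circ w$ is absolutely continuous with a.e.\ derivative $w'(t)/w(t)$. Consequently, $\varphi(\arc,\zeta,\cdot)$ is absolutely continuous and, using $|w'(t)|\leq 1$ a.e.\ together with $|w(t)|\geq \rho$,
\[
|\varphi(\arc,\zeta,t)'|=\left|\operatorname{Im}\frac{w'(t)}{w(t)}\right|\leq \frac{|w'(t)|}{|w(t)|}\leq \frac{1}{\rho} \qquad \text{a.e.}
\]

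Finally, the total variation of an absolutely continuous function is the integral of the absolute value of its derivative, so
\[
\bigvee_0^{s(\arc)} \varphi(\arc,\zeta,\cdot) \,=\, \int_0^{s(\arc)}|\varphi(\arc,\zeta,t)'|\,dt \,\leq\, \frac{s(\arc)}{\rho}.
\]
The only nontrivial point is justifying the derivative formula when the arc is merely rectifiable (the formula of Lemma~\ref{lem angle cas c1} is stated for regular arcs); this is where the complex-analytic viewpoint on $\log w$ is essential, as it bypasses any smoothness assumption on $z$ by relying solely on the chain rule for absolutely continuous maps composed with a holomorphic function.
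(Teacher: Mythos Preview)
Your argument is correct, but it takes a different route from the paper. The paper reduces to broken lines by approximation: it notes that $\varphi(\arc_n,\zeta,\cdot)\to\varphi(\arc,\zeta,\cdot)$ pointwise when $\arc_n\to\arc$, uses lower semicontinuity of the total variation, and then, by the concatenation property of $\bigvee$ and $s$, reduces to a single Euclidean segment; on a segment the explicit derivative formula~\eqref{eq:der angle} together with Cauchy--Schwarz gives $|\varphi'|\le 1/\rho$ directly. Your approach bypasses the approximation entirely by working with the arc length parameterization and the absolute continuity of the lifted argument, which is more direct. The paper's route has the small advantage of only ever differentiating along straight segments, so no chain rule for absolutely continuous maps is invoked.

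One minor remark on your write-up: the image $w([0,s(\arc)])$ need not admit a simply connected neighborhood in $\C\setminus\{0\}$ (for instance if $w$ winds around the origin), so a global holomorphic branch of $\log$ may fail to exist there. What you actually need, and what always exists, is the continuous lift of $\operatorname{arg} w$ along the path; absolute continuity and the formula $(\log w)'=w'/w$ then follow by covering $[0,s(\arc)]$ with finitely many subintervals on each of which $w$ stays in a sector where a branch of $\log$ is well defined.
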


Recall that the \emph{total variation}\index{total variation} of a function $f:[a,b]\to \R$ is
\begin{equation}\label{eq:bigvee}\bigvee_a^b f = \sup \sum_i |f(t_i)-f(t_{i+1})|~, \end{equation}
where the suprememum is taken over all partitions of $[a,b]$.\index{$\bigvee$}

\begin{proof}
From the properties of the principal value of the argument, it clearly follows that if $\arc_n\to \arc$ and $\arc_n$ and $\arc$ all have the same starting point, and $\zeta\not\in K_n$,  then $\varphi (\arc_n,\zeta)\to \varphi (\arc,\zeta)$.
So  it suffices to consider the case of a broken line, the  conclusion will hold  by approximation. 
Actually, due to the concatenation property of $\vee$ and $s$, it suffices to check the inequality for the segment between each vertex of the broken line. But in such a case, the conclusion follows from \eqref{eq:der angle}, Cauchy--Schwarz inequality, and the fact that 
in the smooth case, the total variation is the integral of the derivative.
\end{proof}

Lemma~\ref{lem:smooth turn} allows a generalization of the concept of turn of an arc in two  directions:
\begin{enumerate}
\item  the right-hand side of  \eqref{eq:smooth turn} can be defined from a finite signed Borelian measure with compact support (that has not necessarily smooth Lebesgue density) and a harmonic function;  
\item the right-hand side of  \eqref{eq:smooth turn} can be defined for a non-regular arc, as we saw that the rotation can be  defined for any arc of the class $\tilde{\Delta}$, and for the  term $\frac{\partial h}{\partial \normal}$, it suffices to consider the harmonic conjugate and use  \eqref{eq:int conjugate}.
\end{enumerate}

The point is that the angular function is not defined from a point on the arc, that may cause some problems 
to define the term $\iint \varphi(\arc,\zeta) \D\omega(\zeta)$ for a more general measure.
 This is why 
the notion of right and left turn enters the picture. 

First, if $a$ is an extremity  of $K$, 
if $K_n$ is a proper subarc of $K$ converging to $K$, then we define
$\varphi(K,a)=\lim_n\varphi(K_n,a)$.
To extend the definition of $\varphi(K,\cdot)$ to the interior $K^\circ$\index{$K^\circ$} of $K$, we need the notion of left and right side of a curve.

\begin{figure}
\begin{center}
\includegraphics{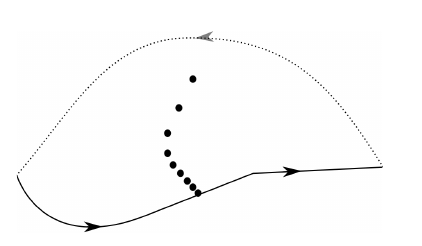}
\caption{The sequence of points is converging on the left to a point in the interior of the plain curve.}\label{fig:left}
\end{center}
\end{figure}

Let us consider a simple closed curve. By Jordan Theorem, it has an interior and an exterior. 
The angle under which the arc is seen from an exterior point is equal to zero. We say that the arc is \emph{positively oriented}\index{positively oriented (curve)} if the angle under which the arc is seen from an interior point is equal to $2\pi$. The only other possibility being that the angle is equal to $-2\pi$, in which case we say that the arc is \emph{negatively oriented}.

Now consider an arc $\arc$ whose extremities do not coincide. By joining it to another suitably oriented arc $\arc'$ in order to obtain a simple close curve $\arc\arc'$, as the arcs are oriented, we are able to say when the domain enclosed by $\arc\arc'$ is \emph{on the right} or \emph{on the left} \index{on the left} of $\arc$.
Using this, 
for a sequence of points in $\C\setminus \arc$ converging to a point
in $\arc^\circ$, we can say if the convergence is \emph{on the right} or \emph{on the left}\index{convergence on the left} of $\arc$.
See 
 Figure~\ref{fig:left} for an heuristic argument, and paragraph 2 in \cite{R63III} for more details.

It is then possible to extend the definition of the angle $\varphi(\arc,\cdot)$ to $K$ by continuity
on the left or on the right. This  leads to the definition of \emph{left angle under which the curve is seen}\index{left angle} $\varphi_l(\arc)$, as well as  \emph{right angle} $\varphi_r(\arc)$,  which  satisfy the following equalities:
\begin{equation}\label{eq:left right angles}\begin{cases}   \varphi_l(\arc)-\varphi_r(\arc)= 0 & \mbox{over }  \C\setminus \arc^\circ\\
   \varphi_l(K)-\varphi_r(\arc)= 2\pi &\mbox{over }   \arc^\circ
\end{cases}~.
\end{equation}
 Existence of the left and right angles and the equalities above are proved in Lemma~2 in \cite{R63III}. For example, if $\arc$ is a  directly oriented circle and $\zeta\in \arc$, then 
 $\varphi_l(\arc,\zeta)=2\pi $, and  $\varphi_r(\arc,\zeta)=0$.

We can now define the \emph{left turn}\index{left turn}
of an arc $\arc$ of the class $\tilde{\Delta}$, given  any signed Borelian measure $\mes$ with compact support, and any harmonic function $h$ defined on a domain containing $\arc$:
 \begin{equation}\label{eq:def turn}\kappa_l(K):=\kappa(K)-\frac{1}{2\pi}\iint_M \varphi_r(K,z)\D\mes(z)+(h^*(z_1)-h^*(z_2))~,\end{equation}
 if $z_1$ and $z_2$ are the extremities of $K$,
 and the \emph{right turn}\index{right turn}  \begin{equation}\label{eq:def turnb}\kappa_r(K):=-\kappa(K)+\frac{1}{2\pi}\iint_M\varphi_l(K,z)\D\mes(z)-(h^*(z_1)-h^*(z_2))~,\end{equation}
 and we have by \eqref{eq:left right angles}
 \begin{equation}\label{eq:left+right}
 \kappa_l(K)+\kappa_r(K)=\mes(K^\circ)~.
 \end{equation}

  Let $K$ be a regular closed curve, positively oritented, enclosing a domain $D$ homeomorphic to a disc. In this case,  we obtain a \emph{Gauss--Bonnet Formula}\index{Gauss--Bonnet Formula} almost by definition:
\begin{equation}\label{eq:GB}\kappa_l(K)+\mes(D)=2\pi~. \end{equation}

See \cite{R63III}. A more general version of \eqref{eq:GB} is given in Theorem~2 in \cite{R63III}.
 
In the next paragraph, we will look at those concepts for the simplest measures which are not smooth Lebesgue densities, that are weighted Dirac measures. The motivation for looking at the turn will be made clear in Section~\ref{sec:tot rot and bound rot}.

 \subsubsection{Flat cones and weak Laplacian}\label{sec:weak laplacian}\label{Sec: flat cones}
 
As a basic and fundamental example, let us look at the turn for arcs in a flat cone metric.
Flat cones are the simplest and the main  example of subharmonic metrics which are not Riemannian. Flat cones are an elementary example of \emph{simple singularities} on Riemannian surfaces, see \cite{tro-ouvert}.
Also,  one of the main tools in Reshetnyak's arguments, the canonical stretching of a distance (see Section~\ref{sec can stretch}), allows to reduce some problems to this  case.

 For $\zeta\in \C$ and $\mes_0\in \R$, let $$\mes=\mes_0\delta_{\zeta}$$ be the signed measure concentrated at $\zeta$ with value $\mes_0\in \R$.
Then  define
$$\lambda(z)=|z-\zeta|^{-\frac{\mes_0}{\pi}}=|z-\zeta|^{2\beta}=
|z-\zeta|^{\frac{\alpha}{\pi}-2}~,$$
where $$\beta=-\frac{\mes_0}{2\pi}$$ is the \emph{weight}\index{weight (cone)}, and $$\alpha=2\pi(\beta+1)=2\pi-\omega_0$$ is the
\emph{cone angle}\index{cone angle}.
To simplify the exposition, let us consider $\zeta=0$.  We will look at the plane endowed with the metric $\lambda|\D z|^2$. 
As $z\mapsto \ln |z|$ is harmonic on $\C\setminus\{0\}$, it follows
that $\lambda|\D z|^2$ is actually a Riemannian flat metric on  $\C\setminus\{0\}$. Let us see that it defines however a distance on the whole plane.

We define the length of a broken line $\arc$ in the same way as for a Riemannian metric (see Remark~\ref{remark:def distance riemn}). If $\param$ is an arc length  parameterization (for the Euclidean length), 
\begin{equation}\label{eq:length curve cone}\tilde{s}_{\lambda}(\arc)=\int_0^{s(\arc)} |\param(t)|^\beta \D t~,\end{equation}
where $s(\arc)$ is the Euclidean length of $\arc$.
For example, let $v$ be a unit vector and let consider the parameterized arc
 $\param:[0,r]\to \C$, $\param(t)=tv$. Hence  
\begin{equation}\label{eq:dist cone zero}\tilde{s}_\lambda(\param)=\int_0^r t^\beta \D t=
\left\{
\begin{array}{c c l c l c l c}
+\infty &\mbox{ if }& \beta\leq -1  &\mbox{ i.e. } & \mes_0 \geq 2\pi  &\mbox{ i.e. } & \alpha \leq 0\\
\frac{1}{\beta+1}r^{\beta+1} &\mbox{ if }&  \beta > -1 & \mbox{ i.e. }&  \mes_0< 2\pi & \mbox{ i.e. } & \alpha > 0
 \end{array}
\right.~.\end{equation}

Let us denote by 
$\rho_\lambda(a,b)$ the infimum of  $\tilde{s}_\lambda$ for all the broken lines between $a$ and $b$. We will denote by $C_r(\zeta)$\index{$C_r(\zeta)$} the circle of center $\zeta$ and (Euclidean) radius $r$.

\begin{lemma}
$\rho_\lambda$ is a distance over the plane.
\end{lemma}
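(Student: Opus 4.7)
The plan is to verify the only non-trivial property required for $\rho_\lambda$ to be a distance, namely that $\rho_\lambda(a,b)>0$ when $a\neq b$; non-negativity, symmetry, and the triangle inequality follow at once from the definition, as recalled in Remark~\ref{remark:def distance riemn}.

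The key observation is that, although $\lambda(z)=|z|^{2\beta}$ is singular at the origin, on any closed disc $\bar D$ that does not contain $0$, $\lambda$ is continuous and strictly positive, hence bounded below by a constant $m>0$. Therefore, whenever a broken line $K$ contains a subarc of Euclidean length $\ell$ lying in such a $\bar D$, one has $\tilde{s}_\lambda(K)\geq\sqrt{m}\,\ell$.

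With this in hand, I would split into two cases. If $a\neq 0$, I would pick $r<\min\{|a|,|a-b|\}$ and take $\bar D=\bar D_r(a)$: then $0\notin\bar D$ and $b\notin\bar D$, so any broken line from $a$ to $b$ admits an initial subarc in $\bar D$ of Euclidean length at least $r$ (starting at the center, it must first reach $\partial\bar D$). This yields $\rho_\lambda(a,b)\geq\sqrt{m}\,r>0$. If instead $a=0$ and $b\neq 0$, I would apply the same idea with $\bar D=\bar D_r(b)$ for $r<|b|$, using this time the terminal subarc of any broken line from $0$ to $b$: it must enter $\bar D$ before reaching its center $b$, so the piece of the curve between the last crossing of $\partial\bar D$ and $b$ lies in $\bar D$ and has Euclidean length $\geq r$.

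The main subtlety is the singular behavior of $\lambda$ at $0$; the trick is to localize the estimate near an endpoint that stays away from the origin, rather than trying to control $\lambda$ along the entire broken line. For finiteness, if $a,b\neq 0$ one can connect them by a broken line avoiding $0$ (on which $\lambda$ is bounded), yielding a finite length; if an endpoint equals $0$, the computation in \eqref{eq:dist cone zero} gives finiteness iff $\beta>-1$, so when $\beta\leq-1$ the lemma has to be understood in the extended sense where distances take values in $[0,+\infty]$.
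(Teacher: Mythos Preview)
Your proof is correct and follows essentially the same approach as the paper: both appeal to Remark~\ref{remark:def distance riemn} for the pseudo-distance axioms and for the key positivity argument via a small closed disc on which $\lambda$ is continuous and bounded below by a positive constant. The only difference is in the treatment of the case where one endpoint is the origin: the paper invokes the explicit computation~\eqref{eq:dist cone zero} to assert that the distance from $0$ to $C_r(0)$ is positive (or infinite), whereas you bypass that computation entirely by placing the disc around the \emph{other} endpoint $b\neq 0$ and using the terminal subarc. Your variant is slightly more uniform and self-contained, since it does not require interpreting the radial-segment length as a lower bound for the infimum; the paper's route, on the other hand, ties the argument directly to the explicit cone geometry already displayed.
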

\begin{proof}
From Remark~\ref{remark:def distance riemn}, 
$\rho_\lambda$ is a pseudo-distance over the plane. Actually the argument in the  remark also gives that it is a distance on $\C\setminus\{0\}$. \eqref{eq:dist cone zero}
says that the distance from $0$ to $C_r(0)$ is positive (or infinite) for any $r>0$. 
\end{proof}

The choice of the name cone angle for $\alpha$ may be explained as follows.  Let us define
\begin{equation}\label{eqValpha}V_\alpha:=\left\{ (r,\theta) \vert r\geq 0,\, 
\theta\in \R / \alpha \mathbb{Z} \right\} / (0,\theta)\thicksim (0,\theta')~. \end{equation} 
Then the pull-back of the metric $|z|^{2\beta}|\D z|^2$ over $\mathbb{C}\setminus \{0\}$ onto $V_\alpha \setminus \{(0,0)\}$ via the mapping
$$\left(r,\theta\right) \mapsto
 \left(\left(\beta+1\right)r\right)^{\frac{1}{\beta+1}} \left( \cos\left(\frac{\theta}{\beta+1}\right), \sin\left(\frac{\theta}{\beta+1}\right)\right)$$
is
$$\D r^2 + r^2\D\theta^2~. $$

It is easy to see that \eqref{eq:dist cone zero} actually gives the $\rho_\lambda$ distance from  the origin to $C_r(0)$, see \cite[Lemma 12.1]{R60II}. 
 If follows that 
$C_r(0)$ is at $\rho_\lambda$ distance one  from the origin if $\mes_0< 2\pi$
and $r=\left(1+\beta\right)^{\frac{1}{1+\beta}}$.
But 
\begin{equation}\label{eq:long cercle cone}\tilde{s}_\lambda(C_r(0))=\int_{C_r(0)} \sqrt{\lambda}= r\int_0^{2\pi} 
r^{\beta}\D t= 2\pi r^{1+\beta}=2\pi (1+\beta)=\alpha~,\end{equation}
hence the length for $\rho_\lambda$  of the  unit circle for $\rho_\lambda$ is $\alpha$. 
From this it is easy to recognize the distance over some usual objects, see Table~\ref{table cone}.

We are cheating a little in the sense that \eqref{eq:long cercle cone} is a priori not the intrinsic length of $C_r(0)$ for the distance $\rho_\lambda$ (see Section~\ref{sec:intrinsic length}). Indeed, the intrinsic length must be defined as  the infimum of the length for $\rho_\lambda$ of all the broken lines inscribed in the curve. But we will see in Section~\ref{sec:intrinsic length} that both definitions agree in our case.

\begin{table}
\begin{center}

\begin{tabular}{|c|c|c|}
  \hline
cone angle & curvature & weight  \\
  $\alpha$ & $\mes_0$ & $\beta=-\frac{\mes_0}{2\pi}=\frac{\alpha}{2\pi}-1$  \\
  \hline
  $0$ & $2\pi$ & $-1$   \\
  $]0,2\pi[$ & $]0,2\pi[$ & $]-1,0[$ \\
  $2\pi$ & $0$ & $0$  \\
  $>2\pi$ & $<0$ & $>0$\\
  \hline
\end{tabular}\caption{It is easy to see that the distance given by the first line is isometric to the induced distance  over a Euclidean cylinder of radius $2\pi$. For the second line, it is a convex cone in the Euclidean $3$-space. Of course, the third line corresponds to the Euclidean plane. It is also possible to see that the distance given by the  last line corresponds to the induced distance on a convex  cone in the Lorentzian Minkowski $3$-space, i.e., $\R^3$ endowed with the bilinear form $\D x^2+\D y^2-\D z^2$, see Example~\ref{example:cone4pi}.}\label{table cone}
\end{center}

\end{table}

It follows that the point $0$ is at finite distance from the other points if $\mes(\{0\})<2\pi$, and at infinite distance if  $\mes(\{0\})\geq 2\pi$. This will be a general phenomenon for the subharmonic distances, except for the case $=2\pi$, that may behave in  different manners, depending on $\mes$. This is the source of some technical difficulties.

\begin{remark}\label{rem lon cercle}{\rm Note that  $$\tilde{s}_\lambda(C_1(0))=2\pi$$
for \emph{any} value of $\alpha>0$.}
\end{remark}

Let us mention the following basic result, which is used for example in the proof of Lemma 16 in \cite{R63III}.

\begin{lemma}\label{lem isom cone} For $\beta>-1$ i.e., $\omega_0<2\pi$,
the sector of the plane between two rays making an angle $\theta$ for the distance  $\rho_\lambda$ is isometric to a Euclidean sector of angle $\theta\left(1-\frac{\mes_0}{2\pi} \right)$.
\end{lemma}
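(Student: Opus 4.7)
The plan is to deduce the claim directly from the explicit change of variables already recorded in the text around \eqref{eqValpha}. Set $\alpha = 2\pi(\beta+1)$ and consider the map
\[
\Phi:V_\alpha\setminus\{0\}\to \C\setminus\{0\},\qquad
\Phi(r,\theta)=\bigl((\beta+1)r\bigr)^{\frac{1}{\beta+1}}
\Bigl(\cos\tfrac{\theta}{\beta+1},\,\sin\tfrac{\theta}{\beta+1}\Bigr),
\]
which, as stated in the excerpt, satisfies $\Phi^{\ast}(\lambda|dz|^2)=dr^2+r^2d\theta^2$. Observe that $\Phi$ extends continuously to the apex by $\Phi(0,\cdot)=0$ and is a homeomorphism between $V_\alpha$ and the plane.

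Next I would fix two rays $R_1,R_2$ issuing from $0$ and bounding a sector $S\subset\C$ of Euclidean opening $\theta$. Picking a branch for $\Phi^{-1}$, one lifts $S$ to the subset
\[
\widetilde S=\bigl\{(r,\psi)\in V_\alpha : r\geq 0,\ \psi_1\leq\psi\leq\psi_1+\theta(\beta+1)\bigr\}.
\]
Because $\theta(\beta+1)\leq\alpha$, the angular extent of $\widetilde S$ does not exceed the total angle of $V_\alpha$, so $\widetilde S$ is an embedded wedge in $V_\alpha$, and the restriction $\Phi|_{\widetilde S}:\widetilde S\to S$ is a homeomorphism which is a Riemannian isometry off the apex. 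Developing the flat metric $dr^2+r^2d\psi^2$ on $\widetilde S$ gives precisely a Euclidean sector of angle $\theta(\beta+1)=\theta\bigl(1-\tfrac{\omega_0}{2\pi}\bigr)$, since $\widetilde S$ contains no full loop around the apex.

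It remains to upgrade the Riemannian isometry on $\widetilde S\setminus\{0\}$ to a metric isometry with respect to the intrinsic distances $\rho_\lambda$ on $S$ and the Euclidean distance on the target sector. For this I would argue that both distances are defined as infima of $\int\sqrt{\lambda}$ (resp.\ of Euclidean length) over admissible broken lines, and $\Phi$ maps broken lines contained in $\widetilde S\setminus\{0\}$ to rectifiable arcs in $S\setminus\{0\}$ of the same length. Since $\beta>-1$, \eqref{eq:dist cone zero} shows that radial segments toward the apex have finite $\rho_\lambda$-length, so any broken line passing through $0$ can be approximated by broken lines that avoid $0$ (e.g.\ by replacing the apex by a small circular arc of vanishing length, using Remark~\ref{rem lon cercle}), without increasing the $\rho_\lambda$-length in the limit. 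The same is true on the Euclidean side, so the infima coincide and $\Phi|_{\widetilde S}$ is the required isometry.

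The main obstacle is precisely this last step: one needs to check that passing through the apex does not create shortcuts that are not seen by the smooth Riemannian isometry. Once one verifies that radial arcs to $0$ have comparable length on both sides (which is immediate from the pull-back formula and the integrability at $r=0$ granted by $\beta+1>0$), the conclusion is automatic.
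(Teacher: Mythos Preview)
Your proof is correct and is essentially the same as the paper's, just phrased through the cone model $V_\alpha$ instead of directly. The paper uses the single-variable conformal map $f(z)=(1+\beta)^{-1}z^{\beta+1}$ on a branch cut: since $|f'(z)|=|z|^{\beta}=\sqrt{\lambda(z)}$, Euclidean length of $f(K)$ equals $\tilde s_\lambda(K)$, so $f$ is an isometry from a sector with the metric $\lambda|dz|^2$ to a Euclidean sector, and $f(e^{i\theta})=(1+\beta)^{-1}e^{i\theta(\beta+1)}$ shows the image sector has opening $\theta(\beta+1)$. Your map $\Phi$ is, up to the polar-coordinate identification $w=re^{i\psi}$, precisely the inverse of this $f$; the two arguments are the same change of variables read in opposite directions.

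Two small remarks. First, since $\Phi:V_\alpha\to\C$ is already a global homeomorphism, no branch choice for $\Phi^{-1}$ is needed; you can drop that phrase. Second, your care about the apex is welcome but the paper simply omits it: once lengths of all arcs agree under the map, the intrinsic distances (infima of lengths) agree automatically, and the finiteness of radial lengths for $\beta>-1$ from \eqref{eq:dist cone zero} is all that is required.
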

\begin{proof}
Let us consider the function
$$f:z\to (1+\beta)^{-1}z^{\beta+1}~,$$
defined for a suitable branch cut of the plane.
For an arc length parameterization $\param$ of a rectifiable arc $K$, we have
$$\tilde{s}_\lambda(f(\arc))=\int_0^{s(\arc)} |\param(t)|^{-\frac{\omega_0}{2\pi}}\D t~. $$
Comparing with \eqref{eq:length curve cone}, it follows that $f$  is an isometry from  
 the plane with the metric $ \lambda|\D z|^2$  to the Euclidean plane. 
 Without loss of generality we can consider that the sector is between vectors $1$ and $\E^{\I\theta}$, and 
 $f(\E^{\I\theta})=(1+\beta)^{-1}\E^{\I\theta(\beta+1)}$, so that the new sector has an angle $\theta(\beta+1)$.
\end{proof}

As we already noted, on $\mathbb{C}\setminus\{0\}$, a flat cone metric is a Riemannian metric. Considering the function $f$ from the proof of Lemma~\ref{lem isom cone}, together with Remark~\ref{pullback metric},
$f$ is a Riemannian isometry 
between the disc without the origin endowed with the 
metric $|z|^\beta|\D z|^2$ and the standard Euclidean metric.
So  if $\arc$ is a regular arc not passing through the origin, we have the following formula for the geodesic curvature of $\arc$, see Lemma~\ref{lem:smooth turn},
\begin{equation}\label{eq:turn flat metric zero}
\kappa_\lambda(\arc)=\kappa(\arc)+\beta \varphi(\arc,0)~.  
\end{equation}
An argument similar to the proof of Lemma~\ref{lem isom cone} gives  the following result, which is used in the beginning of the proof of Lemma 13 in \cite{R63III}.

\begin{lemma}
Let $\arc$ be a regular arc not passing through the origin. Then 
$$\kappa(\arc^{\beta+1})= \kappa_\lambda(\arc)~, $$
with $\lambda=|\cdot|^{2\beta}$, and $K^{\beta+1}$ is the image of the arc under $f(z)=z^{\beta+1}$.
\end{lemma}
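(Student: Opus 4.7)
The plan is to leverage the preceding Lemma~\ref{lem isom cone}, which established that the map $f(z)=(1+\beta)^{-1}z^{\beta+1}$ is an isometry from $(\C\setminus\{0\},\lambda|dz|^2)$ to the Euclidean plane, where $\lambda=|\cdot|^{2\beta}$. Since $\arc$ is a regular arc avoiding $0$ and hence is contained in a simply connected subdomain of $\C\setminus\{0\}$, we can fix a branch of $z^{\beta+1}$ on a neighborhood of $\arc$, so that $f$ is well-defined and smooth there, and $\arc^{\beta+1}=(1+\beta)\,f(\arc)$.

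Isometries preserve geodesic curvature (by naturality of the Levi--Civita connection and of the unit normal), so integrating along $\arc$ yields $\kappa_\lambda(\arc)=\kappa(f(\arc))$, where the right-hand side is the Euclidean rotation of the image of $\arc$ under $f$. The map $z\mapsto (1+\beta)z$ is a Euclidean homothety (of real positive ratio since $\beta>-1$), and the Euclidean rotation $\kappa$ depends only on the tantrix $T=z'/|z'|$, which is invariant under such a homothety. Therefore $\kappa(\arc^{\beta+1})=\kappa((1+\beta)f(\arc))=\kappa(f(\arc))=\kappa_\lambda(\arc)$, which is the desired equality.

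As a cross-check, one can derive the same result directly from \eqref{eq:turn flat metric zero}. Parameterize $\arc$ by a regular $z:[a,b]\to\C\setminus\{0\}$ and set $w(t)=z(t)^{\beta+1}$; then $w'(t)=(\beta+1)z(t)^{\beta}z'(t)$, so that
\begin{equation*}
\arg w'(t)=\arg(\beta+1)+\beta\arg z(t)+\arg z'(t)
\end{equation*}
(choosing all determinations continuously). Since the Euclidean rotation of a regular arc is the total variation of $\arg$ of the tangent, taking the difference between $t=b$ and $t=a$ gives $\kappa(\arc^{\beta+1})=\beta\,\varphi(\arc,0)+\kappa(\arc)$, which by \eqref{eq:turn flat metric zero} equals $\kappa_\lambda(\arc)$.

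The only mild subtlety, and the step that warrants most care, is the definition of $\arc^{\beta+1}$ when $\beta+1$ is not an integer: the image may fail to be a simple (injective) arc and requires a branch choice for $z^{\beta+1}$. However, since $\arc$ is compact, regular, and does not contain $0$, a continuous branch exists in a neighborhood of $\arc$, and the Euclidean rotation of the resulting parameterized curve is defined by the integral in \eqref{eq:rotation} regardless of injectivity. Once this is set up, both arguments above go through without further difficulty.
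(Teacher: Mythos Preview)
Your proposal is correct. Your cross-check is exactly the paper's route: the paper says to compute $\kappa(\arc^{\beta+1})$ directly from the complex curvature formula \eqref{eq:rotation coord cplxe}, then invoke \eqref{eq:angle c1 zero complexe} and \eqref{eq:turn flat metric zero}; your computation of $\arg w'$ is the same calculation in slightly different notation, yielding $\kappa(\arc^{\beta+1})=\kappa(\arc)+\beta\varphi(\arc,0)=\kappa_\lambda(\arc)$.

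Your first argument, via the isometry $f(z)=(1+\beta)^{-1}z^{\beta+1}$ from Lemma~\ref{lem isom cone} and invariance of geodesic curvature under Riemannian isometries, is a genuinely different and more conceptual route that the paper does not take. It has the advantage of being computation-free, but it relies on $\beta>-1$ (so that $f$ is an isometry and the homothety factor $1+\beta$ is positive), whereas the direct computation goes through for any $\beta\neq -1$. Your remark on the branch choice and the possible loss of injectivity of $\arc^{\beta+1}$ is well taken and applies equally to the paper's argument.
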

\begin{proof}
It suffices to compute $\kappa(\arc^{\beta+1})$ with 
\eqref{eq:rotation coord cplxe}, and then use \eqref{eq:angle c1 zero complexe} and \eqref{eq:turn flat metric zero}.
\end{proof}

\begin{example}\label{example:cone4pi}{\rm
 It is easily checked that the Euclidean length of the curve $(r,\theta=\frac{1}{r}-\pi)$, $0<r<\frac{1}{\pi}$ in polar coordinates, and $0$ at $r=0$, is infinite. Its length in the flat cone metric defined by $\lambda(z)=|z|^2$ is finite, see the last section of \cite{R63}. The distance given by $\lambda$ is isometric to the induced distance on the cone in $\R^3$, endowed with the metric $\D x^2+\D y^2-\D z^2$, generated by the rotation preserving horizontal planes of the half-line of direction $(2,0,\sqrt{3})$.
}\end{example}

One of the main result of \cite{R63III} (see the end of Section~\ref{sec:tot rot and bound rot})
 applied to our flat cone situation, implies the well-known fact that a shortest arc for a flat cone metric with $0<\mes_0<2\pi$, i.e., with a cone angle  in $]0,2\pi[$, cannot contain the origin in its interior.

\begin{remark}\label{rem:regular sens de resh}{\rm
So far, we have considered signed measures with smooth Lebesgue density or weighted Dirac measure. It would then be natural to consider signed measures which are a sum of measures of those two kinds. Such measures are the main objects of the last part of  \cite{R63III}. Indeed, they are dense (for the weak convergence of measures) in a set of measures $A$. The Localization Theorem, see Section~\ref{sec:localization}, will say that any of the signed measures considered in Reshetnyak's theory
can be transformed in a suitable manner to 
a measure of the set $A$. 
 }\end{remark}

In Section~\ref{sec:Laplacian and curvature}, we saw the relation between the measure and the Laplacian in the Riemannian case. A similar analysis is possible here. 
The \emph{weak Laplacian} of a locally $L^1$ function $f:M\to \R$ over a domain $M$ of the plane  is the linear functional on $C_c^\infty(M)$, the space of $C^\infty$ functions with compact support included in $M$, defined by
$$\Delta f(\varphi)=\iint_M f \Delta\varphi~.$$ 

Here we are making an abuse of notation, as if $f\in C^2$, then the notation $\Delta f$ may stand for a continuous function or a linear functional on $C^\infty_c$. But if one recalls  the Green formula
\eqref{eq:green}, we have the following relation for $f\in C^2$:
 $$\Delta f(\varphi)=\iint_M \varphi \Delta f~,$$ 
 i.e., if $f$ is $C^2$, the function $\Delta f$  is equal as a distribution to the distribution $\Delta f$.

 Let us go back to the simplest non-smooth example. 

\begin{lemma}\label{lem:potential dirac} For $\zeta\in \C$, let $p(\delta_\zeta)$ be the function defined by 
$p(z;\delta_\zeta)=\frac{1}{2\pi}\ln|z-\zeta|$.  Then for $\varphi\in C_c^\infty$,
$$\Delta p(\delta_\zeta)(\varphi) =\varphi(\zeta)~.$$
\end{lemma}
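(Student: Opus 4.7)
The plan is to reduce to $\zeta=0$ by translation, then deduce the identity from the Green formula~\eqref{eq:green} applied on a punctured domain, using the fact that $z\mapsto \ln|z|$ is harmonic on $\C\setminus\{0\}$. The key issue is that $\ln|\cdot-\zeta|$ is only locally integrable at $\zeta$, so the Laplacian must be interpreted in the distributional sense, and the singularity must be isolated by cutting out a small disc before taking a limit.

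More precisely, I would fix $\varphi\in C_c^\infty(M)$ and assume $\zeta=0$ (the general case follows by translating $\varphi$). Choose $\epsilon_0>0$ small enough that $\overline{D_{\epsilon_0}(0)}\subset M$, and for $0<\epsilon<\epsilon_0$ apply the Green formula~\eqref{eq:green} with $f=\ln|\cdot|$ and $g=\varphi$ on the domain $U_\epsilon:=M\setminus \overline{D_\epsilon(0)}$. Since $\Delta\ln|\cdot|=0$ on $U_\epsilon$, and since $\varphi$ together with all its derivatives vanish on $\partial M$ (compact support), this gives
\begin{equation*}
\iint_{U_\epsilon} \ln|z|\,\Delta\varphi(z)\,d\mathcal{L}(z)
= \int_{C_\epsilon(0)}\!\!\left(\ln|z|\,\frac{\partial\varphi}{\partial\nu}-\varphi\,\frac{\partial\ln|\cdot|}{\partial\nu}\right)ds,
\end{equation*}
where $\nu$ is the outward normal to $U_\epsilon$, i.e., $\nu(z)=-z/|z|$ on $C_\epsilon(0)$.

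Now I would analyze the two boundary integrals as $\epsilon\to 0$. From \eqref{eq:grad ln} one has $\frac{\partial \ln|\cdot|}{\partial\nu}(z)=\langle z/|z|^2,-z/|z|\rangle=-1/\epsilon$ on $C_\epsilon(0)$, so
\begin{equation*}
-\int_{C_\epsilon(0)}\varphi\,\frac{\partial \ln|\cdot|}{\partial\nu}\,ds
=\frac{1}{\epsilon}\int_{C_\epsilon(0)}\varphi\,ds
\;\xrightarrow[\epsilon\to 0]{}\; 2\pi\,\varphi(0),
\end{equation*}
by continuity of $\varphi$ at $0$ (the mean value over $C_\epsilon(0)$ converges to $\varphi(0)$, and its length is $2\pi\epsilon$). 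The remaining term is negligible: $|\ln\epsilon|\cdot 2\pi\epsilon\cdot\|\nabla\varphi\|_\infty\to 0$. Meanwhile the left-hand side tends to $\iint_M \ln|z|\,\Delta\varphi\,d\mathcal{L}$ by dominated convergence, since $\ln|\cdot|$ is locally integrable on the plane and $\Delta\varphi$ is bounded with compact support.

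Combining these limits yields $\iint_M \ln|z|\,\Delta\varphi\,d\mathcal{L}=2\pi\varphi(0)$, and dividing by $2\pi$ gives $\Delta p(\delta_0)(\varphi)=\varphi(0)$, as required. The only mildly delicate point is checking that the ``bad'' boundary term $\int_{C_\epsilon}\ln\epsilon\,\partial_\nu\varphi\,ds$ vanishes in the limit, which is immediate from $\epsilon\ln\epsilon\to 0$; everything else is a direct consequence of Green's identity plus the harmonicity of $\ln|\cdot|$ off the origin.
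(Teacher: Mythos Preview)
Your proof is correct. The paper's proof is shorter because it simply invokes Lemma~\ref{lem:poisson} (the Poisson representation formula) applied to $f=\varphi$ at the point $z=\zeta$, on a domain $M$ containing $\operatorname{supp}\varphi$: the boundary integrals in~\eqref{eq:poisson1} vanish since $\varphi\in C_c^\infty$, and what remains is exactly $\varphi(\zeta)=\frac{1}{2\pi}\iint_M \ln|\zeta-\cdot|\,\Delta\varphi=\Delta p(\delta_\zeta)(\varphi)$.

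Your argument is essentially the proof of Lemma~\ref{lem:poisson} itself, specialized to compactly supported $f$: you excise the singularity, apply Green's formula~\eqref{eq:green}, and pass to the limit. So you are redoing work the paper has already packaged. The gain of your version is that it is self-contained and makes the role of the singularity explicit; the paper's version is cleaner because the analytic work (the $\epsilon\ln\epsilon\to 0$ estimate and the mean-value limit) has already been absorbed into Lemma~\ref{lem:poisson}.
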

\begin{proof}
It is a straightforward consequence of Lemma~\ref{lem:poisson}, by taking a domain containing the support of $\varphi$.
\end{proof}

Hence, as a distribution, the weak Laplacian of $p(\delta_\zeta)$ is the Dirac measure $\delta_\zeta$. But it is classical that if the weak Laplacian is a \emph{positive functional}, i.e., if for a $L^1$ function $f$ we have
$\Delta f(\varphi)\geq 0$ for any non-negative $\varphi\in C_c^\infty$, then it extends to a positive linear functional over $C_c^0$  \cite{ransford}, \cite{hayman}, \cite{rao}. In turn, by the Riesz Representation Theorem, to a positive measure. At the end of the day, we may identify $\Delta p(\delta_\zeta)$ and $\delta_\zeta$ as positive measures.

In turn, we obtain an analogue of \eqref{def:mes riem}, in term of measures, as if we consider the flat cone metric
$\lambda|\D z|^2$
with
$$\lambda(z)=|z-\zeta|^{-\frac{\mes_0}{\pi}}~, $$
then, in the sense of measures, by Lemma~\ref{lem:potential dirac}
\begin{equation}\label{cone measure weak lap flat}\mes_0\delta_\zeta=-\frac{1}{2}\Delta \ln \lambda~.\end{equation}

In the other sections, we will bring what we do for Riemannian metrics and flat metrics to its full generality. Before that, let us mention two natural generalizations of flat metrics: spherical  cone metrics
and hyperbolic cone metrics.

\begin{example}[Spherical cone-metrics]\label{rem:spherical cone metric}{\rm

Let us consider the conformal Riemannian metric 
\begin{equation}\label{eq:def spher hyp}\lambda(z)|\D z|^2= \frac{4}{\left(1+|z|^2\right)^2}|\D z|^2~, \end{equation}
for $z$ belonging to  the plane. 
A straightforward computation (see \eqref{eq:def sec curv}) shows that the curvature is constant equal to $1$. 
The plane endowed with the metric \eqref{eq:def spher hyp} is a conformal representation of a
\emph{spherical metric}\index{spherical metric}.  On may check that 
 \eqref{eq:def spher hyp} is actually given by the stereographic projection (from the south pole) of the induced metric on the unit sphere in the Euclidean $3$-space.

Over $\C$ minus the origin, 
let us consider the function  $f(z)=\frac{1}{\beta+1}z^{\beta+1}$, for a suitable branch cut of the plane and $\beta> -1$. The spherical metric is pull-backed by $f$ (see Remark~\ref{pullback metric}) to a conformal metric with conformal factor

$$\tilde{\lambda}(z)=\lambda(f(z))|f'(z)|^2=\frac{4|z|^{2\beta}}{\left(1+(\beta+1)^{-2}|z|^{2(\beta+1)}\right)^2}~. $$

The pull-back of the metric $\tilde{\lambda}|\D z|^2$ onto the cone $V_\alpha$ given by \eqref{eqValpha}, where $\alpha=2\pi(\beta+1)$, via the function
$$(r,\theta)\mapsto \left(\left((\beta+1)\tan(r/2)\right)^{\frac{1}{\beta+1}}
\left(\cos\left(\frac{\theta}{\beta+1}\right),\sin\left(\frac{\theta}{\beta+1}\right)\right)   \right)$$
is written
$$\D r^2+\sin(r)^2 \D\theta^2~. $$

Let us compute
$$-\frac{1}{2}\Delta \ln \tilde{\lambda}(z)=
-\frac{1}{2}\Delta \ln |z|^{2\beta} + \Delta \ln \left(1+(\beta+1)^{-2}|z|^{2(\beta+1)}\right)~.
 $$

Considering the equation above in the sense of distribution, 
by \eqref{cone measure weak lap flat}, we know that the term 
$-\frac{1}{2}\Delta \ln |z|^{2\beta}$ is the measure
$-2\pi\beta\delta_0$. Concerning the second term $\Delta \ln \left(1+(\beta+1)^{-2}|z|^{2(\beta+1))}\right)$, the function 
$$z\mapsto\ln \left(1+(\beta+1)^{-2}|\cdot|^{2(\beta+1)}\right)$$ has no singularity on $U$, and a straightforward computation shows that 
$$\Delta \ln \left(1+(\beta+1)^{-2}|z|^{2(\beta+1)}\right)=\tilde{\lambda}(z)~,$$
hence in the sense of distribution, $-\frac{1}{2}\Delta \ln \tilde{\lambda}(z)$ is equal to the measure

$$\omega_0\delta_0 + \tilde{\lambda}\mathcal{L} $$
where $\mathcal{L}$ is the Lebesgue measure over $U$ and
$\omega_0=2\pi-\alpha=-2\pi\beta$.
}\end{example}

\begin{example}[Hyperbolic cone-metrics]{\rm

Let us consider the conformal Riemannian metric 
\begin{equation}\label{eq:def met hyp}\lambda(z)|\D z|^2= \frac{4}{\left(1-|z|^2\right)^2}|\D z|^2~, \end{equation}
for $z$ belonging to a subdomain of the open unit disc. 
A straightforward computation (see \eqref{eq:def sec curv}) shows that the curvature is constant equal to $-1$, so \eqref{eq:def met hyp} is a conformal representation of a
\emph{hyperbolic metric}\index{hyperbolic metric}. 
Suppose that $U$ contains the origin. 
Over $U$ minus the origin, 
let us consider the function  $f(z)=\frac{1}{\beta+1}z^{\beta+1}$, for a suitable branch cut of the plane and $\beta> -1$. The hyperbolic metric is pull-backed by $f$ (see Remark~\ref{pullback metric}) to a conformal metric with conformal factor

$$\tilde{\lambda}(z)=\lambda(f(z))|f'(z)|^2=\frac{4|z|^{2\beta}}{\left(1-(\beta+1)^{-2}|z|^{2(\beta+1)}\right)^2}~. $$

The pull-back of the metric $\tilde{\lambda}|\D z|^2$ onto the cone $V_\alpha$ given by \eqref{eqValpha}, where $\alpha=2\pi(\beta+1)$, via the function
$$(r,\theta)\mapsto \left(\left((\beta+1)\tanh(r/2)\right)^{\frac{1}{\beta+1}}
\left(\cos\left(\frac{\theta}{\beta+1}\right),\sin\left(\frac{\theta}{\beta+1}\right)\right)   \right)$$
is written
$$\D r^2+\sinh(r)^2 \D\theta^2~. $$

Let us compute

$$-\frac{1}{2}\Delta \ln \tilde{\lambda}(z)=
-\frac{1}{2}\Delta \ln |z|^{2\beta} + \Delta \ln \left(1-(\beta+1)^{-2}|z|^{2\beta+1}\right)~.
 $$

Computations similar to the ones of the spherical case (Remark~\ref{rem:spherical cone metric}) show that, in the sense of distribution, $-\frac{1}{2}\Delta \ln \tilde{\lambda}(z)$ is equal to the measure

$$\omega_0\delta_0 - \tilde{\lambda}\mathcal{L} $$
where $\mathcal{L}$ is the Lebesgue measure over $U$ and
$\omega_0=2\pi-\alpha=-2\pi\beta$.

}\end{example}

\subsection{Arcs of bounded rotation}\label{sec abs rot}

The arcs of (absolute) bounded rotation are the basic objects of the whole theory. They form a proper subclass of the class of rectifiable arcs of the plane, and this subclass contains  smooth arcs and broken lines. 

The main property that distinguish them from the class of rectifiable arcs is the convergence of the length for a converging sequence of arcs having a uniform bound of the absolute rotation (Lemma~\ref{lem: BR longueur cv}). This is   an essential step in the proof of the Distances Convergence Theorem~\ref{thm: distances convergence theorem}.

Roughly speaking, if rectifiable arcs have a Lipschitz parameterization, arcs of bounded rotation have a \dc parameterization, see Section~\ref{sec arc cvexes}.

In all this section, we are considering the Euclidean plane. References for this section are \cite{AR}, \cite{milnor}, \cite{sullivan}, \cite{res2005}. Basically, we review the properties stated in $\S$3 of \cite{R60II}.

\subsubsection{Classical results}

Let $\arc$ be a broken line, parameterized by affine functions.  Its \emph{absolute rotation}\index{absolute rotation} $|\kappa|(\arc)$ is the non-negative real number that is the sum of the angles (in $[0,\pi]$) between its left and right derivative at its interior vertices.

\begin{definition}
The \emph{absolute rotation} $|\kappa|(\arc)\in\R\cup \{+\infty\}$ of an arc $\arc$ is the supremum of the absolute rotation of the broken lines inscribed in $\arc$. The arc is said to be of \emph{bounded rotation}\index{bounded rotation} if $|\kappa|(\arc)$ is finite. 
\end{definition}

It is clear that if we modify a broken line $\arc$ by adding a new vertex, we obtain a new broken line $\arc'$ such that $|\kappa|(\arc')
\geq |\kappa|(\arc)$, see Figure~\ref{fig:kappa increase}. From this, it is easy to see that for a broken line, the two definitions of absolute rotation we have given above (as a sum of angles on the one hand, and as a supremum of absolute rotations of inscribed broken lines on the other hand) coincide.

\begin{figure}
\begin{center}
\includegraphics{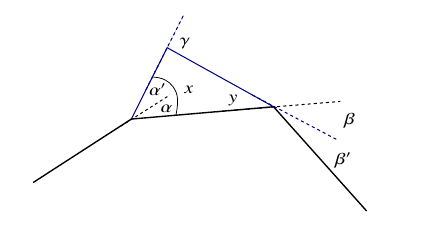}
\caption{$\alpha'+\gamma+\beta' =\alpha'+x+y+\beta'\geq \alpha+\beta $.}\label{fig:kappa increase}\normalsize
\end{center}
\end{figure}

\begin{lemma}\label{lem:liminf kappa}
Let $(\arc_n)_n$ be a sequence of arcs converging to $\arc$. Then  $ |\kappa|(\arc) \leq \liminf |\kappa|(\arc_n)$.
\end{lemma}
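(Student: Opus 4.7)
The plan is to leverage the definition of $|\kappa|$ as a supremum over inscribed broken lines, together with the fact that angles between segments depend continuously on the vertices, as long as consecutive vertices remain distinct.

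First, I would fix an arbitrary broken line $L$ inscribed in $\arc$. Let $z:[a,b]\to\C$ be a parameterization of $\arc$, so that the vertices of $L$ are $p_0=z(a)$, $p_1=z(t_1),\ldots,p_{k-1}=z(t_{k-1})$, $p_k=z(b)$ with $a<t_1<\cdots<t_{k-1}<b$. By definition of convergence of arcs, we may choose parameterizations $z_n:[a,b]\to\C$ of $\arc_n$ converging uniformly to $z$. Set $p_i^n:=z_n(t_i)$ for $i=0,\dots,k$; then $p_i^n\to p_i$ as $n\to\infty$. Since the $p_i$'s are pairwise distinct (the parameterization $z$ is injective), for $n$ large enough all the $p_i^n$'s are pairwise distinct, so they define an honest broken line $L_n$ inscribed in $\arc_n$ whose vertices are ordered along $\arc_n$.

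Next, I would pass to the limit on the absolute rotation. The angle at an interior vertex $p_i^n$ of $L_n$ is the angle between the vectors $p_i^n-p_{i-1}^n$ and $p_{i+1}^n-p_i^n$, and this angle is a continuous function of the triple $(p_{i-1}^n,p_i^n,p_{i+1}^n)$ on the open set where $p_{i-1}^n\neq p_i^n$ and $p_i^n\neq p_{i+1}^n$. For $n$ large this condition holds, so the angle at $p_i^n$ converges to the angle at $p_i$. Summing over the finitely many interior vertices gives $|\kappa|(L_n)\to|\kappa|(L)$.

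Finally, since $L_n$ is a broken line inscribed in $\arc_n$, the definition of absolute rotation for the arc $\arc_n$ yields $|\kappa|(L_n)\le |\kappa|(\arc_n)$. Hence
\[
|\kappa|(L)=\lim_{n\to\infty}|\kappa|(L_n)\le \liminf_{n\to\infty}|\kappa|(\arc_n).
\]
Taking the supremum over all broken lines $L$ inscribed in $\arc$ gives the desired inequality $|\kappa|(\arc)\le\liminf_n|\kappa|(\arc_n)$. The only mild technical point is the one handled in the first paragraph—ensuring the approximating vertices stay distinct so that the discrete notion of absolute rotation is genuinely continuous at $L$; once that is granted, the argument is a standard lower-semicontinuity argument of the form "sup of continuous = lower semicontinuous."
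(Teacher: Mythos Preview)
Your argument is correct and follows exactly the same approach as the paper's proof: fix an inscribed broken line $L$, approximate it by inscribed broken lines $L_n$ in $\arc_n$ with converging vertices, use $|\kappa|(L_n)\to|\kappa|(L)$ and $|\kappa|(L_n)\le|\kappa|(\arc_n)$, and then take the supremum over $L$. You simply spell out in more detail the continuity of the angle function and the distinctness of the approximating vertices that the paper leaves implicit.
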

\begin{proof}
Let $L$ be a broken line inscribed in $\arc$, and let $L_n$ be broken lines inscribed in $\arc_n$ such that the associated sequences of vertices converge to the vertices of $L$. Clearly, $|\kappa|(L_n)\to |\kappa|(L)$ and 
$|\kappa|(L_n)\leq |\kappa|(\arc_n)$, so $|\kappa|(L) \leq \liminf |\kappa|(\arc_n) $, and as $L$ is arbitrary,  $  |\kappa|(\arc)=\sup |\kappa|(L)  \leq \liminf |\kappa|(\arc_n)$.
\end{proof}

\begin{lemma}\label{lem:conv kappa inscr}
Let $(\arc_n)_n$ be a sequence of broken lines inscribed in $\arc$ and converging to $\arc$. Then $s(\arc_n)\to s(\arc)$ and $|\kappa|(\arc_n)\to |\kappa|(\arc)$.
\end{lemma}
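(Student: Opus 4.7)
The plan is to exploit the fact that each $\arc_n$ is simultaneously (i) inscribed in $\arc$, so its length and absolute rotation are automatically $\leq$ those of $\arc$ by the very definition of $s(\arc)$ and $|\kappa|(\arc)$ as suprema over inscribed broken lines; and (ii) convergent to $\arc$, so lower semicontinuity kicks in from the other side. The result is then a sandwich argument.

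More concretely, for the length I would first note that because $\arc_n$ is an inscribed broken line in $\arc$, the definition of $s$ as a supremum over all inscribed broken lines gives $s(\arc_n) \leq s(\arc)$, hence $\limsup_n s(\arc_n) \leq s(\arc)$. On the other hand, the lower semicontinuity of the length functional on the set of arcs, recalled explicitly in the bulleted properties just after the definition of length, yields $s(\arc) \leq \liminf_n s(\arc_n)$. Combining the two bounds gives $s(\arc_n)\to s(\arc)$.

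For the absolute rotation, the argument is strictly parallel. Because $\arc_n$ is a broken line inscribed in $\arc$, the very definition of $|\kappa|(\arc)$ as the supremum of $|\kappa|(L)$ over all inscribed broken lines $L$ gives $|\kappa|(\arc_n)\leq |\kappa|(\arc)$, so $\limsup_n |\kappa|(\arc_n)\leq |\kappa|(\arc)$. The matching lower bound $|\kappa|(\arc)\leq \liminf_n |\kappa|(\arc_n)$ is exactly Lemma~\ref{lem:liminf kappa} applied to the convergent sequence $\arc_n\to \arc$. Sandwiching again yields $|\kappa|(\arc_n)\to |\kappa|(\arc)$.

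There is essentially no obstacle here: both conclusions follow by combining the trivial "inscribed $\Rightarrow$ dominated by the sup" inequality with a previously established (or classical) lower-semicontinuity inequality, so the proof should fit in a few lines. The only minor point of care is that the statement allows $|\kappa|(\arc)=+\infty$, but the sandwich argument works verbatim in $\R\cup\{+\infty\}$: if $|\kappa|(\arc)=+\infty$, then $\liminf_n|\kappa|(\arc_n)=+\infty$ from Lemma~\ref{lem:liminf kappa}, and $|\kappa|(\arc_n)$ still converges to $+\infty$, as desired.
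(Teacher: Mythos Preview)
Your argument is correct and is exactly the natural one: the ``inscribed $\Rightarrow$ bounded above by the supremum'' inequality together with lower semicontinuity (classical for $s$, and Lemma~\ref{lem:liminf kappa} for $|\kappa|$) sandwiches both quantities. The paper in fact omits the proof entirely, merely remarking that the length case is classical and the absolute rotation case is analogous; your write-up fills in precisely what the author had in mind.
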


We skip the proof, as the one for the length is classical, and 
the one  for the absolute rotation is
analogous. This result gives for example that, 
as the absolute rotation of a closed convex broken line is $2\pi$, the absolute rotation of a circle is $2\pi$.

If $A$ is an interior vertex of a broken line $\arc$, and $\arc_1, \arc_2$ the two broken lines obtained from $\arc$ by splitting at $A$, then if $\alpha$ is the angle at $A$, by definition
$$|\kappa|(K)=|\kappa|(K_1)+|\kappa|(K_2)+\alpha~. $$

It follows that, for any decomposition  $a=t_0<t_1 <\cdots < t_N=b$ of a parametrization of an arc $\arc$, \begin{equation}\label{eq:sum rot subarc}\sum_i |\kappa|(K\vert_{ t_i,t_i+1})\leq |\kappa|(\arc)~.\end{equation}
A sharper version of \eqref{eq:sum rot subarc}  will be given  in Corollary~\ref{cor:BR split}.
From \eqref{eq:sum rot subarc} we deduce that  $t\mapsto |\kappa|(K\vert_t)$ is non-decreasing. This is used in the proof of
the following result, which is Theorem~5.8.1  in \cite{AR}.

\begin{proposition}[Alexandrov Inequality]\label{prop: alexandrov inequality}
Let $\arc$ be an arc with $|\kappa|(\arc)< \pi$ and extremities $z_1$ and $z_2$. Then
$$\cos \left(|\kappa|(\arc) / 2 \right) s(\arc)\leq \vert z_1-z_2\vert ~.$$

\end{proposition}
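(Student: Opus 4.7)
The plan is to first prove the inequality for broken lines, and then pass to general arcs by approximation via Lemma~\ref{lem:conv kappa inscr}.

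For a broken line $L$ with consecutive edges of unit direction vectors $u_1,\dots,u_n$ and lengths $\ell_1,\dots,\ell_n$, I would lift the sequence $(u_i)$ to $\R$ via continuous angular coordinates $\theta_1,\dots,\theta_n$, chosen so that $|\theta_{i+1}-\theta_i|$ equals the exterior angle at the $i$-th interior vertex (which lies in $[0,\pi]$ by definition of the absolute rotation). Then $\sum_{i=1}^{n-1}|\theta_{i+1}-\theta_i|=|\kappa|(L)<\pi$, and the triangle inequality on $\R$ gives $\max_i\theta_i-\min_i\theta_i\le|\kappa|(L)<\pi$. Setting $\bar\theta=\tfrac{1}{2}(\max_i\theta_i+\min_i\theta_i)$ and $v=(\cos\bar\theta,\sin\bar\theta)$, each $u_i$ makes an unoriented angle at most $|\kappa|(L)/2$ with $v$, so $\langle u_i,v\rangle\ge\cos(|\kappa|(L)/2)$. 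Projecting the chord onto $v$ yields
\begin{equation*}
|z_2-z_1|\ge\langle z_2-z_1,v\rangle=\sum_i\ell_i\,\langle u_i,v\rangle\ge\cos(|\kappa|(L)/2)\,s(L),
\end{equation*}
which is the desired inequality for $L$.

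For a general arc $K$ of bounded rotation with $|\kappa|(K)<\pi$, I take a sequence $L_n$ of broken lines inscribed in $K$ with vertices forming ever finer partitions of a parametrization of $K$, so that $L_n\to K$. Each $L_n$ has the same endpoints $z_1,z_2$ as $K$; by definition of $|\kappa|(K)$ as the supremum over inscribed broken lines, $|\kappa|(L_n)\le|\kappa|(K)<\pi$; and Lemma~\ref{lem:conv kappa inscr} gives $s(L_n)\to s(K)$ and $|\kappa|(L_n)\to|\kappa|(K)$. Applying the broken-line case to each $L_n$ and letting $n\to\infty$, the continuity of the cosine yields the claimed inequality for $K$.

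The main point of care is the lifting step in the broken-line case: one must verify that the exterior angles (which a priori are unoriented and lie in $[0,\pi]$) can be realized consistently as absolute differences of real-valued lifts, so that the total telescopes into $|\kappa|(L)$ while keeping all lifts inside a single interval of length $<\pi$. This is precisely where the strict bound $|\kappa|(K)<\pi$ is essential: without it, the lifts could wrap around and no common direction $v$ making angle at most $|\kappa|(L)/2$ with every $u_i$ would exist, and the projection estimate would collapse.
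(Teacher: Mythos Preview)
Your proof is correct and follows essentially the same approach as the paper: reduce to broken lines via Lemma~\ref{lem:conv kappa inscr}, find a direction making angle at most $|\kappa|(L)/2$ with every edge, and project the chord onto it. Your choice of the direction $v$ as the midpoint $\bar\theta$ of the range of lifted angles is in fact slightly more careful than the paper's version, which fixes the first edge along $(1,0)$ and takes $e$ with argument $|\kappa|(K)/2$; your formulation makes transparent why every $u_i$ lies within $|\kappa|(L)/2$ of $v$ regardless of which way the broken line turns.
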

\begin{proof}
From Lemma~\ref{lem:conv kappa inscr}, it suffices to prove the formula for broken lines.
Let $\param:[0,s(\arc)]\to \C$ be the arc length parameterization of a  broken line. In particular, its left derivative has unit norm. Up to a rotation, suppose that the first segment of the broken line is directed by $(1,0)$. Let $e$ be a unit vector with principal argument  $|\kappa|(K)/2$. So for any $t\in [0,s(\arc)]$, 
the angle between $\param'_l(t)$ and $e$ is less than  $|\kappa|(K)/2$, i.e., $\langle \param'_l(t),e \rangle \geq \cos \left(|\kappa|(\arc)/2 \right)$, and
$$\vert z_1 - z_2\vert \geq \langle z_1 - z_2 , e \rangle = \int_0^{s(\arc)} \langle \param_l', e \rangle \geq s(\arc) \cos \left(|\kappa|(\arc)/2 \right)~. $$
~\end{proof}

That's not true that for any $t_0\in [a,b]$ there exists a sufficiently small $\delta $ such that $|\kappa|(\arc_{t_0-\delta,t_0+\delta})$ is arbitrarily small, as the example of an inner vertex of a broken line shows. But this property is true on a right neighborhood $[t_0, t_0+\delta]$ for $t_0\in [a,b)$ (of course, a similar property holds 
 on a left neighborhood $[t_0-\delta, t_0]$ for $t_0\in (a,b]$). Indeed, we have for $t_n\to t_0\in [a,b)$, $t_0<t_n<b$, from \eqref{eq:sum rot subarc},
$$|\kappa|(\arc\vert_{t_0,b}) - |\kappa|(\arc\vert_{t_n,b})\geq |\kappa|(\arc\vert_{ t_0,t_n }) $$
and from $|\kappa|(\arc\vert_{t_0,b}) \geq |\kappa|(\arc\vert_{t_n,b})$ and Lemma~\ref{lem:liminf kappa},
it follows that  $|\kappa|(\arc\vert_{t_n,b}) \to |\kappa|(\arc\vert_{t_0,b})$, and hence 
\begin{equation}\label{eq:kappa point=0}
|\kappa|(\arc\vert_{ t_0,t_n}) \xrightarrow[n\to \infty]{}  0~. 
\end{equation}

\begin{remark}\label{rem:equi decompositio BR}{\rm Let  $\epsilon >0$, 
and let us define $t_i\in [a,b)$ by 
$t_0=a$ and 
$|\kappa|(\arc\vert_{ t_{k-1},t_k})= \epsilon$, that is possible by \eqref{eq:kappa point=0}.  
If $ |\kappa|(\arc) \leq A<\infty $, the number $N$ of such $k$ is finite. 
Actually, we have
$$A\geq |\kappa|(\arc) \geq \sum_{k=1}^N |\kappa|(\arc\vert_{ t_{k-1},t_k})\geq 
\sum_{k=1}^{N-1} |\kappa|(\arc\vert_{ t_{k-1},t_k})=(N-1)\epsilon~, $$
so that $N\leq \frac{A}{\epsilon}+1$.

Hence, for any arc with  $|\kappa|(\arc) \leq A$, for any $\epsilon >0$  there exists $N\leq \frac{A}{\epsilon}+1$ and a decomposition  $a=t_0<t_1 <\cdots < t_N=b$  such that for any $i$,  $|\kappa|(\arc\vert_{ t_i,t_{i+1}})\leq \epsilon$.  The decomposition depends on the arc, but $N$ depends only on $A$ and $\epsilon$. 
}\end{remark}

In particular, any  arc of bounded rotation can be decomposed into a finite number of subarcs with absolute rotation less than $\pi$. Applying  Proposition~\ref{prop: alexandrov inequality} to these subarcs, the concatenation property of the length gives the following result.

\begin{corollary}\label{cor:BR->rec}
An arc of bounded rotation is rectifiable.
\end{corollary}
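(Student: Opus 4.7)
The plan is to combine the Alexandrov Inequality (Proposition~\ref{prop: alexandrov inequality}) with the controlled subdivision provided by Remark~\ref{rem:equi decompositio BR}. The Alexandrov bound applies only to arcs of absolute rotation strictly less than $\pi$, whereas $A := |\kappa|(\arc)$ is merely assumed finite, so the idea is to partition $\arc$ into finitely many subarcs each of absolute rotation at most some fixed $\epsilon < \pi$, bound the length of each piece via Alexandrov, and then sum.

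Concretely, I would fix any $\epsilon \in (0,\pi)$, say $\epsilon = \pi/2$, and apply Remark~\ref{rem:equi decompositio BR} to obtain a subdivision $a = t_0 < t_1 < \cdots < t_N = b$ of a parametrization of $\arc$ with $N \leq A/\epsilon + 1$ and $|\kappa|(\arc\vert_{t_i,t_{i+1}}) \leq \epsilon$ for every $i$. Proposition~\ref{prop: alexandrov inequality} applied to each subarc $\arc\vert_{t_i,t_{i+1}}$ then yields
\[
s(\arc\vert_{t_i,t_{i+1}}) \;\leq\; \frac{|\param(t_i) - \param(t_{i+1})|}{\cos(\epsilon/2)}.
\]
Since $\arc$ is the continuous image of the compact interval $[a,b]$, it is compact, so each chord satisfies $|\param(t_i) - \param(t_{i+1})| \leq \mathrm{diam}(\arc) < \infty$, and each $s(\arc\vert_{t_i,t_{i+1}})$ is finite.

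To conclude, I would establish subadditivity $s(\arc) \leq \sum_i s(\arc\vert_{t_i,t_{i+1}})$. Given any broken line $L$ inscribed in $\arc$, I would adjoin the points $\param(t_i)$ to its ordered vertex list; the resulting inscribed broken line $L'$ has Euclidean length $\geq \ell(L)$ by the triangle inequality applied segment by segment, and its portion between $\param(t_i)$ and $\param(t_{i+1})$ is a broken line inscribed in $\arc\vert_{t_i,t_{i+1}}$. Hence
\[
\ell(L) \;\leq\; \ell(L') \;=\; \sum_{i=0}^{N-1} \ell\!\left(L'\cap \arc\vert_{t_i,t_{i+1}}\right) \;\leq\; \sum_{i=0}^{N-1} s(\arc\vert_{t_i,t_{i+1}}) \;\leq\; \frac{N \cdot \mathrm{diam}(\arc)}{\cos(\epsilon/2)}.
\]
Taking the supremum over $L$ gives $s(\arc) < \infty$, so $\arc$ is rectifiable. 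The argument is short and I anticipate no real obstacle; the only point needing mild care is the subadditivity step, where one must check both that inserting extra vertices into an inscribed broken line cannot decrease its length, and that the refined broken line decomposes consistently into inscribed broken lines on the pieces of the subdivision.
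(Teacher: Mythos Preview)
Your proof is correct and follows essentially the same route as the paper: decompose the arc into finitely many subarcs of absolute rotation below $\pi$ via Remark~\ref{rem:equi decompositio BR}, apply the Alexandrov Inequality (Proposition~\ref{prop: alexandrov inequality}) to each piece, and conclude by additivity of length. The paper invokes the ``concatenation property of the length'' without further comment, whereas you spell out the subadditivity step explicitly; otherwise the arguments are identical.
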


\begin{lemma}\label{lem: BR longueur cv}
Let $A\geq 0$ and let $(\arc_n)_{n\geq 1}$, $\arc_0$ be arcs such that $\arc_n\to\arc_0$ and for any $n\geq 1$, $|\kappa|(\arc_n)\leq A$. Then  $s(\arc_n)\to s(\arc)$ when $n\to \infty$.
\end{lemma}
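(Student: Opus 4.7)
The plan is to combine lower semicontinuity of length with a matching upper bound obtained from Alexandrov's inequality applied to a uniform decomposition.

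First, by Lemma~\ref{lem:liminf kappa}, $|\kappa|(\arc_0) \leq \liminf_n |\kappa|(\arc_n) \leq A$, so $\arc_0$ itself is of bounded rotation, and hence rectifiable by Corollary~\ref{cor:BR->rec}. The first listed property of length then gives the lower bound $s(\arc_0) \leq \liminf_n s(\arc_n)$. It remains to show $\limsup_n s(\arc_n) \leq s(\arc_0)$.

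Fix $\epsilon \in (0,\pi)$. By Remark~\ref{rem:equi decompositio BR}, for each $n$ one can choose parametrizations $\param_n \colon [a,b]\to \C$ (with $\param_n \to \param_0$ uniformly) and a decomposition $a = t_0^n < t_1^n < \cdots < t_{N_n}^n = b$ with $N_n \leq A/\epsilon + 1$ and $|\kappa|(\arc_n|_{t_{i-1}^n,t_i^n}) \leq \epsilon$ for every $i$. Applying Proposition~\ref{prop: alexandrov inequality} to each subarc yields
\begin{equation*}
s(\arc_n) \;=\; \sum_{i=1}^{N_n} s(\arc_n|_{t_{i-1}^n,t_i^n}) \;\leq\; \frac{1}{\cos(\epsilon/2)}\sum_{i=1}^{N_n} |\param_n(t_i^n)-\param_n(t_{i-1}^n)|.
\end{equation*}

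Now extract any subsequence along which $s(\arc_n)$ converges to $\limsup_n s(\arc_n)$. Because $N_n$ is bounded, we may pass to a further subsequence on which $N_n = N$ is constant and $t_i^n \to t_i^* \in [a,b]$ for each $i$, with $a = t_0^* \leq t_1^* \leq \cdots \leq t_N^* = b$. Uniform convergence of $\param_n$ and continuity of $\param_0$ give $\param_n(t_i^n) \to \param_0(t_i^*)$. After collapsing indices with $t_i^* = t_{i-1}^*$ (whose contribution tends to $0$), the points $\param_0(t_i^*)$ form the vertices of a broken line inscribed in $\arc_0$, so
\begin{equation*}
\limsup_n s(\arc_n) \;\leq\; \frac{1}{\cos(\epsilon/2)}\sum_{i=1}^N |\param_0(t_i^*)-\param_0(t_{i-1}^*)| \;\leq\; \frac{s(\arc_0)}{\cos(\epsilon/2)}.
\end{equation*}
Letting $\epsilon \to 0$ gives $\limsup_n s(\arc_n) \leq s(\arc_0)$, finishing the proof.

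The main technical point is the bookkeeping in the diagonal extraction: one must handle the fact that the decomposition points $t_i^n$ depend on $n$ and may cluster in the limit, so that the limiting vertices may coincide. The uniform bound $N_n \leq A/\epsilon + 1$ from Remark~\ref{rem:equi decompositio BR} is what makes this compactness argument work, and it is precisely where the hypothesis $|\kappa|(\arc_n) \leq A$ is used (beyond controlling $|\kappa|(\arc_0)$).
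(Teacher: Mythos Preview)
Your proof is correct and follows essentially the same route as the paper's: extract a subsequence realizing the $\limsup$, use Remark~\ref{rem:equi decompositio BR} to cut each $\arc_n$ into at most $A/\epsilon+1$ pieces of rotation $\leq\epsilon$, apply Alexandrov's inequality on each piece, pass to the limit of the inscribed broken lines, and let $\epsilon\to0$. Your handling of the bookkeeping (passing to a further subsequence where $N_n$ is constant, and explicitly collapsing coincident limit vertices) is slightly more careful than the paper's presentation, but the argument is the same.
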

\begin{proof}
As we already know that $\liminf s(\arc_n)\geq s(\arc)$, it suffices to show that $\limsup s(\arc_n)\leq s(\arc)$. Let us consider parameterizations $\param_n$ proportional to  arc length and   a subsequence  $(\arc_{n_i})_{n_i}$ such that $s(\arc_{n_i})$ converges to $\limsup s(\arc_{n})$.  Let $\epsilon >0$. By Remark~\ref{rem:equi decompositio BR}, we know that there exists   $N\leq \frac{A}{\epsilon}+1$ and a decomposition   $0=t_0^{n_i}<t_1^{n_i} <\cdots < t_N^{n_i}=1$ such that  $|\kappa|(\arc_{n_i}\vert_{ t^{n_i}_k,t^{n_i}_{k+1}})\leq \epsilon$. For a given $k$, we  extract from $(t^{n_i}_k)_{n_i}$ a  subsequence  converging to some number $t_k\in [0,1]$. Abusing notation, we will denote with the same indices $n_i$ this subsequence. 
Let $L_{n_i}$ (resp. $L$) be the broken line with vertices  at $t^{n_i}_k$ (resp. $t_k$), so it is inscribed in $\arc_{n_i}$ (resp. $\arc$). By construction, $$s(L_{n_i}) \xrightarrow[n_i\to \infty]{} s(L)\leq s(\arc)~.$$ From Proposition~\ref{prop: alexandrov inequality},
$$s(\arc_{n_i}\vert_{t^{n_i}_k,t^{n_i}_{k+1}}) \leq \frac{1}{\cos\epsilon} \vert \param_{n_i}(t^{n_i}_{k+1})- \param_{n_i}(t^{n_i}_{k}) \vert$$
so
$$s(\arc_{n_i}) \leq \frac{1}{\cos\epsilon} s(L_{n_i})$$
and passing to the limit,
$$\limsup s(\arc_{n}) =  \lim s(\arc_{n_i}) \leq  \frac{1}{\cos\epsilon} \lim  s(L_{n_i}) \leq   \frac{1}{\cos\epsilon} s(\arc)  $$
and the result follows because $\epsilon$ was arbitrary.
\end{proof}

For the proof of Lemma~\ref{lem: BR longueur cv}, we have copied the argument of Lemma~6.2 in \cite{R60II}. Note that the statement in this reference is a bit more general, but only the version of Lemma~\ref{lem: BR longueur cv} will be useful in the sequel.  Lemma~\ref{lem: BR longueur cv} also follows from the following more general result (see Theorem~5.6.2 in \cite{AR}): there exists a positive constant $C$ such that, for two arcs $\arc_1, \arc_2$ of bounded rotation,
$$\vert s(\arc_1)-s(\arc_2)\vert \leq C \rho(\arc_1,\arc_2)(|\kappa|(\arc_1)+|\kappa|(\arc_2)+\pi)$$
where $\rho$ is the distance between $\arc_1$ and $\arc_2$.

\begin{lemma}\label{lem:uniform cv subarcs}
Let $A\geq 0$ and let $(\arc_n)_{n\geq 1}$, $\arc_0$ be arcs such that $\arc_n\to\arc_0$ and for any $n$, $|\kappa|(\arc_n)\leq A$. Then there exists $N$ such that  for all $n$, $\arc_n$ can be decomposed into $N$ subarcs of absolute rotation less than $\pi$, and any sequence of subarcs converge to the corresponding subarc of $\arc_0$.
\end{lemma}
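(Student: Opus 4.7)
The plan is to combine Remark~\ref{rem:equi decompositio BR} (which provides, for each $n$, a decomposition of $\arc_n$ into a uniformly bounded number of pieces with small absolute rotation) with a compactness argument on the division values, and then to transfer uniform convergence of full-arc parameterizations to uniform convergence of piecewise parameterizations. The lower semicontinuity from Lemma~\ref{lem:liminf kappa} will then give the required rotation bound on the limit pieces.

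\textbf{Main steps.} First, fix $\epsilon = \pi/2 < \pi$. By Remark~\ref{rem:equi decompositio BR}, each $\arc_n$ admits a decomposition into at most $N := \lfloor 2A/\pi\rfloor + 1$ subarcs of absolute rotation $\le \pi/2$. Refining a decomposition by inserting extra division points can only decrease the rotation of each subarc (this is the content of \eqref{eq:sum rot subarc}), so we may arrange that the number of pieces equals exactly $N$ for every $n$. This alone proves the first half of the statement. Second, pick parameterizations $\param_n : [0,1]\to\C$ of $\arc_n$ converging uniformly to a parameterization $\param_0$ of $\arc_0$, and let $0 = t_0^n \le t_1^n \le \dots \le t_N^n = 1$ be the division values defining the decomposition above. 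Third, by compactness of $[0,1]^{N-1}$, extract a subsequence along which $t_k^n \to t_k \in [0,1]$ for all $k$; the limits remain ordered and give a decomposition of $\arc_0$ into $N$ (possibly degenerate) subarcs $\arc_0^{(k)} := \arc_0|_{t_{k-1},\,t_k}$.

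\textbf{Convergence of the pieces.} For each $k$, reparameterize $\arc_n^{(k)}$ on $[0,1]$ by $s\mapsto \param_n\bigl(t_{k-1}^n + s(t_k^n - t_{k-1}^n)\bigr)$, and similarly for $\arc_0^{(k)}$. The uniform continuity of $\param_0$ on $[0,1]$ together with $\param_n \to \param_0$ uniformly and $t_k^n \to t_k$ yield uniform convergence of these reparameterizations, hence $\arc_n^{(k)}\to \arc_0^{(k)}$ in the sense of arcs. Applying Lemma~\ref{lem:liminf kappa} to each index $k$ gives
\[
|\kappa|\bigl(\arc_0^{(k)}\bigr)\le \liminf_n |\kappa|\bigl(\arc_n^{(k)}\bigr)\le \pi/2 < \pi,
\]
so the limit decomposition of $\arc_0$ also has $N$ subarcs of absolute rotation strictly less than $\pi$.

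\textbf{Main obstacle.} The delicate point is that the division values $t_k^n$ depend on $n$ and need not themselves converge; compactness forces us to pass to a subsequence, which is precisely the meaning of ``any sequence of subarcs converges to the corresponding subarc of $\arc_0$'' (one can always extract such a converging sub-decomposition). A secondary technical nuisance is the possibility that $t_{k-1} = t_k$ for some $k$, i.e., that a limit piece degenerates to a point; in that case the corresponding reparameterization still converges uniformly (to a constant map) and the rotation bound is trivially satisfied, so this does not cause any problem.
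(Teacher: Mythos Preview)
Your proof is correct and follows essentially the same route as the paper: use Remark~\ref{rem:equi decompositio BR} for the uniform decomposition, then extract convergent division parameters by compactness (exactly as done inside the paper's proof of Lemma~\ref{lem: BR longueur cv}). The only cosmetic difference is that the paper's two-line proof appeals to ``the concatenation property of the length and Lemma~\ref{lem: BR longueur cv}'' for the subarc convergence, whereas you argue it directly via uniform continuity of $\param_0$ and add the explicit verification (via Lemma~\ref{lem:liminf kappa}) that the limiting pieces also have rotation $<\pi$; your version is more self-contained.
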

\begin{proof}
The equidecomposition of the $\arc_n$ is given by Remark~\ref{rem:equi decompositio BR}. It remains to check that the sequence of subarcs converge to a subarc of $\arc$. But this follows from the concatenation property of the length and Lemma~\ref{lem: BR longueur cv}.
\end{proof}

\begin{proposition}\label{prop: rot diam long}
For any arc $\arc$ of bounded rotation, 
\begin{equation}\label{aq: rot diam long}s(\arc) \leq \frac{\operatorname{diam}(K)}{2} \left( |\kappa|(\arc) +\pi\right)~.\end{equation}
\end{proposition}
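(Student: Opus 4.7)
The plan is to prove the inequality first for broken lines via a Cauchy--Crofton style integration over all directions, and then pass to an arbitrary arc of bounded rotation by approximation with inscribed broken lines using Lemma~\ref{lem:conv kappa inscr}.

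For a broken line $L$ with consecutive edges of length $\ell_i$ and direction $\theta_i$, and exterior angles $\alpha_i$ at its interior vertices (so $s(L)=\sum_i \ell_i$ and $|\kappa|(L)=\sum_i \alpha_i$), I would fix $\psi\in[0,2\pi)$ and consider the projection $f_\psi(t)=\langle z(t),e^{i\psi}\rangle$. Parametrized by arc length, $f_\psi$ is piecewise linear with slope $\cos(\theta_i-\psi)$ on the $i$-th edge. If $N(\psi)$ denotes the number of maximal intervals of monotonicity of $f_\psi$, then the image of $f_\psi$ on each such interval has length at most $\operatorname{diam}(L)$, so
$$\sum_i \ell_i\,|\cos(\theta_i-\psi)| \;=\; \bigvee_0^{s(L)} f_\psi \;\leq\; \operatorname{diam}(L)\,N(\psi).$$
Integrating over $\psi\in[0,2\pi)$ and using $\int_0^{2\pi}|\cos\alpha|\,d\alpha=4$, the left-hand side becomes $4\,s(L)$. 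Writing $N(\psi)=n(\psi)+1$, where $n(\psi)$ counts the vertices at which the sign of $\cos(\theta_i-\psi)$ flips, such a sign change at vertex $i$ occurs precisely when one of $\psi\pm\pi/2$ lies in the arc of angular width $\alpha_i$ swept from $\theta_i$ to $\theta_{i+1}$; this is a $\psi$-set of measure $2\alpha_i$. Hence $\int_0^{2\pi} n(\psi)\,d\psi = 2|\kappa|(L)$, and combining gives
$$4\,s(L) \;\leq\; \operatorname{diam}(L)\,\bigl(2|\kappa|(L)+2\pi\bigr),$$
which is exactly the desired inequality for $L$.

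For a general arc $\arc$ of bounded rotation, I would take inscribed broken lines $L_n\to \arc$. Lemma~\ref{lem:conv kappa inscr} yields $s(L_n)\to s(\arc)$ and $|\kappa|(L_n)\to |\kappa|(\arc)$, while trivially $\operatorname{diam}(L_n)\leq \operatorname{diam}(\arc)$; passing to the limit in the broken line inequality concludes the proof. The main technical point is the counting identity $\int_0^{2\pi} n(\psi)\,d\psi = 2|\kappa|(L)$: one must verify that every vertex with exterior angle $\alpha_i\in(0,\pi]$ contributes exactly $2\alpha_i$, accounting for both perpendicular directions $\psi+\pi/2$ and $\psi-\pi/2$ and avoiding double-counting when $\alpha_i$ is close to $\pi$. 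The exceptional configurations $\cos(\theta_i-\psi)=0$ form a $\psi$-null set and do not affect the integration.
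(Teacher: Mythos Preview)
Your argument is correct and is essentially the projection (Cauchy--Crofton) technique the paper refers to: the paper does not give its own proof but points to Theorem~5.6.1 in \cite{AR}, noting that it ``uses projection techniques, similar to the ones in paragraph~8 in \cite{R63III}'', which is precisely your approach of integrating the projected total variation over all directions and counting sign changes of the tangent. The reduction to broken lines via Lemma~\ref{lem:conv kappa inscr} and the bound $\operatorname{diam}(L_n)\leq\operatorname{diam}(\arc)$ (extreme points of $L_n$ are vertices, hence lie on $\arc$) are fine, and your handling of the counting identity $\int_0^{2\pi} n(\psi)\,d\psi = 2|\kappa|(L)$, including the edge case $\alpha_i=\pi$, is accurate.
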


For the proof of Proposition~\ref{prop: rot diam long}, we refer to Theorem 5.6.1 in \cite{AR}.
It uses projection techniques, similar to the ones in paragraph 8 in \cite{R63III}.
There is one case for which \eqref{aq: rot diam long} is straightforward: when the absolute rotation is sufficiently small. Indeed,  consider the function over $[0,\pi)$ defined by $f(x)=\cos(x/2)(x+\pi)/2$. As $f'(0)>0$ and $f(0)>1$, there exists 
$T\in (0,\pi)$ such that for $x\in (0,T)$, 
\begin{equation*}\label{eq: cos 1}1\leq  \cos(x/2)(x+\pi)/2~.\end{equation*}
 Then \eqref{aq: rot diam long}  follows from Proposition~\ref{prop: alexandrov inequality}

\begin{corollary}%[{\cite[Theorem~3.6]{R60II}}]
For any sequence of arcs contained in a bounded domain, with absolute rotation uniformly bounded from above, one can extract a converging subsequence with converging length.
\end{corollary}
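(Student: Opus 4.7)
The strategy is to combine three ingredients already at our disposal: Proposition~\ref{prop: rot diam long} to obtain a uniform length bound, the classical compactness principle for arcs recalled at the beginning of Section~\ref{sec:rot arc pla} to extract a converging subsequence, and Lemma~\ref{lem: BR longueur cv} to upgrade convergence of arcs to convergence of lengths.

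First, let $(\arc_n)_n$ be a sequence contained in a bounded set of the plane, with $|\kappa|(\arc_n)\leq A$ for some fixed $A\geq 0$. Since the arcs are contained in a bounded domain, there exists $D>0$ with $\operatorname{diam}(\arc_n)\leq D$ for every $n$. Proposition~\ref{prop: rot diam long} then gives
\[
s(\arc_n)\;\leq\;\frac{D}{2}\bigl(|\kappa|(\arc_n)+\pi\bigr)\;\leq\;\frac{D}{2}(A+\pi),
\]
so the lengths are uniformly bounded from above.

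Next, by the third property of rectifiable arcs recalled at the beginning of Section~\ref{sec:rot arc pla} (compactness of sets of arcs in a bounded region with uniformly bounded length), we may extract a subsequence $(\arc_{n_k})_k$ converging to some arc $\arc$. By the second recalled property, $\arc$ is rectifiable; moreover Lemma~\ref{lem:liminf kappa} yields $|\kappa|(\arc)\leq A$, so $\arc$ is itself of bounded rotation. Finally, since the subsequence $(\arc_{n_k})_k$ converges to $\arc$ with $|\kappa|(\arc_{n_k})\leq A$, Lemma~\ref{lem: BR longueur cv} applies and gives $s(\arc_{n_k})\to s(\arc)$, as required.

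There is no real obstacle here: once one realises that the bounded rotation assumption, via Proposition~\ref{prop: rot diam long}, immediately provides the uniform length bound needed to invoke the general compactness result, the rest is formal. The only delicate point one might worry about is that the limit arc has finite length, but this is automatic from lower semicontinuity of $s$ together with the uniform length bound.
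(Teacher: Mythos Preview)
Your proof is correct and follows exactly the line the paper intends: the corollary is stated without proof immediately after Proposition~\ref{prop: rot diam long}, and your argument---uniform length bound from that proposition, then the compactness property for arcs recalled in Section~\ref{sec:rot arc pla}, then Lemma~\ref{lem: BR longueur cv}---is precisely the chain of implications the placement suggests. There is nothing to add.
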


\begin{remark}[The condition $\Gamma(h)$]\label{rem Gammah}
{\rm
Let us present a related class of curves, that are introduced in \cite{R60II}.
Let $0<h<1$.
A rectifiable arc $\arc$ satisfies the condition $\Gamma(h)$\index{$\Gamma(h)$} if, for its arc length parameterization $\param$, for any $t_1,t_2\in[a,b]$,
$$|z(t_1)-z(t_2)|\geq h|t_1-t_2|~. $$
If the arc is rectifiable, we already know that $z:[0,s(K)]\to \arc\subset \C$ is a $1$-Lipschitz homeomorphism. The condition  $\Gamma(h)$ means that $z^{-1}$ is $h$-Lipschitz. In turn, the arc length parameterization is bi-Lipschitz. 
By definition, 
$$\operatorname{diam}(\arc)\geq |z(t_1)-z(t_2)|\geq h|t_1-t_2| $$
for any $t_1,t_2$.

Let us consider an arc of bounded rotation, decomposed  it into a finite number of subarcs of absolute rotation $<\pi$. For such a subarc $\arc_i$, Proposition~\ref{prop: alexandrov inequality} says that it satisfies the property $\Gamma(h)$ with
$$h=\cos\left(\frac{\kappa(\arc_i)}{2}\right)~.$$
}\end{remark}

\subsubsection{Arcs of bounded rotation and convex functions}\label{sec arc cvexes}

Let us first prove a regularity property of arcs of bounded rotation.

\begin{lemma}\label{lem: BR derivee}
Let $\param:[a,b]\to\C$ be the arc length parameterization of an arc of bounded rotation. Then $\param$ has a right derivative $\param'_r$  on $[a,b)$, which is a unit vector. 
\end{lemma}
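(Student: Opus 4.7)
The plan is to fix $t_0 \in [a,b)$ and exhibit $z'_r(t_0)$ as the limit of $(z(t) - z(t_0))/(t - t_0)$ as $t \to t_0^+$, treating magnitude and direction separately. Throughout I use that by \eqref{eq:kappa point=0}, $|\kappa|(\arc\vert_{t_0, t}) \to 0$ as $t \to t_0^+$, so I may assume this quantity lies below any chosen positive constant (in particular, below $\pi$).

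First, for the magnitude, I combine Alexandrov's inequality (Proposition~\ref{prop: alexandrov inequality}) applied to $\arc\vert_{t_0, t}$, whose Euclidean length is $t - t_0$ since $z$ is parameterized by arc length, with the fact that an arc-length parameterization is $1$-Lipschitz:
\[
\cos\bigl(|\kappa|(\arc\vert_{t_0, t})/2\bigr)\,(t - t_0) \;\leq\; |z(t) - z(t_0)| \;\leq\; t - t_0.
\]
Letting $t \to t_0^+$ immediately yields $|z(t) - z(t_0)|/(t - t_0) \to 1$.

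Next, for the direction, set $u(t) := (z(t) - z(t_0))/|z(t) - z(t_0)|$, which is well defined for $t$ sufficiently close to $t_0$ by the previous step. I plan to show that $(u(t))$ is Cauchy as $t \to t_0^+$. For $t_0 < s < t$ both close to $t_0$, the two-segment broken line with vertices $z(t_0), z(s), z(t)$ is inscribed in $\arc\vert_{t_0, t}$, and its absolute rotation coincides, by definition, with the angle $\theta \in [0, \pi]$ at $z(s)$ between $z(s) - z(t_0)$ and $z(t) - z(s)$; hence $\theta \leq |\kappa|(\arc\vert_{t_0, t})$. If $\theta = 0$, the three points are collinear with $z(s)$ between $z(t_0)$ and $z(t)$, so $u(s) = u(t)$. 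Otherwise the triangle $z(t_0) z(s) z(t)$ is non-degenerate; its interior angle at $z(s)$ equals $\pi - \theta$, so the interior angle at $z(t_0)$---which is exactly the unoriented angle between $u(s)$ and $u(t)$---is at most $\theta$. In both cases the angle between $u(s)$ and $u(t)$ is bounded by $|\kappa|(\arc\vert_{t_0, t}) \to 0$, so $(u(t))$ is Cauchy on the unit circle and converges to some unit vector $e$.

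Combining the two steps, $(z(t) - z(t_0))/(t - t_0) \to e$ with $|e| = 1$, which is the claimed right derivative. The main obstacle is the direction step, and within it the handling of (near-)degenerate triangle configurations; the key observation is that a small value of $\theta$ forbids any collinear configuration with $z(s)$ outside $[z(t_0), z(t)]$ (such configurations would force $\theta = \pi$), after which the elementary angle sum in a planar triangle closes the argument.
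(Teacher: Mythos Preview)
Your proof is correct and follows essentially the same approach as the paper: both split the limit into magnitude and direction, use Alexandrov's inequality for the magnitude, and for the direction bound the angle at $z(t_0)$ in the inscribed broken line $z(t_0)z(s)z(t)$ by its absolute rotation, which is at most $|\kappa|(\arc\vert_{t_0,t})\to 0$. Your handling of the degenerate collinear case is slightly more explicit than the paper's, but the underlying argument is the same.
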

Of course  a similar result holds for the left derivative on $(a,b]$. In particular, an arc of bounded rotation belongs to the class $\tilde{\Delta}$ introduced in Definition~\ref{def: tilde delta}.
\begin{proof}
Let $t_0\in [a,b)$, and $t_0<t_1<t_2<t_0+\delta$ for  a sufficiently small $\delta >0$. 
Let us consider the triangle $t_0t_1t_2$, $\alpha$ being the angle at $t_0$ and $\beta$ being the angle at  $t_1$ of this triangle. Up to change $t_1,t_2$, we can suppose that $\alpha+\beta<\pi$, otherwise the right of the arc at $\param(t_0)$ is a segment and we are done. Let $k=\pi-\beta$; $k$ is the absolute rotation of the broken line with vertices $t_0$, $t_1$ and $t_2$. By definition of the absolute rotation,
$k\leq |\kappa|(\arc\vert_{t_0,t_0+\delta})$ and hence $\alpha<  |\kappa|(\arc\vert_{t_0,t_0+\delta})$, and  by \eqref{eq:kappa point=0} we know that |$\kappa|(\arc\vert_{t_0,t_0+\delta})$ can be made arbitrary small. In turn, if $t_n\to t_0, t_n>t_0$, 
the sequence of unit vectors $\frac{\param(t_n)-\param(t_0)}{|\param(t_n)-\param(t_0)|}$ is a Cauchy sequence on the unit circle, hence converge. 

On  the other hand, as $\param$ is an arc length parameterization, from Proposition~\ref{prop: alexandrov inequality} we have
$$1\geq \frac{\vert \param(t_n)-\param(t_0)\vert}{t_n-t_0} \geq \cos \left(|\kappa|(\arc\vert_{t_0,t_n})/2 \right)~,$$
so if $t_n\to t_0$, by \eqref{eq:kappa point=0},  $\frac{\vert \param(t_n)-\param(t_0)\vert}{t_n-t_0}\to 1$.

At the end of the day, for $t_n>t_0$, $\frac{ \param(t_n)-\param(t_0)}{t_n-t_0}=\frac{  \param(t_n)-\param(t_0)}{\vert \param(t_n)-\param(t_0)|}\frac{\vert \param(t_n)-\param(t_0)\vert}{t_n-t_0}$ converges when $t_n\to t_0$, and it is also immediate that the limit has unit norm.
\end{proof}

For a rectifiable  arc $\arc$ such that its arc length parameterization $\param$ admits a right derivative, we denote by $T_r(\arc):[a,b)\to \mathbb{S}^1$ its \emph{right tantrix}\index{tantrix}, 
that is  the function $t\mapsto \param_r'(t)$.

\begin{lemma}\label{lem: BR BV cercle}
Let $\arc$ be 
a rectifiable arc such that $T_r(K)$ is defined. Then 
  for $0<\eta<s(K)$,
$$\bigvee_0^{s(K)-\eta} T_r(\arc)\leq |\kappa|(\arc)~. $$
\end{lemma}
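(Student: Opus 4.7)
The plan is to bound any partition sum by the absolute rotation of a carefully chosen inscribed broken line, using the spherical triangle inequality to capture the angular jumps of the tantrix by the rotation angles at interior vertices. I will prove the stronger inequality $\sum_{i=0}^{N-1}\angle(T_r(t_i),T_r(t_{i+1}))\leq|\kappa|(K)$ for every partition $0=t_0<t_1<\cdots<t_N=s(K)-\eta$, where $\angle$ denotes angular distance on $\mathbb{S}^1$; the stated Euclidean version then follows from $|u-v|\leq\angle(u,v)$ for unit vectors.

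Fix such a partition and $\epsilon>0$. Since $\eta>0$ we have $t_N<s(K)$, so $T_r(t_i)=z'_r(t_i)$ is defined at every $t_i$ and there is room for an extra parameter beyond $t_N$. By definition of the right derivative I can select $s_i$, with $s_i\in(t_i,t_{i+1})$ for $i<N$ and $s_N\in(t_N,s(K))$, so that the chord direction
$$e_i:=\frac{z(s_i)-z(t_i)}{|z(s_i)-z(t_i)|}$$
satisfies $\angle(e_i,T_r(t_i))<\epsilon/(2N)$. Let $L$ be the broken line inscribed in $K$ with vertices $z(t_0),z(s_0),z(t_1),z(s_1),\ldots,z(t_N),z(s_N)$; then $|\kappa|(L)\leq|\kappa|(K)$ by definition of absolute rotation.

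Denote by $f_i$ the unit direction from $z(s_i)$ to $z(t_{i+1})$. The interior angle of $L$ at $z(s_i)$ equals $\angle(e_i,f_i)$ and at $z(t_{i+1})$ equals $\angle(f_i,e_{i+1})$, and each interior angle of $L$ appears in exactly one such pair. The spherical triangle inequality gives
$$\angle(e_i,f_i)+\angle(f_i,e_{i+1})\geq \angle(e_i,e_{i+1}),$$
so summation yields $|\kappa|(L)\geq\sum_{i=0}^{N-1}\angle(e_i,e_{i+1})$. One further application of the triangle inequality on $\mathbb{S}^1$, combined with the choice of $e_i$, gives $\angle(e_i,e_{i+1})>\angle(T_r(t_i),T_r(t_{i+1}))-\epsilon/N$, hence
$$|\kappa|(K)\geq|\kappa|(L)\geq \sum_{i=0}^{N-1}\angle(T_r(t_i),T_r(t_{i+1}))-\epsilon.$$
Letting $\epsilon\to 0$ and taking the supremum over partitions closes the argument. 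The main subtlety is that $T_r(t_i)$ is only a one-sided limit and not the direction of a visible chord of $K$; the auxiliary vertices $z(s_i)$ are precisely what let one realize $T_r(t_i)$ as an actual edge direction of $L$, and the spherical triangle inequality absorbs the uncontrolled intermediate directions $f_i$ for free.
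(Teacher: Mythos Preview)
Your proof is correct and follows essentially the same approach as the paper: approximate the right-tantrix values $T_r(t_i)$ by chord directions of an inscribed broken line (inserting auxiliary points $s_i$ just after each $t_i$), then use the triangle inequality for $d_S$ to pass from the broken line's absolute rotation to the partition sum. The paper packages this slightly differently---it first picks a near-optimal inscribed broken line $L$ with $|\kappa|(K)-|\kappa|(L)\leq\epsilon$ and then refines it by adding the $t_i$ and the auxiliary points---whereas you build $L$ directly with only the $2N+2$ necessary vertices; your version is a bit more streamlined but the idea is identical. One small remark: the total variation in the statement is already taken with respect to $d_S$ (see the sentence after the lemma), so what you call the ``stronger'' angular inequality is exactly the statement to be proved, not a strengthening of it.
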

Here, the total variation of $T_r(\arc)$ is taken for the intrinsic distance $d_S$ over the circle: the definition is formally the same as in \eqref{eq:bigvee}, with $d_S$ instead of $|\cdot|$.\index{$\bigvee$}

\begin{proof}
The statement is obvious if the absolute rotation is not finite, so we can suppose that $\arc$ has bounded rotation.
It also suffices to prove the lemma for the subarc of $\arc$ parameterized on $[0,s(K)-\eta]$, for an arc length parameterization of $\arc$. 
Note that $T_r$ is defined everywhere on the subarc. Abusing notation, we will denote this subarc by $\arc$ in the following.

Let $\epsilon>0$. Let $\param$ be the arc length parameterization of $\arc$ 
 and $0=t_0<t_1<\cdots<t_n=s(K)$ be such that 
$\bigvee_0^{s(K)} T_r(\arc) - \sum d_S(\param_r'(t_i),\param_r'(t_{i+1}))\leq \epsilon$.
Let $L$ be a broken line inscribed in $\arc$ such that $|\kappa|(\arc)-|\kappa|(L)\leq \epsilon$.
  This last inequality remains true if we add vertices to $L$, so we can moreover suppose that the images of the $t_i$ by the arc length parameterization $\tilde{\param}$ of $L$ are vertices of $L$, and  
also that 
the angle between $\tilde{\param}_r'(t_i)$ and $\param_r'(t_i)$ is less than $\frac{\epsilon}{2n}$. This is possible if the vertex of $L$ next to the image of $t_i$ is sufficiently close to it, because $\param$ has a right derivative at $t_i$.
Also, the inequality we want to prove is clearly true for broken lines.
Summing up:

\begin{align*}\bigvee_0^{s(K)} T_r(\arc)& \leq  \epsilon + \sum d_S(\param_r'(t_i),\param_r'(t_{i+1}))\\
& \leq  \epsilon+ \sum d_S(\param_r'(t_i),\tilde{\param}_r'(t_{i}))+d_S(\tilde{\param}_r'(t_i),\tilde{\param}_r'(t_{i+1}))+
d_S(\tilde{\param}_r'(t_{i+1}),\param_r'(t_{i+1})) \\
&\leq  2\epsilon + |\kappa|(L) \\ &\leq 3\epsilon + |\kappa|(\arc) \end{align*}
and as $\epsilon$ was arbitrary we have $\bigvee_0^{s(K)} T_r(\arc)\leq |\kappa|(\arc)$.
\end{proof}

A function $f:[a,b]\to \R$ is \emph{\dc}\index{\dc}  if 
there exist continuous convex functions $g_{1}, g_2:[a,b]\to \R$, such that $f=g_1-g_2$. A parameterized arc is called \dc (or DC \index{DC function}) if its components are \dc functions. See Remark~\ref{remark DC} about \dc functions in metric geometry.

\begin{proposition}\label{prop dc curve}
The arc length parameterization of an arc  of bounded rotation is $\delta$-convex.
\end{proposition}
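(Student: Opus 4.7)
The plan is to convert the bounded-variation information about the tantrix (Lemma~\ref{lem: BR BV cercle}) into $\delta$-convexity of the two coordinate functions $z_1,z_2$, using the fact that a function is $\delta$-convex on an interval if and only if its derivative is a function of bounded variation.

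First I would observe that, by Lemma~\ref{lem: BR derivee}, the arc length parameterization $z:[0,s(K)]\to\C$ admits a right derivative $z'_r(t)=T_r(K)(t)\in\mathbb{S}^1$ at every $t\in[0,s(K))$, and symmetrically a left derivative on $(0,s(K)]$. By Lemma~\ref{lem: BR BV cercle}, $T_r(K)$ has total variation on $[0,s(K)-\eta]$ bounded by $|\kappa|(K)<\infty$ with respect to the spherical distance $d_S$, and the analogous statement holds near the right endpoint by working with the left tantrix. Since the inclusion $(\mathbb{S}^1,d_S)\hookrightarrow(\R^2,|\cdot|)$ is $1$-Lipschitz, each component $(z_i)'_r$ of the tantrix is a real-valued function of bounded variation on the whole $[0,s(K)]$.

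Next, I would invoke the classical Jordan decomposition: any real BV function on a compact interval is the difference of two non-decreasing functions, so for $i=1,2$ one can write
\begin{equation*}
(z_i)'_r=\varphi_i-\psi_i
\end{equation*}
with $\varphi_i,\psi_i:[0,s(K)]\to\R$ non-decreasing. Since $z$ is $1$-Lipschitz (arc length parameterization), it is absolutely continuous, so its classical derivative exists a.e., coincides a.e. with $(z_i)'_r$, and
\begin{equation*}
z_i(t)=z_i(0)+\int_0^t(z_i)'_r(s)\,ds=z_i(0)+\int_0^t\varphi_i(s)\,ds-\int_0^t\psi_i(s)\,ds.
\end{equation*}

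Finally, the primitive of a non-decreasing real function on an interval is convex and continuous, so each of the functions $t\mapsto\int_0^t\varphi_i$ and $t\mapsto\int_0^t\psi_i$ is continuous and convex, and $z_i$ is their difference up to a constant. Hence both coordinates of $z$ are $\delta$-convex, which is the definition of a DC parameterization. No single step is really the obstacle; the only subtlety is passing the BV bound of Lemma~\ref{lem: BR BV cercle}, which is stated with a cutoff $\eta>0$, to the full interval, and for this one uses the analogous statement for the left tantrix at the right endpoint together with the finite additivity of total variation.
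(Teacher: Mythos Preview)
Your proof is correct and follows essentially the same route as the paper: pass from the BV bound on the tantrix (Lemma~\ref{lem: BR BV cercle}) to a Jordan decomposition of each coordinate of the right derivative, then integrate to write each $z_i$ as a difference of convex primitives. The paper is slightly more explicit about the endpoint issue you flag at the end, offering two concrete fixes (either invoke Hartman's result that locally $\delta$-convex implies $\delta$-convex after also treating the left tantrix on $[a+\eta,b]$, or extend the arc past $b$ along its left tangent), whereas you absorb this into the remark that the uniform bound $|\kappa|(K)$ is independent of $\eta$; both are fine.
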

\begin{proof}
By Lemma~\ref{lem: BR BV cercle}, the arc length parameterization $\param$ of an arc of bounded rotation has a right derivative of bounded variation in the unit circle on $[a,b-\eta]$ for any small $\eta>0$. For $x=(x_1,x_2),y=(y_1,y_2)$ unit vectors in the plane, if $d_S$ is the intrinsic distance on the circle, there is a positive constant $c$ such that $c(|x_1-y_1|+|x_2-y_2|)\leq |x-y| \leq d_S(x,y)$.  Hence  $(\param_1)_r'$ and $(\param_2)_r'$ are real-valued functions of bounded variation, and hence, for example, 
 $(\param_1)_r'=f_1-f_2$, where $f_1$ and $f_2$ are  two bounded non-decreasing functions on $[a,b-\eta]$. 
It is known that the function $F_i$, $i=1,2$, defined by
$$F_i(t)=\int_a^t f_i~ $$
 is convex on $(a,b)$, and by properties of the integral, $F_i$ is continuous and hence convex over $[a,b-\eta]$. Moreover,
$$(F_1-F_2)'_r(t)=\lim_{h\to 0, h>0}(f_1-f_2)(t+h)=\lim_{h\to 0, h>0}(\param_1)_r'(t+h)~.$$

As $f_i$ are non-decreasing, they are  continuous except on an at most countable set of points $E_i$, hence $(z_1)'_r$ is continuous on $(a,b)\setminus (E_1\cup E_2)$, hence 
on this set $(F_1-F_2)'_r=(z_1)'_r$. 
As $F_i$ and $\param_1$ are Lipschitz, 
they have  derivative  almost everywhere on $(a,b)$ by Rademacher Theorem, so almost everywhere $(F_1-F_2)'=z_1'$,
 and
$$\param_1(t)-\param_1(a)=\int_a^t \param_1' =\int_a^t (F_1-F_2)'  $$
so, up to add a constant to $F_1$, $\param_1=F_1-F_2$, as all those functions are continuous. Hence $\param_1$ is \dc on $[a,b-\eta]$ for any $\eta$.

Now there are two ways to conclude. We can do the same argument with the left derivative instead of the right derivative to conclude that $\param_1$ is also \dc on $[a+\eta,b]$, and use the fact that a locally \dc function is \dc \cite{hartman}.
Otherwise, as $\param$ has a left derivative at $b$, it is not hard to see that we can add a little segment to the endpoint of $\arc$, colinear to $\param_l'(b)$, and that the new curve has also bounded rotation. As our starting curve is a proper subarc, the partial result we obtained proves the statement of the proposition.
\end{proof}

A \dc function inherits many properties of convex functions. In particular, it follows that for any $t_0\in (a,b]$,
\begin{equation}\label{eqRV}\lim_{h\to 0, h>0} T_r(\arc)(t-h)=T_l(\arc)(t)~, \end{equation}
see e.g. \cite{RV}. We have a similar relation by exchanging the role of $T_r$ and $T_l$.

Hence, for an arc $\arc$ of bounded rotation,  we may extend $T_r(K)$, that is defined only on $[a,b)$, to a function $\tilde{T}_r(\arc) $ defined on $[a,b]$ by setting 
$\tilde{T}_r(\arc)=T_r(\arc) $ on $[a,b)$ and 
$\tilde{T}_r(\arc)(b)=T_l(\arc)(b)$.

\begin{proposition}\label{prop:tatrix kapap}
Let $\arc$ be a rectifiable arc such that 
$\tilde{T}_r(\arc)$ is defined. Then 
$$\bigvee_0^{s(\arc)} \tilde{T}_r(\arc)= |\kappa|(\arc)~. $$
\end{proposition}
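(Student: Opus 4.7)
The plan is to prove the equality via the two separate inequalities. The inequality $\bigvee_0^{s(\arc)} \tilde T_r(\arc) \leq |\kappa|(\arc)$ is obtained by boosting Lemma~\ref{lem: BR BV cercle} across the right endpoint using the left-limit relation \eqref{eqRV}. The reverse inequality $|\kappa|(\arc) \leq \bigvee_0^{s(\arc)} \tilde T_r(\arc)$ is extracted from the inscribed-broken-line definition of $|\kappa|(\arc)$ by identifying each segment direction with a tangent value and invoking monotonicity of the chosen parameters.

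For the first inequality, I would fix any partition $0 = s_0 < s_1 < \cdots < s_m = s(\arc)$. All terms $d_S(\tilde T_r(s_i), \tilde T_r(s_{i+1}))$ with $i < m-1$ equal $d_S(T_r(s_i), T_r(s_{i+1}))$, and the final term is $d_S(T_r(s_{m-1}), T_l(s(\arc)))$. By \eqref{eqRV}, given $\varepsilon > 0$ one can choose $h > 0$ with $s_{m-1} < s(\arc) - h$ and $d_S(T_l(s(\arc)), T_r(s(\arc) - h)) < \varepsilon$. Then $\sum_{i=0}^{m-1} d_S(\tilde T_r(s_i), \tilde T_r(s_{i+1})) \leq \varepsilon + \bigvee_0^{s(\arc) - h} T_r(\arc) \leq \varepsilon + |\kappa|(\arc)$ by Lemma~\ref{lem: BR BV cercle}. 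Letting $\varepsilon \to 0$ and taking the supremum over partitions delivers the claim.

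For the reverse inequality, by definition of $|\kappa|(\arc)$ as a supremum of $|\kappa|(L)$ over inscribed broken lines $L$, it suffices to prove $|\kappa|(L) \leq \bigvee_0^{s(\arc)} \tilde T_r(\arc)$ for each such $L$. Given $L$ with vertices $z(t_0), \ldots, z(t_N)$, I would first refine $L$ (without decreasing $|\kappa|(L)$, cf.\ Figure~\ref{fig:kappa increase}) so that each subarc satisfies $|\kappa|(\arc\vert_{t_i, t_{i+1}}) < \pi$, using Remark~\ref{rem:equi decompositio BR}. Writing $\sigma_i = \arg(z(t_{i+1}) - z(t_i)) = \arg\int_{t_i}^{t_{i+1}} T_r(s)\, ds$, the fact that all $T_r(s)$ on $[t_i, t_{i+1}]$ lie in an angular arc of length less than $\pi$ forces $\sigma_i$ (the argument of an integral of vectors in a convex cone) to lie in the same angular arc. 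Choosing $\tau_i \in [t_i, t_{i+1}]$ with $\tilde T_r(\tau_i)$ realizing the value $\sigma_i$, and observing that $\tau_0 < \tau_1 < \cdots < \tau_{N-1}$ is an increasing sequence, I have $\sum_i d_S(\tilde T_r(\tau_i), \tilde T_r(\tau_{i+1})) \leq \bigvee_0^{s(\arc)} \tilde T_r(\arc)$ directly from the definition of the variation. Combining this with the triangle inequality yields $|\kappa|(L) = \sum_i d_S(\sigma_i, \sigma_{i+1}) \leq \bigvee_0^{s(\arc)} \tilde T_r(\arc)$.

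The main obstacle is the choice of $\tau_i$ when $\tilde T_r$ has a jump in the interior of $[t_i, t_{i+1}]$ and $\sigma_i$ falls strictly in the angular gap of this jump: no $\tau \in [t_i, t_{i+1}]$ satisfies $\tilde T_r(\tau) = \sigma_i$ exactly, and the best approximation $\tilde T_r(\tau_i)$ differs from $\sigma_i$ by at most half the jump size. To dispose of this, one refines $L$ further so that the jumps of $\tilde T_r$ of magnitude exceeding a small threshold $\delta$ become vertices of $L$; on the remaining subintervals the residual jumps are then at most $\delta$ each, so that $d_S(\tilde T_r(\tau_i), \sigma_i) \leq \delta$, and the aggregated approximation error enters only through the triangle inequality in a controlled way. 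Passing to a refining sequence of broken lines (via Lemma~\ref{lem:conv kappa inscr} to ensure $|\kappa|(L_n) \to |\kappa|(\arc)$) and letting $\delta \to 0$ completes the reverse inequality.
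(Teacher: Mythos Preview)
Your argument for $\bigvee_0^{s(\arc)}\tilde T_r(\arc)\le|\kappa|(\arc)$ is correct and is precisely how one extends Lemma~\ref{lem: BR BV cercle} across the right endpoint using \eqref{eqRV}; this is what the paper has in mind.

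For the reverse inequality the paper gives nothing beyond ``similar, left to the reader'', so there is no written proof to compare with. Your strategy via intermediate parameters $\tau_i$ with $\tilde T_r(\tau_i)\approx\sigma_i$ is sound, but the error-control step is not justified as written. After refining so that every jump of size $>\delta$ sits at a vertex, the refined broken line has $N\approx N_0+m(\delta)$ segments, where $m(\delta)$ counts the jumps of size exceeding $\delta$. The aggregated triangle-inequality loss is then of order $N\delta$, and for this to vanish you need $m(\delta)\,\delta\to 0$. That \emph{is} true --- it follows from summability of the jump sizes of a function of bounded variation --- but you never state it, and the phrase ``in a controlled way'' followed by an appeal to Lemma~\ref{lem:conv kappa inscr} (which is not needed: one should fix $L$, let $\delta\to 0$, and only then take the supremum over $L$) hides rather than resolves the issue.

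A cleaner route, avoiding the $\tau_i$ and the limiting procedure entirely, is this. On each subinterval your convex-cone observation already shows that the chord direction $e_j=\sigma_j$ lies in the angular range $[\inf,\sup]$ of $T_r$ over $[t_j,t_{j+1})$; splitting the variation at a parameter where the relevant extremum is approached then gives
\[
d_S\bigl(e_j,\,T_r(t_j)\bigr)+d_S\bigl(e_j,\,T_l(t_{j+1})\bigr)\;\le\;\bigvee_{t_j}^{t_{j+1}}\tilde T_r\bigl(\arc\vert_{t_j,t_{j+1}}\bigr).
\]
Inserting $T_l(t_i)$ and $T_r(t_i)$ between $e_{i-1}$ and $e_i$ via the triangle inequality and summing over $i$ yields $|\kappa|(L)\le\sum_j\bigvee_{t_j}^{t_{j+1}}\tilde T_r(\arc\vert_{t_j,t_{j+1}})+\sum_i d_S(T_l(t_i),T_r(t_i))=\bigvee_0^{s(\arc)}\tilde T_r(\arc)$ exactly.
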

\begin{proof}
We need to prove the converse inequality of Lemma~\ref{lem: BR BV cercle}.
The proof is similar, we leave it to the reader.
\end{proof}

Note that it is almost immediate that the absolute rotation of the graph of a continuous convex function is equal to the angle between the directions of the tangents at its extremities.  Deeper, $\delta$-convex functions are essentially characterized by the fact that their right derivative has bounded variation \cite{RV}, and this implies the following result.

\begin{corollary}
An arc with a \dc parameterization has bounded rotation.
\end{corollary}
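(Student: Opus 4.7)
The statement follows from the characterization of \dc functions invoked immediately before the corollary: $f:[a,b]\to \R$ is \dc if and only if its right derivative $f'_r$ exists on $[a,b)$ and has bounded variation. Applied coordinatewise to a \dc parameterization $\param:[a,b]\to \C$ of $\arc$, this yields that $\param'_r$ exists on $[a,b)$ and has finite total variation as an $\R^2$-valued map. Being BV, $\param'_r$ is bounded, so $\param$ is Lipschitz and $\arc$ is rectifiable; the unit right-tantrix $T_r(u)=\param'_r(u)/|\param'_r(u)|$ is therefore well defined wherever $\param'_r(u)\neq 0$.

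The plan is then to bound $|\kappa|(L)$ uniformly over all broken lines $L$ inscribed in $\arc$. Writing the vertices of such an $L$ as $\param(t_0)<\cdots<\param(t_n)$ and the chord vectors as $v_i=\param(t_i)-\param(t_{i-1})$, absolute continuity of $\param$ gives
\begin{equation*}
v_i=\int_{t_{i-1}}^{t_i}\param'_r(u)\,du.
\end{equation*}
The direction of a vector-valued integral lies in the angular interval spanned by the directions of the integrand, so whenever the subdivision is fine enough that the oscillation of $\arg(\param'_r)$ on $[t_{i-1},t_{i+1}]$ is less than $\pi$, we obtain
\begin{equation*}
\angle(v_i,v_{i+1})\leq \operatorname*{osc}_{[t_{i-1},t_{i+1}]} T_r,
\end{equation*}
the oscillation being measured in the intrinsic distance $d_S$ on $S^1$. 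Summing and telescoping yields $|\kappa|(L)\leq 2\bigvee_a^b T_r$; and on any subinterval where $|\param'_r|\geq \varepsilon>0$, the fact that $v\mapsto v/|v|$ is $\varepsilon^{-1}$-Lipschitz on $\{|v|\geq\varepsilon\}$ gives $\bigvee T_r\leq \varepsilon^{-1}\operatorname{Var}(\param'_r)<\infty$.

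The main obstacle is the ``flat set'' $Z=\{\param'_r=0\}$, on which $T_r$ is undefined. Injectivity of $\param$ forces $Z$ to have Lebesgue measure zero, but $Z$ may be topologically thick, so the estimate above cannot be summed naively if some $[t_{i-1},t_{i+1}]$ meets $Z$. I would resolve this by approximation: write $\param=\gamma-\eta$ with componentwise convex $\gamma,\eta$ and mollify each term separately to obtain regular parameterizations $\param^{\varepsilon}$ with $\param^{\varepsilon}\to\param$ uniformly and total variation of $(\param^{\varepsilon})'_r$ uniformly bounded (the latter by the standard fact that mollification does not increase total variation). The previous estimate, applied on regions where $|(\param^{\varepsilon})'_r|$ is bounded below, yields a uniform bound on $|\kappa|(\arc^{\varepsilon})$, and the lower semicontinuity of absolute rotation (Lemma~\ref{lem:liminf kappa}) gives $|\kappa|(\arc)\leq \liminf_{\varepsilon\to 0}|\kappa|(\arc^{\varepsilon})<\infty$, concluding the proof.
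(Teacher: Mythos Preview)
Your identification of the flat set $Z=\{\param'_r=0\}$ as the crux is correct, but the mollification argument does not close the gap. The estimate $\bigvee T_r\leq \varepsilon^{-1}\operatorname{Var}(\param'_r)$ needs a lower bound $|\param'_r|\geq\varepsilon$, and mollification gives no such bound uniformly: if $\param'_r(t_0)=0$ then $|(\param^{\varepsilon})'|$ is small near $t_0$ for small $\varepsilon$, so your bound on $|\kappa|(\arc^{\varepsilon})$ blows up as $\varepsilon\to 0$. (There is also the side issue that $\param^{\varepsilon}$ need not be injective, so Lemma~\ref{lem:liminf kappa} is not directly available.)

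In fact no argument can work, because the statement as literally written is false. Take $\param:[0,1]\to\C$ with $\param(0)=0$ and $\param(t)=e^{(-1+i)/t}$ for $t\in(0,1]$. This is injective (since $|\param(t)|=e^{-1/t}$ is strictly monotone) and each real coordinate is \dc: one checks $|\param''(t)|\leq C\,e^{-1/t}t^{-4}$, which is integrable on $(0,1]$, so $\param'_r$ has bounded variation on $[0,1)$ with $\param'_r(0)=0$. But the image is a logarithmic spiral accumulating at the origin; the inscribed broken line with vertices at $t=0$, $t=1$, and $t_k=1/(1+k\pi)$ for $k=1,\dots,N$ has all its vertices on the line $\R\,e^{i}$, alternating on both sides of the origin, so every interior angle is $\pi$ and $|\kappa|(L)=N\pi$. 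Hence $|\kappa|(\arc)=\infty$.

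The paper's one-line justification is relying on context: the Corollary is meant as the converse to Proposition~\ref{prop dc curve}, so the intended hypothesis is that the \emph{arc-length} parameterization is \dc. Under that reading $|\param'_r|=1$ everywhere (it equals $1$ a.e.\ and is right-continuous by \eqref{eqRV}), so $Z=\emptyset$, $\tilde T_r=\param'_r$ has finite variation, and Proposition~\ref{prop:tatrix kapap} gives $|\kappa|(\arc)=\bigvee\tilde T_r<\infty$ at once. Your difficulty with $Z$ was therefore a symptom of an imprecise statement rather than of a missing idea in your argument.
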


\begin{corollary}%[{\cite[Theorem 3.2]{R60I}}]
\label{cor:BR split}
Let $\arc$ be an arc of bounded rotation with arc length parameterization $\param$. For  $t\in (0,s(K))$, if $\arc$ is splited at $\param(t)$ into arcs $\arc_1$ and $\arc_2$,
then
$$|\kappa|(\arc)=|\kappa|(\arc_1)+|\kappa|(\arc_2)+\angle(t)~, $$ 
where $\angle(t)$ is the angle between $T_r(\arc)$ and $T_l(\arc)$ at $t$.
\end{corollary}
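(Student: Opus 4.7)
The approach is to translate the claim into a statement about total variation of the (extended) right tantrix, via Proposition~\ref{prop:tatrix kapap}, and then use the additivity of total variation across the splitting point. Since $\arc$ has bounded rotation, Lemma~\ref{lem: BR derivee} ensures that $T_r(\arc)$ is defined, so the hypothesis of Proposition~\ref{prop:tatrix kapap} is met for $\arc$ and, restricted to suitable subintervals, for $\arc_1$ and $\arc_2$.

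Concretely, the plan is to write
\[
|\kappa|(\arc)=\bigvee_0^{s(\arc)}\tilde{T}_r(\arc)=\bigvee_0^{t}\tilde{T}_r(\arc)+\bigvee_t^{s(\arc)}\tilde{T}_r(\arc),
\]
using the standard additivity of total variation of a map into the metric circle. The task then reduces to comparing each of these two pieces with $|\kappa|(\arc_i)=\bigvee \tilde{T}_r(\arc_i)$. On $[0,t)$ the functions $\tilde{T}_r(\arc)$ and $\tilde{T}_r(\arc_1)$ coincide (both equal $\param'_r$), while at the endpoint $t$ they differ: $\tilde{T}_r(\arc)(t)=\param'_r(t)=T_r(\arc)(t)$, whereas $\tilde{T}_r(\arc_1)(t)=T_l(\arc_1)(t)=\param'_l(t)$. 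Using \eqref{eqRV}, the left limit as $s\to t^{-}$ of $\param'_r(s)$ equals $\param'_l(t)$, so $\tilde{T}_r(\arc_1)$ is continuous from the left at $t$ and contributes no jump, while $\tilde{T}_r(\arc)$ acquires the jump of size $d_S(\param'_l(t),\param'_r(t))=\angle(t)$. Hence
\[
\bigvee_0^{t}\tilde{T}_r(\arc)=\bigvee_0^{t}\tilde{T}_r(\arc_1)+\angle(t)=|\kappa|(\arc_1)+\angle(t).
\]
On the right interval, $\tilde{T}_r(\arc)$ and $\tilde{T}_r(\arc_2)$ agree on $[t,s(\arc))$ (both equal $\param'_r$) and at $s(\arc)$ both take the value $\param'_l(s(\arc))$, so $\bigvee_t^{s(\arc)}\tilde{T}_r(\arc)=|\kappa|(\arc_2)$. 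Summing gives the claimed identity.

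The main (minor) obstacle is the careful bookkeeping at the gluing point $t$: one has to distinguish the role of $t$ as an interior point of the arc $\arc$, where $\tilde{T}_r(\arc)$ takes the right-derivative value, from its role as the right endpoint of $\arc_1$, where by definition of $\tilde{T}_r$ one uses the left-derivative value. Equation \eqref{eqRV} is exactly what quantifies this discrepancy as the angle $\angle(t)$, and without it the additivity would not directly yield the claim. All other ingredients (additivity of total variation, and Proposition~\ref{prop:tatrix kapap} itself) are used as black boxes.
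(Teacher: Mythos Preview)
Your proof is correct and follows essentially the same route as the paper's: apply Proposition~\ref{prop:tatrix kapap}, split the total variation of $\tilde T_r(\arc)$ at $t$ by additivity, identify the right piece with $|\kappa|(\arc_2)$ directly, and on the left piece use \eqref{eqRV} to see that $\tilde T_r(\arc)$ and $\tilde T_r(\arc_1)$ differ only by the endpoint value, producing the extra term $d_S(T_r(\arc)(t),T_l(\arc)(t))=\angle(t)$. The paper phrases the last step as a triangle-inequality computation rather than a ``jump'' argument, but the content is identical.
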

\begin{proof}
From Proposition~\ref{prop:tatrix kapap},
$$
 |\kappa|(\arc) = \bigvee_0^{s(K)} \tilde{T}_r(\arc) =\bigvee_0^{t} \tilde{T}_r(\arc)  + \bigvee_t^{s(K)} \tilde{T}_r(\arc) = \bigvee_0^{t} \tilde{T}_r(\arc)  + \bigvee_t^{s(K)} \tilde{T}_r(\arc_2)~.$$

Using the triangle inequality, if $d_S$ is the intrinsic distance of the circle, it is easy to see that 
 
 $$\bigvee_0^{t} \tilde{T}_r(\arc)
= \bigvee_0^{t} \tilde{T}_r(\arc_1)  + 
d_S(T_r(\arc)(t),\lim_{h\to 0, h>0}T_r(\arc)(t-h))~.$$

The result follows from \eqref{eqRV} and that of course 
$$\angle(t)=d_S(T_r(\arc)(t),T_l(\arc)(t))~.$$
~\end{proof}

\subsubsection{Bounded rotation and bounded turn}
\label{sec:tot rot and bound rot}
In this part, we relate the absolute rotation to the rotation and the angular function, both introduced in Section~\ref{sec regular arcs}.
We can first note that for a regular
 arc,
$$\vert \kappa\vert (\arc) =\int_\arc |k|(\arc)~.$$
This is however more general. We have defined the rotation for the arcs of the class $\tilde{\Delta}$. Now let us introduce the \emph{rotation function}\index{rotation function} of the arc $\arc$:  $\kappa(t)=\kappa(K|_t)$. Of course this is not defined for any arc, as we need that for any $t$, $K|_t$ belongs to $\tilde{\Delta}$. But that is defined for an arc of the class $\Delta$:\index{$\Delta$ (class of arcs)} an arc in the class $\tilde{\Delta}$ having a parameterization with non-zero left and right derivative at each interior point.
By definition, for a parameterization on $[a,b]$, $\kappa(K)=\kappa(b)$.

For this class of arcs we can now give the following relation: an arc of bounded rotation is an arc of the class $\Delta$ with bounded total variation of the rotation function.

\begin{lemma}\label{lemma:rotation and argument}
Let $\param:[a,b]\to \C$ be  a suitable parameterization of an arc of bounded rotation. Then
\begin{equation}\label{eq:kappa tilde delta}\kappa(K)=\operatorname{arg}(z_l'(b))-\operatorname{arg}(z_r'(a))~. \end{equation}
\end{lemma}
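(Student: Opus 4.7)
My plan is to reduce to the regular case via the defining formula \eqref{eq:def gen rot} for class $\tilde{\Delta}$. By Lemma~\ref{lem: BR derivee}, an arc of bounded rotation admits semi-tangents at both endpoints and hence lies in $\tilde{\Delta}$, so formula \eqref{eq:def gen rot} is available. Throughout, the phrase ``suitable parameterization'' will be taken to mean one compatible with a continuous choice of $\arg$ along the tangent directions involved in the construction; the main content of the lemma is that this choice is consistent.

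\textbf{Step 1 (regular case).} For a regular arc, the tantrix $T = z'/|z'|$ is continuous and unit-valued, so it admits a continuous lift $T(t) = e^{i\theta(t)}$ with $\theta:[a,b]\to\R$ of class $C^1$ (because $z$ is $C^2$ with $z'\neq 0$). A direct calculation starting from \eqref{eq:rotation coord cplxe} expresses $k(z)(t)\,|z'(t)|$ as $\theta'(t)$, and integrating over $[a,b]$ yields $\kappa(K) = \theta(b) - \theta(a) = \arg(z'(b)) - \arg(z'(a))$ for the chosen continuous determination of the argument.

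\textbf{Step 2 (general case via $\tilde{\Delta}$).} Let $K_1$ be an arc of bounded rotation with semi-tangents $v_1 = z'_r(a)$ and $u_1 = z'_l(b)$. Pick a regular arc $K_2$ on the left of $K_1$ with the same extremities, with tangents $v_2 = T_{K_2}(a)$ and $u_2 = T_{K_2}(b)$; its existence is clear by constructing $K_2$ to detach from $K_1$ at a prescribed small counterclockwise angle near each endpoint and smoothly interpolating in between. By \eqref{eq:def gen rot},
\begin{equation*}
\kappa(K_1) = \kappa(K_2) + \alpha + \beta,
\end{equation*}
where $\alpha, \beta$ are the interior angles of the domain enclosed by $K_1, K_2$ at $z(a)$ and $z(b)$ respectively. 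Because the enclosed domain lies on the left of $K_1$, the interior angle at $z(a)$ is the counterclockwise sweep from $v_1$ to $v_2$, so $\alpha \equiv \arg(v_2) - \arg(v_1) \pmod{2\pi}$ with representative in $(0,2\pi)$; analogously, at $z(b)$ the inward directions are $-u_1, -u_2$, and $\beta \equiv \arg(u_1) - \arg(u_2) \pmod{2\pi}$. Using Step 1 for $K_2$, we add:
\begin{equation*}
\kappa(K_1) = [\arg(u_2)-\arg(v_2)] + [\arg(v_2)-\arg(v_1)] + [\arg(u_1)-\arg(u_2)] = \arg(u_1) - \arg(v_1),
\end{equation*}
modulo $2\pi$. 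Choosing the lifts consistently (the phrase ``suitable parameterization'' in the statement) removes the ambiguity and gives \eqref{eq:kappa tilde delta} on the nose.

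\textbf{Main obstacle.} The delicate point is the bookkeeping of $\arg$ modulo $2\pi$: one must verify that the representative of $\alpha+\beta$ chosen in $(0,2\pi)+(0,2\pi)$, combined with the continuous lift of $\arg$ along $T_{K_2}$, telescopes cleanly to the prescribed lift on the two sides of the identity. This amounts to checking that the rotation of $K_1$ relative to $K_2$ (as measured by the enclosed interior angles) reconciles with the total change of argument along $K_2$ to yield the total change of the formal semi-tangent argument of $K_1$. The choice of $K_2$ (in particular, keeping $v_2$ close enough to $v_1$ and $u_2$ close enough to $u_1$ that no $2\pi$ ambiguity sneaks in) makes this reconciliation explicit; alternatively, one may verify independence from the choice of $K_2$ by varying $K_2$ continuously and noting that both sides change by the same integer multiple of $2\pi$.
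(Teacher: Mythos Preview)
Your argument is correct, and Step~1 coincides with the computation the paper gives: differentiate a continuous lift of $\arg z'$ to obtain $k(z)|z'|$ and integrate. The difference is that the paper's proof stops there---it writes the same $\arctan$ differentiation and the same integral, citing \eqref{eqRV} only to justify using $z_l'$ at the endpoint---and does not separately treat the non-regular case; the displayed identity $\arg(z')' = k(z)|z'|$ presupposes $z\in C^2$. Your Step~2 fills this in by invoking the very definition of $\kappa$ for arcs of class~$\tilde\Delta$ via \eqref{eq:def gen rot}, so the reduction to a regular comparison arc $K_2$ is entirely natural.

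What you gain is a self-contained justification of the general case, at the cost of the angle bookkeeping you flag in your ``main obstacle''. That bookkeeping is fine: choosing $K_2$ so that $\alpha,\beta$ are small (which the freedom in \eqref{eq:def gen rot} allows) pins down the representatives and makes the telescoping exact rather than mod~$2\pi$. What the paper's compressed version buys is brevity, implicitly relying on the $\delta$-convexity of the arc-length parametrization (Proposition~\ref{prop dc curve}) to make sense of the integral of $(\arg z')'$ in the non-smooth case; but it does not spell this out.
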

\begin{proof}
We have from \eqref{eqRV}
$$\operatorname{arg}(z_l'(b))-\operatorname{arg}(z_l'(a))=\int_a^b \operatorname{arg}(z'(t))'\D t~, $$
and 
$$\operatorname{arg}(z')'=\arctan\left(\frac{z_2'}{z_1'}\right)' =\frac{z_1'z_2''-z_1''z'_2}{|z'|^2}=k(z)|z'|~.$$~
\end{proof}

\begin{lemma}\label{lemma:variation rotation}
For an arc $\arc$ of bounded rotation, if $\kappa$ is its rotation function on $[a,b]$, then
$$|\kappa|(K)=\bigvee_a^b  \kappa~.$$
\end{lemma}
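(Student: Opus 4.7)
By applying Lemma~\ref{lemma:rotation and argument} to each subarc $K|_{s,t}$, and choosing the arguments as continuous lifts, one obtains
\[
\kappa(K|_{s,t}) = \arg z_l'(t) - \arg z_r'(s),
\]
so that $\kappa(t) = \kappa(K|_t)$ is, up to the additive constant $\arg z_r'(a)$, a continuous lift of the argument of the left tantrix $t \mapsto z_l'(t)$. This identity is the bridge between both sides of the claim.

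For the upper bound $\bigvee_a^b \kappa \leq |\kappa|(K)$, I would fix a partition $a = t_0 < \cdots < t_n = b$ and use the decomposition
\[
\kappa(t_{i+1}) - \kappa(t_i) = \kappa(K|_{t_i, t_{i+1}}) + \alpha_i,
\]
where $\alpha_i := \arg z_r'(t_i) - \arg z_l'(t_i)$ is the signed angle between the two tangents at $t_i$ (with the convention $\alpha_0 := 0$). Combining the triangle inequality with the elementary estimate $|\kappa(K')| \leq |\kappa|(K')$ (obtained by approximating $K'$ by inscribed broken lines, for which $|\sum \beta_j| \leq \sum |\beta_j|$), and then applying Corollary~\ref{cor:BR split} inductively so that each interior $t_i$ contributes its angle $\angle(t_i) = |\alpha_i|$, I obtain $\sum_i |\kappa(t_{i+1}) - \kappa(t_i)| \leq |\kappa|(K)$. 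Supremizing over partitions yields the upper bound.

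For the lower bound $|\kappa|(K) \leq \bigvee_a^b \kappa$, I would exploit Proposition~\ref{prop:tatrix kapap}. Given $\epsilon > 0$, pick a partition $a = s_0 < \cdots < s_N = b$ with
$\sum_i d_S(\tilde T_r(s_{i+1}), \tilde T_r(s_i)) > |\kappa|(K) - \epsilon.$
Since the tantrix of a \dc parameterization has only countably many jump points, the set of continuity points is dense, and I can refine the partition so every $s_i$ is a continuity point (splitting any jump point $s_i$ into two nearby non-jump points $s_i^- < s_i < s_i^+$ that bracket the jump). At a continuity point one has $z_l'(s_i) = z_r'(s_i) = \tilde T_r(s_i)$, and the lifted argument satisfies $|\arg z_l'(s_{i+1}) - \arg z_l'(s_i)| \geq d_S(\tilde T_r(s_{i+1}), \tilde T_r(s_i))$. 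Summing yields $\sum_i |\kappa(s_{i+1}) - \kappa(s_i)| > |\kappa|(K) - 2\epsilon$, and letting $\epsilon \to 0$ concludes.

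\textbf{Main obstacle.} The delicate point is the perturbation in the lower bound: one must verify that bracketing each partition jump point $s_i$ by continuity points $s_i^\pm$ does not destroy the variation concentrated at $s_i$. Thanks to the right-continuity of $T_r$ and the identity \eqref{eqRV}, the contribution $d_S(\tilde T_r(s_i^+), \tilde T_r(s_i^-))$ converges to the full spherical jump $d_S(z_r'(s_i), z_l'(s_i)) = \angle(s_i)$ as $s_i^\pm \to s_i$, so the variation is preserved in the limit. A secondary subtlety is ensuring that the signed jumps $\alpha_i$ lie in $[-\pi,\pi]$, so that the lifted argument difference coincides with the spherical distance; this is automatic because the angle between two unit tangent vectors is always in $[0,\pi]$.
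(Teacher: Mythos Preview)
Your argument is correct, but it is considerably more elaborate than the paper's. The paper proves the lemma in two lines: by Lemma~\ref{lemma:rotation and argument}, $\kappa(t)=\arg z_l'(t)-\arg z_r'(a)$, so $\bigvee_a^b\kappa$ is the total variation of a real lift of the argument of the left tantrix; this is declared to coincide with the $d_S$-total variation of the left tantrix, and then the left-tantrix analog of Proposition~\ref{prop:tatrix kapap} finishes. In other words, the paper simply identifies $\bigvee\kappa$ with $\bigvee\tilde T_l$ and quotes the already-proved equality $\bigvee\tilde T_l=|\kappa|(K)$.

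What you do differently is to prove the two inequalities separately and with heavier tools: for $\bigvee\kappa\le|\kappa|(K)$ you invoke Corollary~\ref{cor:BR split} plus the approximation inequality $|\kappa(K')|\le|\kappa|(K')$, and for $|\kappa|(K)\le\bigvee\kappa$ you go through Proposition~\ref{prop:tatrix kapap} and a perturbation to continuity points. This works, and it has one genuine advantage: you make explicit the point the paper glosses over, namely that the real-variation of the lift equals the $d_S$-variation of the tantrix only because every jump of the tantrix is an angle $\angle(t)\in[0,\pi]$, so no increment of the lift ever exceeds $\pi$ after refinement. The paper's one-line identification ``this is exactly the total variation of the left tantrix'' is using this fact silently. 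Your route trades brevity for transparency on this issue; if you want to match the paper's economy, simply note that all jumps are at most $\pi$ and conclude that the real-valued and $d_S$-valued total variations agree, then cite Proposition~\ref{prop:tatrix kapap} once.
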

\begin{proof}
By Lemma~\ref{lemma:rotation and argument},
$\bigvee_a^b  \kappa$ is the total variation of the principal argument of the left derivative of a parameterization of $\arc$. This is exactly the total variation of the left tantrix of the arc. The result is given by (the analog for the left tantrix of) Proposition~\ref{prop:tatrix kapap}.
\end{proof}

Let $\arc$ be an arc of the class $\Delta$, so that the rotation function $\kappa$ is defined. Given  a  finite signed Borelian measure $\mes$ with compact support and a harmonic function $h$, let us define  the functions
$ \kappa_l$ and $\kappa_r$ on a segment $[a,b]$, for a suitable parameterization of the arc:
$$\kappa_l(t)=\kappa(t) -\frac{1}{2\pi}\iint \varphi_l(\arc,z,t)\D\mes(z) +(h^*(z(a))-h^*(z(t)))~, $$
$$\kappa_r(t)=\kappa(t) +\frac{1}{2\pi}\iint \varphi_r(\arc,z,t)\D\mes(z)-(h^*(z(a))-h^*(z(t)))~. $$
(Compare   \eqref{eq:def turn} and \eqref{eq:def turnb}.)
 The arc 
is said of \emph{bounded turn}\index{bounded turn} if the functions $ \kappa_l$ and $\kappa_r$ are of bounded total variation. As the measure $\mes$ if finite, its total variation is finite (see Section~\ref{sec: delta sub} for a remainder about the total variation of a measure), and it follows from \eqref{eq:left+right} that it suffices that only one of the functions  $ \kappa_l$ or $\kappa_r$ has bounded total variation for the arc to be of bounded turn.

Now, if $\arc$ is of bounded rotation, Lemma~\ref{lemma:variation rotation} says that the function $\kappa$ is of bounded total variation. The function $h^*\circ z$ is also of bounded total variation, as  $\arc$ is rectifiable, so its parameterizations are of bounded total variation, and $h^*$ is smooth.

\begin{lemma}
We have 
$$\bigvee_a^b \varphi_l (K,z) \leq |\kappa|(K)+\pi~.$$
\end{lemma}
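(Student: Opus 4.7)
My plan is a two-stage reduction, first to broken lines, and then to a careful segment-by-segment analysis.

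First, I would reduce to the case when $K$ is a broken line. Let $L_n$ be broken lines inscribed in $K$ with $L_n \to K$; by Lemma~\ref{lem:conv kappa inscr} we have $|\kappa|(L_n) \to |\kappa|(K)$, and by the same continuity properties of $\arg$ used in the proof of Proposition~2.1.4, $\varphi_l(L_n,z,\cdot)$ converges pointwise (indeed uniformly on compacta avoiding $z$) to $\varphi_l(K,z,\cdot)$. Since total variation is lower semicontinuous under pointwise convergence of functions,
\[
\bigvee_a^b \varphi_l(K,z) \leq \liminf_n \bigvee_a^b \varphi_l(L_n,z).
\]
So, if the inequality holds for each $L_n$, it holds for $K$ in the limit.

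Second, suppose $L$ is a broken line with segments $S_1,\dots,S_m$ and vertices $V_0,\dots,V_m$. Let $\theta_i$ denote the direction of $S_i$ and $\gamma_i=\theta_{i+1}-\theta_i$ the signed exterior angle at $V_i$, so $|\kappa|(L)=\sum|\gamma_i|$. Pick a continuous lift $\phi(t)$ of $\arg(z(t)-z)$. Because $z \notin S_i$, a direct computation (as in \eqref{eq:der angle}) shows that $\phi$ is strictly monotonic on each $S_i$, with the offset $\phi(t)-\theta_i$ staying in a single open half-interval of length $\pi$ (either $(0,\pi)$ or $(-\pi,0)$, mod $2\pi$, according to which side of the line through $S_i$ contains $z$). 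Consequently the total variation equals
\[
\bigvee_a^b \varphi_l(L,z) \;=\; \sum_{i=1}^m \alpha_i, \qquad \alpha_i := |\phi(V_i)-\phi(V_{i-1})| \in [0,\pi).
\]

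The crux is the inequality $\sum_i \alpha_i \leq \sum_i |\gamma_i|+\pi$. I would approach this via the ``offset'' function $\tau(t)=\phi(t)-\theta(t)$, where $\theta$ is the (piecewise constant) tangent direction. On each $S_i$, $\tau$ is a continuous monotone function taking values in an interval of length $\pi$, and at a vertex $V_i$, $\tau$ jumps by $-\gamma_i$. The signed change of $\phi$ on $S_i$ decomposes as $(\Phi_i^+-\Phi_i^-)$ with $\Phi_i^\pm=\tau$ at the endpoints of $S_i$; these two values lie in the same half-interval. Using this confinement together with the vertex relation $\Phi_{i+1}^- = \Phi_i^+-\gamma_i$, the excess contribution $|\alpha_i|-|\gamma_i|$ accumulated at each step can be tracked, and the global potential $W_i = \sum_{j\leq i}\alpha_j - \sum_{j<i}|\gamma_j|$ is seen to stay bounded by $\pi$, because each time $W$ tries to increase, either $\tau$ must have re-entered the opposite half-interval (which costs a comparable amount of $|\gamma|$ at the preceding vertex) or we are still on an initial ``free'' half-interval of length $\pi$.

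The main obstacle I expect is precisely this last combinatorial bookkeeping: controlling how the offset $\tau$ transits between the two half-intervals $(0,\pi)$ and $(-\pi,0)$ across vertices, and showing that the only irreducible $\pi$ of excess comes from a single initial offset that is not compensated by any $\gamma_i$. A clean way to package this might be to decompose $L$ into maximal pieces on which $\phi$ is monotone; on each such piece the inequality $|\text{change of }\phi|\leq |\text{change of }\theta|+\pi$ follows from $\tau\in(0,\pi)$ (or $(-\pi,0)$), and summing these, all but one of the ``$+\pi$'' contributions can be absorbed into $|\gamma_i|$ at the corresponding sign-changing vertex. The final single ``$+\pi$'' is what survives.
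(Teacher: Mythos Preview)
The paper does not actually prove this lemma; it only cites Radon and Lemma~8.3 of \cite{R60II}, noting that the latter handles $\varphi(K,z)$ for $z\notin K$ and that the extension to $\varphi_l$ is immediate by lower semicontinuity of the total variation. Your reduction to broken lines and your identification of the offset $\tau=\phi-\theta$ as confined, segment by segment, to a single open interval of length~$\pi$ are both correct, and they do imply that on each maximal monotone piece $P_j$ of~$\phi$ one has $|\Delta_j|\le |\Gamma_j|+\pi$ (since all $\tau$-values on $P_j$ lie in one common interval $(k_j\pi,(k_j{+}1)\pi)$).

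The gap is exactly where you flagged it. At a sign-changing vertex $V_{i_j}$ one computes, with $B_j,A_{j+1}\in(0,\pi)$ the positions of $\tau$ within its interval just before and just after the jump, that $|\gamma_{i_j}|=\pi-|B_j-A_{j+1}|$. Hence $|\gamma_{i_j}|$ can be arbitrarily small, and the heuristic ``absorb a full $\pi$ into $|\gamma_{i_j}|$ at each turning vertex'' fails as stated. Summing the crude piecewise bound gives only $\bigvee\phi\le |\kappa|(L)+(k{+}1)\pi-\sum_j|\gamma_{i_j}|$, which overshoots. One can rescue the direct argument by keeping the \emph{signed} identity $\Delta_j=(B_j-A_j)+\Gamma_j$ and exploiting the alternation of signs, but this requires a further non-obvious inequality that your sketch does not supply.

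A route that avoids the bookkeeping entirely --- and matches the ``projection techniques'' the paper invokes for Proposition~\ref{prop: rot diam long} --- is the Banach indicatrix. For a broken line $L$ and $z\notin L$,
\[
\bigvee\varphi(L,z)=\int_0^{2\pi}\bar M(\psi)\,d\psi,\qquad \bar M(\psi)=\#\bigl(L\cap\{z+te^{i\psi}:t>0\}\bigr).
\]
Pairing $\psi$ with $\psi+\pi$, the quantity $\bar M(\psi)+\bar M(\psi{+}\pi)$ is the number of intersections of $L$ with the full line through $z$ in direction $\psi$, hence is at most the number of monotone pieces of the projection of $L$ onto the normal direction. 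A vertex $V_i$ is a local extremum of that projection for a set of $\psi\in[0,\pi)$ of measure exactly $|\gamma_i|$, so integrating gives $\int_0^\pi(\bar M(\psi)+\bar M(\psi{+}\pi))\,d\psi\le \pi+|\kappa|(L)$, which is the desired bound.
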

The above result was proved by Radon in \cite{radon}. Another proof can be found in Lemma~8.3 in \cite{R60II}.
Actually, the statement in this reference is only for $\varphi (K,z) $, but it is easily extended to  $\varphi_l (K,z)$ by lower semicontinuity of the total variation.
Putting things together gives  the following result.

\begin{theo}
\begin{theorem}[{\cite[Theorem 4]{R63III} }]\label{thm:br implique bt}
Let $\arc$ be an arc of bounded rotation  in the plane. 
Then, for any finite signed Borelian
measure $\mes$  with compact support and any harmonic function $h$, $\arc$ is an arc of bounded turn.
\end{theorem}
\end{theo}

The main difference of nature between arcs of bounded turn and arcs of bounded rotation is that the latter class depend only on the Euclidean structure of the plane. To define arcs of bounded turn, one needs the differentiable structure of the plane (to define the $\Delta$ class), a signed measure $\mes$ and a harmonic function $h$.
Of course, for $\mes=0$ and $h=0$, bounded turn and bounded rotation are the same thing. Apart from this trivial case, the converse result to Theorem~\ref{thm:br implique bt} is much more harder to prove. It will be stated in Section~\ref{sec:turn rotation}.

\section{Subharmonic functions}\label{sec:sh}
We saw at the very end of Section~\ref{sec regular arcs}, that if $\delta_\zeta$ is the Dirac measure with support $\{\zeta\}$,  
if $p(\delta_\zeta)$ is defined on the plane by
$$p(z;\delta_\zeta)=\frac{1}{2\pi}\ln |z-\zeta| $$
then, for the weak Laplacian $\Delta$, in the sense of measures, 
$$\Delta p(\delta_\zeta)=\delta_\zeta~. $$
A similar result is  actually true for any positive measure with compact support. We will only review some basic facts about potential theory in sections \ref{sec: 3.1 def} and \ref{sec:approx}. For more details, we refer to the numerous  textbooks existing on this topic, see e.g. \cite{rado}, \cite{tsuji}, \cite{ransford}, \cite{hayman}, \cite{AG}, \cite{landkof}, \cite{demailly}, 
\cite{hor}. 

In Section~\ref{sec: hausdorff sub}, we will focus on the fact that the set of points where a subharmonic function has infinite values is a set of Hausdorff dimension zero. This is a fundamental fact, as it will imply  that the integral
of  the restriction of a difference of subharmonic functions on a rectifiable arc is well defined. 
In Section~\ref{sec:localization} we present in full details the \res Localization Theorem, which allows to
change locally a harmonic term to a difference of subharmonic functions into a constant.

\subsection{Definition}\label{sec: 3.1 def}

Let $\mes$ be a finite positive (Borelian) measure with compact support over the plane. 
%The restriction to measures with compact support may propably be relaxed in the following, but as this assumption will be mandatory for the distances convergence theorems of Section~\ref{sec: inter dst conv}, we consider this case from 
It will be useful to recall that such measures are actually \emph{regular measures}\index{regular measures}:\footnote{This is the actual meaning of regular measure, and is different from the regular measures studied in \cite{R63III}, see Remark~\ref{rem:regular sens de resh}.
}
 for every Borel set $E$, 
$$\mes(E)=\inf \{ \mes(V) : E \subset V, \, V\mbox{ open}\}$$
and 
$$\mes(E)=\sup \{ \mes(K) : K\subset E,\, K \mbox{ compact}\}~.$$

For $z$ in the plane, let us denote
\begin{equation}\label{eq:potential}
p(z;\mes)=\frac{1}{2\pi} \iint \ln |z-\zeta  | \D\mes(\zeta)~.
\end{equation}

 We denote by $Q_r(\zeta)$\index{$Q_r(\zeta)$} the open disc of center $\zeta$ and (Euclidean) radius $r$, and set  $Q_r=Q_r(0)$.

\begin{lemma}\label{lem:potential bfb}
Let $r>1/2$ be such that the support of $\mes$ is contained in $Q_r$. 
For any $z\in  Q_r$, $p(z;\mes)$ is well-defined and $$p(z;\mes)\leq \frac{\mes(\C)}{2\pi}\ln(2r)~. $$
\end{lemma}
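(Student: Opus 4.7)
The plan is to reduce the statement to an elementary triangle-inequality bound on $|z-\zeta|$, together with a careful check that the integrand is integrable in the extended sense (i.e.\ the integral makes sense in $[-\infty,+\infty)$).

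First I would note that, since both $z$ and any point $\zeta$ in the support of $\mes$ lie in $Q_r$, the triangle inequality gives $|z-\zeta|\le |z|+|\zeta|<2r$, and hence
\[
\ln|z-\zeta|\le \ln(2r)
\]
for $\mes$-almost every $\zeta$. Because the hypothesis $r>1/2$ forces $\ln(2r)>0$, this upper bound is a non-negative constant, and controls the positive part: $(\ln|z-\zeta|)^+\le \ln(2r)$.

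Next I would argue that $p(z;\mes)$ is well defined in $[-\infty,+\infty)$. The measure $\mes$ is finite, so the positive part integrates to at most $\mes(\C)\ln(2r)<\infty$; this is the only non-trivial issue, since the negative part may a priori be infinite (e.g.\ if $\mes$ has a large atom at $z$), but in that event we simply set $p(z;\mes)=-\infty$, which is allowed. In particular the integral $\iint \ln|z-\zeta|\,d\mes(\zeta)$ is meaningful.

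Finally, integrating the pointwise inequality $\ln|z-\zeta|\le \ln(2r)$ against $\mes$ yields
\[
p(z;\mes)=\frac{1}{2\pi}\iint \ln|z-\zeta|\,d\mes(\zeta)\le \frac{1}{2\pi}\iint \ln(2r)\,d\mes(\zeta)=\frac{\mes(\C)}{2\pi}\ln(2r),
\]
which is the asserted estimate. There is no real obstacle here; the only subtlety worth flagging is the role of the hypothesis $r>1/2$, which ensures that the dominating constant $\ln(2r)$ is non-negative, so that bounding the positive part gives a meaningful quantitative upper bound rather than a vacuous one.
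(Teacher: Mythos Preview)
Your argument is correct and coincides with the paper's: both use the triangle inequality to get $\ln|z-\zeta|\le\ln(2r)$, then use $r>1/2$ (so $\ln(2r)>0$) to see that the positive part of the integrand is dominated by an integrable constant, making the integral well defined in $[-\infty,+\infty)$ and bounded above by $\frac{\mes(\C)}{2\pi}\ln(2r)$. The paper phrases the well-definedness step as the decomposition $\ln|z-\zeta|=\ln(2r)-(\ln(2r)-\ln|z-\zeta|)$ into two non-negative pieces, which is exactly your positive/negative part argument in slightly different clothing.
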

\begin{proof}
We have
$$p(z;\mes)=\frac{1}{2\pi}\iint_{|\zeta|\leq r}\ln|z-\zeta| \D\mes(\zeta) $$
so if $z\in Q_r$, i.e. $|z|\leq r$, we have 
$|z-\zeta|\leq 2r$. 
In turn:
\begin{equation}\label{eq:potential bfb}\ln |z-\zeta| =\left(\ln 2r\right)- \left( \ln 2r-\ln |z-\zeta| \right)~, \end{equation}
and both functions in the brackets are non-negative,  and the Lebesgue integral of $\ln 2r$ is finite because $\mes$ is finite. 
\end{proof}

It follows that for any $z$, 
$p(z;\mes)$ is well-defined in $[-\infty,+\infty)$, so we obtain a function  
$p(\mes):\R^2\to [-\infty,+\infty)$, the \emph{potential}\index{potential} of $\mes$. 

\begin{lemma}\label{lem:eq:supermean eq}
Let $\mes$ be a finite positive measure with compact support. Then for any $r>0$
\begin{equation}\label{eq:supermean eq}
p(z_0;\mes)\leq \frac{1}{2\pi r}\int_{C_r(z_0)} p(\mes)~.
\end{equation}
 \end{lemma}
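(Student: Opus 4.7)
The plan is to reduce the inequality to the corresponding pointwise inequality for the logarithmic kernel $\ln|\cdot - \zeta|$ and then integrate against $d\omega(\zeta)$ using Fubini--Tonelli. The first step is to establish, for each fixed $\zeta \in \C$, the classical identity
\begin{equation*}
\frac{1}{2\pi r}\int_{C_r(z_0)} \ln|\cdot - \zeta| \;=\; \max\bigl(\ln|z_0-\zeta|,\;\ln r\bigr) \;\geq\; \ln|z_0-\zeta|.
\end{equation*}
When $|z_0-\zeta|\geq r$, this is the usual mean value equality applied to $z\mapsto \ln|z-\zeta|$, which is harmonic on a neighborhood of $\overline{Q_r(z_0)}$. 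When $|z_0-\zeta|<r$, one may compute the average directly (in polar coordinates centered at $z_0$) or pass to the limit $\zeta\to z_0$ while staying strictly inside the disc; both give the value $\ln r$.

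The second step is to integrate this inequality in $\zeta$ against $d\mes$ and exchange the order of integration with the integration over $C_r(z_0)$. The only delicate point is that $\ln|z-\zeta|$ is unbounded below near the diagonal, so Fubini cannot be applied blindly. I use the trick of Lemma~\ref{lem:potential bfb}: pick $R>1/2$ large enough so that both $C_r(z_0)$ and the support of $\mes$ lie in $Q_R$, and decompose
\begin{equation*}
\ln|z-\zeta| \;=\; \ln(2R) \;-\; \bigl(\ln(2R)-\ln|z-\zeta|\bigr),
\end{equation*}
in which the bracketed term is non-negative on $Q_R\times Q_R$. Tonelli's theorem applies without restriction to the non-negative piece (measurability of the kernel is clear), and the constant piece is trivially integrable since $\mes$ is finite and the circle has finite length. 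This legitimizes the exchange and yields
\begin{equation*}
\frac{1}{2\pi r}\int_{C_r(z_0)} p(\mes) \;=\; \frac{1}{2\pi}\iint \left[\frac{1}{2\pi r}\int_{C_r(z_0)} \ln|\cdot - \zeta|\right] d\mes(\zeta) \;\geq\; \frac{1}{2\pi}\iint \ln|z_0-\zeta|\, d\mes(\zeta) \;=\; p(z_0;\mes).
\end{equation*}

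The main obstacle is really just the bookkeeping around the logarithmic singularity: one must verify that both sides are well defined as elements of $[-\infty,+\infty)$ (the right-hand side is by Lemma~\ref{lem:potential bfb}, and the left-hand side because $p(\mes)$, being upper semicontinuous and bounded above on $C_r(z_0)$, is integrable in $[-\infty,+\infty)$ along the circle) and that Fubini can be applied. Once the non-negative decomposition is in place, the substance of the argument is entirely contained in the kernel inequality of the first step.
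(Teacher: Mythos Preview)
Your proof is correct and follows essentially the same approach as the paper: establish the supermean inequality for the logarithmic kernel $\ln|\cdot-\zeta|$ and then integrate against $d\mes(\zeta)$ via Fubini. You are somewhat more thorough than the paper, which simply notes that $z\mapsto\ln|z-\zeta|$ is ``either harmonic or equal to $-\infty$'' and invokes Fubini without further comment; your explicit computation of the circular mean as $\max(\ln|z_0-\zeta|,\ln r)$ and your Tonelli justification via the decomposition of Lemma~\ref{lem:potential bfb} fill in details the paper leaves implicit.
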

 Recall that $C_r(z_0)$ is the plane circle of center $z_0$ and radius $r$.
\begin{proof}
 First note that for any $\zeta$,  the function $z\mapsto \ln\vert z-\zeta\vert$ satisfies \eqref{eq:supermean eq}. Indeed, either it is harmonic (so \eqref{eq:supermean eq} is satisfied with equality) or it is equal to $-\infty$.  Then by Fubini
\begin{equation*}
\begin{split}
 \frac{1}{2\pi}\int_0^{2\pi} p(z+r\E^{\I t};\mes)\D t = &
\frac{1}{2\pi}\iint \frac{1}{2\pi}\int_0^{2\pi}  \ln\vert z+r\E^{\I t}- \zeta\vert \D t \D\mes(\zeta) \\
 \geq & \frac{1}{2\pi}\iint  \ln\vert z- \zeta\vert  \D\mes(\zeta) = p(z;\mes)~.
\end{split}
\end{equation*}
~\end{proof}

\begin{lemma}\label{lem: pmu lower}
The potential $p(\mes)$ is upper semicontinuous,  in particular, it is bounded from above on any compact set.
\end{lemma}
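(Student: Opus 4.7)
The plan is to reduce both claims to a Fatou-type argument applied after localizing integrands to be non-positive. Fix $z_0$ in the plane, choose $r>1/2$ so that both $z_0$ and $\operatorname{supp}(\mes)$ lie in $Q_r$, and consider any sequence $z_n\to z_0$ (eventually in $Q_r$). Using the decomposition from the proof of Lemma~\ref{lem:potential bfb},
\[
\ln|z_n-\zeta|=\ln(2r)-\bigl(\ln(2r)-\ln|z_n-\zeta|\bigr),
\]
the bracket is a non-negative $\mes$-measurable function of $\zeta$, so I can apply Fatou's lemma to the sequence $f_n(\zeta):=\ln(2r)-\ln|z_n-\zeta|\geq 0$.

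The pointwise behavior is clear: for every $\zeta\neq z_0$, $\ln|z_n-\zeta|\to \ln|z_0-\zeta|$ by continuity of $\ln$ off the origin, and at $\zeta=z_0$ we have $\ln|z_n-z_0|\to-\infty$, so $f_n(\zeta)\to \ln(2r)-\ln|z_0-\zeta|$ in $[0,+\infty]$ for every $\zeta$. Fatou's lemma then yields
\[
\iint\!\bigl(\ln(2r)-\ln|z_0-\zeta|\bigr)\,d\mes(\zeta)\leq \liminf_n \iint\!\bigl(\ln(2r)-\ln|z_n-\zeta|\bigr)\,d\mes(\zeta).
\]
Since $\mes(\C)<\infty$, the constant term $\ln(2r)\,\mes(\C)$ can be subtracted from both sides, giving $\limsup_n p(z_n;\mes)\leq p(z_0;\mes)$, i.e.\ upper semicontinuity at $z_0$.

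For the second assertion, there are two routes, either suffices: (i) directly from Lemma~\ref{lem:potential bfb}, any compact $K$ can be enclosed together with $\operatorname{supp}(\mes)$ in some $Q_r$, yielding the uniform bound $\frac{\mes(\C)}{2\pi}\ln(2r)$ on $K$; (ii) alternatively, an upper semicontinuous function on a compact set attains its supremum, hence is bounded above. I would favor route (i) since Lemma~\ref{lem:potential bfb} has just been proved. I do not foresee any real obstacle: the only subtlety is the treatment of the singular point $\zeta=z_0$, which is harmless because the integrand is dominated below by $0$ there, which is exactly what makes Fatou applicable.
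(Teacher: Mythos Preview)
Your proof is correct and essentially identical to the paper's: both use the decomposition of Lemma~\ref{lem:potential bfb} to make the integrand non-negative and then apply Fatou's lemma. The only cosmetic difference is that for the boundedness the paper invokes your route~(ii) (the general fact that an upper semicontinuous function attains its supremum on compacta), rather than the explicit bound of Lemma~\ref{lem:potential bfb}.
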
 
\begin{proof} 
By \eqref{eq:potential bfb},  for any $z\in M$,  $\zeta\mapsto \ln(2r)-\ln|z-\zeta|$ is non-negative, and  Fatou's lemma applies: for $z_n\to z$, 
\begin{equation*}
\begin{split}
-p_\mes(z)& =\frac{1}{2\pi}\iint \liminf_{n\to \infty} \left(\ln(2r)- \ln |z_n-\zeta  | \right) \D\mes(\zeta) \const \mes(\C)\ln(2r) \\
& \leq  \frac{1}{2\pi}\liminf_{n\to \infty} \iint \left(\ln(2r)- \ln |z_n-\zeta  | \right)\D\mes(\zeta)  \const \mes(\C)\ln(2r) \\
\  &=  \liminf_{n\to \infty} -p_\mes(z_n)~.\end{split}
\end{equation*}
The assertion about boundedness is a classical property of upper semicontinuous functions, see e.g., Section~2.1 in \cite{ransford}.
\end{proof}

\begin{lemma}\label{lem:harmonic out support}
The potential $p(\mes)$ is harmonic over $\C\setminus \operatorname{supp}\mes$.
\end{lemma}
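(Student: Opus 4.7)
The plan is to argue that away from $\operatorname{supp}\mes$, the potential is smooth and has vanishing classical Laplacian, hence is harmonic in the strong sense. The key observation is that for a fixed $\zeta$, the function $z\mapsto \ln|z-\zeta|$ is harmonic on $\C\setminus\{\zeta\}$; so as long as we can pull differentiation through the integral, we inherit harmonicity from the integrand.

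More concretely, fix $z_0\in \C\setminus \operatorname{supp}\mes$. Since $\operatorname{supp}\mes$ is compact and $z_0$ lies outside of it, there exists $d>0$ with $d:=\operatorname{dist}(z_0,\operatorname{supp}\mes)>0$. For $z\in Q_{d/2}(z_0)$ and $\zeta\in \operatorname{supp}\mes$, one has $|z-\zeta|\geq d/2$, so $\zeta\mapsto \ln|z-\zeta|$ is bounded on $\operatorname{supp}\mes$ uniformly in $z\in Q_{d/2}(z_0)$, and it is smooth in $z$ with all partial derivatives up to any fixed order bounded uniformly in $\zeta$ (since the quantities involve only $|z-\zeta|^{-k}$ terms, all controlled by $(d/2)^{-k}$). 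This uniform domination enables differentiation under the integral sign, so
\begin{equation*}
\Delta p(\mes)(z)=\frac{1}{2\pi}\iint \Delta_z\ln|z-\zeta|\,d\mes(\zeta)=0\quad \text{for } z\in Q_{d/2}(z_0),
\end{equation*}
using $\Delta_z\ln|z-\zeta|=0$ whenever $z\neq \zeta$.

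Since $z_0$ was arbitrary, $p(\mes)$ is $C^2$ (in fact $C^\infty$) on $\C\setminus\operatorname{supp}\mes$ with vanishing Euclidean Laplacian, i.e.\ harmonic. The main (and essentially only) obstacle is justifying the interchange of differentiation and integration, but this is immediate from the uniform bound on $|z-\zeta|\geq d/2$; no deep potential-theoretic input is needed. As an alternative I could verify the mean value property directly via Fubini and the harmonicity of $\ln|\cdot-\zeta|$ on $Q_{d/2}(z_0)$, combined with the upper semicontinuity from Lemma~\ref{lem: pmu lower}, but the differentiation-under-the-integral route gives smoothness at the same time, so I would take it as the primary path.
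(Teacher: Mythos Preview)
Your proof is correct. The paper takes precisely the alternative route you mention at the end: it invokes the characterization that a continuous function satisfying the mean value \emph{equality} on small circles is harmonic, and then obtains this equality by running the Fubini argument of Lemma~\ref{lem:eq:supermean eq} in both directions, using that $\ln|\cdot-\zeta|$ is genuinely harmonic (not merely subharmonic) on a disc that avoids $\operatorname{supp}\mes$. Your primary argument via differentiation under the integral sign is a legitimate and arguably cleaner alternative: it yields $C^\infty$ regularity for free and does not require citing the mean-value-to-harmonic characterization. The paper's route, on the other hand, stays closer to the tools just developed (the supermean inequality and upper semicontinuity) and avoids checking dominated-convergence bounds on higher derivatives. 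Both are short and standard; neither requires any real potential-theoretic depth.
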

\begin{proof}
A continuous function that satisfies equality in \eqref{eq:supermean eq} for sufficiently small circles is harmonic, see e.g., \cite[Theorem 1.2.7]{ransford}. To prove the reverse inequality is along the same line 
as  for the proof of Lemma~\ref{lem:eq:supermean eq}, with the difference that 
for $\C\setminus \operatorname{supp}\mes$, $ \vert z-\zeta\vert >0$ for $\zeta \in  \operatorname{supp}\mes$.
\end{proof}

\begin{example}\label{ex:potentiel cercle}{\rm
The simplest example of potential is for a weighted mass
$\mes=\mes_0\delta_{z_0}$, for which we have already seen that $p(z;\mes)=\frac{\mes_0}{2\pi}\ln \vert z-z_0\vert$.
Now let us look at the potential of the measure $\mes_r$ which is the extension to the plane of the angular measure over $C_r(0)$ (the total measure is $2\pi$). This is the beginning of Example~2 in \cite{R60II}.
We have
\begin{equation*}
\begin{split}
p(z;\mes_r)&= \frac{r}{2\pi}\int_0^{2\pi}  \ln\left|z-r\E^{\I\theta}\right|\D\theta \\
&= \frac{r}{2\pi}\int_0^{2\pi}  \ln\left|\vert z\vert -r\E^{\I(\theta-\operatorname{arg}(z))}\right| \D\theta \\
&=\frac{r}{2\pi}\int_0^{2\pi}  \ln\left|\vert z\vert -r\E^{\I\theta}\right|\D\theta
\end{split}
\end{equation*}
hence $p(\mes_r)$ is radial over $Q_r$: $p(z;\mes_r)=f(t)$ with $|z|=t$. Moreover by Lemma~\ref{lem:harmonic out support}, it is harmonic out of the circle $C_r$. For a radial function we have
$$\Delta p(z;\mes_r)= f''(t)+f'(t)/t~, $$
that is a second order Euler equation, but can be handle  easily by considering 
$y=f'$. The solutions of $y'+y/r=0$ are $c/t$ for some constant $c$, hence the radial harmonic functions on the plane have the form $a\ln |z| +b$, for some constants $a,b$. As $p(\mes_r)$ is harmonic out of  the support of $\mes_r$,
it is harmonic  over $Q_r$, hence constant there (because $\ln|\cdot|$ is not harmonic at the origin), and taking $z=0$,  we have $p(z;\mes_r)=r\ln r$. As 
$\lim_{\vert z\vert\to\infty} p(z,\mes_r)-r\ln \vert z\vert=0$, we obtain that out of $\overline{Q}_r$,
$p(z;\mes_r)=r\ln \vert z\vert$.
}\end{example}

It is easy to see that $z\mapsto \ln \vert z \vert$ is in $L^p_{loc}$ for $p\geq 1$. Indeed, 
$\iint_{Q_R} \vert \ln \vert^p = 2\pi \int_0^R r\vert \ln r\vert^p \D r $, and the result follows 
for example by the substitution $t=\ln r$. This elementary result can be generalized using the following classical result, see e.g. \cite[Theorem~2.6.2]{ransford} or \cite[Theorem~3.3]{Rudin}.

\begin{lemma}[{Jensen inequality (Cauchy inequality in \cite{R60II})}]\label{jensen inequality}\index{Jensen inequality}\index{Cauchy inequality}
Let $\mes$ be a finite positive measure with compact support, non identically equal to zero, and let $\psi:\mathbb{C}\to (-\infty,+\infty)$ be a convex function. For any $\mes$-integrable function $f:\mathbb{C}\to (-\infty,+\infty)$, we have
$$\psi \left(\frac{1}{\mes(\C)}\iint f \D\mes\right)  \leq \frac{1}{\mes(\mathbb{C})} \iint \psi\circ f \D\mes~. $$
\end{lemma}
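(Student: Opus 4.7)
The plan is to use the standard supporting-line argument for convex functions, adapted to an arbitrary finite positive measure via normalization. First I would reduce to a probability measure by setting $\tilde{\mes} = \mes/\mes(\C)$, so the inequality becomes the classical form
$$\psi\!\left(\iint f \, d\tilde{\mes}\right) \leq \iint \psi \circ f \, d\tilde{\mes}.$$
(The composition $\psi\circ f$ should be read as $\psi$ applied to the values of $f$, with $\psi$ convex on the real line; the statement in the excerpt uses $\C$ as a common codomain, but convexity is a one-dimensional notion here.) Denote the finite real number $a = \iint f \, d\tilde{\mes}$; this is well-defined since $f$ is $\mes$-integrable.

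The key step is the supporting-line property of convex functions: at the point $a$ there exists a slope $m \in \R$ (for instance, any element of the subdifferential, such as the left derivative of $\psi$ at $a$) such that
$$\psi(t) \geq \psi(a) + m(t - a) \qquad \text{for all } t \in \R.$$
Substituting $t = f(\zeta)$ and integrating against $\tilde{\mes}$ yields
$$\iint \psi \circ f \, d\tilde{\mes} \geq \psi(a) + m\!\left(\iint f \, d\tilde{\mes} - a\right) = \psi(a),$$
since the bracketed term is zero by the definition of $a$. Multiplying through by $\mes(\C)$ recovers the form stated in the lemma.

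The only real subtlety is justifying the existence of the supporting slope and ensuring the linear lower bound $\psi(a) + m(f-a)$ is integrable. The former is a standard consequence of the existence of one-sided derivatives for convex functions on $\R$; the latter follows because $f$ is $\mes$-integrable by hypothesis and the constants $\psi(a), ma$ integrate against a finite measure. So I do not expect a genuine obstacle here: the lemma is truly a packaging of the classical Jensen inequality, and the only care needed is to clarify that $\psi$ is convex in the real-variable sense and to handle the normalization by $\mes(\C)$ cleanly. (If one wished to avoid invoking subdifferentials, one could alternatively prove it first for simple functions $f$ using the definition of convexity applied iteratively, and then pass to the limit using monotone/dominated convergence, but the supporting-line argument is considerably cleaner.)
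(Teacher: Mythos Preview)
Your argument is correct and is the standard supporting-line proof of Jensen's inequality. The paper does not actually supply its own proof of this lemma: it simply records the statement as a classical result and refers the reader to textbooks (Ransford, Rudin), where the proof given is precisely the one you outline.

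Your parenthetical remark that $\psi$ should be understood as convex on the real line (the paper's ``$\psi:\mathbb{C}\to(-\infty,+\infty)$'' is a slip in the domain) is well observed and worth keeping.
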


\begin{lemma}\label{lem:pot Lp}
A potential is in $L^p_{loc}$  for $p\geq 1$.
\end{lemma}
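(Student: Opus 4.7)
The plan is to show that for any bounded open set $U$, $\int_U |p(z;\mes)|^p dz < \infty$, where $p \geq 1$. Without loss of generality I would pick $r > 1/2$ large enough that both $U$ and the support of $\mes$ are contained in $Q_r$; then for any $z \in Q_r$ and any $\zeta$ in the support, $|z-\zeta| \leq 2r$, so the decomposition \eqref{eq:potential bfb} gives
\begin{equation*}
p(z;\mes) = \frac{\mes(\C)}{2\pi}\ln(2r) - \phi(z), \qquad \phi(z) := \frac{1}{2\pi}\iint \bigl(\ln(2r) - \ln|z-\zeta|\bigr)\, d\mes(\zeta),
\end{equation*}
with $\phi \geq 0$ on $Q_r$. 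The constant piece is trivially $L^p$ on a bounded set, so the task reduces to controlling $\int_{Q_r} \phi(z)^p dz$.

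Assuming $\mes \not\equiv 0$ (else there is nothing to prove), I would apply Lemma~\ref{jensen inequality} to the normalized probability measure $\mes/\mes(\C)$ and the convex function $\psi(t)=t^p$ (convex on $[0,\infty)$ since $p \geq 1$), applied to the nonnegative integrand $g(z,\zeta):=\ln(2r)-\ln|z-\zeta|$. This yields the pointwise inequality
\begin{equation*}
\phi(z)^p \leq \frac{\mes(\C)^{p-1}}{(2\pi)^p}\iint g(z,\zeta)^p\, d\mes(\zeta).
\end{equation*}
Integrating over $Q_r$ and invoking Fubini (legitimate since the integrand is nonnegative) reduces the problem to bounding, uniformly in $\zeta \in Q_r$, the quantity $\int_{Q_r} g(z,\zeta)^p\, dz$.

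For this last step I would enlarge the domain: since $\zeta \in Q_r$ forces $Q_r \subset Q_{2r}(\zeta)$, passing to polar coordinates centered at $\zeta$ gives
\begin{equation*}
\int_{Q_r} g(z,\zeta)^p\, dz \leq 2\pi \int_0^{2r} \rho \left(\ln\frac{2r}{\rho}\right)^p d\rho.
\end{equation*}
The substitution $t = \ln(2r/\rho)$ turns this into $8\pi r^2\int_0^\infty t^p e^{-2t}\,dt = 8\pi r^2\, \Gamma(p+1)/2^{p+1}$, a finite quantity independent of $\zeta$. Combining the bounds yields $\int_{Q_r}\phi^p < \infty$, and hence $p(\mes) \in L^p(U)$ as required.

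There is no real obstacle once the reduction to the nonnegative function $\phi$ is made; the only point that requires attention is using Jensen in the correct direction, which is why writing $p(\mes) = \text{const}-\phi$ (so that a convex $\psi=t^p$ acts on a nonnegative integrand) is essential rather than trying to apply Jensen directly to $\ln|\cdot-\zeta|$, on which $t\mapsto |t|^p$ is not convex.
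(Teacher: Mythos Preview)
Your proof is correct and follows essentially the same route as the paper: apply Jensen's inequality to push the $p$-th power inside the $\mes$-integral, then use Fubini and the elementary fact that $\ln|\cdot|\in L^p_{\mathrm{loc}}$. The only difference is that you first split off the constant $\frac{\mes(\C)}{2\pi}\ln(2r)$ so as to work with a nonnegative integrand, whereas the paper applies Jensen directly with $\psi(t)=|t|^p$ to the signed integrand $\ln|z-\zeta|$.

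Your final remark, however, is mistaken: the function $t\mapsto |t|^p$ \emph{is} convex on all of $\R$ for every $p\ge 1$, so Jensen applies perfectly well without the preliminary decomposition. The paper does exactly what you claim would fail, writing
\[
\iint_O |p(\mes)|^p
= \iint_O \left|\frac{1}{\mes(\C)}\iint \ln|z-\zeta|^{\mes(\C)/2\pi}\,d\mes(\zeta)\right|^p dz
\le \frac{1}{\mes(\C)}\iint_O\iint \bigl|\ln|z-\zeta|^{\mes(\C)/2\pi}\bigr|^p\,d\mes(\zeta)\,dz,
\]
and concluding by Fubini. Your detour through $\phi$ is harmless but unnecessary.
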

\begin{proof}
A potential is Borel measurable  by semicontinuity.
If $\mes=0$ then $p(\mes)=0$ and we are done. Otherwise,
let us use Lemma~\ref{jensen inequality} with $\psi(z)=\vert z\vert^p$. For a compact set $O$,
\begin{equation*}
\begin{split}\iint_O \left\vert p(\mes)\right\vert^p 
&= \iint_O \left\vert \frac{1}{\mes(\C)} \iint \ln \vert z-\zeta\vert^{\frac{\mes(\C)}{2\pi}}  \D\mes(\zeta) \right\vert^p \D z \\
&\leq  \frac{1}{\mes(\C)} \iint_O \iint  \left\vert\ln \vert z-\zeta\vert^{\frac{\mes(\C)}{2\pi}}  \right\vert^p \D\mes(\zeta)  \D z \end{split}\end{equation*}
and the result follows by Fubini and because $\ln \vert \cdot\vert$ is locally in $L^p$.
\end{proof}

\begin{remark}\label{rem:pot fini a.e.}{\rm
It follows from Lemma~\ref{lem:pot Lp} that a potential  is finite almost everywhere on $M$. We will see a stronger result in Section~\ref{sec: hausdorff sub}.}
\end{remark}

Let $\mes_1, \mes_2$ be two positive finite measures with compact support, and let $h$ be a continuous function with compact support. Recall that the \emph{convolution}\index{convolution} $\mes_1\star\mes_2$ is the measure defined by 
$$\iint h(z) \D (\mes_1\star\mes_2)(z)=\iint \iint h(\zeta+z)\D\mes_1(\zeta)\D\mes_2(z)~.$$

If $\mes_1$ has a Lebesgue density $f_1$, we can define the convolution of
 $f_1$ and $\mes_2$:
\begin{equation*}
 \begin{split}
\iint h(z) \D (f_1\star\mes_2)(z)= & \iint\iint h(\zeta+z)f_1(\zeta)\D\zeta \D\mes_2(z) \\
=& \iint \iint h(z)f_1(z-\zeta)\D z\D\mes_2(\zeta)~,
\end{split}
\end{equation*}
i.e.
\begin{equation}\label{eq:conv fct mes}(f_1\star\mes_2)(z)=\iint f_1(z-\zeta)\D\mes_2(\zeta) \end{equation}
and if $\mes_2$ has also a Lebesgue density $f_2$, then we recover the usual definition of convolution for functions:
$$(f_1\star f_2)(z)=\iint f_1(z-\zeta)f_2(\zeta) \D\zeta~. $$

\begin{remark}\label{rem:lap convolution}{\rm
The definition \eqref{eq:conv fct mes} gives in particular, 
$$p(z;\mes)=(\ln \vert\cdot\vert \star \mes) (z)~.$$
Now let $\varphi$ be a smooth function with compact support. By Lemma~\ref{lem:pot Lp}, it is meaningful to speak about the weak Laplacian of a potential. Then, a direct utilization of Fubini theorem and Lemma~\ref{lem:potential dirac} give
\begin{equation*}
\begin{split}
\Delta p(\mes) (\varphi) =& \iint \Delta \ln \vert \cdot -\zeta\vert (\varphi) \D\mes (\zeta) \\
= & \iint \varphi (\zeta) \D\mes(\zeta)~.
\end{split}
\end{equation*}
}\end{remark}

Let us state Remark~\ref{rem:lap convolution} as follows.%obtained in Remark~\ref{rem:lap convolution}.

\begin{proposition}\label{propr: potentiel l1}
In the sense of positive measures,
\begin{equation}\label{eq:delta potentiel}\Delta p(\mes) = \mes~. \end{equation}
\end{proposition}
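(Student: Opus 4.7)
My plan is to take the definition $p(z;\mes)=(\tfrac{1}{2\pi}\ln|\cdot|)\star\mes$ literally and reduce the computation for a general positive compactly supported $\mes$ to the already established Dirac case, Lemma~\ref{lem:potential dirac}, by Fubini. This is essentially what Remark~\ref{rem:lap convolution} announces, so I will turn that sketch into a careful argument.

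First I would fix a test function $\varphi\in C_c^\infty(M)$ and form
\begin{equation*}
\Delta p(\mes)(\varphi)=\iint_M p(z;\mes)\,\Delta\varphi(z)\,dz=\iint_M\left(\frac{1}{2\pi}\iint \ln|z-\zeta|\,d\mes(\zeta)\right)\Delta\varphi(z)\,dz,
\end{equation*}
which is meaningful because $p(\mes)\in L^1_{\mathrm{loc}}$ by Lemma~\ref{lem:pot Lp} and $\Delta\varphi$ is bounded with compact support. Next I would apply Fubini to swap the two integrations, getting
\begin{equation*}
\Delta p(\mes)(\varphi)=\iint\left(\frac{1}{2\pi}\iint_M\ln|z-\zeta|\,\Delta\varphi(z)\,dz\right)d\mes(\zeta)=\iint \Delta p(\delta_\zeta)(\varphi)\,d\mes(\zeta),
\end{equation*}
and then invoke Lemma~\ref{lem:potential dirac} inside the integrand to conclude $\Delta p(\mes)(\varphi)=\iint\varphi(\zeta)\,d\mes(\zeta)$. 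That identifies $\Delta p(\mes)$ with $\mes$ as distributions on $M$.

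The only real obstacle is justifying Fubini, because $\ln|z-\zeta|$ blows up on the diagonal. I would handle it by the decomposition \eqref{eq:potential bfb}: fixing $r$ large enough that $\operatorname{supp}\mes\cup\operatorname{supp}\varphi\subset Q_r$, write $\ln|z-\zeta|=\ln(2r)-(\ln(2r)-\ln|z-\zeta|)$, so that the unsigned part of the integrand is a non-negative function times $|\Delta\varphi(z)|$. Since $\mes$ is finite and $\zeta\mapsto \iint(\ln(2r)-\ln|z-\zeta|)|\Delta\varphi(z)|\,dz$ is bounded uniformly in $\zeta\in Q_r$ (the inner integral is a convolution of $|\Delta\varphi|$ with the locally $L^1$ function $-\ln|\cdot|$ restricted to a compact set), the Tonelli bound is finite and Fubini applies to the signed version.

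Finally, the passage from distributional equality to equality as positive Borel measures is immediate: $\mes$ is already a regular Borel measure, and $\Delta p(\mes)$, being a positive functional on $C^\infty_c(M)$ (its integral against any non-negative $\varphi$ equals $\iint\varphi\,d\mes\ge 0$), extends uniquely by Riesz representation, as recalled just above the statement. Both extensions coincide on $C_c^\infty$, which is dense in $C_c^0$, so they coincide as measures, giving \eqref{eq:delta potentiel}.
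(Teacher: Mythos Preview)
Your proof is correct and follows essentially the same route as the paper: the paper's argument is precisely the Fubini-plus-Lemma~\ref{lem:potential dirac} computation sketched in Remark~\ref{rem:lap convolution}, and you have simply fleshed out the Fubini justification (via the decomposition \eqref{eq:potential bfb}) and the passage to measures, both of which the paper leaves implicit.
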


\begin{definition}\label{def:subh}
A function $u:M\to [-\infty,+\infty)$, defined over a domain $M$ of the plane, is a \emph{subharmonic function}  \index{subharmonic function} if there   exists a 
finite positive measure with compact support $\mes$ and a harmonic function $h$ over $M$ such that 
$$u=p(\mes) + h~.$$
\end{definition}

As harmonic functions are $C^\infty$
and satisfy equality in \eqref{eq:supermean eq}, subharmonic functions inherit all the properties of the potential we have seen. But be careful that if a potential is defined over the entire plane, it is not necessarily the case for a subharmonic function, because of the harmonic term.

%
%
%\begin{example}\label{ex:ln harm ssh}{\rm
%[$\ln \vert f\vert$ for $f$ holomorphic]
%}\end{example}
%
%
Let us recall the following classical result. It is a straightforward adaptation of a property of holomorphic functions, see e.g. \cite{ransford} for details.
\begin{lemma}[Identity principle]\label{lem:Identity principle}
Let $h_1$ and $h_2$ be two harmonic functions on a domain $M$ in the plane. If $h_1=h_2$ over a non-empty open subset of $M$, then they are equal over $M$.
\end{lemma}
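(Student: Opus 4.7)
The plan is to reduce to the case of a single harmonic function vanishing on an open set, then exploit a clopen set argument combined with the real analyticity of harmonic functions.

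First I would set $h:=h_1-h_2$, which is harmonic on $M$, and observe that it vanishes on a non-empty open set $V\subset M$. The goal becomes showing that $h\equiv 0$ on $M$. Define
\[
U:=\{z\in M : h \text{ vanishes on some open neighborhood of } z\}.
\]
By construction $U\supset V$, so $U$ is non-empty, and $U$ is open by definition. Since $M$ is a domain, hence connected, it suffices to prove that $U$ is closed in $M$; then $U=M$ and we are done.

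To prove $U$ is closed, I would invoke the real analyticity of harmonic functions. Concretely, given $z_0\in M$, pick a disc $Q_r(z_0)\subset M$; this disc is simply connected, so the harmonic conjugate exists (cf.\ the discussion around \eqref{eq:CR}), and $h$ is the real part on $Q_r(z_0)$ of a holomorphic function $f$. In particular $h$ is real analytic at $z_0$. Now suppose $z_0$ is a limit point of $U$ in $M$. All partial derivatives of $h$ vanish identically on $U$, hence by continuity all partial derivatives of $h$ at $z_0$ are zero. Equivalently, all Taylor coefficients of $f$ at $z_0$ have zero real part; using that $f$ is holomorphic together with the Cauchy--Riemann equations \eqref{eq:CR}, the imaginary parts of the Taylor coefficients are determined by the real parts (up to an additive imaginary constant), so $f$ is constant on the component of $Q_r(z_0)$ containing $z_0$, and therefore $h$ is constant, namely zero, there. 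Hence $z_0\in U$, proving closedness.

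The only potentially delicate step is the real analyticity statement; the rest is routine connectedness. One could alternatively invoke Weyl's lemma or the Poisson-integral representation on discs in place of the harmonic-conjugate argument, but the conjugate approach is the most direct given that \eqref{eq:CR} is already in the paper. Once closedness of $U$ is in hand, the connectedness of $M$ delivers $U=M$, so $h_1=h_2$ on $M$.
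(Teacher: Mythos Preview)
Your proof is correct and matches the approach the paper itself indicates: the paper does not give a proof but simply records the lemma as ``a straightforward adaptation of a property of holomorphic functions'' with a reference to \cite{ransford}, and your clopen argument via the harmonic conjugate is exactly that adaptation. The step about the Taylor coefficients of $f$ could be phrased more cleanly (it suffices to note that $f^{(n)}(z_0)$ is a linear combination of partial derivatives of $h$ at $z_0$, hence vanishes for all $n\ge 1$), but the logic is sound.
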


It follows from \eqref{eq:delta potentiel} that if two subharmonic functions $p(\mes_1) + h_1$ and $p(\mes_2) + h_2$ coincide over an open set $U$, then $\mes_1\vert_U=\mes_2\vert_U$. 
However there is no identity principle as Lemma~\ref{lem:Identity principle}, as the functions $u_1=0$ and $u_2=\max(\operatorname{Re}(z),0)$ show. As a consequence, one may change locally the measure and the harmonic function defining a subharmonic function. Section~\ref{sec:localization} will be devoted to describe a procedure that allows to replace the harmonic term by a constant by  changing the measure over discs. 
%However a weak identity principle holds for subharmonic functions, see Lemma~\ref{lem:weak identity}.

%From our definition, a subharmonic function is determined by a unique couple $(\mes,h)$. Indeed, suppose that
%$$u=p(\mes_1)+h_1=p(\mes_2)+h_2~.$$
%Then $\mes_1$ and $\mes_2$ are both equal to $\Delta u=\mes$, so $u=p(\mes)+h_1=p(\mes)+h_2.$ As $p_\mes$ is finite almost everywhere, $h_1=h_2$ almost everywhere, hence everywhere by continuity. 

We will denote by $p(\mes,h)$\index{$p(\mes,h)$} the subharmonic function which at $z$ takes the value
$$p(z;\mes,h)=p(z;\mes)+h(z)~. $$

\begin{remark}{\rm
Definition~\ref{def:subh} is the most adapted to our needs. Subharmonic functions are  more often defined as  upper semicontinuous functions satisfying  the super-mean inequality \eqref{eq:supermean eq}. Actually, this alternative definition may allow the constant function equal to $-\infty$ as a subharmonic function, that is excluded by our definition. The relation between this alternative definition of subharmonic function and our's is given by 
 the  \emph{Riesz Decomposition Theorem},\index{Riesz decomposition} that says  that an upper semicontinuous functions satisfying  the super-mean inequality satisfies our Definition~\ref{def:subh} on any relatively compact open subset of $M$, see \cite[Theorem 3.7.9]{ransford}. (The function $-p(\mes)$ which is  used in Reshetnyak's articles is the \emph{antipotential} of $\mes$, and is
a \emph{superharmonic} function.)}
\end{remark}

\begin{remark}\label{rem:c2ssh} {\rm
Suppose that the boundary of $M$ is regular, and
let $u$ be a $C^2$ function over $\overline{M}$ such that 
$\Delta u\geq 0$. Then $u$ is subharmonic over $M$.
Indeed, this is a straightforward consequence of Lemma~\ref{lem:poisson}: for $z\in M$
$$u(z)=p(z;\mes)+h(z) $$
where 
$\mes=\Delta u \mathcal{L}_{\vert M}$ where  $\mathcal{L}_{\vert M}$ is the restriction of the Lebesgue measure to $M$, so that $\mes$ is a positive finite measure with compact support; and 
$$h(z)=
\frac{1}{2\pi} \int_{\partial M}  u\frac{\partial \ln|z- \cdot|}{\partial \nu}-\ln|z-\cdot| \frac{\partial u }{\partial  \nu}$$
defines a harmonic function over $M$.
}\end{remark}

\begin{remark}[Sobolev regularity]\label{rem:W1p}{\rm
Recall that for a function $f$ which is locally $L^1$ over $M$, a function $v\in L^1_{loc}(M)$ is the \emph{weak derivative}\index{weak derivative} of $f$ in the direction $x$ if for any $C^1$ function $\varphi$ with compact support in $M$,
$$\iint_M v \varphi =- \iint f \frac{\partial \varphi}{\partial x}~, $$
and we define similarly the weak derivative in the direction $y$. For example, when it is defined, for $z=x+\I y$, 
$f(z)= \frac{x}{\vert z\vert^2}$ is the partial derivative of $\ln\vert \cdot\vert$ in the direction of $x$, and using polar coordinates it is immediate that it is locally in $L^1$, and it follows easily that $f$ is the weak derivative of $\ln\vert \cdot\vert$. Actually, $f$ is locally in $L^p$ for $1\leq p <2$. It is exactly the same 
for the weak partial derivative in the direction of $y$. We then have that $\ln\vert \cdot\vert$  is locally in  the \emph{Sobolev space} $W^{1,p}$ for $1\leq p <2$: it is locally in $L^p$ and its weak first partial derivatives are also locally in $L^p$ for $1\leq p <2$. See e.g. \cite{jost-post} for more details.

An utilization of Jensen inequality in a way similar to the proof of Lemma~\ref{lem:pot Lp} implies that any subharmonic function is locally in $W^{1,p}$  for $ 1\leq p <2$.
}
\end{remark}

\subsection{Approximation}\label{sec:approx}

We will use the fact that a radial subharmonic function is increasing with respect to the radius. This can be easily seen as follows:
along any line, as the weak Laplacian is non-negative, the weak second derivative is non-negative hence we have convexity with minimum at the origin by \eqref{eq:supermean eq}. But we will give an argument based on the following maximum principle.

\begin{lemma}[Maximum Principle]\label{princip max}
Let $u$ be an upper semicontinuous function satisfying \eqref{eq:supermean eq} 
 over a  domain $M$. If $u$ attains its maximal value $A$, then $u$ is constant.

In particular, if $\overline{M}$ is compact and $u$ is defined over $\overline{M}$, then
for any $z\in M$,
$$u(z)\leq \sup_{\partial M} u~.$$
\end{lemma}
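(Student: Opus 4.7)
The plan is to prove the first assertion by a standard connectedness argument on the level set $E=\{z\in M: u(z)=A\}$, and then deduce the second assertion as an easy corollary.

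First I would check that $E$ is closed in $M$. Since $A$ is the maximum, $E=\{z\in M:u(z)\geq A\}$, which is closed because $u$ is upper semicontinuous. By hypothesis $E$ is non-empty. Next I would show that $E$ is open. Fix $z_0\in E$ and let $r>0$ be small enough that $\overline{Q}_r(z_0)\subset M$. Applying \eqref{eq:supermean eq} and the fact that $u\leq A$ everywhere on $M$, I get
\begin{equation*}
A=u(z_0)\leq \frac{1}{2\pi r}\int_{C_r(z_0)}u \leq \frac{1}{2\pi r}\int_{C_r(z_0)}A=A,
\end{equation*}
so equality holds throughout, i.e.\ $\int_{C_r(z_0)}(A-u)=0$.

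Here is the point I expect to be the most delicate: concluding that $u\equiv A$ on the whole circle $C_r(z_0)$ from this integral identity. Since $u$ is upper semicontinuous, the function $A-u$ is non-negative and lower semicontinuous on $C_r(z_0)$; if it were strictly positive at some point $w$, then by lower semicontinuity it would be bounded below by a positive constant on a neighborhood of $w$ in the circle, contradicting that its integral vanishes. Hence $u\equiv A$ on $C_r(z_0)$ for every admissible $r$, which gives $Q_{r_0}(z_0)\subset E$ for any $r_0$ with $\overline{Q}_{r_0}(z_0)\subset M$. Thus $E$ is open. Being non-empty, open and closed in the connected set $M$, we obtain $E=M$, proving that $u$ is constant.

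For the second statement, suppose $\overline{M}$ is compact and $u$ is upper semicontinuous on $\overline{M}$. Then $u$ attains its maximum $A$ on $\overline{M}$. If this maximum is attained at some interior point, the first part gives $u\equiv A$ on $M$, and upper semicontinuity at boundary points forces $u(w)\geq A$ for $w\in\partial M$, so in fact $\sup_{\partial M}u=A$ and the desired bound is trivial. Otherwise the maximum is attained only on $\partial M$, so $\sup_{\partial M}u=A\geq u(z)$ for every $z\in M$. In either case $u(z)\leq \sup_{\partial M}u$ on $M$.
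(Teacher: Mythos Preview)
Your proof is correct and follows essentially the same connectedness argument as the paper: showing that the level set $\{u=A\}$ is non-empty, closed (by upper semicontinuity), and open (by the sub-mean inequality), then deducing the boundary estimate. Your treatment of the ``open'' step is in fact more detailed than the paper's, which simply asserts that openness follows from \eqref{eq:supermean eq} without spelling out the lower-semicontinuity argument for $A-u$ on the circle.
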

\begin{proof}
Let 
$$X=\{z\in M \vert u(z)=A\}$$
and suppose that $X$ is not empty. As $A$ is the maximal value, it follows from \eqref{eq:supermean eq}, by taking sufficiently small values of $r$, that $X$ is open. On the other hand, 
$$M \setminus X = \{z\in M\vert u(z)<A\}$$
is open as $u$ is upper semi continuous. So $M=X$.

If $u$ is defined over the compact set $\overline{M}$, then it has an upper bound that is attained. By the preceding property, either the maximal value is attained on the boundary, or $u$ is constant, that leads to the same conclusion.
\end{proof}

\begin{remark}\label{rem:harmonic majorant}{\rm The Maximum Principle Lemma~\ref{princip max} justifies the name ``subharmonic''. Indeed, if $u$ is a subharmonic function over $Q_r(z)$, for any harmonic function $h$ over the disc such that $u=h$ on the boundary, then $u-h$ satisfies the condition of the Maximum Principle Lemma~\ref{princip max}, so that $u\leq h$.
}\end{remark}

Let us mention the following obvious formula:
if $f$ is continuous, then for sufficiently small $r$, $\frac{1}{2\pi r}\int_{C_r(z_0)}f \leq f(z_0)+\epsilon$, from which it is easy to see that
$$\frac{1}{2\pi r}\int_{C_r(z_0)} f\xrightarrow[r\to 0]{}f(z_0)~.$$
Note also that as
$$\frac{1}{\pi r^2}\iint_{Q_r(z_0)}f
=\frac{1}{\pi r^2}\int_0^r \int_{C_R(z_0)}
f \D R \leq \frac{1}{\pi r^2}\int_0^{r}2\pi R(f(z_0)+\epsilon)\D R
=f(z_0)+\epsilon~,$$
we also have 
$$\frac{1}{\pi r^2}\iint_{Q_r(z_0)} f\xrightarrow[r\to 0]{}f(z_0)~.$$

\begin{lemma}\label{lem:fonctions radiales}
 Let $\mes$ be a finite positive measure with compact support.
The function $r\mapsto \frac{1}{2\pi r}\int_{C_r(z_0)} p(\mes)$ is  non-decreasing and 
$$\frac{1}{2\pi r}\int_{C_r(z_0)} p(\mes)\xrightarrow[r\to 0]{} p(z_0;\mes)~.$$
\end{lemma}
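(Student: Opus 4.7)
The strategy is to reduce both claims to a single explicit formula for the circle average. By Fubini (justified via the bound from Lemma~\ref{lem:potential bfb} and the finiteness of $\mes$), I would rewrite
$$\frac{1}{2\pi r}\int_{C_r(z_0)} p(\mes) = \frac{1}{2\pi}\int_0^{2\pi} p(z_0+re^{it};\mes)\, dt = \frac{1}{2\pi}\iint \left(\frac{1}{2\pi}\int_0^{2\pi} \ln|z_0+re^{it}-\zeta|\, dt\right) d\mes(\zeta).$$
The heart of the argument is then computing the inner angular average of $\ln|w+re^{it}|$ with $w=z_0-\zeta$.

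I would establish the classical identity
$$\frac{1}{2\pi}\int_0^{2\pi} \ln|w+re^{it}|\, dt = \ln\max(|w|,r).$$
For $|w|>r$, the function $z\mapsto \ln|z-\zeta|$ is harmonic on a neighborhood of $\bar{Q}_r(z_0)$, so the identity reduces to the mean value equality in \eqref{eq:supermean eq}, giving $\ln|w|$; for $|w|<r$, I would factor $w+re^{it}=re^{it}(1+(w/r)e^{-it})$ and integrate the Taylor expansion of $\ln(1+(w/r)e^{-it})$ term by term, each $e^{ikt}$ contributing zero, leaving $\ln r$; the boundary case $|w|=r$ then follows by continuity (the integrable singularity of $\ln$ causes no issue). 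This yields
$$\frac{1}{2\pi r}\int_{C_r(z_0)} p(\mes) = \frac{1}{2\pi}\iint \ln\max(|z_0-\zeta|,r)\, d\mes(\zeta).$$

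Both conclusions now fall out. Monotonicity in $r$ is immediate since, for each fixed $\zeta$, $r\mapsto \ln\max(|z_0-\zeta|,r)$ is non-decreasing. For the limit, as $r\searrow 0$ the integrand decreases pointwise to $\ln|z_0-\zeta|$ (with value $-\infty$ at $\zeta=z_0$). Fixing some $r_0>0$ with $r\leq r_0$, the integrand is bounded above by $\ln\max(|z_0-\zeta|,r_0)$, which is bounded on the compact support of $\mes$ and hence $\mes$-integrable. The monotone convergence theorem in its decreasing-sequence form (applicable thanks to this integrable upper bound) then gives
$$\lim_{r\to 0}\frac{1}{2\pi r}\int_{C_r(z_0)} p(\mes) = \frac{1}{2\pi}\iint \ln|z_0-\zeta|\, d\mes(\zeta) = p(z_0;\mes),$$
with the understanding that both sides may equal $-\infty$ when $\mes(\{z_0\})>0$ or more generally when $z_0$ sits at a singularity of the potential.

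The only non-routine ingredient is the circle-average identity, but it is classical and self-contained; no real obstacle arises, as the whole argument rests on Fubini, the Taylor trick for $\ln(1+x)$, and a standard monotone convergence application.
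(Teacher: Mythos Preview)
Your argument is correct, but it proceeds along a genuinely different line from the paper's. You compute the circle average explicitly via the classical identity
\[
\frac{1}{2\pi}\int_0^{2\pi}\ln|w+re^{it}|\,dt=\ln\max(|w|,r),
\]
then read off monotonicity pointwise in $\zeta$ and obtain the limit by monotone convergence. The paper instead never evaluates the inner integral: it observes that the radialized function $v(z)=\frac{1}{2\pi}\int_0^{2\pi}p(z_0+ze^{it};\mes)\,dt$ inherits upper semicontinuity and the sub-mean inequality \eqref{eq:supermean eq} from $p(\mes)$, and then applies the Maximum Principle (Lemma~\ref{princip max}) to conclude that the radial function $\bar v(r)$ is non-decreasing; the limit follows from monotonicity together with upper semicontinuity at $r=0$.

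Your route is more explicit and arguably more elementary (no maximum principle needed), and it yields the useful closed formula $\frac{1}{2\pi r}\int_{C_r(z_0)}p(\mes)=\frac{1}{2\pi}\iint\ln\max(|z_0-\zeta|,r)\,d\mes(\zeta)$; note that this identity is essentially the content of Example~\ref{ex:potentiel cercle} in the paper, read the other way around. The paper's route, by contrast, uses only the abstract properties (upper semicontinuity and \eqref{eq:supermean eq}) and therefore extends verbatim to any subharmonic function, not just potentials of compactly supported measures.
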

\begin{proof}
Let us define 
$$\bar{v}(r)=\frac{1}{2\pi r}\int_{C_r(z_0)} p(\mes)$$
and
$$v(z)=\frac{1}{2\pi }\int_0^{2\pi} 
p(z\E^{\I t};\mes)\D t~. $$
Similarly to the proof of Lemma~\ref{lem:eq:supermean eq}, it is easy to see that as $p(\mes)$ satisfies \eqref{eq:supermean eq}, then $v$ satisfies \eqref{eq:supermean eq}. As $p(\mes)$ is bounded on compact sets and upper semicontinuous, it follows that $v$ is also upper semicontinuous (see the proof of Lemma~\ref{lem: pmu lower}). So Lemma~\ref{princip max} applies, and as $v$ is radial, we obtain that for $z\in Q_r(0)$, i.e. $\vert z\vert=r'<r$,
$$\bar{v}(r')=v(z)\leq \sup_{z\in C_r(0)}v(z)=\bar{v}(r)$$
so $\bar{v}$ is non-decreasing. 
It also follows that $\liminf_{r\to 0} \bar{v}(r)\geq \bar{v}(0)$, and the last result follows because $v$ is upper semicontinuous: $\limsup_{r\to 0} \bar{v}(r)\leq \bar{v}(0)$.
\end{proof}

The following fundamental result (Proposition~\ref{prop:approx potentiel}) is obtained from a simple convolution (or \emph{Sobolev averaging}\index{Sobolev averaging} as it is called in \cite{R61}), but \eqref{eq:supermean eq}
allows a strong conclusion.

Recall that a sequence of finite positive measure with compact support $(\mes_n)_n$ 
\emph{converges weakly}\index{weak convergence} to $\mes$ if for any continuous bounded function $f$ over the plane,
\begin{equation}\label{eq:weak conv def}\iint f \D\mes_n \xrightarrow[n\to\infty]{} \iint f\D\mes~. \end{equation}

\begin{remark}\label{rem:def weak cv mes}{\rm
The terminology of weak convergence of measures is ambiguous in the literature. However, we will be concerned only 
by sequences of measures which have their support in a same compact set. In that case, the weak convergence is equivalent to
consider \eqref{eq:weak conv def} only for continuous $f$ with compact support.
}~\end{remark}

\begin{proposition}\label{prop:approx potentiel}
Let $\mes$ be a finite positive measure with compact support. Then there exists a sequence  $\left(\mes_n\right)_n$  of finite positive measure, with compact support and smooth Lebesgue densities, such that $\mes_n$ converge weakly  to $\mes$.

Moreover,  $p(\mes_n)$ is a non-decreasing sequence pointwise converging to $p(\mes)$.
\end{proposition}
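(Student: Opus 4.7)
The plan is to obtain $\mes_n$ by mollifying $\mes$ against a well-chosen radial kernel. Fix once and for all a smooth non-negative radially symmetric function $\varphi$ on $\C$, supported in the closed disc $Q_1$, whose radial profile $\psi(r):=\varphi(re^{i0})$ is non-increasing in $r$, and normalized by $\iint \varphi=1$. Set $\varphi_n(z):=n^2\varphi(nz)$ and define $\mes_n$ to be the measure with Lebesgue density $z\mapsto \iint \varphi_n(z-\zeta)\,d\mes(\zeta)$, i.e.\ $\mes_n=\varphi_n \star \mes$. Differentiation under the integral gives smoothness of the density; positivity and finiteness ($\mes_n(\C)=\mes(\C)$) are obvious; and the support of $\mes_n$ is contained in the $\tfrac{1}{n}$-neighbourhood of $\operatorname{supp}\mes$, hence compact.

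For the weak convergence, I would run the standard approximation-of-identity argument. For a bounded continuous $f$ on $\C$, Fubini and a change of variable yield
\[
\iint f\,d\mes_n = \iint (f\star \varphi_n)(\zeta)\,d\mes(\zeta),
\]
where I used the radial symmetry $\varphi_n(-w)=\varphi_n(w)$. Since $f$ is uniformly continuous on any compact neighbourhood of $\operatorname{supp}\mes$ and $\varphi_n$ has shrinking support, $f\star\varphi_n\to f$ uniformly on $\operatorname{supp}\mes$; as $\mes$ is finite, dominated convergence gives $\iint f\,d\mes_n\to \iint f\,d\mes$.

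For the monotone pointwise convergence of the potentials, the key ingredient is Lemma~\ref{lem:fonctions radiales}. Convolution is associative (Fubini is legitimate since $p(\mes)\in L^1_{\mathrm{loc}}$ by Lemma~\ref{lem:pot Lp} and $\varphi_n$ is bounded with compact support), which gives $p(\mes_n)=\varphi_n \star p(\mes)$. Since $\psi_n$ is non-increasing, a layer-cake decomposition yields $\varphi_n=\int_0^{1/n}(-\psi_n'(r))\,\mathbbm{1}_{Q_r}\,dr$, so passing to polar coordinates one finds
\[
p(\mes_n)(z) = \int_0^{1/n}(-\psi_n'(r))\,\pi r^2\,M_r(z)\,dr,\qquad M_r(z):=\frac{1}{\pi r^2}\iint_{Q_r(z)} p(\mes),
\]
with the weight $(-\psi_n'(r))\pi r^2$ of total mass $1$ on $[0,1/n]$ (integration by parts). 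Writing $M_r(z)=2\int_0^1 t\,\bar v_z(rt)\,dt$ with $\bar v_z(r)=\frac{1}{2\pi r}\int_{C_r(z)}p(\mes)$, the monotonicity of $\bar v_z$ in $r$ (Lemma~\ref{lem:fonctions radiales}) transfers immediately to monotonicity of $M_r$ in $r$, and $M_r(z)\to p(\mes)(z)$ as $r\to 0$. The substitution $s=nr$ then transforms the above into
\[
p(\mes_n)(z)=\int_0^1 (-\psi'(s))\,\pi s^2\,M_{s/n}(z)\,ds,
\]
a weighted average whose weight is independent of $n$ and whose integrand $M_{s/n}(z)$ is monotone in $n$ (because $s/n$ decreases as $n$ grows). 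Hence $(p(\mes_n)(z))_n$ is monotone, and dominated convergence in $s$ (using $M_{s/n}(z)\le M_s(z)$) together with $M_{s/n}(z)\to p(\mes)(z)$ gives $p(\mes_n)(z)\to p(\mes)(z)$.

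The only subtle point — and the one where a naive choice of mollifiers fails — is producing a \emph{monotone} sequence rather than merely a convergent one. The trick is to use dilates of a single non-increasing radial kernel, so that after the layer-cake decomposition the $n$-dependence enters only through the argument $s/n$ of $M$; the monotonicity in $n$ is then just a restatement of the monotonicity of the disc averages $M_r$, which is itself a direct consequence of Lemma~\ref{lem:fonctions radiales}.
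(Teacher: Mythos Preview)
Your proof is correct and follows the same strategy as the paper: mollify by dilates of a fixed radial kernel, identify $p(\mes_n)=\varphi_n\star p(\mes)$, and deduce the monotone pointwise convergence from Lemma~\ref{lem:fonctions radiales}. The one noteworthy difference is that you route the computation through a layer-cake decomposition and disc averages $M_r$, for which you impose that the radial profile $\psi$ be non-increasing. The paper avoids this detour: writing the convolution directly in polar coordinates and substituting $t=nr$ gives
\[
p(z;\mes_n)=\int_0^1\Bigl(\int_0^{2\pi} p\bigl(z-\tfrac{t}{n}\,e^{i\theta};\mes\bigr)\,d\theta\Bigr)\,t\,\varphi(t)\,dt,
\]
where the weight $t\,\varphi(t)$ is already independent of $n$ and the inner integral is $2\pi$ times the \emph{circle} average at radius $t/n$, monotone in $n$ directly by Lemma~\ref{lem:fonctions radiales}. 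So, contrary to your closing remark, the monotonicity does not hinge on a non-increasing profile; any non-negative radial kernel does the job, and both the layer-cake step and the passage through $M_r$ can simply be dropped.
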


\begin{proof}

Let us take a Sobolev hat $\alpha$ defined e.g., by
$$\alpha(z)=\begin{cases}
c\operatorname{exp}\left(\frac{1}{|z|^2-1}\right) & \mbox{if }|z|< 1~\\
0 & \mbox{if }|z|\geq 1~
\end{cases}~,$$
where  $c$ is such that
$\iint \alpha =1.$  Let us define
$$\alpha_h(z)=\frac{1}{h^2}\alpha\left(\frac{z}{h}\right)\,~,\, f_h= \alpha_h\star \mes$$
and let $\mes_h$ be the measures with $C^\infty$ Lebesgue density $f_h$. 
Let $\epsilon>0$ and let $\varphi$ be a continuous function with compact support. In particular, $\varphi$ is uniformly continuous over compact sets, and  for any $z$ and for sufficiently small $h$, 
$\sup_{\zeta\in \overline{Q}_h}\vert\varphi(z+\zeta)-\varphi(z)\vert <\epsilon$. So we have
\begin{equation*}\begin{split}
\left\vert \iint \varphi \D\mes_h -\iint \varphi \D\mes\right\vert\leq \iint\iint_{Q_h} \vert\varphi(z+\zeta)-\varphi(z)\vert \alpha_h(\zeta) \D\zeta \D\mes(z) \\
\leq \epsilon \mes(\mathbb{C}) \iint_{Q_h}\alpha_h(\zeta) \D\zeta =
\epsilon \mes(\mathbb{C})~,
\end{split}
\end{equation*}
i.e.,  $\mes_h$  converge weakly to $\mes$ when $h\to 0$. 

Moreover,
$$p(\mes_h)=\ln\vert\cdot\vert \star \mes_h
=\ln\vert\cdot\vert \star f_h=\ln\vert\cdot\vert \star (\alpha_h\star \mes)=(\ln\vert\cdot\vert \star \mes)\star \alpha_h=p_\mes\star \alpha_h$$
is a $C^\infty$ function, and as $\alpha_h$ is a radial function,  substituting $t=r/h$ in the last line:
\begin{equation*}\begin{split}
p(z;\mes_h)=& \iint p(z-\zeta;\mes)\alpha_h(\zeta)\D\zeta \\
=&  \int_0^{h} \left( \int_0^{2\pi}p(z-r\E^{\I\theta};\mes) \D\theta\right)  \frac{r}{h^2} \alpha\left(\frac{r}{h}\right)\D r  \\
=& \int_0^1 \left( \int_0^{2\pi}p(z-ht\E^{\I\theta};\mes) \D\theta\right)  t \alpha\left(t\right)\D t ~.
\end{split}
\end{equation*}

By Lemma~\ref{lem:fonctions radiales},  the term into the parenthesis non-decreases to $2\pi p(z;\mes)$ when $h\to 0$, so by Levi's monotone convergence, $p(z;\mes_h)$ non-decreases to
$p(z;\mes)$. 
\end{proof}

It is proved in Lemma~5 in \cite{R61} that if a sequence of measures  converges weakly, then the associated sequence of potential converges in $L^p$ for $p>1$. For more results about convergence, see \cite{landkof}, especially Theorem 3.8.

Let us end this section with some applications of Proposition~\ref{prop:approx potentiel}.

Let us first mention the following famous result. It  may be proved by a simple convolution, see e.g. \cite[Lemma 3.7.1]{ransford} or \cite[Theorem 4.20]{demailly}, or using the regularity of the  mean equality more explicitly \cite[Theorem 4.7]{dacorogna}, \cite{rao}.% from the more general fact that the Laplacian is a hypoelliptic operator.

\begin{theorem}[Weyl Lemma]\label{thm:Weyl's Lemma}\index{Weyl's lemma}
A locally integrable function over a domain $M$ such that its weak Laplacian is zero is equal almost everywhere to a harmonic function.
\end{theorem}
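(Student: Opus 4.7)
The plan is to prove Weyl's Lemma via mollification, exploiting the fact that mollifying commutes with the Laplacian and that a smooth harmonic function is unchanged under convolution with a radial mollifier of total mass one.

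First, I would set up the mollification exactly as in the proof of Proposition~\ref{prop:approx potentiel}: take the Sobolev hat $\alpha$ and $\alpha_\epsilon(z) = \epsilon^{-2}\alpha(z/\epsilon)$, and on the open set $M_\epsilon = \{z \in M : \bar Q_\epsilon(z) \subset M\}$ define $u_\epsilon = u \star \alpha_\epsilon$. Since $u$ is locally $L^1$ and $\alpha_\epsilon$ is $C^\infty$ with compact support, $u_\epsilon$ is a well-defined $C^\infty$ function on $M_\epsilon$, and by standard mollifier theory $u_\epsilon \to u$ in $L^1_{\text{loc}}(M)$ as $\epsilon \to 0$, hence a.e.\ along some subsequence.

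The crucial step is that $u_\epsilon$ is classically harmonic on $M_\epsilon$. For any $z \in M_\epsilon$, the function $\zeta \mapsto \alpha_\epsilon(z - \zeta)$ belongs to $C_c^\infty(M)$, so by the hypothesis $\Delta u = 0$ (as a functional on $C_c^\infty(M)$) applied to this test function,
\[
\Delta u_\epsilon(z) = \iint u(\zeta)\,\Delta_\zeta \alpha_\epsilon(z-\zeta)\,d\zeta = \Delta u\bigl(\alpha_\epsilon(z - \cdot)\bigr) = 0.
\]
Thus $u_\epsilon$ is smooth and satisfies $\Delta u_\epsilon = 0$; by Remark~\ref{rem:c2ssh} (or directly from the Green formula of Lemma~\ref{lem:poisson}), it is harmonic in the classical sense and in particular satisfies the equality version of the mean-value identity \eqref{eq:supermean eq}.

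Next I would show that $u_\epsilon$ does not depend on $\epsilon$ on overlapping domains. For $0 < \delta < \epsilon$ small enough, on $M_{\epsilon+\delta}$,
\[
u_\epsilon \star \alpha_\delta = u \star \alpha_\epsilon \star \alpha_\delta = u \star \alpha_\delta \star \alpha_\epsilon = u_\delta \star \alpha_\epsilon.
\]
Since $u_\epsilon$ is harmonic and $\alpha_\delta$ is a radial function with integral one supported in $\bar Q_\delta$, writing the convolution in polar coordinates and using the mean-value equality $\frac{1}{2\pi r}\int_{C_r(z)} u_\epsilon = u_\epsilon(z)$ for all admissible $r$ gives $u_\epsilon \star \alpha_\delta = u_\epsilon$. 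The same argument (swapping roles) gives $u_\delta \star \alpha_\epsilon = u_\delta$. Hence $u_\epsilon = u_\delta$ on their common domain.

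Therefore the harmonic functions $u_\epsilon$ glue into a single harmonic function $h$ defined on $M$ (a given $z \in M$ lies in $M_\epsilon$ for all sufficiently small $\epsilon$, and the value $u_\epsilon(z)$ stabilizes). Since $u_\epsilon \to u$ almost everywhere along a subsequence and $u_\epsilon = h$ on $M_\epsilon$ for small $\epsilon$, we conclude $u = h$ almost everywhere on $M$. The main obstacle is cleanly justifying the passage from the distributional hypothesis to the pointwise identity $\Delta u_\epsilon \equiv 0$, which is handled above by recognizing $\alpha_\epsilon(z - \cdot)$ as an admissible test function; everything else is mollifier bookkeeping and the mean-value property.
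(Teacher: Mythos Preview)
Your proof is correct and follows precisely the mollification route that the paper itself points to (``It may be proved by a simple convolution''), which the text does not spell out but only references externally. The key steps --- that $\alpha_\epsilon(z-\cdot)\in C_c^\infty(M)$ makes $\Delta u_\epsilon=0$ pointwise, and that radial averaging leaves a harmonic function invariant so the $u_\epsilon$ agree and glue --- are all sound.
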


A locally integrable function over $M$ such that its weak Laplacian is a positive finite measure with compact support is called \emph{almost subharmonic}\index{almost subharmonic}.

\begin{lemma}\label{lem:a.e.=pp sh}
An   almost subharmonic function is equal almost everywhere to a subharmonic function. 
\end{lemma}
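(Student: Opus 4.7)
The plan is to reduce the statement to Weyl's Lemma by subtracting off the Riesz potential of the measure that equals the weak Laplacian.

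More precisely, let $f$ be almost subharmonic on $M$, with weak Laplacian $\mu$, a positive finite measure with compact support. First I would form the auxiliary function $g := f - p(\mu)$. Since $f$ is locally $L^1$ by hypothesis and $p(\mu)$ is locally $L^p$ for every $p\geq 1$ by Lemma~\ref{lem:pot Lp}, the function $g$ is locally integrable on $M$, so its weak Laplacian is well-defined as a distribution.

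Next I would compute $\Delta g$ in the weak sense. For any test function $\varphi \in C_c^\infty(M)$,
\[
\Delta g(\varphi) = \iint_M g\, \Delta\varphi = \iint_M f\,\Delta\varphi - \iint_M p(\mu)\,\Delta\varphi = \Delta f(\varphi) - \Delta p(\mu)(\varphi).
\]
By hypothesis $\Delta f = \mu$ as a distribution, and by Proposition~\ref{propr: potentiel l1} (see also Remark~\ref{rem:lap convolution}) we have $\Delta p(\mu) = \mu$ as a measure, hence also as a distribution when tested against $C_c^\infty$ functions. Therefore $\Delta g = 0$ in the weak sense on $M$.

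Applying Weyl's Lemma (Theorem~\ref{thm:Weyl's Lemma}) to the locally integrable function $g$, there exists a harmonic function $h$ on $M$ such that $g = h$ almost everywhere. Consequently $f = p(\mu) + h$ almost everywhere on $M$, and the right-hand side is subharmonic by Definition~\ref{def:subh}. The only subtle point to verify is that $\Delta p(\mu)$ agrees with the measure $\mu$ when viewed as a distribution against arbitrary $C_c^\infty$ test functions (not just continuous compactly supported ones), but this is exactly the content of Remark~\ref{rem:lap convolution}, whose Fubini-based derivation used smooth $\varphi$ precisely to make the weak-Laplacian computation legitimate. No step here is a real obstacle; the proof is essentially a one-line reduction to Weyl's Lemma once the potential $p(\mu)$ has been subtracted off.
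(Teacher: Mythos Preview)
Your proof is correct and follows exactly the same approach as the paper: subtract off the potential $p(\Delta u)$, observe the difference has zero weak Laplacian, and apply Weyl's Lemma to conclude it equals a harmonic function almost everywhere. The paper's proof is the one-line version of what you wrote out in detail.
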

\begin{proof}
Let $u$ be an almost subharmonic function.  Then $\Delta (u-p(\Delta u))=0$, hence by the Weyl Lemma Theorem~\ref{thm:Weyl's Lemma},
  $u-p(\Delta u)$ is equal almost-everywhere to a harmonic function $h$, so an almost subharmonic function is equal almost everywhere to the subharmonic function $p(\Delta u)+h$.
  \end{proof}
   
A subharmonic function is an upper semicontinuous  almost subharmonic function. But those two properties do not characterize subharmonic functions, 
 as shows the following elementary example taken from 
 \cite{demailly}.
Let
$v$ be the function that assigns the value $1$ to the points of a bounded closed set of measure zero in the plane, and zero otherwise. It is upper semicontinous, and its weak Laplacian is the zero measure $\mes$. So by our definition, if $v$ was subharmonic, it should be harmonic, that  is not. Let us mention the following result,
  that was initially proved in \cite{szpilrajn},  see also \cite{schwartz}.
  
  \begin{lemma}[Szpilrajn Lemma]\label{lem almost cont}
  A continuous almost subharmonic function is subharmonic.
  \end{lemma}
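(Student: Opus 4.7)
The plan is to apply Lemma~\ref{lem:a.e.=pp sh} to obtain a subharmonic candidate, then promote an almost-everywhere equality to an everywhere equality by a mean-value argument.

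Let $u$ be continuous and almost subharmonic, so that $\mes := \Delta u$ is a finite positive Borel measure with compact support. By Lemma~\ref{lem:a.e.=pp sh}, there exists a harmonic function $h$ such that $v := p(\mes) + h$ is subharmonic and $u = v$ almost everywhere (with respect to the Lebesgue measure). It suffices to prove that $u(z_0) = v(z_0)$ at every point $z_0$.

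Fix $z_0$. Let $E = \{z : u(z) \neq v(z)\}$, a Lebesgue-null set. Writing the complement of $\{z_0\}$ in polar coordinates centered at $z_0$, the measure $\mathcal{L}$ corresponds to $r\,dr\,d\theta$, so by Fubini the set of radii $r > 0$ for which the arc-length measure of $E \cap C_r(z_0)$ is positive is itself a null subset of $(0,\infty)$. Moreover, since $u$ is continuous and $v$ is locally $L^1$ (Lemma~\ref{lem:pot Lp}), both restrictions are integrable on $C_r(z_0)$ for a.e.\ $r$. Hence there is a set $S \subset (0,\infty)$ with $0$ as a Lebesgue density point such that for every $r \in S$,
\begin{equation*}
\frac{1}{2\pi r}\int_{C_r(z_0)} u = \frac{1}{2\pi r}\int_{C_r(z_0)} v~.
\end{equation*}

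Now take the limit along $r \to 0$ with $r \in S$. On the left, continuity of $u$ gives
$\frac{1}{2\pi r}\int_{C_r(z_0)} u \to u(z_0)$. On the right, write $v = p(\mes) + h$: for the harmonic (hence continuous) term, the circle average tends to $h(z_0)$, while for the potential, Lemma~\ref{lem:fonctions radiales} yields
\begin{equation*}
\lim_{r \to 0}\frac{1}{2\pi r}\int_{C_r(z_0)} p(\mes) = p(z_0;\mes)~.
\end{equation*}
(These limits exist in $[-\infty,+\infty)$ regardless of whether $p(z_0;\mes)$ is finite, because $r \mapsto \frac{1}{2\pi r}\int_{C_r(z_0)} p(\mes)$ is monotone.) Therefore $u(z_0) = p(z_0;\mes) + h(z_0) = v(z_0)$; since $u(z_0)$ is a finite real number, this also forces $p(z_0;\mes) > -\infty$. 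As $z_0$ was arbitrary, $u = v$ everywhere and $u$ is subharmonic.

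The only delicate step is the Fubini selection of radii, which is needed precisely because $u$ and $v$ agree only a.e.\ and circles have Lebesgue measure zero; everything else is an application of results already proved (Lemmas~\ref{lem:a.e.=pp sh} and~\ref{lem:fonctions radiales}).
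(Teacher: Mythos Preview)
Your proof is correct. Both you and the paper start from Lemma~\ref{lem:a.e.=pp sh} to get a subharmonic $v$ with $u=v$ a.e., and then upgrade this to equality everywhere; the difference is in how the upgrade is done. The paper mollifies: since $u=v$ a.e., the convolutions $u_n=v_n$ agree everywhere, and then $u_n\to u$ pointwise by continuity while $v_n\to v$ pointwise by Proposition~\ref{prop:approx potentiel}, forcing $u=v$. You instead work with circle means at a fixed $z_0$: Fubini gives a full-measure set of radii on which the circle averages of $u$ and $v$ coincide, and then you let $r\to 0$ along that set, using continuity of $u$ on one side and Lemma~\ref{lem:fonctions radiales} (plus monotonicity, so the restricted limit equals the full limit) on the other. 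Your route avoids the mollification machinery and relies only on Fubini and Lemma~\ref{lem:fonctions radiales}; the paper's route is shorter because the pointwise convergence of mollifiers for subharmonic functions has already been packaged in Proposition~\ref{prop:approx potentiel}. A minor stylistic point: ``$0$ as a Lebesgue density point of $S$'' is more than you need---since the complement of $S$ is null in $(0,\infty)$, any sequence in $S$ tending to $0$ suffices, and monotonicity of the circle mean of $p(\mes)$ does the rest.
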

\begin{proof}
Let $u$ be a continuous almost subharmonic function. It is equal almost everywhere to a subharmonic function 
$v$. 
Let us consider $u_n$ and $v_n$ which are obtained from  $u$ and $v$ by convolution by the  regularizing $C^\infty$ functions from the proof of Proposition~\ref{prop:approx potentiel}. 
As $u=v$ almost everywhere, then $u_n=v_n$. As $u$ is continuous, $u_n$ converge pointwise to $u$, see e.g., \cite{folland}. By Proposition~\ref{prop:approx potentiel}, $v_n$ converge pointwise to $v$, so $u=v$.
\end{proof}

\begin{lemma}\label{lem:weak identity}
Let $u_1, u_2$ be two subharmonic functions equal almost everywhere over $M$. Then they are equal over $M$.
\end{lemma}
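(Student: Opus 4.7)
The plan is to argue via the Sobolev averaging from Proposition~\ref{prop:approx potentiel}. Write $u_i = p(\mes_i) + h_i$ for $i=1,2$. Fix $z \in M$ and pick $\epsilon_0 > 0$ small enough that the closed disc $\bar{Q}_{\epsilon_0}(z)$ is contained in $M$. For $0 < \epsilon < \epsilon_0$, define the convolved functions $(u_i)_\epsilon = u_i \star \alpha_\epsilon$, using the Sobolev hat $\alpha_\epsilon$ of the proof of Proposition~\ref{prop:approx potentiel}. These are well-defined near $z$ because $u_i$ is locally integrable (Lemma~\ref{lem:pot Lp} applied to $p(\mes_i)$, together with smoothness of $h_i$).

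Since $u_1 = u_2$ almost everywhere, the integrals defining $(u_1)_\epsilon(z)$ and $(u_2)_\epsilon(z)$ coincide, so $(u_1)_\epsilon(z) = (u_2)_\epsilon(z)$ for every such $\epsilon$. It therefore suffices to show that, for each subharmonic $u = p(\mes) + h$, the convolution $u_\epsilon(z)$ tends to $u(z)$ as $\epsilon \to 0$. Splitting $u_\epsilon = p(\mes) \star \alpha_\epsilon + h \star \alpha_\epsilon$, the first term tends pointwise (and monotonically from below) to $p(\mes)$ by Proposition~\ref{prop:approx potentiel}. The second term equals $h$ identically: indeed, $h$ is harmonic and $\alpha_\epsilon$ is a radial bump supported in $\bar Q_\epsilon \subset M - z$, so the mean value property for harmonic functions (equality in \eqref{eq:supermean eq}) gives $h \star \alpha_\epsilon(z) = h(z)$ by integrating in polar coordinates exactly as in the last display of the proof of Proposition~\ref{prop:approx potentiel}.

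Combining these two facts yields $(u_i)_\epsilon(z) \to u_i(z)$ as $\epsilon \to 0$, and passing to the limit in $(u_1)_\epsilon(z) = (u_2)_\epsilon(z)$ gives $u_1(z) = u_2(z)$. Since $z \in M$ was arbitrary, $u_1 = u_2$ on $M$.

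The step that requires the most care is the harmonic averaging identity $h \star \alpha_\epsilon = h$: one must check that the support of the translated bump sits inside $M$ (hence the choice of $\epsilon_0$), and then use that the circular mean $\frac{1}{2\pi}\int_0^{2\pi} h(z + re^{i\theta})\, d\theta$ equals $h(z)$ for all $r < \epsilon_0$ because $h$ is harmonic on a neighborhood of $\bar{Q}_{\epsilon_0}(z)$. The monotone convergence of the potential part is then just a direct invocation of Proposition~\ref{prop:approx potentiel} (with the measure $\mes$ fixed and the hat $\alpha_\epsilon$ playing the role of the regularizing kernel used there).
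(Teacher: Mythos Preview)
Your proof is correct and follows essentially the same approach as the paper: both convolve with the Sobolev hat of Proposition~\ref{prop:approx potentiel}, observe that the convolutions agree (since $u_1=u_2$ a.e.), and then pass to the limit pointwise using Proposition~\ref{prop:approx potentiel} for the potential part. Your treatment of the harmonic term via the mean value identity $h\star\alpha_\epsilon(z)=h(z)$ (with the care about $\epsilon_0$ to keep the support inside $M$) is in fact a cleaner and more explicit version of the paper's brief remark about restricting the convolution to points at distance $>1/n$ from $\partial M$.
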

\begin{proof}
Let us first suppose that $u_1$ and $u_2$ are potentials. The approximations given by the proof of Proposition~\ref{prop:approx potentiel} are then equal, and passing to the limit the functions are equal. In the general case, harmonic functions on $M$ are added to the potentials, they may not be integrable over the whole $M$. To overcome this, it suffices to make the convolution at the step $n$ on the set of points at distance larger than $1/n$ from the boundary of $M$.
\end{proof}

%
%\begin{proposition}\label{prop:approx potentiel poly}
%Let $\mes$ be a finite positive measure of compact support. Then there exists a sequence of finite sum of Dirac measures with positive weights
% $\mes_n$ such that $\mes_n$ weakly converge to $\mes$, and $p(\mes_n)$ is a \com{non-decreasing} sequence pointwise converging to $p(\mes)$.
%\end{proposition}
%\begin{proof}
%
%\end{proof}

\subsection{Polar sets}\label{sec: hausdorff sub}

Let us introduce the \emph{infinity set}\index{infinity set} of a finite positive measure over the plane, with compact support, $\mes$ :
\begin{equation}\label{eq:sing mes}
\sing =\{z\in \C : p(z;\mes)=-\infty\}~. 
\end{equation}
% We will have a description on how small is $\sing$ in Section~\ref{sec: hausdorff sub}. 
Properties of the potential imply
\begin{itemize}
\item  $\sing=\bigcap_{n\in \N} \{ z : p(z;\mes)< -n\}$, so
$\sing$ is a $G_\delta$ set: a countable intersection of open  sets (because $p(z;\mes)$ is upper semicontinuous by
Lemma~\ref{lem: pmu lower});
\item $\sing\subset \operatorname{supp}(\mes)$;
\item $\sing$ is a Borel set of zero Lebesgue measure.
\end{itemize}

We will need a more precise description of $\sing$, in order to define  the restriction of the difference of two subharmonic functions to some arcs.

%We will give a property of the set $\sing$ of points where the potential of a finite positive measure with compact support over the plane is equal to $-\infty$, see \eqref{eq:sing mes}.

For a subset $A$ of a metric space, for $\alpha\geq 0$ and $\delta \in (0,+\infty]$, let us denote
$$\mathcal{H}^\alpha_\delta(A)=c_\alpha\inf \left\{ \sum_{i\in I} (\operatorname{diam}(A_i))^\alpha  : \operatorname{diam}(A_i)<\delta,\, A \subset \bigcup_{i\in I} A_i \right\}~,$$
where $c_\alpha$ is a positive constant which will be fixed below.

Note that for a given $\alpha$, the function $\delta\mapsto \mathcal{H}^\alpha_\delta(A)$ is  non-negative and non-increasing. 
The \emph{$\alpha$-dimensional Hausdorff measure} of $A$ is $$\mathcal{H}^\alpha(A)=\sup_{\delta>0} \mathcal{H}^\alpha_\delta (A)~,$$ and it is actually a (Borel) measure and the \emph{Hausdorff dimension} of $A$ is $$\dim_H(A)=\inf\{\alpha>0 : \mathcal{H}^\alpha(A)=0\}~.$$ 

We will need the following, see e.g. \cite{AT}.
\begin{proposition}\label{prop: pte Haus}
 For any Borel set $A\subset \R^n$, $\mathcal{H}^n(A)$ is equal to the Lebesgue measure of $A$ (for a suitable constant $c_n$).
\end{proposition}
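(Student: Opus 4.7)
\medskip

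The plan is to set the normalization constant to $c_n = \omega_n/2^n$, where $\omega_n$ is the Lebesgue volume of the unit ball in $\R^n$; this is the value of $c_n d^n$ corresponding to the Lebesgue volume of a ball of diameter $d$. I would first reduce to the case where $A$ is bounded, since both $\mathcal{H}^n$ and $\mathcal{L}^n$ are regular Borel measures on $\R^n$ (for $\mathcal{H}^n$, this follows from the $G_\delta$ approximation inherent in the infimum definition) and both are inner regular on Borel sets.

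For the inequality $\mathcal{L}^n(A)\leq \mathcal{H}^n(A)$, the key input is the \emph{isodiametric inequality}: for any Borel set $B\subset\R^n$,
\[
\mathcal{L}^n(B)\leq c_n (\operatorname{diam} B)^n.
\]
Granting this, if $A\subset \bigcup_i A_i$ with $\operatorname{diam}(A_i)<\delta$, then $\mathcal{L}^n(A)\leq \sum_i \mathcal{L}^n(A_i)\leq c_n\sum_i (\operatorname{diam}A_i)^n$, so taking the infimum over admissible covers yields $\mathcal{L}^n(A)\leq \mathcal{H}^n_\delta(A)$, and then $\mathcal{L}^n(A)\leq \mathcal{H}^n(A)$ by letting $\delta\to 0$.

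For the reverse inequality, I would fix $\delta>0$ and an open set $V\supset A$. Applying the Vitali covering theorem (in the form that gives a countable disjoint family of closed balls $\bar{Q}_{r_i}(z_i)\subset V$ with $2r_i<\delta$ covering $A$ up to a Lebesgue-null set), one obtains
\[
\mathcal{H}^n_\delta(A) \leq c_n \sum_i (2r_i)^n = \sum_i \mathcal{L}^n(\bar{Q}_{r_i}(z_i)) \leq \mathcal{L}^n(V),
\]
where the last inequality uses disjointness. Letting $\delta\to 0$ and then using the outer regularity of $\mathcal{L}^n$ to shrink $V$ down to $A$ gives $\mathcal{H}^n(A)\leq \mathcal{L}^n(A)$. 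One needs to verify separately that sets of Lebesgue measure zero have $\mathcal{H}^n$ measure zero; this is done by a direct covering argument using that a null set is contained in an open set of arbitrarily small volume, which can itself be exhausted by small disjoint balls.

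The main obstacle is the isodiametric inequality itself, which is not obvious (the proof of spherical optimality cannot be done by a direct convexification, since the diameter-preserving convex hull of a set need not be a ball). The standard route is \emph{Steiner symmetrization}: iteratively replacing $B$ by its symmetrization through coordinate hyperplanes preserves Lebesgue measure, does not increase the diameter, and produces in the limit a centrally symmetric set of the same volume contained in the ball of the original diameter centered at the origin. Granting this classical argument, the equality $\mathcal{H}^n=\mathcal{L}^n$ follows from the two bounds above.
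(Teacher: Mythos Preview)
Your argument is the standard proof of this classical fact (isodiametric inequality for one direction, Vitali covering for the other), and it is correct. In the paper, Proposition~\ref{prop: pte Haus} is stated without proof and simply referred to an external source (``see e.g.\ \cite{AT}''), so there is nothing to compare: your writeup supplies exactly the kind of argument the cited textbooks contain.
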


We want to show that the infinity set has Hausdorff dimension zero. We essentially follow Section~5.9 in \cite{AG}.

\begin{lemma}\label{lem: majoration sup}
Let $\mes$ be a finite  positive measure over $\C$ with compact support, and let $p(\mes)$ be its potential. Then for any $\alpha > 0$ and $z\in \mathbb{C}$,

$$ p(z;\mes) \geq \frac{1}{2\pi \alpha} \inf \left\{-\mes(Q_t(z))t^{-\alpha}: t\in ]0,1]\right\}~.  $$ 

\end{lemma}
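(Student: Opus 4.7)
The plan is to split the integral defining $p(z;\mes)$ at the sphere $|z-\zeta|=1$. Outside $Q_1(z)$ the integrand $\ln|z-\zeta|$ is non-negative, so that contribution can be discarded, reducing the task to bounding from below the quantity
$$\frac{1}{2\pi}\iint_{Q_1(z)}\ln|z-\zeta|\,d\mes(\zeta).$$

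On $Q_1(z)$ I would use the elementary layer-cake identity $-\ln r = \int_0^1 \mathbf{1}_{s>r}\,\frac{ds}{s}$, valid for $r\in(0,1]$, apply Fubini--Tonelli (legitimate since the integrand is non-negative), and observe that for $s\leq 1$ the level set $\{\zeta\in Q_1(z):|z-\zeta|<s\}$ is exactly $Q_s(z)$. Writing $\mu(s):=\mes(Q_s(z))$, this yields
$$-\iint_{Q_1(z)}\ln|z-\zeta|\,d\mes(\zeta) = \int_0^1 \frac{\mu(s)}{s}\,ds.$$

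Now set $M:=\sup\{\mu(t)t^{-\alpha}:t\in(0,1]\}$, so that the right-hand side of the claimed inequality equals $-M/(2\pi\alpha)$. If $M=+\infty$ the statement is trivial; otherwise $\mu(s)\leq M s^\alpha$ for every $s\in(0,1]$, so
$$\int_0^1 \frac{\mu(s)}{s}\,ds \leq M\int_0^1 s^{\alpha-1}\,ds = \frac{M}{\alpha},$$
and combining with the estimate on the complement of $Q_1(z)$ delivers $p(z;\mes)\geq -M/(2\pi\alpha)$, which is the claim.

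There is no real obstacle here; the argument is just a Fubini-based layer-cake representation of the logarithmic potential, with the parameter $\alpha$ entering only through the elementary bound $\int_0^1 s^{\alpha-1}\,ds=1/\alpha$. The only subtlety to keep in mind is that $\mu$ may have an atom at $0$ (if $\mes(\{z\})>0$), but this is consistent with both sides being $-\infty$ in that case, and the Fubini step remains valid for non-negative integrands.
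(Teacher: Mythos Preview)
Your proof is correct and follows essentially the same route as the paper: both drop the non-negative contribution from $\{|z-\zeta|\geq 1\}$, use the layer-cake/Fubini identity to rewrite $-\iint_{Q_1(z)}\ln|z-\zeta|\,d\mes(\zeta)$ as $\int_0^1 \mes(Q_t(z))\,t^{-1}\,dt$, and then bound $\mes(Q_t(z))t^{-\alpha}$ by its supremum before integrating $t^{\alpha-1}$ over $(0,1]$. Your remark on the atomic case $\mes(\{z\})>0$ is also handled correctly.
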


\begin{proof}
Let $z\in \C$. Let $\chi_1(\zeta)=1$ if $\vert z-\zeta\vert \leq 1$ and zero otherwise. Let $\chi_2(t)=1$ if $t\geq \vert z-\zeta\vert$ and zero otherwise. By Fubini, 

\begin{eqnarray*}
\ 2\pi p(z;\mes)&=& \iint \ln|z-\zeta| \D\mes(\zeta) \\
& \geq &  \iint_{ Q_1(z)} \ln|z-\zeta| \D\mes(\zeta)   \\
\ &=& -\iint_{\vert z-\zeta\vert\leq 1}
\int_{\vert z-\zeta\vert}^1 t^{-1}\D t\D\mes (\zeta)\\ &=& -\iint \chi_1(\zeta) \int_0^1 t^{-1} \chi_2(t) \D t \D\mes(\zeta)\\
\ &=& -\int_0^1 \iint (\chi_1 \times \chi_2)(\zeta,t)\D\mes(\zeta) t^{-1} \D t\\
&=& -\int_0^1 \mes(\{\zeta : \vert z-\zeta\vert\leq t\}) t^{-1} \D t\\
&=& -\int_0^1 \mes(Q_t(z))t^{-\alpha}t^{\alpha-1}\D t~.
\end{eqnarray*}
~\end{proof}

For the following statement see 
e.g.,
\cite{AT}, \cite{AG}, \cite{Falconer}.

\begin{lemma}[Vitali Covering Theorem]\label{lem: vitali covering}
For any collection of discs $\{Q_{r_i}(z_i), i\in I\}$ such that $\sup_{i\in I} r_i <+\infty$, there is a countable subset $J\subset I$ such that the discs $Q_{r_j}(z_j)$ are disjoint for $j\in J$, and
$$\bigcup_{i\in I} Q_{r_i}(z_i) \subset \bigcup_{j\in J} Q_{5r_j}(z_j)~.$$

\end{lemma}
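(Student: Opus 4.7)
The plan is to construct $J$ by a greedy selection procedure, grouping the discs by radius so that when a large disc $Q_{r_i}(z_i)$ is omitted it is always because some previously selected disc, of comparable or larger radius, already hits it.

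First I would set $R := \sup_{i\in I} r_i < \infty$ and partition the index set into
\[
I_n := \{ i \in I : R/2^n < r_i \leq R/2^{n-1}\},\qquad n=1,2,\ldots,
\]
so that $I = \bigsqcup_n I_n$ and every $r_i$ in $I_n$ lies between two values differing by a factor of two. Then I would build $J_n \subset I_n$ inductively: $J_1$ is chosen (by Zorn's lemma) as a maximal subfamily of $I_1$ whose associated discs are pairwise disjoint, and for $n \geq 2$, $J_n$ is a maximal subfamily of those $i \in I_n$ whose disc $Q_{r_i}(z_i)$ is disjoint from all $Q_{r_j}(z_j)$ with $j \in J_1 \cup \cdots \cup J_{n-1}$. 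Setting $J := \bigcup_n J_n$, the discs indexed by $J$ are pairwise disjoint by construction.

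The key step is the covering property. For any $i \in I$ there is some $n$ with $i \in I_n$. By maximality at stage $n$, either $i \in J_n$ (nothing to check) or $Q_{r_i}(z_i)$ must intersect some $Q_{r_j}(z_j)$ with $j \in J_m$ for some $m \leq n$. Since $r_j > R/2^m \geq R/2^n$ while $r_i \leq R/2^{n-1}$, we get $r_i < 2r_j$. If $z$ lies in $Q_{r_i}(z_i) \cap Q_{r_j}(z_j)$, then
\[
|z_i - z_j| \leq |z_i-z| + |z-z_j| < r_i + r_j < 3 r_j,
\]
so for any $w \in Q_{r_i}(z_i)$ we have $|w - z_j| \leq |w-z_i| + |z_i-z_j| < r_i + 3r_j < 5r_j$, giving $Q_{r_i}(z_i) \subset Q_{5 r_j}(z_j)$. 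This establishes the claimed inclusion.

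It remains to verify that $J$ is countable. Each $J_n$ indexes pairwise disjoint discs of radius $> R/2^n$ in the plane. Choosing a point with rational coordinates in each such disc yields an injection of $J_n$ into $\mathbb{Q}^2$, hence $J_n$ is countable, and so is $J = \bigcup_n J_n$. The only subtle point in the whole argument is the radius inequality $2r_j > r_i$ linking consecutive dyadic scales, which is precisely what forces the enlargement factor to be $5$ rather than something larger; the use of Zorn's lemma for maximality and the covering verification are routine once this ratio is in place.
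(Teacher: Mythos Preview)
Your proof is correct; the dyadic stratification by radius, the greedy selection via Zorn's lemma, and the verification that $r_i < 2r_j$ forces $Q_{r_i}(z_i) \subset Q_{5r_j}(z_j)$ are all carried out cleanly. The paper itself does not supply a proof of this lemma but simply refers the reader to standard sources (\cite{AT}, \cite{AG}, \cite{Falconer}), so there is nothing to compare against; your argument is one of the standard proofs found in those references. One small point of wording: when you define $J_n$ for $n\geq 2$, you should say explicitly that it is a maximal \emph{pairwise disjoint} subfamily of those $i\in I_n$ whose discs avoid the earlier $J_m$'s, since as written the phrase ``maximal subfamily'' alone does not force disjointness within $J_n$; but your subsequent use of maximality makes clear this is what you intend.
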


\begin{lemma}\label{cor:hausdorff}
Let $\alpha >0$. For any $\varepsilon >0$, there is a countable   cover of 
$\sing$ by open discs of radii $r_i$,  with $\sum r_i^\alpha <\epsilon$.
\end{lemma}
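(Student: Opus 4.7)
My plan is to combine Lemma~\ref{lem: majoration sup} with the Vitali Covering Theorem (Lemma~\ref{lem: vitali covering}). The starting observation is that Lemma~\ref{lem: majoration sup} gives, for every $z\in\C$,
$$p(z;\mes) \;\geq\; \frac{1}{2\pi \alpha} \inf_{t\in(0,1]}\bigl(-\mes(Q_t(z))\, t^{-\alpha}\bigr),$$
so for each $z\in\sing$ (where by definition $p(z;\mes)=-\infty$) the infimum on the right must be $-\infty$. Equivalently, $\sup_{t\in(0,1]}\mes(Q_t(z))\, t^{-\alpha}=+\infty$. Hence, given any $N>0$, each $z\in\sing$ admits some $r_z\in(0,1]$ with $\mes(Q_{r_z}(z)) > N\, r_z^\alpha$.

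Given $\varepsilon>0$, I would fix $N > 5^\alpha \mes(\C)/\varepsilon$ and, for each $z\in\sing$, choose a radius $r_z\in(0,1]$ as above. The family $\{Q_{r_z}(z):z\in\sing\}$ has uniformly bounded radii, so Lemma~\ref{lem: vitali covering} yields a countable subfamily $\{Q_{r_{z_j}}(z_j)\}_{j\in J}$ of pairwise disjoint discs such that
$$\sing \;\subset\; \bigcup_{j\in J} Q_{5r_{z_j}}(z_j).$$
Disjointness together with finiteness of $\mes$ then gives
$$\sum_{j\in J} r_{z_j}^\alpha \;<\; \frac{1}{N}\sum_{j\in J} \mes(Q_{r_{z_j}}(z_j)) \;\leq\; \frac{\mes(\C)}{N},$$
whence $\sum_{j\in J}(5r_{z_j})^\alpha < 5^\alpha \mes(\C)/N < \varepsilon$, and the family $\{Q_{5r_{z_j}}(z_j)\}_{j\in J}$ is the desired cover.

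I do not see any substantive obstacle; the argument is essentially measure-theoretic bookkeeping once the key estimate of Lemma~\ref{lem: majoration sup} is available. The only point worth a second look is the very first step: one has to read Lemma~\ref{lem: majoration sup} in contrapositive form to conclude that a finite supremum of $\mes(Q_t(z))t^{-\alpha}$ would contradict membership in $\sing$, but this is immediate.
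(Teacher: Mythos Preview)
Your proof is correct and follows essentially the same approach as the paper: use Lemma~\ref{lem: majoration sup} to see that points of $\sing$ have $\sup_{t\in(0,1]}\mes(Q_t(z))t^{-\alpha}=+\infty$, choose for each such point a radius making $\mes(Q_{r_z}(z))/r_z^\alpha$ exceed a fixed threshold, apply the Vitali Covering Theorem, and sum using disjointness. The only cosmetic difference is that the paper takes the threshold to be exactly $5^\alpha\mes(\C)/\varepsilon$ rather than introducing an auxiliary $N$ strictly larger than it.
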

\begin{proof}
 For any  $z\in \sing$,  by definition,  Lemma~\ref{lem: majoration sup} gives $$\inf\left\{-\mes(Q_t(z))t^{-\alpha}, t\in]0,1] \right\}=-\infty~,$$
i.e.
$$\sup\left\{\mes(Q_t(z))t^{-\alpha}, t\in]0,1] \right\}=+\infty~.$$ 
  Hence there exists 
$0<r_z\leq 1$ such that 
\begin{equation}\label{eq: mu disque rz}\frac{\mes(Q_{r_z}(z))}{r_z^\alpha} > \frac{5^\alpha\mes(\C)}{\varepsilon}~.\end{equation}
As $r_z\leq 1$ for any $z$, we can apply Lemma~\ref{lem: vitali covering}: there exists a countable set $J$ such that
$\sing\subset \bigcup_{j\in J} Q_{5r_{z_j}}(z_j)$, and
the $Q_{r_{z_j}}(z_j)$ are disjoint. Hence by \eqref{eq: mu disque rz},
$$\sum_{j\in J} (5r_{z_j})^\alpha \leq \varepsilon \frac{\sum_{j\in J} \mes(Q_{r_{z_j}}(z_j) )}{\mes(\C)}\leq  \varepsilon~,
$$
where the last inequality holds because the family is disjoint.
\end{proof}

\begin{proposition}\label{prop:haus zero}
The Hausdorff dimension of $\sing$ is zero, i.e. for any $\alpha >0$, the $\alpha$-dimensional Hausdorff measure of $\sing$ is zero.
\end{proposition}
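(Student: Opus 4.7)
The plan is to deduce the statement almost directly from Lemma~\ref{cor:hausdorff}, but with one minor sharpening needed to gain control on the diameters of the covering discs. Recall that by definition $\mathcal{H}^\alpha(\sing) = \sup_{\delta > 0} \mathcal{H}^\alpha_\delta(\sing)$, so it suffices to prove $\mathcal{H}^\alpha_\delta(\sing) = 0$ for every $\delta > 0$. Lemma~\ref{cor:hausdorff} as stated only provides a cover whose radii are bounded by $1$, which controls $\mathcal{H}^\alpha_\delta$ only for $\delta \geq 2$; I would first observe that its proof can be rerun to force arbitrarily small radii.

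The key point is this: for a fixed $z \in \sing$, the function $t \mapsto \mes(Q_t(z))/t^\alpha$ is bounded on any interval $[\eta, 1]$ with $\eta > 0$ (uniformly by $\mes(\C)/\eta^\alpha$), while the proof of Lemma~\ref{cor:hausdorff} extracts from $z \in \sing$ the fact that $\sup_{t \in (0,1]} \mes(Q_t(z))/t^\alpha = +\infty$. Hence this supremum is already unbounded when restricted to $t \in (0, \eta]$ for any $\eta > 0$. Consequently, given any $\delta > 0$ and $\varepsilon > 0$, we can choose for each $z \in \sing$ a radius $r_z \in (0, \delta/10]$ satisfying inequality \eqref{eq: mu disque rz}, and then repeat verbatim the Vitali covering argument of Lemma~\ref{cor:hausdorff}.

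This produces a countable cover $\{Q_{5 r_{z_j}}(z_j)\}_{j \in J}$ of $\sing$ with $5 r_{z_j} \leq \delta/2$, hence diameters at most $\delta$, and with $\sum_{j \in J} (5 r_{z_j})^\alpha \leq \varepsilon$ by the same disjointness argument. Therefore
\begin{equation*}
\mathcal{H}^\alpha_\delta(\sing) \leq c_\alpha \sum_{j \in J} (10 r_{z_j})^\alpha = c_\alpha 2^\alpha \sum_{j \in J} (5 r_{z_j})^\alpha \leq c_\alpha 2^\alpha \varepsilon.
\end{equation*}
Since $\varepsilon > 0$ was arbitrary, $\mathcal{H}^\alpha_\delta(\sing) = 0$, and since $\delta > 0$ was arbitrary, $\mathcal{H}^\alpha(\sing) = 0$.

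The only subtle point is the sharpening of Lemma~\ref{cor:hausdorff} to produce covers with small diameters; once this is observed, the conclusion is immediate. There is no real obstacle here since the original Vitali-based proof accommodates the extra constraint $r_z \leq \delta/10$ without any modification beyond the remark about where the supremum is attained.
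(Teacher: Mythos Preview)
Your proof is correct, and follows essentially the same strategy as the paper (apply Lemma~\ref{cor:hausdorff} to get covers with $\sum r_i^\alpha < \varepsilon$). However, you work harder than necessary at the one point you flag as subtle. You reopen the proof of Lemma~\ref{cor:hausdorff} to force the radii to satisfy $r_z \leq \delta/10$, arguing that the supremum in question must already blow up on $(0,\eta]$. This is fine, but the paper avoids it entirely with a one-line observation: the conclusion $\sum_i r_i^\alpha < \varepsilon$ of Lemma~\ref{cor:hausdorff} already forces each individual $r_i^\alpha < \varepsilon$, hence $r_i < \varepsilon^{1/\alpha}$. So by simply choosing $\varepsilon < \delta^\alpha$ (or $\varepsilon < (\delta/2)^\alpha$ if one is careful about radii versus diameters), the cover produced by the lemma as stated automatically has all diameters below $\delta$, and $\mathcal{H}^\alpha_\delta(\sing) \leq c_\alpha 2^\alpha \varepsilon$ follows directly. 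No re-entry into the Vitali argument is needed.
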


Let us recall that a subset of $\R^n$ with Hausdorff dimension $<1$ is totally disconnected: its connected components are points.
\begin{proof}
Let $\alpha>0$ and $\delta>0$. For any $\delta^\alpha > \epsilon >0$, by Lemma~\ref{cor:hausdorff}, there exists a covering of $\sing$
by sets of diameter $r_i$ such that $\sum r_i^\alpha<\epsilon$. In particular, $r_i < \delta$. Hence $\mathcal{H}_\delta^\alpha(\sing)<\epsilon$, hence  $\mathcal{H}_\delta^\alpha(\sing)=0$ as $\epsilon$ was arbitrary. But $\delta$ is also arbitrary, hence $\mathcal{H}^\alpha(\sing)=0$.
\end{proof}

The following result is a particular case of the Area  Formula for Lipschitz mappings, see \cite[Remark 2.72]{AFP}. 
Recall that, by Rademacher Theorem, a Lipschitz mapping is differentiable almost everywhere.

\begin{theorem}\label{thm:area formula}
Let $f:\R \to \R^2$ be an injective Lipschitz mapping, 
and $E\subset \R$ a Lebesgue measurable set, then
$$\mathcal{H}^1(f(E))=\int_E | f' |~. $$
\end{theorem}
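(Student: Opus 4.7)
The plan is to follow the classical strategy for the area formula: use Rademacher differentiability to linearize $f$ locally, then decompose $E$ into pieces on which $f$ is quantitatively bi-Lipschitz, and finally compare $\mathcal{H}^1(f(E))$ with $\int_E|f'|$ on each piece.

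By Rademacher's theorem, $f$ is differentiable off a Lebesgue null set $N\subset\R$; since $f$ is $L$-Lipschitz, $\mathcal{H}^1(f(N))\leq L\,\mathcal{H}^1(N)=0$ and the integral over $N$ is zero by convention, so I may replace $E$ by $E\setminus N$. I then split $E=E_0\cup E_+$ with $E_0=\{t\in E:f'(t)=0\}$ and $E_+=\{t\in E:f'(t)\neq 0\}$. On $E_0$ the right-hand side vanishes, so it suffices to show $\mathcal{H}^1(f(E_0))=0$: for each $t_0\in E_0$ and each $\varepsilon>0$ choose $\delta(t_0,\varepsilon)$ with $|f(t)-f(t_0)|\leq \varepsilon|t-t_0|$ on the interval $I_{t_0}=(t_0-\delta,t_0+\delta)$, extract a countable disjoint Vitali subfamily via Lemma~\ref{lem: vitali covering}, and estimate diameters of $f(I_{t_0})$ by $2\varepsilon\,\mathrm{diam}(I_{t_0})$. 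This gives $\mathcal{H}^1(f(E_0))\leq C\varepsilon\,\mathcal{H}^1(E_0)$ for arbitrary $\varepsilon>0$, hence $\mathcal{H}^1(f(E_0))=0$.

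The heart of the proof is the treatment of $E_+$. I partition $E_+$ into countably many disjoint measurable pieces $E_k$ on which simultaneously
\begin{itemize}
\item $|f'|$ lies in a narrow interval $[c_k,c_k(1+\varepsilon)]$, and
\item for every $s,t\in E_k$ one has $c_k(1-\varepsilon)|s-t|\leq |f(s)-f(t)|\leq c_k(1+\varepsilon)|s-t|$.
\end{itemize}
The first condition is obtained by slicing $\{|f'|>0\}$ along level sets of $\log|f'|$; the second follows by first using Lusin's theorem to make $f'$ continuous on a large compact subset, and then Egorov's theorem applied to the family of difference quotients $(f(t)-f(s))/(t-s)$ to ensure uniform approximation by $f'$. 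On each $E_k$ the restriction of $f$ is bi-Lipschitz with constants close to $c_k$; combined with injectivity, this yields
\[
c_k(1-\varepsilon)\,\mathcal{L}(E_k)\leq \mathcal{H}^1(f(E_k))\leq c_k(1+\varepsilon)\,\mathcal{L}(E_k),
\]
while $c_k\mathcal{L}(E_k)\leq \int_{E_k}|f'|\leq c_k(1+\varepsilon)\mathcal{L}(E_k)$. Summing over $k$ and letting $\varepsilon\to 0$ yields the claim.

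The main obstacle is producing the decomposition in the third step: the bi-Lipschitz estimate on each $E_k$ is not a pointwise consequence of differentiability but requires a uniform comparison between $f$ and its linearization, which is exactly what the Lusin/Egorov argument delivers. Once this decomposition is in hand, the rest of the argument is merely bookkeeping, and injectivity ensures that $\mathcal{H}^1$ on $f(E_k)$ is computed linearly from the Lebesgue measure on $E_k$.
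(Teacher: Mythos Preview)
The paper does not actually prove this theorem: immediately before the statement it observes that the result is a particular case of the Area Formula for Lipschitz mappings and refers the reader to \cite[Remark~2.72]{AFP}. Your sketch is precisely the classical argument that underlies that reference (Rademacher, the Sard-type estimate on $\{f'=0\}$, and a countable decomposition of $\{f'\neq 0\}$ into pieces on which $f$ is quantitatively bi-Lipschitz), so there is no methodological divergence to discuss.

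One small point worth tightening in your third step: the two-sided bound $c_k(1-\varepsilon)\,|s-t|\le |f(s)-f(t)|\le c_k(1+\varepsilon)\,|s-t|$ for \emph{arbitrary} $s,t\in E_k$ does not follow from Lusin/Egorov alone, since differentiability and the uniform convergence of difference quotients only control $f(s)-f(t)$ when $|s-t|$ is below the uniform good radius $\delta$. You must further intersect each $E_k$ with a countable family of intervals of length $<\delta$, so that every pair of points in a piece automatically satisfies $|s-t|<\delta$; after this extra (standard) refinement the bookkeeping goes through as you describe.
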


\begin{lemma}\label{lem: measure 0 intervalle}
Let $\param:[0,l]\to \C$ be 
the arc length parameterization of a rectifiable  arc. 
Then $\param^{-1}(\sing)$ has zero Lebesgue measure in $[0,l]$.
\end{lemma}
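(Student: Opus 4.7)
The plan is to combine the already-established Proposition~\ref{prop:haus zero}, stating that $\sing$ has Hausdorff dimension zero, with the Area Formula Theorem~\ref{thm:area formula}, by pushing forward through $\param$ rather than pulling back.

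First I would verify that $E := \param^{-1}(\sing)$ is Lebesgue measurable. This is immediate: the set $\sing$ is a $G_\delta$ (as noted in the bullet points after \eqref{eq:sing mes}), hence Borel; and $\param$ is continuous, so $E$ is Borel, therefore Lebesgue measurable. Next, I would establish that $|\param'|=1$ almost everywhere on $[0,l]$. Indeed, $\param$ is $1$-Lipschitz (being an arc-length parametrization), hence differentiable almost everywhere by Rademacher's theorem with $|\param'|\leq 1$ a.e. On the other hand $\int_0^l|\param'|\leq l=s(\arc)$, while the length formula for absolutely continuous curves forces $\int_0^l|\param'|=s(\arc)=l$, so $|\param'|=1$ almost everywhere.

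The core of the proof is then to apply Theorem~\ref{thm:area formula} to the injective $1$-Lipschitz mapping $\param:[0,l]\to\C$ and the measurable set $E$. This yields
\[
\mathcal{H}^1(\param(E))=\int_E|\param'|=\mathcal{L}^1(E),
\]
using $|\param'|=1$ a.e. and Proposition~\ref{prop: pte Haus} identifying $\mathcal{H}^1$ with $\mathcal{L}^1$ on $\R$. Since $\param$ is injective, $\param(E)=\sing\cap\param([0,l])\subset\sing$, and Proposition~\ref{prop:haus zero} gives $\mathcal{H}^1(\sing)=0$, so $\mathcal{H}^1(\param(E))=0$. Combining the two displayed identities yields $\mathcal{L}^1(\param^{-1}(\sing))=0$, which is the desired conclusion.

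There is essentially no genuine obstacle: the deep content of the statement is hidden in Proposition~\ref{prop:haus zero} (dimension zero of $\sing$) and in the Area Formula. The only subtlety worth checking is the a.e.\ unit-speed property of $\param$; a direct covering argument would \emph{not} suffice here, because a rectifiable arc can revisit a neighborhood of a given point many times, so $\param^{-1}$ of a small disc may have large Lebesgue measure. This is precisely why the Area Formula, which accounts for the metric expansion of $\param$ via $|\param'|$, is the correct tool.
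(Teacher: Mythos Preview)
Your proof is correct and follows essentially the same route as the paper: both apply the Area Formula (Theorem~\ref{thm:area formula}) to $E=\param^{-1}(\sing)$, use that $|\param'|=1$ almost everywhere, and invoke Proposition~\ref{prop:haus zero} to conclude $\mathcal{H}^1(\sing)=0$. Your write-up is in fact more detailed than the paper's, which only sketches these steps; the one technicality you do not mention is that Theorem~\ref{thm:area formula} is stated for maps $\R\to\R^2$, so a trivial Lipschitz extension of $\param$ beyond $[0,l]$ is implicitly needed (the paper alludes to Tietze for this).
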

\begin{proof}
Up to apply the Tietze Extension Theorem, we use Theorem~\ref{thm:area formula}, for $E=\param^{-1}(\sing)$, which is a Borel set.
By Proposition~\ref{prop:haus zero}, $\mathcal{H}^1(\sing)=0$. Up to a set of Lebesgue zero measure, $|\param'|=1$, hence the Lebesgue measure of $\param^{-1}(\sing)$ is zero.
\end{proof}

 We can be a bit more precise in the case of curves of bounded rotation, having Remark~\ref{rem Gammah} in mind. We need the following result, whose proof is left to the reader.

\begin{lemma}\label{lem:haus lip}
Let $f$ be a $K$-Lipschitz mapping between two metric spaces $X$ and $Y$. Then for any $A\subset X$, for any $\alpha \geq 0$,
$\mathcal{H}^\alpha(f(A))\leq K^\alpha \mathcal{H}^\alpha(A)~. $
\end{lemma}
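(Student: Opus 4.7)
The plan is to unwind the definition of $\mathcal{H}^\alpha$ through the outer-measure construction $\mathcal{H}^\alpha_\delta$, and exploit the elementary fact that a $K$-Lipschitz map scales diameters by at most $K$.

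First I would fix $\delta>0$ and consider any countable cover $\{A_i\}_{i\in I}$ of $A$ with $\operatorname{diam}(A_i)<\delta/K$. Since $f$ is $K$-Lipschitz, for any two points $x,x'\in A_i$ we have $d_Y(f(x),f(x'))\leq K\,d_X(x,x')$, so $\operatorname{diam}(f(A_i))\leq K\operatorname{diam}(A_i)<\delta$. Hence $\{f(A_i)\}_{i\in I}$ is an admissible cover of $f(A)$ for the computation of $\mathcal{H}^\alpha_\delta(f(A))$, and
\[
c_\alpha\sum_{i\in I}\operatorname{diam}(f(A_i))^\alpha\leq K^\alpha\, c_\alpha\sum_{i\in I}\operatorname{diam}(A_i)^\alpha.
\]
Taking the infimum over all such covers of $A$ yields $\mathcal{H}^\alpha_\delta(f(A))\leq K^\alpha\,\mathcal{H}^\alpha_{\delta/K}(A)$.

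Finally, since $\mathcal{H}^\alpha_{\delta/K}(A)\leq \mathcal{H}^\alpha(A)=\sup_{\eta>0}\mathcal{H}^\alpha_\eta(A)$ for every $\delta$, letting $\delta\to\infty$ (equivalently, taking the supremum over $\delta$) gives the desired inequality $\mathcal{H}^\alpha(f(A))\leq K^\alpha\mathcal{H}^\alpha(A)$.

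There is essentially no obstacle here: the only bookkeeping point is to use $\delta/K$ (rather than $\delta$) as the diameter bound on the source covering so that the image covering is admissible at scale $\delta$; if $K=0$, then $f$ is constant and both sides are zero (interpreting $K^\alpha=0$ for $\alpha>0$), while for $\alpha=0$ one needs the convention $K^0=1$ and the statement reduces to $\#f(A)\leq \#A$, which is immediate from $f$ being a function.
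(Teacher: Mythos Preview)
Your argument is correct and is the standard one; the paper in fact omits the proof entirely (it states the lemma with ``whose proof is left to the reader''), so there is nothing to compare against. One minor slip: since $\delta\mapsto\mathcal{H}^\alpha_\delta$ is non-increasing, the supremum $\mathcal{H}^\alpha=\sup_{\delta>0}\mathcal{H}^\alpha_\delta$ is attained as $\delta\to 0^+$, not $\delta\to\infty$; your inequality $\mathcal{H}^\alpha_\delta(f(A))\leq K^\alpha\mathcal{H}^\alpha(A)$ holds for every $\delta>0$, so taking the supremum over $\delta$ (equivalently $\delta\to 0^+$) indeed yields the conclusion.
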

\begin{lemma}
Let $\param:[0,l]\to \C$ be 
the arc length parameterization of a rectifiable  arc. Suppose that
 $\param^{-1}$ is Lipschitz.
Then $\param^{-1}(\sing)$ is Lebesgue measurable and has zero Hausdorff dimension.
\end{lemma}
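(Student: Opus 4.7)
The plan is to combine three earlier results: the fact (noted at the start of Section~\ref{sec: hausdorff sub}) that $\sing$ is a Borel set, Proposition~\ref{prop:haus zero} giving $\dim_H(\sing) = 0$, and the Lipschitz behaviour of Hausdorff measure from Lemma~\ref{lem:haus lip}. The two assertions (Lebesgue measurability and zero Hausdorff dimension) are handled separately and independently.

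For measurability, I would first note that $\param:[0,l]\to\C$ is continuous (being $1$-Lipschitz as an arc length parameterization of a rectifiable arc), and therefore Borel measurable. Since $\sing$ is a Borel subset of $\C$, the preimage $\param^{-1}(\sing)$ is a Borel subset of $[0,l]$, hence Lebesgue measurable. No use of the assumption on $\param^{-1}$ is needed here.

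For the Hausdorff dimension bound, I would use the hypothesis that $\param^{-1}\colon \arc \to [0,l]$ is Lipschitz, with some constant $K$. Apply Lemma~\ref{lem:haus lip} to $f = \param^{-1}$, restricted to the set $A = \sing \cap \arc$: for every $\alpha > 0$,
\[
\mathcal{H}^\alpha\bigl(\param^{-1}(\sing)\bigr) = \mathcal{H}^\alpha\bigl(\param^{-1}(\sing \cap \arc)\bigr) \leq K^\alpha\, \mathcal{H}^\alpha(\sing \cap \arc) \leq K^\alpha\, \mathcal{H}^\alpha(\sing).
\]
By Proposition~\ref{prop:haus zero}, $\mathcal{H}^\alpha(\sing) = 0$ for every $\alpha > 0$, so $\mathcal{H}^\alpha(\param^{-1}(\sing)) = 0$ for every $\alpha > 0$. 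By the very definition of $\dim_H$, this gives $\dim_H(\param^{-1}(\sing)) = 0$.

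There is essentially no obstacle here: everything follows immediately from the cited results once one checks that $\param^{-1}$ is defined on the image of $\param$ (which is where $\sing$ is being intersected). The only minor subtlety is to remember that the Lipschitz hypothesis is on $\param^{-1}$, which is what lets us push the Hausdorff zero-dimensionality of $\sing$ forward to $[0,l]$; this is a sharpening over the previous Lemma~\ref{lem: measure 0 intervalle}, which used the Area Formula for the Lipschitz map $\param$ itself and concluded only that the Lebesgue measure (i.e.\ $\mathcal{H}^1$) of $\param^{-1}(\sing)$ vanishes.
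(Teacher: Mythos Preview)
Your proof is correct and follows essentially the same approach as the paper: both combine Proposition~\ref{prop:haus zero} with Lemma~\ref{lem:haus lip} applied to the Lipschitz map $\param^{-1}$ for the dimension claim. The only cosmetic difference is in the measurability step, where the paper invokes the Lipschitz property of $\param^{-1}$ while you (more simply) use the continuity of $\param$ to get a Borel preimage.
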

\begin{proof}
As $\param^{-1}$ is Lipschitz, $\param^{-1}(\sing)$ is Lebesgue measurable. The result is then immediate 
by Proposition~\ref{prop:haus zero} and Lemma~\ref{lem:haus lip}.
\end{proof}

%Let us denote by $C_\mes$\index{$C_\mes$} the set of continuity points of $p(\mes)$. Here continuity is meant in an extended sense, allowing the $-\infty$ value. 
%The upper semi-continuity of the potential gives the following.
%
%\begin{lemma}\label{lem:cont polar}
%With the notation above, $\sing \subset C_\mes$.
%\end{lemma}

In potential theory, a set $E$ is \emph{polar}\index{polar set} if 
there exists a measure with potential  constant $=-\infty$ on $E$. Obviously $\sing$ is a polar set, and  Proposition~\ref{prop:haus zero} is the classical result that a polar set has Hausdorff dimension zero. 
A property holds \emph{quasi-everywhere}\index{quasi-everywhere} on a domain $M$ if it holds up to a polar set. In the following, the only polar set we will need to consider is $\sing$, and ``quasi-everywhere'' can always be read as ``up to a set of zero Hausdorff dimension''. As an example,  a subharmonic function is finite quasi-everywhere (recall that the constant function equal  to $-\infty$ is not considered as a subharmonic function in the present text).

The simplest example of a subharmonic function with a non-empty polar set is, for $\mes_0\geq 0$, $p(z;\mes_0\delta_\zeta)=\ln \left| z-\zeta\right|^{\frac{\mes_0}{2\pi}}$, whose polar set is $\{\zeta\}$. This simple example can be made wilder, by taking an uncountable dense subset supporting Dirac measure, suitably weighted see e.g.,  \cite[Theorem 2.5.4]{ransford}, \cite[5.34]{rao}, \cite[Example 3.3.2]{AG}.  See also
  \cite{imomkulov}, \cite{cartan},   Theorem 5.5.8  in \cite{AG} for related results. 
We also have the following. 
%Let $C_\mes^c$ the complement of $C_\mes$ in $M$, i.e. the set of discontinuity of $p(\mes)$. 
\begin{theo}
\begin{theorem}[{\cite[Theorem~1]{R63}}]
Let $p(\mes)$ be a potential over a bounded domain $M$.
For any $\epsilon>0$,  there exists a sequence $(Q_n)_n$ of open discs with sum of diameters less than $\epsilon$ such that 
%$p(\mes)$ is continuous over $M\setminus \cup_n Q_n$.
%$\displaystyle C_\mes^c\subset \cup_n Q_n$, i.e., 
$u$ is continuous out of the union of the discs $Q_n$.
\end{theorem}
\end{theo}

%
%\begin{lemma}
%On any domain $M$, there exists a subharmonic function $u$ 
%such that $C_\mes=\sing$ is a dense uncountable set. In particular $u$ is discontinuous quasi-everywhere.
%\end{lemma}
%\begin{proof}
%Let $(a_n)_n$ be a sequence of positive real numbers such that 
%$\sum^\infty a_n$ converges, and let $(\zeta_n)_n$ be a dense countable subset of $M$. Define for each $z\in M$, $u(z) = \sum a_n / |z-\zeta_n|$. The function  $u$ is a 
%potential.
%
%We have $u(\zeta_n)=-\infty$, and $(\zeta_n)_n$ a dense subset of $M$, so if $u$ is continuous at a point $z$, then $u(z)$ cannot be finite, i.e., $C_\mes\subset \sing$, and by Lemma~\ref{lem:cont polar}, $C_\mes=\sing$.
%
%Let us write 
%$$M= \sing \bigcup \left( \bigcup_n \{z\in M | u(z)\geq n\}\right)~.$$
%We have that $M$ is a union of closed sets ($\sing$ is closed, because  $\sing=C_\mes$). Moreover $\sing$ as empty interior, as well as $\{z\in M | u(z)\geq n\}$ (because $\sing$ is dense). Let us suppose that $\sing$ is countable. Then $M$ would be a union of closed sets with empty interior, and by the  Baire Category Theorem, $M$ would have empty interior, that contradicts the assumption that $M$ is a domain.
%\end{proof}

\subsection{Delta-subharmonic functions}\label{sec: delta sub}

%sh : convex cone !!!

Delta-subharmonic functions are simply differences of subharmonic functions. But as a subharmonic function may be infinite at some points, a difference of subharmonic functions may not be defined everywhere. 

\begin{definition}
Let $M$ be a bounded domain in the plane. A function 
$v$, defined quasi-everywhere on $M$, is called  \emph{$\delta$-subharmonic function over $M$}\index{$\delta$-subharmonic function} if there exists two subharmonic functions $u_1$ and $u_2$, such that 
$v=u_1-u_2$.
\end{definition}

Let us write $u_i=p(\mes_i)+h_i$. 
The $\delta$-subharmonic function exists only on $\C\setminus \left(\mathcal{I}_{\mes_1}\cap \mathcal{I}_{\mes_2}\right)$, i.e., it exists quasi-everywhere. Whenever it is defined, it belongs to $[-\infty,+\infty]$. The function has finite values quasi-everywhere. 

We have the following  example. 

\begin{lemma}\label{lem:ln module holo}
Let $f:M\to \C$ be a function. Then $\ln |f|$ is
\begin{itemize} 
\item harmonic if $f$ is a holomorphic function without zeros;
\item subharmonic if $f$ is a holomorphic function;
\item $\delta$-subharmonic if $f$ is meromorphic.
\end{itemize}
\end{lemma}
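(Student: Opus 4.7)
The plan is to handle the three items in order, moving from the easiest (harmonic case) to the more delicate (meromorphic case), with each item building on the previous one.

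For the first item, the function $\ln|f|$ is $C^\infty$ on $M$ since $f$ is holomorphic and $|f|>0$. I would compute its Laplacian using the Wirtinger operators $\partial_z = \tfrac{1}{2}(\partial_x - i\partial_y)$ and $\partial_{\bar z} = \tfrac{1}{2}(\partial_x + i\partial_y)$, and the factorization $\Delta = 4\,\partial_z\partial_{\bar z}$. Locally choose a holomorphic branch of $\ln f$ (which exists on simply connected subregions since $f$ has no zeros). Then $\ln|f|^2 = \ln f + \overline{\ln f}$, so $\partial_{\bar z}\ln f = 0$ and $\partial_z \overline{\ln f} = 0$, giving $\Delta \ln|f|^2 = 0$, and hence $\Delta \ln|f| = 0$. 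Thus $\ln|f|$ is harmonic.

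For the second item, assume $f \not\equiv 0$ (otherwise $\ln|f|\equiv -\infty$, which is excluded by the convention in the present text). Work on an arbitrary relatively compact subdomain $M'\Subset M$; by the identity principle the zeros of $f$ are isolated, so there are only finitely many, say $z_1,\dots,z_n$, with multiplicities $k_1,\dots,k_n$. Factor $f(z) = \Bigl(\prod_{j=1}^n (z-z_j)^{k_j}\Bigr) g(z)$ on $M'$, where $g$ is holomorphic and nowhere zero on $M'$. Taking logarithms of moduli yields
\begin{equation*}
\ln|f(z)| \;=\; \sum_{j=1}^n k_j \ln|z-z_j| \;+\; \ln|g(z)|.
\end{equation*}
The first sum equals $2\pi\, p(z;\mes)$ with $\mes = \sum_j k_j\,\delta_{z_j}$, which is a finite positive measure with compact support in $M'$; and the second term is harmonic on $M'$ by item one. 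This exhibits $\ln|f|$ on $M'$ in the form required by Definition~\ref{def:subh}, so $\ln|f|$ is subharmonic on every relatively compact subdomain (which, in line with the Riesz decomposition remark after the definition, is the appropriate sense).

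For the third item, a meromorphic function $f$ can be written locally (on any small disc $Q\subset M$) as $f = g/h$ with $g,h$ holomorphic on $Q$ and $h\not\equiv 0$. Then $\ln|f| = \ln|g| - \ln|h|$ on the complement of the pole/zero sets, hence quasi-everywhere on $Q$. By item two, both $\ln|g|$ and $\ln|h|$ are subharmonic on $Q$, so $\ln|f|$ is $\delta$-subharmonic on $Q$, and hence on every relatively compact subdomain of $M$ by a finite covering.

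The only real subtlety---and the place a careless write-up can stumble---is matching the paper's specific definition of a subharmonic function (potential of a \emph{compactly supported} positive finite measure plus a global harmonic term) rather than an abstract ``upper-semicontinuous satisfying the sub-mean inequality'' definition. Restricting to relatively compact subdomains $M'\Subset M$ keeps the finite-measure/compact-support requirement automatic, and it is also what allows the factorization of $f$ into a finite product of linear factors times a non-vanishing holomorphic function.
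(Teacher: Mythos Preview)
Your proposal is correct and follows essentially the same route as the paper's proof: the paper too argues that $\ln|f|$ is locally the real part of a holomorphic logarithm for the first item, and for the second uses the local factorization $f(z)=(z-z_0)^k g(z)$ with $g$ non-vanishing, yielding a potential of a Dirac sum plus a harmonic term. Your write-up is simply more explicit, in particular about restricting to relatively compact subdomains so that the measure is finite with compact support in accordance with Definition~\ref{def:subh}; the paper's own proof is only two sentences and leaves these details implicit.
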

\begin{proof}
The first item follows from the fact that $f$ is locally the real part of a holomorphic function (considering the complex logarithm). 
If $f$ is holomorphic, its zeros are isolated, and around such a point the function is the sum of an entire power plus a non-zero harmonic term.
\end{proof}

\begin{remark}[$C^2$ functions are $\delta$-subharmonic]\label{rem:c2 delta ssh}{\rm
For a real valued function $f$, let us denote
$f^+(x)=\max(0,f(x))$ and $f^-(x)=\max(0,-f(x))$,\index{$f^+$} so that $f^+, f^-$ are non-negative and $f=f^+-f^-$. 
Now, let us suppose that the boundary of $M$ is regular and that $u$ is $C^2$ over $\overline{M}$.  By Lemma~\ref{lem:poisson}, over $M$,
$$u(z)=\frac{1}{2\pi}\iint_M  \ln |z-\cdot| (\Delta u)^+ - \frac{1}{2\pi}\iint_M  \ln |z-\cdot| (\Delta u)^- + h(z)~, $$
where $h(z)=\frac{1}{2\pi} \int_{\partial M}   u\frac{\partial \ln|z- \cdot|}{\partial \nu}-\ln|z-\cdot| \frac{\partial  u }{\partial  \nu}$
is harmonic over $M$. It follows that  $u$ is $\delta$-subharmonic.}
\end{remark}

\begin{remark}[Regularity]\label{ex:concave}\label{remark:regularity}{\rm 
Convex functions on an open subset are characterized by the fact that their Hessian in the sense of distribution is non-negative \cite{schwartz}, \cite{dudley2}, \cite{dudley1}. Hence, they are almost subharmonic, and by continuity, Szpilrajn Lemma~\ref{lem almost cont} says that they are subharmonic. 
 It follows  that 
any \dc function is $\delta$-subharmonic. This implies that 
$C^{1,1}$ functions, i.e., functions with locally Lipschitz first derivatives, are $\delta$-subharmonic, as well as for example piecewise affine functions, see \cite{alexdc}. (This is also proved in \cite{arsove}.)  As from Remark~\ref{rem:W1p}, we know that a subharmonic function is locally in the Sobolev space $W^{1,p}$ for $p\in [1,2)$, loosely speaking
 we have:

$$C^2\subset C^{1,1} \subset \mbox{ \dc} \subset \mbox{  $\delta$-subharmonic }
\subset \bigcap_{1\leq p <2} W^{1,p}_{\operatorname{loc}}~. $$
}\end{remark}

\begin{remark}[DC functions]\label{remark DC}{\rm
The $\delta$-convex functions (or DC functions) \index{DC function} 
form a  class of functions that has some importance in metric geometry, in the wake of the Leningrad Geometric School, and  manly after \cite{perelman}. See also \cite{alexdc}, or more recently \cite{ABDC}, \cite{PRZ}.
}\end{remark}

We will often consider $\delta$-subharmonic functions as potentials in the following manner. 
Let $\mes$ be a  signed (Borel) measure\index{signed measure}. By definition, it takes only finite values. Recall that the \emph{total variation measure}\index{$\vert\mes\vert$} $|\mes|$ of $\mes$ is the finite positive measure defined by
$$|\mes|(E)=\sup \sum_{i=1}^\infty |\mes(E_i)|  $$
where the supremum is taken over all the partitions $ \{E_i\} $ of $E$. In general, $|\mes|(E)\geq |\mes(E)|$.
The \emph{positive and negative variations} of $\mes$ are the finite positive Borel measures defined by\index{$\mes^+$}
$$\mes^+=\frac{1}{2}(|\mes|+\mes),\, \mes^-=\frac{1}{2}(|\mes|-\mes)~,$$
 and we have the \emph{Jordan decomposition} of the signed measure $\mes$ as a difference of two finite positive measures: $\mes=\mes^+-\mes^-$. Note also that $|\mes|=\mes^++\mes^-$.
This special decomposition has the following property: if $\mes_1$ and $\mes_2$ are positive measures, and $\mes=\mes_1-\mes_2$, then
$$\mes_1\geq \mes^+,\,\mes_2\geq \mes^-~.$$
In particular, if $\varphi=\mes_1+\mes_2$, then $\varphi \geq |\mes| $. Also, if 
$\mes(E)=0$, then $\mes^+(E)=\mes^-(E)=0$. Let us also recall that $|\mes|(E)=0$ if and only if $\mes(F)=0$ for every measurable subset of $E$. The \emph{support}\index{support (signed measure)} of $\mes$ is defined as the support of $|\mes|$.

If a signed measure $\mes$ has compact support, then 
$\mes^+$ and $\mes^-$ have compact support, and in this case we can define
$$p(\mes):=p(\mes^+) - p(\mes^-)~, $$
 \index{$p_\mes$} and similarly, if $h$ is a harmonic function,
\begin{equation}\label{eq:p mes sing}p(\mes,h):=p(\mes^+) - p(\mes^-)+h~. \end{equation}

Of course, $p(\mes,h)$ is a $\delta$-subharmonic function.

\subsection{Localization of delta-subharmonic functions}\label{sec:localization}

We now give a detailed proof  of the 
Localization Theorem. We will explain its interest at the end of the section, after its proof.
Let us  give some simple preparatory results about harmonic functions.
Let us first recall the following classical formula from the  Neumann problem for the disc.
\begin{lemma}[Dini Formula]
Let $h$ be a harmonic function on the closure of the disc $Q_r(z_0)$. Then
\begin{equation}\label{eq:dini}
\forall z\in Q_r(z_0),\, h(z)=h(z_0)-\frac{1}{\pi}\int_{C_r(z_0)} \ln|z-\cdot|\frac{\partial h}{\partial \nu}~.
\end{equation}
\end{lemma}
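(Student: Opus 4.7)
The plan is to combine the Green--Poisson representation from Lemma~\ref{lem:poisson} with Green's identity~\eqref{eq:green} applied to $h$ together with the logarithm based at the inverse of $z$ with respect to $C_r(z_0)$.

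First, I specialize Lemma~\ref{lem:poisson} to $f = h$ on $M = Q_r(z_0)$. Since $\Delta h = 0$, the area term in \eqref{eq:poisson1} drops, giving
$$
h(z) = \frac{1}{2\pi}\int_{C_r(z_0)} h\,\frac{\partial \ln|z-\cdot|}{\partial \nu} - \frac{1}{2\pi}\int_{C_r(z_0)} \ln|z-\cdot|\,\frac{\partial h}{\partial \nu}. \qquad (\ast)
$$
The case $z=z_0$ is immediate: on $C_r(z_0)$ one has $\ln|z_0-\cdot|\equiv \ln r$ and $\partial_\nu \ln|z_0-\cdot|\equiv 1/r$, while $\int_{C_r(z_0)}\partial_\nu h = \iint_{Q_r(z_0)}\Delta h = 0$; thus $(\ast)$ reduces to the mean-value identity $h(z_0) = \frac{1}{2\pi r}\int_{C_r(z_0)} h$, which is \eqref{eq:dini} at $z=z_0$.

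Assume now $z\neq z_0$ and set $z^{\ast} := z_0 + r^2(z-z_0)/|z-z_0|^2$, the inversion of $z$ through $C_r(z_0)$; since $|z-z_0|<r$, the point $z^{\ast}$ lies outside $\overline{Q_r(z_0)}$, so $g(\zeta):=\ln|z^{\ast}-\zeta|$ is harmonic on the closed disc. Applying \eqref{eq:green} to the pair $(h,g)$ with both Laplacians vanishing yields
$$
0 = \int_{C_r(z_0)} h\,\frac{\partial \ln|z^{\ast}-\cdot|}{\partial \nu} - \int_{C_r(z_0)} \ln|z^{\ast}-\cdot|\,\frac{\partial h}{\partial \nu}. \qquad (\ast\ast)
$$
Two elementary identities on $C_r(z_0)$ feed into this. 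The inversion identity $|z^{\ast}-\zeta| = (r/|z-z_0|)\,|z-\zeta|$ shows that $\ln|z^{\ast}-\cdot|$ differs from $\ln|z-\cdot|$ by a constant on $C_r(z_0)$, so, using once again $\int_{C_r(z_0)}\partial_\nu h = 0$, the second integral in $(\ast\ast)$ equals $\int_{C_r(z_0)} \ln|z-\cdot|\,\partial_\nu h$. A direct polar computation with $\zeta = z_0 + re^{i\theta}$ moreover yields
$$
\frac{\partial \ln|z-\cdot|}{\partial \nu}(\zeta) + \frac{\partial \ln|z^{\ast}-\cdot|}{\partial \nu}(\zeta) = \frac{1}{r}, \qquad \zeta \in C_r(z_0).
$$

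Combining these with the mean-value property $\frac{1}{2\pi r}\int_{C_r(z_0)} h = h(z_0)$ rewrites $(\ast\ast)$ as
$$
\int_{C_r(z_0)} h\,\frac{\partial \ln|z-\cdot|}{\partial \nu} = 2\pi\,h(z_0) - \int_{C_r(z_0)} \ln|z-\cdot|\,\frac{\partial h}{\partial \nu},
$$
and substituting into $(\ast)$ produces exactly \eqref{eq:dini}. The only computational step of substance is the normal-derivative identity on $C_r(z_0)$; I expect it to be the main obstacle, but it is purely mechanical once one differentiates $r\mapsto \ln|z-(z_0+re^{i\theta})|$ and its analogue for $z^{\ast}$ at the prescribed radius and adds, the terms combining neatly thanks to the relation $|z^{\ast}-\zeta|^2 = (r/|z-z_0|)^2|z-\zeta|^2$.
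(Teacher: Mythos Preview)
Your argument is correct. The normal-derivative identity $\partial_\nu\ln|z-\cdot|+\partial_\nu\ln|z^\ast-\cdot|=1/r$ on $C_r(z_0)$ is indeed the crux, and your indication of how to check it is accurate (it follows cleanly from $|z^\ast-\zeta|^2=(r/|z-z_0|)^2|z-\zeta|^2$ after differentiating both factors of $|w-re^{i\theta}|^2$ in $r$). Once that holds, the substitution you describe gives exactly \eqref{eq:dini}.

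Your route is, however, genuinely different from the one taken in the paper. The paper argues via complex analysis: it writes $h=\operatorname{Re}f$ for a holomorphic $f$, observes that on the unit circle $\partial_\nu h=\operatorname{Re}(zf'(z))$, feeds this boundary data into the Schwarz--Poisson integral to recover $zf'(z)$ inside the disc, integrates to obtain $f$ up to a constant, and then takes real parts; the constant is fixed by evaluating at the centre. Your proof, by contrast, is purely real-variable: it is the classical method of images, combining the representation \eqref{eq:poisson1} at $z$ with Green's identity for the harmonic pair $(h,\ln|z^\ast-\cdot|)$ and exploiting the reflection symmetry of the circle. What your approach buys is that it avoids any appeal to holomorphic conjugates or the Schwarz kernel and stays entirely within the potential-theoretic toolkit already set up in the paper (Lemma~\ref{lem:poisson} and \eqref{eq:green}); what the paper's approach buys is a tighter link with the complex-analytic machinery (harmonic conjugates, $\operatorname{Ln}$) that is used repeatedly elsewhere, for instance around \eqref{eq:int conjugate}.
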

\begin{proof}
Without loss of generality, let us prove the result for $C_r(z_0)=C_1(0)=:C$.
Let $h$ be the real part of the holomorphic function $f$. As on the circle $C$ we have
$$\frac{\partial h}{\partial \nu}(z)=\left(z_1\frac{\partial h}{\partial x}(z) + z_2\frac{\partial h}{\partial y}(z)\right) $$
and as 
$$z f'(z)=Df(z)(z)=z_1\frac{\partial f}{\partial x}(z)+z_2\frac{\partial f}{\partial y}(z)~, $$
we have, on the circle,
\begin{equation}\label{eq:normale harm hol}\frac{\partial h}{\partial \nu}(z)=\operatorname{Re} (zf'(z))~.\end{equation}

 Now, recall the Poisson equation 
 that gives the solution of the Dirichlet problem for the disc: for a continuous function $H$ on $C$, define
 $$F(z)=\frac{1}{2\pi }\int_{0}^{2\pi} \frac{\E^{\I\theta}+z}{\E^{\I\theta}-z} H(\theta) \D\theta~.$$
Then $F$ is a holomorphic function over $Q$, and $\operatorname{Re}(F)$ is the unique harmonic function on $Q$ that extends continuously as $H$ on $C$. In particular,

\begin{equation}\label{eq:poisson}
F(z)-F(0)=\frac{z}{\pi }\int_0^{2\pi}\frac{H(\theta)}{\E^{\I\theta}-z}\D\theta~. 
\end{equation} 
 
Now, let  $H=\frac{\partial h}{\partial \nu}$. Then by uniqueness, $F(z)=zf'(z)$ by \eqref{eq:normale harm hol}, and \eqref{eq:poisson} gives 

$$f'(z)=\frac{1}{\pi } \int_0^{2\pi}\frac{\partial h}{\partial \nu}(\theta)\frac{1}{\E^{\I\theta}-z}\D\theta~,$$
so that, if $\operatorname{Ln}$ is the principal determination of the complex logarithm,
$$f(z)=\alpha-\frac{1}{\pi } \int_0^{2\pi}\frac{\partial h}{\partial \nu}(\theta)\operatorname{Ln}( \E^{\I\theta}-z)\D\theta~, $$
where $\alpha$ is a constant.
As $h=\operatorname{Re}f$, we obtain
$$h(z)=\operatorname{Re}\alpha - \frac{1}{\pi}\int_{C} \ln|z-\cdot|\frac{\partial h}{\partial \nu}~. $$
By \eqref{eq:int conjugate}, for $z=0$,
$\int_{C} \ln|z-\cdot|\frac{\partial h}{\partial \nu}=0$, hence $\operatorname{Re}\alpha=h(0)$.
\end{proof}

It will be suitable to write \eqref{eq:dini} in the following manner. 
\begin{lemma}[Localization of Harmonic Functions]\label{lem:localisation harmonic}
Let $h$ be a harmonic function on a domain $M$, and let $z_0\in M$, $r>0$ such that $\overline{Q}_r(z_0)\subset M$. Then there exists a  signed measure  
 $\psi_r$ over the plane with
\begin{itemize}
\item  support the circle $C_r(z_0)$,
\item  bounded variation, and   
 $|\psi_r|(\mathbb{C})\xrightarrow[r\to 0]{} 0$,
 \item $\psi_r(\C)=0$ 
\end{itemize} 
 such that for 
$z\in Q_r(z_0)$,
\begin{equation}\label{eq:loc harm}h(z)= h(z_0) -\frac{1}{\pi}\iint  \ln|z-\zeta|\D\psi_r(\zeta)~.\end{equation}
\end{lemma}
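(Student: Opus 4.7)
The plan is to take $\psi_r$ to be precisely the signed measure that makes the right-hand side of the Dini Formula \eqref{eq:dini} match the desired expression in \eqref{eq:loc harm}. Namely, I would define $\psi_r$ on the plane as the measure supported on $C_r(z_0)$ whose density with respect to the arc length measure on that circle is $\frac{\partial h}{\partial \nu}$. Under this definition, $\iint \ln|z-\zeta| d\psi_r(\zeta) = \int_{C_r(z_0)} \ln|z-\cdot| \frac{\partial h}{\partial \nu}$, so \eqref{eq:loc harm} is simply a rewriting of \eqref{eq:dini}. That $\operatorname{supp}\psi_r \subset C_r(z_0)$ is immediate from the construction.

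Next I would check the total variation bound. Since $h$ is harmonic on $M$, it is $C^\infty$ there; in particular $\nabla h$ is continuous on the compact set $\bar{Q}_{r_0}(z_0) \subset M$ for some fixed $r_0 > 0$ chosen so that $\bar{Q}_{r_0}(z_0) \subset M$. Denote $M_0 := \sup_{\bar{Q}_{r_0}(z_0)} |\nabla h| < \infty$. Then for any $r \leq r_0$, since $|\partial h/\partial \nu| \leq |\nabla h| \leq M_0$ on $C_r(z_0)$, we obtain
\begin{equation*}
|\psi_r|(\mathbb{C}) = \int_{C_r(z_0)} \left|\frac{\partial h}{\partial \nu}\right| \leq 2\pi r \, M_0 \xrightarrow[r\to 0]{} 0,
\end{equation*}
which gives both the bounded variation property and the required convergence.

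Finally, to verify $\psi_r(\mathbb{C}) = 0$, I would apply the Green formula \eqref{eq:green} with $f \equiv 1$ and $g = h$ on $Q_r(z_0)$. The left-hand side vanishes because $\Delta h = 0$ and $\Delta 1 = 0$, and the right-hand side reduces to $\int_{C_r(z_0)} \frac{\partial h}{\partial \nu}$ since $\partial 1/\partial \nu = 0$. Hence
\begin{equation*}
\psi_r(\mathbb{C}) = \int_{C_r(z_0)} \frac{\partial h}{\partial \nu} = 0.
\end{equation*}

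There is no real obstacle: the whole statement is a repackaging of the Dini Formula, combined with two elementary observations (the gradient bound on a compact set and Green's formula with a constant test function). The only point that would deserve a brief justification is the identification between the line integral $\int_{C_r(z_0)}$ appearing in \eqref{eq:dini} and the measure-theoretic integral $\iint d\psi_r$, but this is just the definition of the measure with the prescribed density against arc length on the circle.
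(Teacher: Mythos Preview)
Your proof is correct and follows essentially the same route as the paper: define $\psi_r$ as the measure on $C_r(z_0)$ with density $\partial h/\partial\nu$ against arc length (the paper phrases this via the Lebesgue--Stieltjes measure of $\alpha\mapsto r\int_0^\alpha \frac{\partial h}{\partial\nu}(z_0+re^{i\theta})\,d\theta$, which is the same object), then read off \eqref{eq:loc harm} from the Dini Formula and bound the total variation by $r$ times $\sup|\nabla h|$ on a fixed compact. The only cosmetic difference is that the paper obtains $\psi_r(\mathbb{C})=0$ from the harmonic-conjugate identity \eqref{eq:int conjugate}, whereas you use Green's formula with $f\equiv 1$; both are valid and equivalent here.
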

\begin{proof}

Let us write
\begin{equation*}
\begin{split}
\ \int_{C_r(z_0)}  \ln|z-\cdot|\frac{\partial h}{\partial \nu}&= r \int_0^{2\pi} \ln|z-z_0-r\E^{\I\theta}|\frac{\partial h}{\partial \nu}(z_0+r\E^{\I\alpha})\D\theta \\
\   & = \int_0^{2\pi}\ln |z-z_0-r\E^{\I\theta}| \D\tilde{\psi}_r(\theta) 
\end{split}
\end{equation*}
where  $\D\tilde{\psi}_r$ is the Lebesgue–Stieltjes measures over $C_r(z_0)$ given by the (smooth) function
$$\tilde{\psi}_r(\alpha)=r\int_0^{\alpha} \frac{\partial h}{\partial \nu}(z_0+r\E^{\I\theta})\D\theta~. $$
Note that by \eqref{eq:int conjugate}, we have $\tilde{\psi}_r(2\pi)=0$.
From \eqref{eq:dini}, we obtain 
$$h(z)=h(z_0)-\frac{1}{\pi}\int_0^{2\pi} \ln |z-z_0-r\E^{\I\theta}| \D\tilde{\psi}_r(\theta)~. $$

If we denote by $\psi_r$  the measure 
over the plane  with support  $C_r(z_0)$
which 
 extends $\tilde{\psi}_r$, we obtain \eqref{eq:loc harm}. We have

\begin{equation*}\label{eq:total angular measure}
\begin{split}
\ |\psi_r|(\mathbb{C})=|\D\tilde{\psi}_r|([0,2\pi])= \bigvee_0^{2\pi} \tilde{\psi}_r(\alpha) & =r \int_0^{2\pi} \left|\left( \int_0^\alpha 
\frac{\partial h}{\partial \nu}(z_0+r\E^{\I\theta})\D\theta\right)' \right|\D\alpha \\
\ &=r\int^{2\pi}_0 \left| \frac{\partial h}{\partial \nu} (z_0+r\E^{\I\alpha})\right| \D\alpha~,
\end{split}
 \end{equation*}
and  by  regularity of $h$, the right-hand side is bounded from above by $r$ times a constant that depends only on $M$, so $|\psi_r|(\mathbb{C})\to 0$ when $r\to 0$.
\end{proof}

\begin{remark}\label{rem:localization harmonic}{\rm The Localization of Harmonic Function Lemma~\ref{lem:localisation harmonic} will be used at several places. Indeed, given a subharmonic function $p(\mes,h)$ over $M$, this says that over an open disc whose closure is inside $M$, there are a measure $\psi$ and a constant $c$ such that
$$p(\mes,h)=p(\mes+\psi)+c~.$$}
\end{remark}

The localization of (difference of) subharmonic function, Theorem~\ref{thm:localization} below, follows the same procedure, by applying Lemma~\ref{lem:localisation harmonic} to the harmonic part of the subharmonic function.

\begin{lemma}\label{lem:arcsin}
Let $r>0$ and $\zeta\in \C$ with 
$|\zeta|>r$. Then the angular function of the circle (see \eqref{eq:angular function}) satisfies
$$\bigvee_0^{2\pi}\varphi(C_r(0),\zeta,\cdot) = 4 \arcsin \frac{r}{|\zeta|}~.$$
\end{lemma}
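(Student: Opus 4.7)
The plan is to parametrize $C_r(0)$ as $z(t)=re^{it}$, $t\in[0,2\pi]$, reduce to $\zeta=d>r$ by rotational symmetry (the angular function is invariant under simultaneously rotating both the arc and $\zeta$), then compute $\varphi'(t)$ and find its sign changes, which pinpoint the extrema of $\varphi$. Having only two extrema, the total variation becomes a telescoping sum that I can evaluate explicitly.

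First I would use formula \eqref{eq:der angle} (or the complex form $\varphi'(t)=\operatorname{Im}\frac{z'(t)}{z(t)-\zeta}$) to obtain
\[
\varphi'(t)=\frac{r(r-d\cos t)}{r^{2}+d^{2}-2rd\cos t}.
\]
Since $r<d$ the denominator is positive, so $\varphi'(t)=0$ exactly when $\cos t=r/d$, giving the two tangent parameters $t=\theta_{0}:=\arccos(r/d)$ and $t=2\pi-\theta_{0}$. A quick sign check shows $\varphi$ is strictly decreasing on $[0,\theta_{0}]$, strictly increasing on $[\theta_{0},2\pi-\theta_{0}]$, and strictly decreasing on $[2\pi-\theta_{0},2\pi]$; in particular $\varphi$ has a unique minimum at $\theta_{0}$ and a unique maximum at $2\pi-\theta_{0}$.

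Next I would compute $\varphi(\theta_{0})$. At $P_{+}=re^{i\theta_{0}}$ the segment $\zeta P_{+}$ is tangent to the circle, so the triangle $0,\zeta,P_{+}$ is right-angled at $P_{+}$, and the angle at $\zeta$ equals $\arcsin(r/d)$. Combined with $\arg(z(0)-\zeta)=\pi$ and the fact that the continuous lift of $\arg(z(t)-\zeta)$ on $[0,\theta_{0}]$ stays in $(\tfrac{\pi}{2},\pi]$ (since $z(t)-\zeta$ stays in the closed second quadrant), this yields $\varphi(\theta_{0})=-\arcsin(r/d)$. By the symmetry $\varphi(2\pi-t)=-\varphi(t)$ (which follows from complex conjugation, using $\zeta\in\R$, and the fact that the total winding is $0$), we obtain $\varphi(2\pi-\theta_{0})=\arcsin(r/d)$, as well as $\varphi(2\pi)=0$.

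Finally, as $\varphi$ is monotonic on each of the three subintervals, its total variation is just the sum of the absolute jumps between consecutive extrema:
\[
\bigvee_{0}^{2\pi}\varphi=\bigl|\varphi(\theta_{0})-0\bigr|+\bigl|\varphi(2\pi-\theta_{0})-\varphi(\theta_{0})\bigr|+\bigl|0-\varphi(2\pi-\theta_{0})\bigr|=4\arcsin\frac{r}{d}.
\]
The only subtlety — and really the only thing to watch — is keeping the continuous lift of $\arg(z(t)-\zeta)$ consistent across the point $t=\pi$ where the principal value may have a branch issue; the symmetry argument and the vanishing winding number around $\zeta$ handle this cleanly.
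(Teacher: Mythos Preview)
Your proof is correct and follows essentially the same strategy as the paper: locate the monotonicity intervals of $t\mapsto\varphi(C_r(0),\zeta,t)$, identify the two extrema as the tangent parameters, and sum the absolute increments. The only difference is one of execution: you compute $\varphi'(t)$ explicitly from \eqref{eq:der angle} and find the extrema at $\theta_0=\arccos(r/d)$ and $2\pi-\theta_0$, whereas the paper argues geometrically from a figure and (somewhat loosely) places the extrema at $\pi/2$ and $3\pi/2$; your version is the more precise of the two.
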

\begin{proof}
Let $\param:[0,2\pi]\to \C$ be a parameterization of
$C_r(0)$ such that $\param(0)$ is the orthogonal projection of $\zeta$ onto $C_r(0)$. Then
$\varphi:=\varphi(C_r(0),\zeta,\cdot)$ is decreasing on $[0,\pi/2]$ and $[3\pi/2, 2\pi]$, and increasing on $[\pi/2,3\pi/2]$ hence
\begin{align*} 
\bigvee_0^{2\pi}\varphi &=\bigvee_0^{\pi/2}\varphi+\bigvee_{\pi/2}^{\pi}\varphi
+\bigvee_{\pi}^{3\pi/2}\varphi 
 +\bigvee_{3\pi/2}^{2\pi}\varphi
 \\ &=-2\varphi(C_r(0),\zeta,\pi/2)+2\varphi(C_r(0),\zeta,3\pi/2)\end{align*}
and the result follows, see Figure~\ref{fig:arcsin}.
\end{proof}

%
%\begin{figure}
%\begin{center}
%\footnotesize
%\psfrag{z}{$\zeta$}
%\psfrag{r}{$r$}
%\psfrag{n}{$|\zeta|$}
%\psfrag{a}{$\alpha$}
%\normalsize
%\includegraphics[width=0.5\textwidth]{arcsin.eps}
%\caption{To Lemma \ref{lem:arcsin}: $\displaystyle \alpha=-\varphi(C_r(0),\zeta,\pi/2)=\arcsin \frac{r}{|\zeta|}$.}\label{fig:arcsin}
%\end{center}
%
%\end{figure}

\begin{theo}
\begin{theorem}[{Localization Theorem, \cite[Theorem 6]{R63III}}]\label{thm:localization}
Let $h$ be a harmonic function on a domain $M$, and let $\mes$ be a signed measure with compact support.
For any $\epsilon,\eta >0$, there exist $\delta >0$, $k\in \R$ and a signed measure 
$\tilde{\omega}$ such that 
\begin{itemize}
\item $\tilde{\omega}=\omega$ on $Q_\delta(z_0)$,
\item   $\tilde{\omega}=0$ outside of
  $Q_{\delta(1+\eta)}(z_0)$
\item $|\tilde{\omega}|(Q_{\delta(1+\eta)}(z_0)\setminus\{z_0\})<\epsilon$
\item  on $Q_{\delta}(z_0)$, $$p(\mes,h)=p(\tilde{\omega})+k~.$$
\end{itemize}
\end{theorem}
\end{theo}

\begin{figure}
\begin{center}
\includegraphics{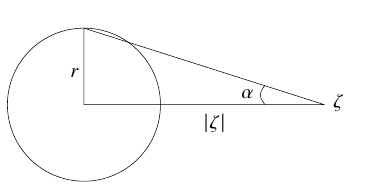}
\caption{To Lemma \ref{lem:arcsin}: $\displaystyle \alpha=-\varphi(C_r(0),\zeta,\pi/2)=\arcsin \frac{r}{|\zeta|}$.}\label{fig:arcsin}
\end{center}
\end{figure}
\begin{proof}
Let $r>0$ be such that $Q_{r(1+\eta)}(z_0)\subset M$.
Let $\mes_r$ be the signed measure that is equal to $0$ on $ \overline{Q}_{r(1+\eta)}(z_0)$ and to 
$\mes$ on the complementary.
We have
$$p(\mes,h)=p(\mes|_{Q_{r(1+\eta)}(z_0)})+p(\mes_r)+h~.$$

As $h$ is harmonic,  by Lemma~\ref{lem:localisation harmonic}, there exists a measure  $\psi_r^1$ concentrated on $C_{r(1+\eta /2)}(z_0)$ such that  for $z\in Q_{r(1+\eta /2)}(z_0)$,
$$h(z)= \iint  \ln|z-\zeta|\D\psi_r^1(\zeta)+h(z_0)$$
and $|\psi^1_r|(\C)\rightarrow 0$ when $r\to 0$. This gives
$$p(\mes,h)=p(\mes|_{Q_{r(1+\eta)}(z_0)})+p(\mes_r)+p(\psi_r^1)+h(z_0)~.$$

Also,
$\overline{Q}_{r(1+\eta /2)}(z_0)$ is in the complementary of the support of $\mes_r$, hence in the complementary of the support of $\mes_r^+$ and $\mes_r^-$, and then  $p(\mes_r)$ is harmonic on $\overline{Q}_{r(1+\eta /2)}(z_0)$. 
 Lemma~\ref{lem:localisation harmonic} gives that there exists a measure  $\psi^2_r$ concentrated on $C_{r(1+\eta /2)}(z_0)$ such that
 for $z\in Q_{r(1+\eta /2)}(z_0)$,
$$p(z;\mes_r)=\iint  \ln|z-\zeta|\D\psi_r^2(\zeta)+p(z_0;\mes_r)~. $$

This gives 
$$p(\mes,h)=p(\mes|_{Q_{r(1+\eta)}(z_0)})+p(\psi^2_r)+p(\psi_r^1)+h(z_0)+p(z_0;\mes_r)~.$$

However, as the harmonic function $p(\mes_r)$ depends on $r$, we cannot directly conclude that the total variation of $\psi_r^2$ goes to $0$ when $r\to 0$. The measure $\psi_r^2$ is defined from the Lebesgue--Stieljes measure $\tilde{\psi}_r^2$ defined by (see the proof of  Lemma~\ref{lem:localisation harmonic})
$$\tilde{\psi}_r^2(\alpha)=r(1+\eta /2)\int_0^\alpha \frac{\partial p(\mes_r)}{\partial \nu}(z_0+r(1+\eta /2)\E^{\I t})\D t~. $$
Using the definition of $p(\mes_r)$, an exchange of derivative and integration, Fubini and Lemma~\ref{lem angle cas c1}:

\begin{align*}
\tilde{\psi}_r^2(\alpha)&=\frac{r(1+\eta /2)}{2\pi}\int_0^\alpha \frac{\partial}{\partial \nu}   \iint_{|\zeta-z_0|>r(1+\eta)} \ln|\zeta-z_0-r(1+\eta /2)\E^{\I t} | \D\mes(\zeta) \D t \\
&=\frac{1}{2\pi} \iint_{|\zeta-z_0|>r(1+\eta)} \int_0^\alpha r(1+\eta /2) \frac{\partial \ln|\zeta -\cdot |}{\partial \nu}(z_0+r(1+\eta /2)\E^{\I t}) \D t \D\mes(\zeta) \\
& =-\frac{1}{2\pi} \iint_{|\zeta-z_0|>r(1+\eta)} \varphi(C_{r(1+\eta/2)}(z_0),\zeta,\alpha)\D\mes(\zeta)~.
\end{align*}

By Lemma~\ref{lem:arcsin}, we obtain 

\begin{align*}|\psi_r^2|(\mathbb{C})&=|\D\tilde{\psi}^2_r|([0,2\pi]) =\bigvee_0^{2\pi}\tilde{\psi}_r^2 \\
& \leq \frac{1}{2\pi}\iint_{|z_0-\zeta|>r(1+\eta)}\bigvee_0^{2\pi}\varphi(C_{r(1+\eta/2)}(z_0),\zeta,\cdot)\D|\mes|(\zeta) \\
&= \frac{2}{\pi} \iint_{|z_0-\zeta|>r(1+\eta)} \arcsin \frac{r}{|z_0-\zeta|}\D|\mes|(\zeta)\\
&= \frac{2}{\pi} \iint A_r(\zeta) \D|\mes|(\zeta)
\end{align*}
where 
$$A_r(\zeta)=
  \left\{
      \begin{aligned}
     \arcsin \frac{r}{|z_0-\zeta|} & \mbox{ when } & |z_0-\zeta|>r(1+\eta)~,\\
      0 & \mbox{ } & \mbox{ otherwise}~,\\
      \end{aligned}
    \right.
$$
which goes to $0$ when $r\to 0$ for any $\zeta$.
 Hence $|\psi_r^2|(\mathbb{C})\rightarrow 0$ when $r\to 0$.

Let us set $\psi_r=\psi^1_r+\psi_r^2$. We have $|\psi_r|(\C)\rightarrow 0$ when $r\to 0$. Let us set
$$\tilde{\mes}_r=\mes|_{Q_{r(1+\eta)}(z_0)}+\psi_r~. $$
Note that, on $Q_r(z_0)$, $\psi_r=0$.
If we set $c_r=h(z_0)+p(z_0;\mes_r)$, we have
$$p(\mes,h)=p(\tilde{\mes}_r)+k_r~,$$
and one checks easily that $\tilde{\mes}_r$ satisfies the required properties for $r$ sufficiently small.
\end{proof} 

The Localization Theorem~\ref{thm:localization} will be essential at at least two places:
\begin{itemize}
\item  to prove the properties of the distance stretching, itself being fundamental in the proof of the Distances Convergence Theorem, see Section~\ref{sec can stretch}, or in the proof of the conformal nature of the distance preserving mappings, see Section~\ref{sec:conformal};
\item to prove that if the measure is not too concentrated, shortest arcs are arcs of bounded rotation with non-positive left and right turns, and arcs of bounded turn are of bounded rotation, see Section~\ref{sec:turn rotation}.
\end{itemize}

\section{Subharmonic distances on the plane}\label{sec:distance}
%\subsection{Subharmonic metrics}

In all this section, unless specific precision, $M$ is  a bounded domain of the plane, $\mes$ is a signed Borelian  measure over the plane with compact support, $h$ is a harmonic function over $M$.

We defined the function 
$p(\mes,h)$ over $M$ as the difference of two subharmonic functions, see \eqref{eq:p mes sing} and  Definition~\ref{def:subh}.  We denote
\begin{equation}\label{eq:def pl}\lambda(\mes,h)=\E^{-2(p(\mes)+h)}~, \end{equation}
i.e., for $z\in M$,
$$\ln \lambda(z;\mes,h)=-\frac{1}{\pi}\iint \ln \vert z-\zeta\vert \D\mes(\zeta) -2h(z)~. $$
\index{$\lambda(\mes,h)$}
Recall that such formula holds only quasi-everywhere on $M$, that will be implicit most of the time.

Once $\mes$ and $h$ are given, we denote $\lambda(\mes,h)$ by $\lambda$.  In turn, in the sense of distribution,  Proposition~\ref{propr: potentiel l1} applied  to the positive and negative parts of $\mes$ gives that
 \begin{equation}\label{eq:mesure PL}-\frac{1}{2}\Delta \ln \lambda =\mes~. \end{equation}

We may call $\lambda |\D z|^2$ a \emph{subharmonic metric}\index{subharmonic metric}, and call $\mes$ the \emph{curvature measure}\index{curvature measure} of the metric. Apart from the fact that it is defined only quasi-everywhere, conversely to Riemannian metrics, it may reach values $0$ or $+\infty$ at some points. Standard examples are the ones of flat cones (see Lemma~\ref{lem:ln module holo}):
\begin{itemize}
\item if the curvature of the cone is positive, then the subharmonic metric is infinite at the vertex;
\item if the curvature of the cone is negative, then the subharmonic metric is zero at the vertex.
\end{itemize}

We will define the $\lambda$-length of an arc, rectifiable for the Euclidean metric, in Section~\ref{sec:length structure}, and in Section~\ref{sec:intrinsic length} we will define a distance from the length structure, and we present some preliminary convergence results. Section~\ref{sec can stretch}
presents  Reshetnyak's method of canonical stretching of a distance, based on Localization  Theorem~\ref{thm:localization}.  Then Reshetnyak's convergence theorems are presented in Section~\ref{sec cv}.
 
%
%
%
%In all this section, $M$ is a bounded domain of the plane whose boundary is made of arcs with bounded rotations (see Section~\ref{sec abs rot}). If not specified, $\mes$ is a signed measure with compact support, $h$ is a harmonic function over $M$, and $\lambda=\lambda(\mes,h)$ in the sense of \PL functions of Section~\ref{sec PL}, i.e. if $p(\mes)$ is the potential of $\mes$, 
%$$\lambda=e^{-2(p(\mes)+h)}~$$
%i.e. if $\mes^+$ and $\mes^-$ are the positive and negative parts of $\mes$,
%$$p(\mes)=\frac{1}{2\pi} \iint \ln |z-\zeta  | d\mes^+(\zeta)-\frac{1}{2\pi} \iint \ln |z-\zeta  | d\mes^-(\zeta)~.$$

\subsection{Length of arcs}\label{sec:length structure}

%\subsubsection{General definition}

Let $\param$ be the arc length parameterization of a rectifiable arc.  By Lemma~\ref{lem: measure 0 intervalle}, $\lambda\circ \param$  is defined almost everywhere on $[0,s(K)]$, where $s(\arc)$ is  the Euclidean length of the arc.
%  
%We noted in Remark~\ref{rem Gammah}
%that, if $\param$ is  the arc length parameterization  of an arc $\arc$ with absolute rotation $<\pi$, then $\param^{-1}$ is
%$\cos\left(\frac{\kappa(\arc)}{2}\right)$-Lipschitz. By Lemma~\ref{lem: measure 0 intervalle}, $\lambda\circ \param$  is defined almost everywhere on $[0,s(K)]$, where $s(\arc)$ is  the Euclidean length of the arc.\footnote{This necessity to consider 
%arcs of bounded rotation to define the length is missing in the articles of Y. \res that are contained in the present volume. This gap is corrected in his survey \cite{R93}.}
 In turn, if now $\param:[0,1]\to \C$ denotes the parameterization proportional to the arc length of a rectifiable arc $\arc$, 
we can define the \emph{length of $\arc$ for the metric $\lambda |\D z|^2$}\index{length of a curve}:
\begin{equation}\label{eq:tilde s}\tilde{s}_\lambda(\arc)= s(\arc)\int_0^1 \lambda^{1/2}(\param(t); \mes,h) \D t~.\end{equation}

% \subsubsection{The fundamental estimates}

In this section, we prove a theorem about convergence of length (Theorem~\ref{thm: basic I}).

The following estimate is based on the  simple idea that, as $\lambda$ is the exponential of the integral of a logarithm, we may use Jensen inequality (Lemma~\ref{jensen inequality}).
The following result is a part of the fundamental Theorem 3.1 in \cite{R60II}. We present it under the form of Lemma 4.2 in \cite{tro-principe}.

\begin{lemma}%[\res--Troyanov inequality]
\label{re-tro inequality}
Let $\alpha$ and $\beta$ be such that
$$ 0\leq \beta = \frac{\alpha}{\pi}\mes^+(\C)~. $$
Then  if $M$ is included in $Q_R$,
$$\lambda^\alpha(z;\mes)=\E^{2 \alpha(p(\mes^-)-p(\mes^+))(z)}\leq \frac{k_1^\alpha}{\omega^+(\mathbb{\C})}\iint \vert z - \zeta \vert^{-\beta} \D\mes^+(\zeta)~, $$
where $k_1=2R^{\frac{\mes^-(\C)}{\pi}}$.
\end{lemma}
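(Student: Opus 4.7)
The plan is to split $\lambda^\alpha(z;\mes)$ along the Jordan decomposition of $\mes$, bound the contribution of $\mes^+$ by Jensen's inequality (Lemma~\ref{jensen inequality}), and estimate the contribution of $\mes^-$ by the obvious pointwise bound coming from the compact support.

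First I would write, by the definition \eqref{eq:def pl} of $\lambda$ (without the harmonic term) and the definition \eqref{eq:potential} of the potential,
\[
\lambda^\alpha(z;\mes)=e^{-2\alpha p(z;\mes^+)}\cdot e^{\,2\alpha p(z;\mes^-)}.
\]
For the first factor, I would rewrite the exponent as an integral with respect to the probability measure $\mes^+/\mes^+(\mathbb C)$:
\[
-2\alpha\, p(z;\mes^+)=\frac{1}{\mes^+(\mathbb C)}\iint \ln|z-\zeta|^{-\beta}\,d\mes^+(\zeta),
\]
where the identity $\beta=\alpha\mes^+(\mathbb C)/\pi$ is used. Applying Jensen's inequality to the convex function $\exp$ then gives
\[
e^{-2\alpha\, p(z;\mes^+)}\leq \frac{1}{\mes^+(\mathbb C)}\iint |z-\zeta|^{-\beta}\,d\mes^+(\zeta).
\]
This is the main step, and it is the reason Lemma~\ref{jensen inequality} is introduced above.

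For the second factor, I would use the assumption that $\operatorname{supp}(\mes)\subset Q_R$, so that for any $z\in M\subset Q_R$ and any $\zeta\in\operatorname{supp}(\mes^-)$ we have $|z-\zeta|\leq 2R$, hence $\ln|z-\zeta|\leq \ln(2R)$. Integrating against $\mes^-$ yields
\[
2\alpha\, p(z;\mes^-)=\frac{\alpha}{\pi}\iint \ln|z-\zeta|\,d\mes^-(\zeta)\leq \frac{\alpha\,\mes^-(\mathbb C)}{\pi}\ln(2R),
\]
and exponentiating gives a uniform bound by $k_1^\alpha$, with $k_1$ the constant in the statement. Multiplying the two bounds yields the claim.

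The only delicate point is the first step: one must check that $\mes^+$ is not identically zero (otherwise $\beta=0$ and the inequality is trivial, with the convention $0^0=1$ on the right-hand side), and that the integrand $\ln|z-\zeta|^{-\beta}$ is $\mes^+$-integrable so that Jensen applies; this follows from Lemma~\ref{lem:potential bfb}. Everything else is bookkeeping with the compact support bound.
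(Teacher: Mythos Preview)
Your proof is correct and follows essentially the same approach as the paper: split $\lambda^\alpha$ via the Jordan decomposition, bound $e^{2\alpha p(\mes^-)}$ by the compact-support estimate (the paper cites Lemma~\ref{lem:potential bfb} for this), and apply Jensen's inequality (Lemma~\ref{jensen inequality}) with $\psi=\exp$ to the $\mes^+$-average to control $e^{-2\alpha p(\mes^+)}$. Your remarks on the degenerate case $\mes^+(\C)=0$ and on the $\mes^+$-integrability of $\ln|z-\zeta|^{-\beta}$ match the paper's own checks.
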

\begin{proof}
First,  $\E^{2p(\mes^-)}$ is bounded from above by  $k_1$ by Lemma~\ref{lem:potential bfb}. 
Then, we write
$$\E^{-2\alpha p(\mes^+)}=\exp \left( \frac{1}{\omega^+(\C)}\iint \ln \vert z-\zeta\vert^{-\beta} \D\mes^+(\zeta)\right)~.$$

The function $\zeta\to -\beta \ln \vert z-\zeta\vert$ is integrable because $\beta \geq 0$ and the following decomposition (up to increase $R$ to get $R>1/2$):
$$-\ln |z-\zeta| = \left(-\ln |z-\zeta| + \ln 2R\right) - \left(\ln 2R\right)~. $$
Both functions in the brackets are non-negative. We can apply Lemma~\ref{jensen inequality}, and the result follows.
\end{proof}

An easy computation \cite[Lemma 8]{R60II} shows that, for any arc length parameterized arc $\param$
 of absolute rotation bounded from above by $A<\pi$, for any $0\leq \beta <1$, for any $\zeta\in M$, then 
\begin{equation}\label{eq:inequalit gamma(h)}\int_0^{s(K)} \frac{\D t}{|\zeta-\param(t)|^\beta} < \frac{2^{1+\beta}s(K)^{1-\beta}}{(1-\beta)\cos(A/2)^\beta}~. \end{equation}

Using this inequality together with an easy improvement of the proof of  Lemma~\ref{re-tro inequality}, \res obtained the following fundamental estimate.%\newpage
\begin{theo}
\begin{theorem}[{\cite[Theorem 3.1]{R60II} }]\label{prop:fun estimate}
Let $\arc$ be an arc with absolute rotation bounded from above by $A < \pi$. Let $\arc(r)$ be the $r$-neighborhood of $\arc$. For each $\alpha, \beta$ such that
\begin{equation}\label{eq:hyp fund est}0\leq \beta = \frac{\alpha}{\pi}\omega^+(K(r)) <1~,  \end{equation}
we have
$$\int_K \lambda(\mes)^\alpha \leq  A_1 \frac{s(K)^{1-\beta}}{\cos(A/2)^\beta r^b}$$
where $b$ and $A_1$ depend only on $\alpha, \mes^+(K(r))$ and $\mes^\pm(\C)$.
%:
%$b=\frac{\alpha}{\pi}\mes^+(\C\setminus K(r))$ and 
%$$A_1=\frac{2^{1+\beta}}{1-\beta}\left(2R^{\frac{\mes^-(\C)}{\pi}}\right)^\alpha
%=\frac{2^{1+\beta}}{1-\beta}(2R)^{\frac{\alpha}{\pi}(\mes^+(\C\setminus K(r)) - \mes(\C)) + \beta}~.$$
\end{theorem} 
\end{theo}

In the case $\alpha=\frac{1}{2}$, the estimate in Theorem~\ref{prop:fun estimate} gives an upper bound for $\tilde{s}_\lambda$. The condition \eqref{eq:hyp fund est} then becomes
\begin{equation}\label{eq: mes+Kr}\mes^+(K(r))<2\pi~.\end{equation}

In turn, if for a point $z\in M$, 
\begin{equation}\label{eq:mes plu pet 2pi}\mes^+(\{z\}) <2\pi~,\end{equation} as $\mes^+$ is regular, there exists a neighborhood $U$ of $z$ such that 
$\mes^+(U)<2\pi$. Note that as the measures are finite, there is a finite number of points $z$ with $\mes^+(\{z\})\geq 2\pi$. The necessity of condition 
\eqref{eq:mes plu pet 2pi} is given by \eqref{eq:inequalit gamma(h)}. But it also has the following interpretation. Let us write
$$p(z;\mes)=\frac{\mes(\{z_0\})}{2\pi}\ln \vert z-z_0\vert+\frac{1}{2\pi}\int_{\C\setminus \{z_0\}}\ln \vert z-\zeta\vert \D\mes(\zeta)~,$$
then
\begin{equation}\label{eq:decomp mes lambda point}\lambda^{1/2}(z;\mes)=\vert z-z_0\vert^{-\frac{\mes(\{z_0\})}{2\pi}}\E^{-\frac{1}{2\pi}\int_{\C\setminus \{z_0\}}\ln \vert z-\zeta\vert \D\mes(\zeta)}~.\end{equation}

%\subsubsection{Convergence of length }

We will prove the following result, which is a consequence of the estimate in Theorem~\ref{prop:fun estimate}. The statement below is indeed a particular case of Theorem~\ref{thm: basic I} that we will mention later. The proof of Proposition~\ref{prop: cv lg}, that we present below, is hence a simplified version of the one of Theorem~\ref{thm: basic I}. The proof of Theorem~\ref{thm: basic I} may be found in \cite{R60II}.

\begin{proposition}\label{prop: cv lg}
%Let $M$ be a bounded domain, $\mes$ be a signed measure with compact support, $h$ be a harmonic function over $M$. Let $\lambda:=\lambda(\mes,h)$. 
Let $(\arc_n)_n$ be a sequence of arcs converging to an arc $\arc$, with absolute rotation uniformly bounded by 
$A<\infty$.
  If \begin{equation}\label{eq:mes cond conv leng}\mes^+(\{z\})<2\pi~\forall z\in \arc~,\end{equation}
 then
$$\tilde{s}_{\lambda}(\arc_n)\xrightarrow[n\to\infty]{}  \tilde{s}_\lambda(\arc)~. $$
\end{proposition}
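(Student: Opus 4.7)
The plan is to combine length convergence for Euclidean arcs (Lemma~\ref{lem: BR longueur cv}) with a dominated/Vitali convergence argument on the integrand, where uniform integrability comes from Reshetnyak's fundamental estimate (Theorem~\ref{prop:fun estimate}). Writing $\tilde{s}_\lambda(\arc_n)=\int_{\arc_n}\lambda^{1/2}\,ds$ with arc-length integration, the issue is that $\lambda^{1/2}$ may blow up on the polar set $\sing$ but remains mildly integrable provided the positive curvature mass near the arc stays below~$2\pi$.

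The first step is a decomposition: since $\arc$ is compact and $\mes^+(\{z\})<2\pi$ for each $z\in \arc$, by regularity of $\mes^+$ each point of $\arc$ has an open disc neighborhood $V_z$ with $\mes^+(V_z)<2\pi$. Extract a finite subcover $\{V_1,\dots,V_m\}$, and fix $r>0$ so that each piece in some eventual subdivision of $\arc$ lies well inside one of the $V_j$ with a margin $r$. Using Remark~\ref{rem:equi decompositio BR} together with Lemma~\ref{lem:uniform cv subarcs}, one may choose a common integer $N$ and partitions of the parameterizations of the $\arc_n$ (for large $n$) into $N$ subarcs $\arc_n^i$, each of absolute rotation $<\pi$, each contained in some $V_{j(i)}$ with $\arc_n^i(r)\subset V_{j(i)}$, and such that $\arc_n^i\to \arc^i$ for a corresponding decomposition of $\arc$. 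Since $\tilde{s}_\lambda$ is additive under concatenation, it suffices to prove $\tilde{s}_\lambda(\arc_n^i)\to \tilde{s}_\lambda(\arc^i)$ for each $i$; so I assume henceforth $\arc_n\to \arc$, $|\kappa|(\arc_n)\leq A<\pi$, and an open set $U\supset \arc_n(r)$ with $\mes^+(U)<2\pi$.

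The second step is uniform integrability. Pick $\epsilon>0$ so small that $\beta:=\frac{(1+\epsilon)/2}{\pi}\,\mes^+(U)<1$. Applying Theorem~\ref{prop:fun estimate} with $\alpha=(1+\epsilon)/2$ on each $\arc_n$ gives
\[
\int_{\arc_n}\lambda(\mes)^{(1+\epsilon)/2}\,ds\leq C
\]
with $C$ independent of $n$; multiplying by $e^{-(1+\epsilon)h}$, which is bounded on a neighborhood of $\arc$, the same holds for $\lambda=\lambda(\mes,h)$. Parametrize $\arc_n$ on $[0,1]$ proportionally to arc length by $\param_n$, and set $f_n(t)=s(\arc_n)\lambda^{1/2}(\param_n(t))$, so that $\tilde{s}_\lambda(\arc_n)=\int_0^1 f_n$. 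Since $s(\arc_n)$ is bounded (Lemma~\ref{lem: BR longueur cv}), the preceding estimate rewrites as $\int_0^1 f_n^{1+\epsilon}\,dt\leq C''$, uniformly in $n$; hence $\{f_n\}$ is uniformly integrable on $[0,1]$.

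The third step is pointwise convergence. The arc length parametrizations $\param_n$ are $s(\arc_n)$-Lipschitz with $s(\arc_n)\to s(\arc)$, so by Arzel\`a--Ascoli (using the hypothesis $\arc_n\to \arc$ and length convergence on each subarc $\arc_n\vert_{[0,t]}$) they converge uniformly to $\param$. The complement $C_\mes^c\cup \sing$ has Hausdorff dimension zero by Proposition~\ref{prop:haus zero} and the discussion following it, and since $\param$ is $1$-Lipschitz, the preimage of this set under $\param$ has Lebesgue measure zero (Lemma~\ref{lem: measure 0 intervalle} plus Lemma~\ref{lem:haus lip}). Hence for a.e.\ $t\in[0,1]$, $\param(t)$ is a (finite) continuity point of $p(\mes)$, and $p(\mes)(\param_n(t))\to p(\mes)(\param(t))$ by definition of $C_\mes$; continuity of $h$ then yields $f_n(t)\to f(t):=s(\arc)\lambda^{1/2}(\param(t))$ a.e. Vitali's convergence theorem gives $\int_0^1 f_n\to \int_0^1 f=\tilde{s}_\lambda(\arc)$, as desired.

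The main obstacle will be setting up the decomposition step cleanly, because Theorem~\ref{prop:fun estimate} is only useful when both $|\kappa|<\pi$ on each piece \emph{and} $\mes^+$ on an $r$-thickening of the piece is $<2\pi$, the latter forcing a genuinely local subdivision that must be compatible with $\arc_n\to \arc$ uniformly in $n$. The slack $\epsilon>0$ in the exponent $(1+\epsilon)/2$, needed to pass from an a.p.s.-mean bound to honest uniform integrability, is what dictates the strict inequality $\mes^+(U)<2\pi$ and explains why condition \eqref{eq:mes cond conv leng} is sharp.
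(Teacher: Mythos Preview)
Your proof is correct and follows the same overall strategy as the paper: reduce by Lemma~\ref{lem:uniform cv subarcs} to subarcs with $|\kappa|<\pi$ and $\mes^+$-mass $<2\pi$ on a thickening, obtain uniform integrability from Theorem~\ref{prop:fun estimate} with exponent $\alpha=\tfrac{1+\epsilon}{2}$, and conclude via a Vitali/de~La~Vall\'ee--Poussin argument (Theorem~\ref{thm:LVP}).

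The one genuine difference is in how you justify convergence of the integrands. You invoke the fact that the discontinuity set $C_\mes^c$ of the potential is $\mathcal{H}^1$-null (the theorem from \cite{R63} quoted after Lemma~\ref{lem:cont polar}; note this gives $\mathcal{H}^1=0$, not Hausdorff dimension zero as you wrote, but that is all you need). The paper instead proves a separate lemma (Lemma~\ref{lem:P_n cv f_n}) showing, via Fubini and an equicontinuity estimate on $\zeta\mapsto\int_{\arc_n}\ln|\cdot-\zeta|$, that $\int_0^1 p(\mes^\pm)\circ\param_n\to\int_0^1 p(\mes^\pm)\circ\param$; from this and a.e.\ convergence it deduces $L^1$ convergence, hence convergence in measure of $F_n=\sqrt{\lambda}\circ\param_n$. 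Your route is shorter once the continuity theorem is available; the paper's route is more self-contained and yields the stronger intermediate information that $p(\mes^\pm)\circ\param_n\to p(\mes^\pm)\circ\param$ in $L^1$, which is what generalizes in the proof of Theorem~\ref{thm: basic I} where the measure also varies.
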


The elementary Example­~\ref{ex:cylindre 1} of a flat cylinder shows that the result would be false without the assumption \eqref{eq:mes cond conv leng}: on can easily find a sequence of segments converging to the origin (considered as a degenerated curve),
with  constant $\tilde{s}_{\lambda}$ length.

To simplify the notation, we will only deal with the case $h=0$. In the general case, as $h$ is $C^\infty$ over $M$, it is easy to deal with it. For example, if $\lambda=\E^h$, with $h$ harmonic, 
as by Lemma~\ref{lem: BR longueur cv}, $s(K_n)\to s(K)$, it is straightforward  that $\tilde{s}_{\lambda}(\arc_n)\to  \tilde{s}_\lambda(\arc)$.

Due to Lemma~\ref{lem:uniform cv subarcs}, we may consider that $\arc_n$ and $\arc$ have absolute rotation $<\pi$.
By Remark~\ref{rem Gammah}, the arc length parameterization of these subarcs, as well as the inverse of the parameterizations, are equi-Lipschitz mappings.

The proof of Proposition~\ref{prop: cv lg} is based on the following result, which is a combination of a theorem of de  La Vall\'ee-Poussin  and a theorem of D. Vitali, see  Chapter VI, 3, in \cite{natanson}, Theorem~7 p. 159 and  Theorem~4 p. 157, respectively.

\begin{theorem}\label{thm:LVP}
Let $F_n, F_0$ be non-negative  measurable functions over $[0,1]$ such that:
\begin{itemize}
\item $F_n$ \emph{converge in measure}\index{convergence in measure} to $F_0$,  i.e., for any $\epsilon >0$, if $\mathcal{B}$ are the Borelians of the plane, $$\mathcal{L}(\{E\in \mathcal{B} : |F_n(E)-F_0(E)|\geq \epsilon\})\xrightarrow[n\to 0]{}0~,$$ 
where $\mathcal{L}$ is the Lebesgue measure,
\item  there exists a positive increasing function $\Phi$ on $\R^+$ such that $\frac{\Phi(y)}{y}\to \infty$ when $y\to\infty$, and a constant $B$ such that
\begin{equation}\label{eq:la vallee poussin}
\int_0^1 \Phi(F_n(t))\D t \leq B~.
\end{equation}
\end{itemize}
Then
$$\int_0^1 F_0 = \lim_{n\to\infty} \int_0^1 F_n~.$$
\end{theorem}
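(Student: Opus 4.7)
My plan is to derive Theorem~\ref{thm:LVP} as a combination of two ingredients that are classical and each quite short to establish: (i) the de~La~Vall\'ee-Poussin hypothesis implies that the family $(F_n)_n$ is \emph{uniformly integrable}, and (ii) convergence in measure together with uniform integrability yields convergence of the integrals (Vitali's theorem).

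For the first step, I fix $\varepsilon>0$. Since $\Phi(y)/y \to \infty$, there exists $K=K(\varepsilon)$ such that $y \leq (\varepsilon/B)\Phi(y)$ for all $y\geq K$. Splitting the integral over a measurable set $E\subset[0,1]$ as
\[
\int_E F_n = \int_{E\cap\{F_n<K\}} F_n + \int_{E\cap\{F_n\geq K\}} F_n
\leq K\,\mathcal{L}(E) + \frac{\varepsilon}{B}\int_0^1 \Phi(F_n)
\leq K\,\mathcal{L}(E) + \varepsilon,
\]
which shows both that $\sup_n\int_0^1 F_n \leq K+\varepsilon$ and that $\int_E F_n$ can be made uniformly small by taking $\mathcal{L}(E)$ small.

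For the second step, I first deal with $F_0$. By hypothesis, $F_n\to F_0$ in measure, so a subsequence $F_{n_k}$ converges to $F_0$ almost everywhere. Fatou's lemma then gives $\int_0^1 F_0 \leq \liminf_k \int_0^1 F_{n_k} \leq K+\varepsilon$, and taking the liminf in the bound displayed above yields the analogous uniform-integrability estimate $\int_E F_0 \leq K\mathcal{L}(E) + \varepsilon$. Now, given any $\varepsilon>0$, I choose $K$ as above (the same $K$ works for $F_0$ and all $F_n$), and then choose $\delta>0$ with $K\delta<\varepsilon$. By convergence in measure, for $n$ large enough the set $A_n = \{t\in[0,1]: |F_n(t)-F_0(t)|>\varepsilon\}$ satisfies $\mathcal{L}(A_n)<\delta$. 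Splitting $[0,1]=A_n \cup ([0,1]\setminus A_n)$, I estimate
\[
\int_0^1 |F_n-F_0|\,dt
\leq \int_{A_n}(F_n+F_0)\,dt + \varepsilon
\leq 2(K\delta+\varepsilon) + \varepsilon
\leq 5\varepsilon,
\]
using uniform integrability of both $F_n$ and $F_0$ on $A_n$. Hence $\int_0^1 F_n \to \int_0^1 F_0$.

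The step I expect to be the main nuisance (not really an obstacle) is the symmetric treatment of $F_0$: one must ensure that the uniform-integrability estimate transfers from the sequence to the limit, which is exactly why I pass through a subsequence converging almost everywhere and apply Fatou to the bound $K\mathcal{L}(E)+\varepsilon$. Everything else is bookkeeping: the splitting trick for uniform integrability is the content of de~La~Vall\'ee-Poussin's criterion, and the final three-piece estimate is exactly Vitali's convergence argument.
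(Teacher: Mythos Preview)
Your proof is correct and follows precisely the approach the paper indicates: the paper does not give its own proof but states that the result is ``a combination of a theorem of de~La~Vall\'ee-Poussin and a theorem of Vitali'' (citing Natanson), and you have simply supplied the standard details of that combination --- de~La~Vall\'ee-Poussin's criterion yields uniform integrability, and Vitali's theorem then converts convergence in measure plus uniform integrability into $L^1$-convergence.
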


We will use the following functions defined over $[0,1]$:
$$f^\pm_n:=p(\mes^\pm)\circ \param_n~,\, f^\pm_0:=p(\mes^{\pm})\circ \param~,$$
where $\param_n$ and $\param$ are the parameterization proportional to the arc length of $\arc_n$ and $\arc$.

For convenience, we repeat the proof of Lemma 7.2 in \cite{R60II}.

\begin{lemma}\label{lem:P_n cv f_n}
%Let $(\arc_n)$, $\arc$ be arcs with  bounded rotation uniformly bounded by a constant $A<\pi$, parameterized proportionally to arc-length by $\param_n$ and $\param$ respectively, and such that $\arc_n\to \arc_0$. 
With the notation above,
$$\int_0^1 f_n^\pm \xrightarrow[n\to\infty]{}  \int_0^1 f_0^\pm~. $$

\end{lemma}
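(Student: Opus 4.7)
The strategy is to apply Theorem~\ref{thm:LVP} to the functions $G_n^{\pm}(t) := C - f_n^{\pm}(t)$ on $[0,1]$, where $C$ is a constant chosen large enough that $G_n^{\pm} \geq 0$. Such a $C$ exists because the arcs $\arc_n$ eventually lie in a common compact subset of $\C$ (as $\arc_n \to \arc$), and $p(\mes^{\pm})$ is bounded above on compacts by Lemma~\ref{lem: pmu lower}. It then suffices to establish two things: pointwise convergence $f_n^{\pm} \to f_0^{\pm}$ almost everywhere on $[0,1]$ (which implies convergence in measure), and a uniform bound of the form $\int_0^1 \Phi(G_n^{\pm})\,dt \leq B$ for some superlinear $\Phi$.

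For the almost-everywhere convergence, I would use the result cited after Lemma~\ref{lem:cont polar}: the set $D^{\pm}$ of discontinuities of $p(\mes^{\pm})$ can be covered by discs of arbitrarily small total diameter, so in particular $\mathcal{H}^1(D^{\pm}) = 0$. The proportional-to-arc-length parameterization $\param$ is injective and Lipschitz, with $|\param'| = s(\arc)$ almost everywhere (a positive constant since $\arc$ is non-degenerate). Applying the area formula (Theorem~\ref{thm:area formula}) to $\param$ and the Borel set $\param^{-1}(D^{\pm})$ gives $s(\arc)\cdot|\param^{-1}(D^{\pm})| = \mathcal{H}^1(\param(\param^{-1}(D^{\pm}))) \leq \mathcal{H}^1(D^{\pm}) = 0$, hence $|\param^{-1}(D^{\pm})| = 0$. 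For $t$ outside this null set, $p(\mes^{\pm})$ is continuous (possibly with value $-\infty$) at $\param(t)$, and since $\param_n(t) \to \param(t)$ uniformly, we get $f_n^{\pm}(t) \to f_0^{\pm}(t)$.

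For the uniform integrability, I would choose $\Phi(y) = e^{2\alpha y}$ with some $\alpha>0$ to be fixed. Applied to the signed measure $\mes^{+}$ alone (taking the ``negative part'' in the statement to be zero), Lemma~\ref{re-tro inequality} yields the pointwise bound
\[
e^{-2\alpha p(\mes^{+})(z)} \;\leq\; \frac{2^{\alpha}}{\mes^{+}(\C)}\iint |z-\zeta|^{-\beta}\,d\mes^{+}(\zeta),
\qquad \beta = \tfrac{\alpha}{\pi}\mes^{+}(\C),
\]
and symmetrically for $\mes^{-}$. Choose $\alpha$ so small that $\beta < 1$. Integrating in $t$ and using Fubini gives
\[
\int_0^1 e^{-2\alpha f_n^{+}(t)}\,dt \;\leq\; \frac{2^{\alpha}}{\mes^{+}(\C)\, s(\arc_n)}\iint \int_0^{s(\arc_n)} |\tilde{\param}_n(u)-\zeta|^{-\beta}\,du\,d\mes^{+}(\zeta),
\]
where $\tilde{\param}_n$ is the arc-length parameterization. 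By Remark~\ref{rem:equi decompositio BR}, one can decompose $\arc_n$ into a number $N$ of subarcs of absolute rotation less than $\pi/2$, with $N$ depending only on the uniform bound $A$ on absolute rotations. On each such subarc, estimate \eqref{eq:inequalit gamma(h)} gives a uniform upper bound for $\int |\tilde{\param}_n - \zeta|^{-\beta}$ independent of $\zeta$ and $n$ (here $s(\arc_n)$ is bounded in $n$ by Lemma~\ref{lem: BR longueur cv} and bounded below for large $n$ by $s(\arc)/2 > 0$). Summing over the $N$ pieces yields the required uniform bound, and hence $\int_0^1 \Phi(G_n^{+})\,dt \leq B$.

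Putting the two steps together, Theorem~\ref{thm:LVP} applies to $G_n^{+}$ and yields $\int_0^1 G_n^{+} \to \int_0^1 G_0^{+}$, equivalently $\int_0^1 f_n^{+} \to \int_0^1 f_0^{+}$. The same argument with $\mes^{-}$ in place of $\mes^{+}$ handles $f_n^{-}$. The main subtlety I expect is the uniform-in-$n$ nature of the estimate in step three: one must use the bi-Lipschitz/$\Gamma(h)$ character of arcs of bounded rotation (Remark~\ref{rem Gammah}) to control $\int |\tilde{\param}_n-\zeta|^{-\beta}$ by a constant independent of both $n$ and $\zeta$, which forces the a priori inconvenient dependence of the decomposition in Remark~\ref{rem:equi decompositio BR} on the individual arc to be absorbed into the bound.
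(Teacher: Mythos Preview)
Your proposal is correct, but it takes a different route from the paper's proof. The paper instead swaps the order of integration by Fubini: setting $P_n(\zeta) = s(\arc_n)\int_0^1 \ln|\param_n(t)-\zeta|\,dt$, one has $\int_0^1 f_n^\pm = \frac{1}{2\pi\,s(\arc_n)}\iint P_n\,d\mes^\pm$. Using the elementary bound $\ln|1+w| \leq |w|^\beta/\beta$ together with \eqref{eq:inequalit gamma(h)}, the paper obtains a uniform H\"older estimate $|P_n(\zeta_1)-P_n(\zeta_2)| \leq C\,|\zeta_1-\zeta_2|^\beta$, hence equicontinuity of $(P_n)$; pointwise convergence $P_n(\zeta) \to P_0(\zeta)$ for $\zeta \notin \arc$ then upgrades via Arzel\`a--Ascoli to uniform convergence on compacts, and the conclusion follows because $\mes^\pm$ is finite with compact support and $s(\arc_n)\to s(\arc)$.

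The conceptual difference is this: you work directly on $[0,1]$ and establish a.e.\ convergence $f_n^\pm \to f_0^\pm$ plus uniform integrability, invoking the small-discontinuity-set result for potentials (the boxed theorem after Lemma~\ref{lem:cont polar}) for the former. The paper's argument is more self-contained---the discontinuity set of $p(\mes^\pm)$ never enters---and relies only on equicontinuity in the $\zeta$-variable. Your approach has the merit of mirroring the structure of the proof of Proposition~\ref{prop: cv lg}, applying Theorem~\ref{thm:LVP} one level down. Both proofs lean on the same inequality \eqref{eq:inequalit gamma(h)}: you use it for uniform integrability, the paper for the H\"older modulus of $P_n$.
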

\begin{proof}
Let us denote

$$P_n(\zeta)=s(K_n)\int_0^{1} \ln|\param_n(t)-\zeta|\D t~. $$
For any $0<\beta<1$, 
$$\ln|1+z|\leq \ln (1+|z|)=\frac{1}{\beta}\ln(1+|z|)^\beta\leq \frac{1}{\beta}\ln(1+|z|^\beta)\leq \frac{|z|^\beta}{\beta}~, $$
and using this together with \eqref{eq:inequalit gamma(h)}, we obtain
$$P_n(\zeta_1)-P_n(\zeta_2)\leq \frac{s(K_n)}{\beta}\int_0^1\left|\frac{\zeta_1-\zeta_2 }{\param_n(t)-\zeta_2}\right|^\beta \D t\leq   \frac{2^{1+\beta}s(\arc_n)^{1-\beta}}{\beta(1-\beta)\cos(A/2)^\beta} |\zeta_1-\zeta_2|^\beta~.$$

As the above inequality is symmetric for $\zeta_1$ and $\zeta_2$, it follows that 
the $P_n$ are an equicontinuous family ($s(\arc_n)$ are uniformly bounded by \eqref{aq: rot diam long}). Also, if $\zeta\notin \arc$, then $P_n(\zeta)\to P(\zeta)$. 
By Arzel\`a--Ascoli Theorem, $(P_n)_n$ converge uniformly  to $P$ on any compact set.

Let us apply Fubini Theorem:
$$\int_0^1 f_n^\pm(t)\D t=\frac{1}{s(\arc_n)}\iint P_n(\zeta) \D\mes(\zeta)~, $$
and the result follows by uniform convergence, as $\mes$ is finite and has compact support, and the fact that by Lemma~\ref{lem: BR longueur cv}, $s(K_n)\to s(K)$.
\end{proof}
%
%\begin{lemma}\label{lem:f_n(t)}
%Under the hypothesis of Lemma~\ref{lem:P_n cv f_n},
%  $(f_n^\pm)_n$ converges in $L^1$ to $f_0^\pm$.
%\end{lemma}
%\begin{proof}
%As $p(\mes^\pm)$ is upper semicontinuous, we have that for $t\in [0,1]$, $-f_0^\pm(t)\leq \liminf_n(-f^\pm_n(t)).$
%By separating the cases $f^\pm_n(t)-f^\pm_0(t)\geq 0$ and $f^\pm_n(t)-f^\pm_0(t)\leq 0$, we have \com{FAUX} $$-2f^\pm_0\leq \liminf_n (-f^\pm_n) -f_0^\pm - |f^\pm_n-f^\pm_0|~.$$ Integration and  Fatou lemma give:
%$$-2 \int_0^1 f_0^\pm \leq \liminf_n  \int_0^1 (-f_n^\pm) - \int_0^1 f_0^\pm +\liminf_n \int^1_0- |f^\pm_n-f^\pm_0|~.$$
%
%From Lemma~\ref{lem:P_n cv f_n}, we obtain
% $\limsup_n  \int_0^1|f_n^\pm-f_0^\pm|\leq 0$, so 
% $\limsup_n  \int_0^1|f_n^\pm-f_0^\pm|= 0$, and the result follows.
%\end{proof}

\begin{proof}[Proof of Proposition~\ref{prop: cv lg}]
As by Lemma~\ref{lem: BR longueur cv} we have $s(\arc_n)\to s(K)$, it will suffice to check the hypothesis of Theorem~\ref{thm:LVP} for 
 $$F_n=\E^{-( f_n^+-f_n^-)}=\sqrt{\lambda}\circ \param_n~.$$

%By Lemma~\ref{lem:f_n(t)}, 
It is classical that, as $f_n^\pm$ converge almost everywhere to $f_0^\pm$ and as the integrals converge (Lemma~\ref{lem:P_n cv f_n}), then
$f_n^\pm$ converge to $f_0^\pm$ in $L^1$. The $L^1$ convergence implies the convergence in measure, and the composition of a sequence converging in measure by a continuous function still converge in measure, see e.g. \cite[Theorem 2.5.2]{Ash} and \cite{BJ61} respectively. Hence $(F_n)_n$ converges in measures to $F$.

As $\mes^+(K(r))<2\pi$, if $\mes^+(K(r))\not= 0$, we have 
$\frac{\pi}{\mes^+(K(r)}>\frac{1}{2}$, so we can
take $\epsilon>0$ such that
$$\frac{1}{2}<\alpha = \frac{1}{2}+\frac{\epsilon}{2}<\frac{\pi}{\mes^+(K(r))}~, $$
hence we can apply Theorem~\ref{prop:fun estimate}: there exists $B$ such that

$$\int_0^1 \lambda^\alpha \circ \param_n =\int_0^1
F_n^{1+\epsilon}\circ \param_n < B~.
 $$

And finally, $\Phi(y)=y^{1+\epsilon}$ satisfies the requirements of Theorem~\ref{thm:LVP}. If  $\mes^+(K(r))= 0$, the argument is similar.
\end{proof}

An improvement of the proof of Proposition~\ref{prop: cv lg} gives the following.

\begin{theo}
\begin{theorem}[{Basic Lemma I ---Length Convergence Lemma \cite{R60II}}]\label{thm: basic I}
For all $n\in \mathbb{N}^*$, let   $(\mes_n^1)_n$ and $(\mes_n^2)_n$ be  two sequences of positive
 measures   whose supports are contained in a same disc, 
  that converge weakly to $\mes_0^1$ and $\mes_0^2$
respectively.  
  Let us define
  $\mes_n=\mes_n^1-\mes_n^2$, $\mes_0=\mes_0^1-\mes_0^2$,  $\lambda_n=\lambda(\mes_n)$.

  Let $(\arc_n)$, $\arc_0$ be arcs with uniformly  bounded rotation, such that $\arc_n\to \arc_0$ and 
\begin{equation}\label{eq: cond hab prem thm cv long}\mes^1_0(\{z\})<2\pi~, \forall z\in \arc~.\end{equation}
Then
$$\tilde{s}_{\lambda_n}(\arc_n)\xrightarrow[n\to\infty]{}  \tilde{s}_{\lambda_0}(\arc_0)~. $$
\end{theorem}
\end{theo}

The condition about measures convergence in Theorem~\ref{thm: basic I}  is stronger than the weak convergence of $\mes_n$ to $\mes_0$. Also, $\mes^1_0$ and $\mes^2_0$ may not be the positive and negative parts of $\mes_0$, as the following example shows.

\begin{example}{\rm
Let $\mes_n=\mes^1_n-\mes^2_n$ be the  measure such that $\mes^1_n=2\pi\delta_0$, and $\mes^2_n$ is $n$ times the angular measure on $C_{1/n}(0)$ (the total measure is $2\pi$). It is easy to see that $\mes_n$ converges weakly to the zero measure over the plane. But $\mes^1_n$  converge weakly to $\mes^1_0=2\pi\delta_0$, and $\mes^2_n$ converge weakly to $\mes^2_0=2\pi\delta_0$, which are not the positive and the negative part of the zero measure. Even if  $\mes_0=\mes^1_0-\mes_0^2$ is indeed the zero measure,  as \eqref{eq: cond hab prem thm cv long} fails at $0$, we cannot apply Theorem~\ref{thm: basic I} for arcs containing $0$. Indeed, 
for any segment $\arc$ from the origin, 
$\tilde{s}_{\lambda_n}(K)\to +\infty$.
 (See Example~\ref{ex:potentiel cercle} for the potentials  of $\mes^1_n$ and $\mes^2_n$.)
%XX faire un dessin. exemple AZ p 245
}\end{example}

\subsection{Length distance}\label{sec:intrinsic length}

%\subsubsection{Definition}

We want to define a distance from the length $\tilde{s}_\lambda$ defined in \eqref{eq:tilde s}, in the way it is done in the case of Riemannian metrics (see Remark~\ref{remark:def distance riemn}). 

We will give the definition of the distance among a more general framework. The following definitions and main properties can be found e.g., in \cite{R93}, \cite{bbi}, \cite{papadopoulos}.
Let $X$ be a Hausdorff topological space.  A \emph{set of admissible arcs} $\mathcal{A}$ is a subset of the space of arcs of  $X$ which is closed under
restriction and concatenation. In this general framework, we will abuse notation denoting by the same letter an arc and a parameterization of the arc. Given a set of admissible arcs $\mathcal{A}$, a function $\tilde{L}:\mathcal{A}\to [0,+\infty]$ is a \emph{length mapping}\index{length mapping}   if
\begin{itemize}
\item $\tilde{L}$  depends neither on the orientation nor on reparameterization by linear homeomorphism;
\item $\tilde{L}$ is additive: $\tilde{L}(c|_{[a,b]})=\tilde{L}(c|_{[a,t]})+\tilde{L}(c|_{[t,b]})$ for any $t\in[a,b]$;
\item if $\tilde{L}(c)$ is finite, then the function $t\mapsto \tilde{L}(c|_{[a,t]})$ is continuous;
\item \label{loc dist} for any $x\in  X$, there is a neighborhood $U_x$ of $x$ such that 
\begin{equation}\label{eq:loc dist}\inf \{ \tilde{L}(c) : c(a)=x, c(b)\in X\setminus U_x\}>0~.\end{equation}
\end{itemize}

 A \emph{length distance}\index{length distance} on $X$ is the function $d:X\times X \to [0,+\infty]$ defined by
\begin{equation}\label{def:d_L}d(x,y)=\inf\{ \tilde{L}(c) :c \in \mathcal{A}(x,y) \}~, \end{equation}
where $\mathcal{A}(x,y)$ is the subset of $\mathcal{A}$ of elements joining $x$ and $y$. It is rather straightforward that  $d$ is a \emph{distance}\index{distance} on $X$, i.e. a non-negative function on $X  \times X$ such that
\begin{itemize}
\item $d(x,y)=0$ if and only if $x=y$;
\item $d(x,y)=d(y,x)$;
\item $d(x,z)\leq d(x,y)+d(y,z)$.
\end{itemize}

 Note that as the infimum of the empty set is $+\infty$, $d(x,y)=+\infty$ if there is no admissible arc between $x$ and $y$. A point $x$ such that $d(x,y)=+\infty$  for all $y\in X$ is called a  \emph{point at infinity}\index{point at infinity}. 
 Note that however a point at infinity is joined to itself by an arc reduced to a point, that has zero length because of the additivity property, hence we have $d(x,x)=0$ ---this requires to extend a little the definition of arcs we made.
 A distance is \emph{finite}\index{finite distance} if it is a function with values in $[0,\infty)$, i.e., any two points are at finite distance.

We will use at several places the following classical trick.  For a metric space $(X,d)$, we will denote by $B_r(x)$\index{$B_r(x)$} the open ball of radius  $r$ and center $x$.

\begin{lemma}\label{lem: trick discs}
Let  $r>0$ and for $o\in X$, let 
$x,y\in B_{r}(o)$. Then 
$$d(x,y)=\inf\{\tilde{L}(c) : c\in  \mathcal{A}(x,y), c\subset B_{2r}(o) \}~.$$
\end{lemma}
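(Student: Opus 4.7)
The plan is to prove the nontrivial inequality
\[
\inf\{\tilde{L}(c) : c \in \mathcal{A}(x,y),\ c \subset B_{2r}(o)\} \leq d(x,y)
\]
by showing that any \emph{sufficiently near-optimal} admissible arc from $x$ to $y$ must automatically stay inside $B_{2r}(o)$. The reverse inequality is immediate from the definition \eqref{def:d_L}, since restricting to a smaller class of admissible arcs can only make the infimum larger. Since $x,y\in B_r(o)$, we have $d(x,o)<r$ and $d(y,o)<r$; in particular $d(x,y)\leq d(x,o)+d(y,o)<2r$ is finite, so the argument will be nontrivial.

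Set
\[
\eta := 2r - d(x,o) - d(y,o) > 0,
\]
and fix an arbitrary $\varepsilon \in (0, 2\eta)$. By definition of $d(x,y)$, one can choose an admissible arc $c:[a,b]\to X$ from $x$ to $y$ with $\tilde{L}(c) \leq d(x,y) + \varepsilon$. Suppose, for contradiction, that $c$ exits $B_{2r}(o)$, i.e.\ there exists $t_\ast \in (a,b)$ with $d(o,c(t_\ast)) \geq 2r$. The restrictions $c|_{[a,t_\ast]}$ and $c|_{[t_\ast,b]}$ are admissible (by closure of $\mathcal{A}$ under restriction), hence by \eqref{def:d_L},
\[
d(x,c(t_\ast)) \leq \tilde{L}(c|_{[a,t_\ast]}), \qquad d(c(t_\ast),y) \leq \tilde{L}(c|_{[t_\ast,b]}).
\]
Applying the reverse triangle inequality to each, combined with additivity of $\tilde{L}$,
\[
\tilde{L}(c) \geq d(x,c(t_\ast)) + d(c(t_\ast),y) \geq \bigl(2r - d(x,o)\bigr) + \bigl(2r - d(y,o)\bigr) = 2r + \eta.
\]

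On the other hand, $\tilde{L}(c) \leq d(x,y) + \varepsilon \leq d(x,o) + d(y,o) + \varepsilon = 2r - \eta + \varepsilon$. Combining,
\[
2r + \eta \leq 2r - \eta + \varepsilon,
\]
i.e.\ $\varepsilon \geq 2\eta$, contradicting the choice of $\varepsilon$. Hence $c$ is contained in $B_{2r}(o)$, and the restricted infimum is at most $d(x,y) + \varepsilon$. Letting $\varepsilon \to 0$ finishes the proof. The whole argument is essentially a single triangle-inequality estimate; there is no real obstacle, the only thing to be careful about is choosing $\varepsilon$ small enough relative to the ``slack'' $\eta$ given by the strict inclusion $x,y \in B_r(o)$.
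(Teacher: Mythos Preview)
Your proof is correct and follows essentially the same approach as the paper's: both argue that any admissible arc leaving $B_{2r}(o)$ must have length at least $2r$ (the paper uses the cruder bound $d(x,o),d(y,o)<r$ rather than your sharper $\eta$), which already exceeds $d(x,y)<2r$. Your version is simply a more carefully quantified write-up of the same triangle-inequality idea.
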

\begin{proof}
We have $d(x,y)\leq 
 d(x,o)+d(o,y)<2r$. Let $c\in \mathcal{A}(x,y)$ 
 and suppose it leaves $B_{2r}(o)$. Then $\tilde{L}(c)\geq 2r$. 
\end{proof}

 These concepts  apply to the case $X=M$ and $\mathcal{A}$ is the set of broken lines  contained in $M$. 
The following result and the properties of the integral imply that $\tilde{s}_\lambda$ is a length mapping.
\begin{theo}
\begin{theorem}[{\cite[Theorem 4.1]{R60II}}]\label{lem:minor lengthBP}
Let $\arc$ be a broken line in $M$ joining $z_1,z_2\in M$, $z_1\not= z_2$.
Then:
$$\tilde{s}_\lambda(\arc) \geq A |z_1-z_2| \exp \left( - \frac{B}{|z_1-z_2|} \right)~, $$
where $A>0$ and $B>0$ depend only on $R$,  $\mes^+(\C)$ and $\mes^-(\C)$. %More precisely,  $A=\frac{1}{2}e^{-\frac{1}{\pi}\mes(D)\ln\sqrt{2}}$, $B=\frac{6}{\pi}\mes^-(D)$.
\end{theorem}
\end{theo}

 We now know that the length mapping $\tilde{s}_\lambda$ gives a distance.
 \begin{definition}
We denote by  $\rho_\lambda$\index{$\rho_\lambda$} the distance given by the length $\tilde{s}_\lambda$ on the set of broken lines of $M$. A distance obtained in this way is called a \emph{subharmonic distance}\index{subharmonic distance}.
\end{definition}

 One may pay attention to the fact that the distance $\rho_\lambda$ is determined by $\lambda$, hence by a measure $\omega$ and a harmonic function, but it is also determined by the domain $M$, as to define the distance we consider only the broken lines contained in $M$.

%\PL distances are simply called subharmonic in \cite{R63}.
  
 A length distance $d$ endows $X$ with a  topology  that is a priori different from the starting one.   Let us denote denote
  $$\mathcal{A}^*=\{c\in\mathcal{A} : \tilde{L}(c)<\infty \}~.$$
  \index{$\mathcal{A}^*$}

 \begin{lemma}\label{lem:comp top struct dist}
 \begin{itemize}
 \item The mapping $i:(X,d)\to X$ is continuous.
 \item An element of $\mathcal{A}^*$ is an arc in $(X,d)$.
 \item The topologies of $X$ and $(X,d)$ coincide if and only if $\forall x\in X$, $\forall\epsilon>0$, there exists an open set $U_{x,\epsilon}\ni x$ of $X$ such that each point of $U_{x,\epsilon}$ is connected to $x$ by an admissible arc of length $<\epsilon$.
 \end{itemize}
 \end{lemma}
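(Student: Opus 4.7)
The plan is to treat the three items in order, leveraging the four axioms of a length mapping (notably continuity of $t\mapsto \tilde{L}(c\vert_{[a,t]})$, additivity, and the local positivity \eqref{eq:loc dist}) together with the definition \eqref{def:d_L} of $d$.

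For (i), the approach is: fix $x\in X$ and an $X$-open neighborhood $V\ni x$; select a neighborhood $U_x\subset V$ satisfying \eqref{eq:loc dist} with constant $\alpha>0$, and show $B^d_\alpha(x)\subset U_x\subset V$. Concretely, if $d(x,y)<\alpha$, then by \eqref{def:d_L} some admissible $c\in\mathcal{A}(x,y)$ has $\tilde{L}(c)<\alpha$; if $y\notin U_x$, the first exit time $t^*:=\inf\{t:c(t)\notin U_x\}$ is attained, since $c$ is $X$-continuous and $U_x$ is $X$-open, and the restriction $c\vert_{[a,t^*]}$ belongs to $\mathcal{A}$ (closure under restriction) and has length at most $\tilde{L}(c)<\alpha$, contradicting \eqref{eq:loc dist}. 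Hence $B^d_\alpha(x)\subset V$, and $i$ is continuous at $x$.

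For (ii), let $c:[a,b]\to X$ lie in $\mathcal{A}^*$. Injectivity as a map into $(X,d)$ is inherited from injectivity into $X$, as the underlying set is the same. For continuity, the estimate
\[
d(c(s),c(t))\le \tilde{L}(c\vert_{[s,t]})=\tilde{L}(c\vert_{[a,t]})-\tilde{L}(c\vert_{[a,s]})
\]
combined with continuity of $t\mapsto \tilde{L}(c\vert_{[a,t]})$ (which holds because $\tilde{L}(c)<\infty$) yields the result.

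For (iii), observe that by (i) the identity from $(X,d)$ to $X$ is always continuous, so the two topologies coincide if and only if each $d$-ball around $x$ contains an $X$-neighborhood of $x$. Sufficiency of the stated condition is immediate: if $U_{x,\epsilon}$ is an $X$-neighborhood of $x$ consisting of points reachable from $x$ by admissible arcs of length $<\epsilon$, then $U_{x,\epsilon}\subset B^d_\epsilon(x)$. For necessity, assuming coincidence, $B^d_\epsilon(x)$ is itself an $X$-open neighborhood of $x$, and each $y\in B^d_\epsilon(x)$ satisfies $d(x,y)<\epsilon$, so by \eqref{def:d_L} there is an admissible arc from $x$ to $y$ of length strictly less than $\epsilon$; take $U_{x,\epsilon}=B^d_\epsilon(x)$.

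The main technical point to navigate carefully is in (i): the axiom \eqref{eq:loc dist} as stated only furnishes \emph{some} neighborhood $U_x$, whereas to conclude that the $X$-topology is coarser than the $d$-topology at every point $x$ one needs the positive-infimum property along a basis of $X$-neighborhoods of $x$ (so that the $U_x\subset V$ used above can actually be found). In the applications to bounded plane domains with the Euclidean topology this is essentially automatic for small Euclidean discs, but in the abstract setting a tacit reading of \eqref{eq:loc dist} along a basis, or an appeal to first-countability of $X$, is what makes the argument go through.
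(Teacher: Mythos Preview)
Your proof is correct and follows essentially the same route as the paper: item (i) via the positivity axiom \eqref{eq:loc dist} to trap a $d$-ball inside an $X$-neighborhood, item (ii) via additivity and continuity of $t\mapsto\tilde{L}(c\vert_{[a,t]})$, and item (iii) via the inclusion $U_{x,\epsilon}\subset B^d_\epsilon(x)$. You are in fact more thorough than the paper on two points: you supply the ``only if'' direction of (iii), which the paper's proof omits, and you correctly flag that \eqref{eq:loc dist} as literally stated yields only \emph{some} neighborhood $U_x$ rather than one inside a prescribed $V$ --- a subtlety the paper's proof glosses over by tacitly applying \eqref{eq:loc dist} to an arbitrary open $U$.
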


\begin{proof}
Let $U$ be an open set in $X$ with $x\in U$ and $r_x>0$ be the number given by 
\eqref{eq:loc dist}. Hence $B_{r_x/2}(x)\subset U$ and $U$ is open in $(X,d)$. For the second point, let $c:[a,b]\to X$ be an arc with $\tilde{L}(c)<\infty$. We have to check that it is continuous for $(X,d)$. But for any $a\leq t<t'\leq b$,
$$d(c(t),c(t'))\leq \tilde{L}(c_{|[t,t']})=\tilde{L}(c_{|[a,t']})-\tilde{L}(c_{|[a,t]})  $$
 and this last quantity goes to $0$ when $t\to t_0$ by the definition of $\tilde{L}$.
  
   For the last point, the hypothesis says that $U_{x,\epsilon}\subset B_\epsilon(x)$, and it is easy to deduce that $B_\epsilon(x)$ is open. The conclusion follows from the first point.
\end{proof}

\begin{example}\label{ex:cylindre 1}{\rm
We have seen flat cones in Section~\ref{Sec: flat cones}.
Let us consider the case when 
the cone angle at the origin is non-positive. The length of any broken line 
passing through the origin is infinite. These Euclidean arcs are not arcs for the topology given by $\rho_\lambda$. 
In turn, the set of arcs for the topology induced by $\rho_\lambda$ joining the origin to any other point is empty, so the origin is a point at infinity. And of course the topology induced by $\rho_\lambda$ is not the Euclidean one: for any sequence with $z_n\not= 0$ and  $\vert z_n\vert\to 0$, we have  $\rho_\lambda(z_n,0)=\infty$.
 }\end{example}

%\subsubsection{Instrinsic length of arcs}\label{sec:intrinsic length}

Let us first note an immediate consequence of the estimate from Theorem~\ref{prop:fun estimate} applied on segments.
 %As usual,  $h$ is a harmonic function on a bounded domain $M$ and $\mes$ is a  signed measure with compact support. We denote $\lambda=\lambda(\mes,h)$.

\begin{lemma}\label{proposition: tame point topologies coincide}
Let $z\in M$ be such that $\mes^+(\{z\}) <2\pi$. There exists $r$ such that $Q_r:=Q_r(z)\subset M$ and
\begin{itemize}
\item the Euclidean topology and the topology induced by $\rho_\lambda$ coincide over $Q_r$,
\item $\rho_\lambda$ is finite over $Q_r$.
\end{itemize}
\end{lemma}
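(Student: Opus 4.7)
The plan is to reduce both assertions to Theorem~\ref{prop:fun estimate} applied to Euclidean segments lying in a small enough disc around $z$. First I would use the continuity from above of the finite positive Borel measure $\mes^+$: since $\mes^+(\{z\})<2\pi$, there exists $r_1>0$ with $\bar Q_{r_1}(z)\subset M$ and $\mes^+(\bar Q_{r_1}(z))<2\pi$. Setting $r=r_1/4$, any two points $x,y\in Q_r(z)$ are joined, by convexity of the disc, by the Euclidean segment $[x,y]\subset Q_r(z)$, whose absolute rotation is $0$ and whose $r$-neighborhood lies in $Q_{2r}(z)\subset \bar Q_{r_1}(z)$, hence has $\mes^+$-mass strictly less than $2\pi$.

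Applying Theorem~\ref{prop:fun estimate} to $\arc=[x,y]$ with $\alpha=1/2$, $A=0$ and neighborhood radius $r$, the condition \eqref{eq:hyp fund est} reads $\beta=\mes^+([x,y](r))/(2\pi)<1$, and the theorem yields a constant $C>0$ (depending only on $r_1$, $\mes^+(\bar Q_{r_1}(z))$ and $\mes^\pm(\mathbb{C})$, but not on $x,y$) such that
$$\tilde{s}_\lambda([x,y])=\int_{[x,y]}\lambda^{1/2}\leq C\,|x-y|^{1-\beta}.$$
Since $|x-y|<2r$, the right-hand side is finite, so $\rho_\lambda(x,y)\leq \tilde{s}_\lambda([x,y])<\infty$, which is the second bullet.

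For the first bullet, Lemma~\ref{lem:comp top struct dist} already guarantees that every Euclidean-open subset of $Q_r(z)$ is $\rho_\lambda$-open, so only the converse inclusion of topologies needs checking. I would verify the third bullet of that lemma: given $x\in Q_r(z)$ and $\varepsilon>0$, choose $\delta>0$ with $C\delta^{1-\beta}<\varepsilon$ and $Q_\delta(x)\subset Q_r(z)$; then for any $y\in Q_\delta(x)$, the segment $[x,y]$ is an admissible broken line in $M$ of $\tilde{s}_\lambda$-length at most $C\delta^{1-\beta}<\varepsilon$, as required. The only delicate point of the plan is securing the strict inequality on the $\mes^+$-mass of a neighborhood needed in \eqref{eq:hyp fund est}; this is precisely where the hypothesis $\mes^+(\{z\})<2\pi$ together with regularity of $\mes^+$ enters. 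Once this is in hand, both claims are direct consequences of Reshetnyak's estimate and of the general topology criterion.
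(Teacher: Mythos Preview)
Your proposal is correct and follows essentially the same route as the paper: use regularity of $\mes^+$ to secure a disc with $\mes^+$-mass below $2\pi$, apply Theorem~\ref{prop:fun estimate} with $\alpha=1/2$ to Euclidean segments (which have zero absolute rotation), and conclude via the topology criterion of Lemma~\ref{lem:comp top struct dist}. The only point the paper adds that you omit is the remark that the harmonic factor $e^{-2h}$ in $\lambda=\lambda(\mes,h)$ is harmless, since $h$ is continuous and hence bounded on the compact closure of the chosen disc; you should also note that your exponent $\beta$ and constant $C$ can be taken uniform in $x,y$ because $\mes^+([x,y](r))\le\mes^+(\bar Q_{r_1}(z))<2\pi$.
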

\begin{proof}
As the measure $\mes^+$ is regular, then there is $r$ such that $\mes^{+}(Q_r)<2\pi$. Let $\epsilon>0$ and $z'\in Q_r$, and let $U_d$ be an open Euclidean disc centered at $z'$ of radius $d$. %Note that $Q_r$ and $U_d$ are both open for the topology defined by $\rho_\lambda$ by the first point of Lemma~\ref{lem:comp top struct dist}. 
If $d$ is small enough, we also have $\mes^{+}(U_d)<2\pi$. Let $z''\in U_d$.  For a segment $\arc$ joining 
$z'$ to $z''$, $s(K)=|z-z'|<d$ and $|\kappa|(\arc)=0$. Let $\bar{
r}$ be such that $\arc(\bar{r})\subset U_d$.
Theorem~\ref{prop:fun estimate} and the definition of $\rho_\lambda$ say that 
\begin{equation}\label{eq:longueur segment}\rho_\lambda(z',z'')\leq \tilde{s}_\lambda(\arc)\leq A_1\frac{d^{1-\frac{1}{2\pi}\mes^+(K(\bar{r}))}}{\bar{r}^b}~, \end{equation}
where $A_1$ and $b$ depend only on $\mes$. So if $d$ is small enough, every point in $U_d$ is connected to $z'$ by a broken line of length 
$<\epsilon$, and by Lemma~\ref{lem:comp top struct dist}, both topologies coincide over $Q_r$.
By \eqref{eq:longueur segment} again, the distance from $z$ to any other point in $Q_r$ is finite.
The distance $\rho_\lambda$ is then finite over $Q_r$ by the triangle inequality.
Here we have considered only the case  $\lambda:=\lambda(\mes)$. For $\lambda=\lambda(\mes,h)$, as $h$ 
 is continuous over $M$ and $K\subset M$ is compact, an analogous result occurs.
\end{proof}

Let us return to a more general framework. We have a Hausdorff topological space $X$, a set of admissible arcs $\mathcal{A}$ and a length structure $\tilde{L}$. 
Those elements gave a distance $d$ over  $X$. 
Now from the distance $d$, we can define   another length structure.  Let  $c$ be an arc of $X$. The \emph{intrinsic length}\index{length (intrinsic)} of $c$ is
$$L(c)=\bigvee_a^b c~.$$ 
 Here of course the total variation is computed with the distance $d$, and is independent of the choice of the parameterization. More precisely, if $c:[a,b]\to X$, then $L(c)$ is the supremum of $\sum d(c(t_i),c(t_{i+1}))$ over all the  decompositions of $[a,b]$.
 Obviously from the definition, for any arc $c$ from $x$ to $y$,
 \begin{equation}\label{eq:dis plus petite long}
 d(x,y)\leq L(c)~.
 \end{equation}
 
 By properties of the total variation, it follows that $L$ endows $X$ with another length structure. 
The arc  $c$ is \emph{rectifiable}\index{rectifiable (for $d$)} if it has a parameterization with bounded variation, i.e., $L(c)$ is finite. With this notion of rectifiablity in hands, we can define \emph{parameterization by arc length}\index{parameterization by arc length} and \emph{parameterization proportional to the arc length}\index{parameterization proportional to the arc length} for a rectifiable arc, in a similar way than for curves in the Euclidean plane, see e.g. \cite{papadopoulos} for details.

So now, we have $\tilde{L}$ and $L$ to compute length of arcs. We always have the following relation.

 \begin{lemma}\label{lem:L Lt A}
For $c\in \mathcal{A}^*$, $L(c)\leq \tilde{L}(c)$.
 \end{lemma}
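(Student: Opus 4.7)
The plan is to reduce the statement to the definition of $d$ applied on each piece of a partition, using the two defining properties of a length mapping that we need: closure of $\mathcal{A}$ under restriction, and additivity of $\tilde{L}$.

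First, fix a parameterization $c:[a,b]\to X$ with $c\in\mathcal{A}^*$, so $\tilde{L}(c)<\infty$. Take an arbitrary partition $a=t_0<t_1<\cdots<t_n=b$. Since $\mathcal{A}$ is closed under restriction, each piece $c|_{[t_i,t_{i+1}]}$ is an admissible arc joining $c(t_i)$ to $c(t_{i+1})$, so by the definition \eqref{def:d_L} of the length distance,
\[
d\bigl(c(t_i),c(t_{i+1})\bigr)\leq \tilde{L}\bigl(c|_{[t_i,t_{i+1}]}\bigr).
\]
Summing over $i$ and invoking the additivity property of $\tilde{L}$, I get
\[
\sum_{i=0}^{n-1} d\bigl(c(t_i),c(t_{i+1})\bigr)\leq \sum_{i=0}^{n-1}\tilde{L}\bigl(c|_{[t_i,t_{i+1}]}\bigr)=\tilde{L}(c).
\]

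Finally, taking the supremum over all partitions of $[a,b]$ yields $L(c)=\bigvee_a^b c\leq \tilde{L}(c)$, as desired. There is no real obstacle here; the statement is essentially a direct consequence of the two structural axioms of a length mapping together with the definition of the induced length distance. The only minor point to mention is that the restricted arc $c|_{[t_i,t_{i+1}]}$ belongs to $\mathcal{A}(c(t_i),c(t_{i+1}))$ so the infimum defining $d(c(t_i),c(t_{i+1}))$ is taken over a non-empty set and bounded above by $\tilde{L}(c|_{[t_i,t_{i+1}]})$.
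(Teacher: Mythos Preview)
Your proof is correct and follows essentially the same approach as the paper: bound each term $d(c(t_i),c(t_{i+1}))$ by $\tilde{L}(c|_{[t_i,t_{i+1}]})$ using that the restriction is admissible, sum using additivity, and take the supremum over partitions. The paper's argument is phrased with the infimum over the set $\mathcal{A}_i$ of admissible arcs joining $c(t_i)$ to $c(t_{i+1})$, but the content is identical.
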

\begin{proof}
 Let us denote by $\mathcal{A}_i$ the elements of $\mathcal{A}^*$ from $c(t_i)$ to $c(t_{i+1})$. It is not empty as $c\in \mathcal{A}^*$. We have
  $$\sum_i d(c(t_i),c(t_{i+1}))=\sum_i \inf_{c_i\in \mathcal{A}_i} \tilde{L}(c_i)\leq 
  \sum_i \tilde{L}(c_{|[t_i,t_{i+1}]}) =\tilde{L}(c) $$
  and taking the supremum over the partitions of $[a,b]$ of the left-hand side of this inequality leads to the result.
\end{proof}

A length structure $(\mathcal{A}, \tilde{L})$ is \emph{lower semicontinuous} when, if $c_n\to c$ in $\mathcal{A}$,  then 
$$\tilde{L}(c)\leq \liminf_n \tilde{L}(c_n)~. $$

We refer to Theorem~2.4.3 in \cite{bbi} for a proof of the following result.

\begin{lemma}\label{prop:= length admissible}
If $\tilde{L}$ is lower semicontinuous, then for $c\in \mathcal{A}^*$, $\tilde{L}(c)=L(c)$.
\end{lemma}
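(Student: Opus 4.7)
The plan is to establish the missing inequality $\tilde{L}(c) \leq L(c)$, since the reverse inequality is already provided by Lemma~\ref{lem:L Lt A}. One can assume $L(c) < \infty$ (otherwise there is nothing to prove). The idea is a classical ``replace $c$ by an optimal broken approximation and invoke lower semicontinuity'' argument.

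First, fix $\varepsilon > 0$ and pick a sequence of partitions $P_n: a = t_0^n < t_1^n < \cdots < t_{N_n}^n = b$ of $[a,b]$ with mesh tending to zero. Since $c \in \mathcal{A}^*$, condition~3 of the definition of a length mapping ensures that $t \mapsto \tilde{L}(c|_{[a,t]})$ is continuous on $[a,b]$, and by Lemma~\ref{lem:L Lt A} together with Lemma~\ref{lem:comp top struct dist} the arc $c$ is also continuous as a map into $(X,d)$. By definition of $d$ as an infimum, for each $i$ one can select $c_i^n \in \mathcal{A}$ joining $c(t_i^n)$ to $c(t_{i+1}^n)$ with
\[
\tilde{L}(c_i^n) \leq d(c(t_i^n), c(t_{i+1}^n)) + \frac{\varepsilon}{N_n}.
\]
Concatenating the $c_i^n$ over $i$ produces an admissible arc $\bar{c}_n$ from $c(a)$ to $c(b)$, parameterized so that $\bar{c}_n(t_i^n) = c(t_i^n)$, and additivity yields
\[
\tilde{L}(\bar{c}_n) = \sum_{i} \tilde{L}(c_i^n) \leq \sum_{i} d(c(t_i^n), c(t_{i+1}^n)) + \varepsilon \leq L(c) + \varepsilon.
\]

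The key step is to verify that $\bar{c}_n \to c$ in the sense of arc convergence, i.e., uniformly on $[a,b]$. For $t \in [t_i^n, t_{i+1}^n]$, the subarc $c_i^n|_{[t_i^n, t]}$ connects $c(t_i^n)$ to $\bar c_n(t)$, so by the very definition of $d$,
\[
d(\bar{c}_n(t), c(t_i^n)) \leq \tilde{L}(c_i^n) \leq d(c(t_i^n), c(t_{i+1}^n)) + \varepsilon/N_n,
\]
while $d(c(t), c(t_i^n)) \leq \tilde{L}(c|_{[t_i^n, t]})$ is bounded by the maximal oscillation of the continuous function $t \mapsto \tilde{L}(c|_{[a,t]})$ on the partition. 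Both bounds tend to zero uniformly in $i$ and $t$ as the mesh of $P_n$ shrinks, giving uniform convergence $\bar{c}_n \to c$ in $(X,d)$; since the identity $(X,d) \to X$ is continuous by Lemma~\ref{lem:comp top struct dist}, uniform convergence in $X$ follows, using property~4 of the length mapping to transfer the control of $d$-balls into genuine $X$-neighborhoods of each $c(t)$.

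Once uniform convergence $\bar c_n \to c$ is established, the lower semicontinuity of $\tilde{L}$ yields
\[
\tilde{L}(c) \leq \liminf_{n \to \infty} \tilde{L}(\bar{c}_n) \leq L(c) + \varepsilon,
\]
and letting $\varepsilon \to 0$ completes the proof. The main obstacle is the uniform convergence step: one must ensure that the freshly chosen arcs $c_i^n$ do not stray far from the corresponding subarcs $c|_{[t_i^n,t_{i+1}^n]}$, and this is where both the local distance property~4 in the definition of a length mapping and the fact that $c$ is continuous for the stronger topology of $(X,d)$ (not just for the given topology of $X$) play an essential role.
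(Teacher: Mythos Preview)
Your argument is the standard one, and since the paper does not supply its own proof but simply refers to Theorem~2.4.3 in \cite{bbi}, your approach is essentially that proof.

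One technical point deserves a sharper treatment: you establish uniform convergence $\bar c_n\to c$ in the metric $d$, but the lower semicontinuity hypothesis on $\tilde L$ is stated for convergence of arcs in the original topology of $X$. Your appeal to Lemma~\ref{lem:comp top struct dist} and property~\eqref{eq:loc dist} yields pointwise convergence in $X$ at once, but upgrading this to \emph{uniform} convergence in a bare Hausdorff space is delicate (one needs a uniform structure and a compactness argument along the image of $c$, which you only sketch). In the paper's actual applications $X$ is a plane domain and the quantitative estimate of Theorem~\ref{lem:minor lengthBP} makes the transfer explicit, so the issue is harmless there; still, it would be worth stating clearly that this is where an extra hypothesis (e.g.\ metrizability of $X$, or lower semicontinuity with respect to pointwise convergence) is being used.
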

%\begin{proof}
%\com{By Lemma~\ref{lem:L Lt A}, the  proposition shows that $L(c)\geq \tilde{L}(c)$. .}
%\end{proof}

The length structure $L$ induces itself a distance, but fortunately it can be proved that it is   nothing but $d$.

A metric space is said to be \emph{intrinsic}\index{intrinsic distance}\footnote{Intrinsic metric space may also be called \emph{inner}\index{inner metric space} or \emph{internal}\index{internal metric space}.} if the distance between two points is the infimum of the intrinsic length of \emph{all} the arcs  joining the two points. 
Now if $d$ is defined by a lower semicontinuous length mapping, 
from Proposition~\ref{prop:= length admissible} and \eqref{eq:dis plus petite long}, it follows that $d$ is intrinsic. 
%But by definition, $\rho_\lambda(z_1,z_2)$ the infimum of $\tilde{s}_\lambda$ over the broken lines between $z_1$ and $z_2$, and for boken lines $\tilde{s}_\lambda=s_\lambda$. So $\rho_\lambda$ in intrinsic.

Let us come back to our length structure $\tilde{s}_\lambda$ over $M$. We denote by $s_\lambda$\index{$s_\lambda$} the intrinsic length defined from the distance $\rho_\lambda$.
Let us denote by $\mathcal{B}$ the set of
arcs, rectifiable for the Euclidean metric, and contained in $M$. Let us begin with an easy lemma. 

\begin{lemma}\label{lem: Iomega lsc}
The function $\tilde{s}_\lambda:\mathcal{B}\to \R$ is lower semicontinuous.
\end{lemma}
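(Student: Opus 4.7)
The plan is to reduce, via two applications of monotone convergence, the lower semicontinuity of $\tilde s_\lambda$ to the statement that $K\mapsto \int_K g\,ds$ is lower semicontinuous on $\mathcal{B}$ for $g$ continuous and nonnegative on $\bar M$. I decompose $\lambda^{1/2} = e^{-p(\omega^+)}\cdot e^{p(\omega^-)}\cdot e^{-h}$. The factor $e^{-p(\omega^+)}$ is nonnegative and lower semicontinuous since $p(\omega^+)$ is upper semicontinuous (Lemma~\ref{lem: pmu lower}); more concretely, Lemma~\ref{lem:fonctions radiales} lets me write $-p(\omega^+)(z) = \sup_{r>0}(-\bar v_r(z))$ with $\bar v_r(z) := \frac{1}{2\pi r}\int_{C_r(z)}p(\omega^+)$ continuous in $z$. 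For $p(\omega^-)$, Proposition~\ref{prop:approx potentiel} provides a monotone approximation $p(\omega^-_n)\nearrow p(\omega^-)$ by smooth potentials. Combining these two monotone approximations inside the exponential (via a diagonal enumeration of the index set $(r,n)$) produces a pointwise non-decreasing sequence of nonnegative continuous functions $g_k$ on $M$ with $g_k\nearrow \lambda^{1/2}$ quasi-everywhere.

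By the Beppo--Levi monotone convergence theorem, applied along the arc $K$ where $\lambda^{1/2}$ is finite almost everywhere for arc-length measure (Lemma~\ref{lem: measure 0 intervalle} combined with Proposition~\ref{prop:haus zero}), I obtain $\tilde s_\lambda(K) = \sup_k \int_K g_k\,ds$ for every $K\in\mathcal{B}$. Since the pointwise supremum of lower semicontinuous functions is lower semicontinuous, it suffices to show that $\Phi_g:K\mapsto \int_K g\,ds$ is lower semicontinuous on $\mathcal{B}$ for each continuous nonnegative $g$ on $\bar M$.

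The continuous case will go as follows. Given $K_n\to K_0$ with uniformly converging parameterizations $w_n\to w_0$ on $[a,b]$ and $\varepsilon>0$, I choose a partition $a=t_0<\cdots<t_N=b$ fine enough so that $g$ oscillates by less than $\varepsilon$ on each subarc $K_0|_{[t_i,t_{i+1}]}$, and the inscribed-polygon sum satisfies $\sum_i g(w_0(t_i))|w_0(t_{i+1})-w_0(t_i)| > \int_{K_0}g\,ds-\varepsilon$ (this uses the classical fact that inscribed polygon lengths tend to $s(K_0)$ for a rectifiable arc, which reduces the modified Riemann sum on the left to the standard Riemann sum for $\int_{K_0}g\,ds$, via the bound $\|g\|_\infty(s(K_0)-\sum|w_0(t_{i+1})-w_0(t_i)|)$ on the error). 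For $n$ large, uniform convergence $w_n\to w_0$ forces each $K_n|_{[t_i,t_{i+1}]}$ to lie in a small neighborhood of $K_0|_{[t_i,t_{i+1}]}$, where $g$ oscillates by at most $3\varepsilon$; together with $s(K_n|_{[t_i,t_{i+1}]})\geq |w_n(t_{i+1})-w_n(t_i)|$ this yields
\[
\int_{K_n}g\,ds \;\geq\; \sum_i\bigl(g(w_0(t_i))-3\varepsilon\bigr)\,|w_n(t_{i+1})-w_n(t_i)|.
\]
Letting $n\to\infty$ the right-hand side converges to $\sum_i g(w_0(t_i))|w_0(t_{i+1})-w_0(t_i)| - 3\varepsilon\sum_i|w_0(t_{i+1})-w_0(t_i)|$, which is at least $\int_{K_0}g\,ds-\varepsilon-3\varepsilon\,s(K_0)$; sending $\varepsilon\to 0$ completes the proof.

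The main obstacle is this continuous case. The obvious strategy of passing $n\to\infty$ under the integral using arc-length parameterizations fails because rectifiability of the $K_n$ gives neither $s(K_n)\to s(K_0)$ nor uniform convergence of the arc-length parameterizations; one has to work with the given uniformly converging (non--arc-length) parameterizations and extract the lower bound via an inscribed polygon, which is precisely where the uniform-continuity estimate on $g$ and the inequality $s(K_n|_{[t_i,t_{i+1}]})\geq|w_n(t_{i+1})-w_n(t_i)|$ play off against each other.
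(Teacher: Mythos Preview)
Your proof is correct and takes a genuinely different route from the paper. The paper's argument is two lines: with parametrizations $z_n,z$ proportional to arc length on $[0,1]$, write $\tilde s_\lambda(K_n)=s(K_n)\int_0^1\sqrt{\lambda}\circ z_n$; then lower semicontinuity of the Euclidean length $s$ handles the first factor, and Fatou's lemma is invoked for the integral. You instead build a monotone approximation $g_k\nearrow\sqrt{\lambda}$ by continuous functions (via Lemma~\ref{lem:fonctions radiales} for the $\omega^+$ part and Proposition~\ref{prop:approx potentiel} for the $\omega^-$ part), reduce to $\tilde s_\lambda(K)=\sup_k\int_K g_k\,ds$ by monotone convergence, and then prove directly that $K\mapsto\int_K g\,ds$ is lower semicontinuous for continuous nonnegative $g$ by an inscribed-polygon estimate.

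The paper's route is much shorter. Yours, however, sidesteps a genuine subtlety in the short argument: the Fatou step requires $\liminf_n\sqrt{\lambda}(z_n(t))\geq\sqrt{\lambda}(z(t))$ almost everywhere, i.e.\ lower semicontinuity of $\sqrt{\lambda}$ along the limit arc; but $\sqrt{\lambda}=e^{-p(\omega^+)}e^{p(\omega^-)}e^{-h}$ need not be lower semicontinuous at points where $p(\omega^-)$ is discontinuous, since the factor $e^{p(\omega^-)}$ is only \emph{upper} semicontinuous. Your explicit approximation of $e^{p(\omega^-)}$ from below by the smooth $e^{p(\omega^-_n)}$ avoids this issue entirely, at the price of having to prove the continuous-integrand case by hand. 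One small remark: your $g_k$ are continuous on $M$ rather than on $\bar M$ (the harmonic term $h$ need not extend), but since $K_0\subset M$ is compact and, for large $n$, the $K_n$ all lie in a fixed compact neighborhood of $K_0$ inside $M$, the uniform-continuity argument goes through unchanged.
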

\begin{proof}
Let $\arc_n\to \arc$, where the arcs are parameterized proportionally to  arc length. On the one-hand, the Euclidean length is lower semicontinuous:
$$s(\arc)\leq \liminf_n s(\arc_n)~, $$
and on the other hand, by Fatou lemma
$$\int_0^1 \sqrt{\lambda}\circ z \leq \liminf_n  \int_0^1 \sqrt{\lambda}\circ z_n~.$$
~\end{proof}

In particular, as broken lines are rectifiable, it follows that for broken lines with finite  length  $\tilde{s}_\lambda$, $\tilde{s}_\lambda=s_\lambda$, and in turn the distance $\rho_\lambda$ is intrinsic. 

One may ask if the choice of a set of admissible arcs other than broken lines would lead to another distance, or if the length and the intrinsic length may differ for some arcs. 
The convergence result of the preceding section give some information,  when the measure it not too concentrated at some points and when the arc is of bounded rotation.

\begin{lemma}\label{lem metriques coincident}
If every point $z\in M$ satisfies $\mes^+(\{z\})<2\pi$,
then for $z_1,z_2\in M$, $\rho_\lambda(z_1,z_2)$ is the infimum of $\tilde{s}_\lambda$ over the arcs of bounded rotation from $z_1$ to $z_2$ contained in $M$.
\end{lemma}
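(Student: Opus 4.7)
The plan is to prove two inequalities, one of which is essentially immediate.

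First, a broken line $L$ contained in $M$ is itself an arc of bounded rotation (its absolute rotation equals the sum of the angles at its vertices, which is finite). Hence the infimum of $\tilde{s}_\lambda$ over arcs of bounded rotation from $z_1$ to $z_2$ in $M$ is at most the infimum over broken lines, which is $\rho_\lambda(z_1,z_2)$ by definition.

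For the converse inequality, I would fix an arc $K \subset M$ of bounded rotation from $z_1$ to $z_2$ and approximate it by inscribed broken lines. Using a parameterization proportional to arc length, I would choose finite partitions $0 = t_0^n < t_1^n < \cdots < t_{N_n}^n = 1$ whose mesh tends to $0$ as $n \to \infty$, and let $L_n$ be the corresponding inscribed broken line. Then $L_n \to K$ as parameterized arcs. Since $K$ is compact (rectifiable arcs are compact sets in the plane) and $M$ is open, there is $\epsilon > 0$ such that the $\epsilon$-neighborhood of $K$ is contained in $M$. Using Corollary~\ref{cor:BR->rec} together with Proposition~\ref{prop: alexandrov inequality} applied to subarcs $K|_{t_i^n, t_{i+1}^n}$ (whose absolute rotations tend to $0$ by \eqref{eq:kappa point=0}), for all $n$ large enough each segment of $L_n$ lies in the $\epsilon$-neighborhood of $K$, so $L_n \subset M$.

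Next I would check that we can apply the Length Convergence Lemma (Theorem~\ref{thm: basic I}) with the constant sequences $\mes_n^1 := \mes^+$ and $\mes_n^2 := \mes^-$ (so that $\mes_n = \mes$ and $\lambda_n = \lambda$ for every $n$). These sequences trivially converge weakly to $\mes^+$ and $\mes^-$ respectively, and the hypothesis $\mes^+(\{z\}) < 2\pi$ for every $z\in K$ is the required condition \eqref{eq: cond hab prem thm cv long}. By Lemma~\ref{lem:conv kappa inscr}, the absolute rotations $|\kappa|(L_n)$ converge to $|\kappa|(K)$, hence are uniformly bounded. Therefore Theorem~\ref{thm: basic I} applies and
\[
\tilde{s}_\lambda(L_n) \xrightarrow[n\to\infty]{} \tilde{s}_\lambda(K)~.
\]
Since each $L_n$ is a broken line in $M$ joining $z_1$ to $z_2$, we have $\rho_\lambda(z_1,z_2) \leq \tilde{s}_\lambda(L_n)$ for every $n$, whence $\rho_\lambda(z_1,z_2) \leq \tilde{s}_\lambda(K)$. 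Taking the infimum over arcs $K$ of bounded rotation in $M$ from $z_1$ to $z_2$ yields the remaining inequality.

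The only subtle point is the containment $L_n \subset M$: the vertices of $L_n$ lie on $K \subset M$ but a priori a segment between two consecutive vertices could leave $M$. The hypothesis of bounded rotation is precisely what makes this easy, via the Alexandrov inequality, which controls the Euclidean diameter of each subarc $K|_{t_i^n,t_{i+1}^n}$ by its length; once the rotation of each subarc is small, the corresponding segment cannot deviate far from $K$. Everything else then reduces to invoking the already-established length convergence theorem.
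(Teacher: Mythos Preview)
Your proof is correct and follows essentially the same approach as the paper's: both establish the easy inequality from the fact that broken lines have bounded rotation, and obtain the reverse one by approximating an arbitrary arc $K$ of bounded rotation by broken lines and invoking the length convergence result (the paper cites Proposition~\ref{prop: cv lg}, of which your Theorem~\ref{thm: basic I} with constant measure sequences is the general form). Your treatment is in fact slightly more careful, since you explicitly justify via the Alexandrov inequality that the approximating broken lines $L_n$ eventually lie in $M$, a point the paper leaves implicit.
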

\begin{proof} Let us
 denote by $\rho'_\lambda(z_1,z_2)$ the infimum for $\tilde{s}_\lambda$ over the  arcs of bounded rotation from $z_1$ to $z_2$.  For any broken line $L$ from $z_1$ to $z_2$, as  broken lines are arcs of bounded rotation:
$$\rho'_\lambda(z_1,z_2)\leq \rho_\lambda(z_1,z_2) \leq \tilde{s}_{\lambda}(L)~.$$
Hence $\rho'_\lambda(z_1,z_2)$ is a minorant of the set of $\tilde{s}_\lambda(L)$ for all the broken lines from $z_1$ to $z_2$. Now let $\epsilon>0$ and let  $\arc$ be such that 
$$\tilde{s}_\lambda(K)\leq \rho'_\lambda(z_1,z_2)+\epsilon­­~.$$
By definition, there exists a broken line $L$ arbitrarily close to $\arc$ and with absolute rotation arbitrarily close to the one of $\arc$, so by  Proposition~\ref{prop: cv lg} one can find such a $L$ with
$$\tilde{s}_\lambda(L)\leq \tilde{s}_\lambda(K)+\epsilon~.$$
Putting the equations together, the minorant is the infimum and we are done.
 \end{proof}
 
\begin{remark}{\rm
The argument in the proof of Lemma~\ref{lem metriques coincident} is valid for any set of arcs of bounded rotation that contains the set of broken lines, when for every point $z\in M$ we have  $\mes^+(\{z\})<2\pi$. In particular, we obtain the same distance considering only e.g., piecewise $C^\infty$ arcs.
}\end{remark}  
 
 Lemma~\ref{prop:= length admissible}, Lemma~\ref{lem: Iomega lsc} and Lemma~\ref{lem metriques coincident} give the following.
\begin{lemma}\label{lem:long coincide}
If every point $z\in M$ satisfies $\mes^+(\{z\})<2\pi$,
then  for any arc $\arc$ of bounded rotation and with $\tilde{s}_\lambda(\arc)<\infty$, then  $\tilde{s}_\lambda(\arc)=s_\lambda(\arc)$.
\end{lemma}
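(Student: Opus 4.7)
The plan is to recognize the lemma as a direct instance of Proposition~\ref{prop:= length admissible}, once the appropriate length structure is fixed. I would take $X = M$, choose $\mathcal{A}$ to be the class of arcs of bounded rotation contained in $M$, and take $\tilde{L} = \tilde{s}_\lambda$ as the length mapping. Closure of $\mathcal{A}$ under restriction and concatenation is immediate from the definition of absolute rotation (a restriction only decreases it, and concatenation adds at most a single finite angle at the junction, cf.\ Corollary~\ref{cor:BR split}). Additivity and reparametrization-invariance of $\tilde{s}_\lambda$ are built into its integral definition \eqref{eq:tilde s}, and continuity of $t \mapsto \tilde{s}_\lambda(\arc|_{[a,t]})$, whenever the total length is finite, follows from absolute continuity of the Lebesgue integral. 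The local lower bound \eqref{eq:loc dist} is supplied by Theorem~\ref{lem:minor lengthBP}.

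Once $(\mathcal{A}, \tilde{s}_\lambda)$ is certified as a length structure, I would use Lemma~\ref{lem metriques coincident} to identify the distance it produces via the general construction \eqref{def:d_L} with $\rho_\lambda$: here the standing assumption $\mes^+(\{z\}) < 2\pi$ for every $z\in M$ enters in an essential way, since it is exactly what ensures that bounded-rotation arcs and broken lines give the same infimum. Consequently, the intrinsic length $L$ associated to $\rho_\lambda$ restricted to arcs in $\mathcal{A}$ is, by definition, $s_\lambda$.

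Finally, I would invoke Lemma~\ref{lem: Iomega lsc} for lower semicontinuity of $\tilde{s}_\lambda$, noting that $\mathcal{A} \subset \mathcal{B}$ by Corollary~\ref{cor:BR->rec}, so the lower semicontinuity on $\mathcal{B}$ restricts to $\mathcal{A}$. Proposition~\ref{prop:= length admissible} then applies to any $\arc \in \mathcal{A}^*$, i.e.\ any bounded-rotation arc with $\tilde{s}_\lambda(\arc) < \infty$, yielding $\tilde{s}_\lambda(\arc) = L(\arc) = s_\lambda(\arc)$. The only genuinely delicate step is verifying the local lower bound \eqref{eq:loc dist}, which could in principle fail at points where $\lambda$ vanishes or is large, but this is precisely what Theorem~\ref{lem:minor lengthBP} is designed to handle.
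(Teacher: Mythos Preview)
Your proposal is correct and follows exactly the paper's own route: the paper simply states that the lemma is obtained by combining Lemma~\ref{prop:= length admissible}, Lemma~\ref{lem: Iomega lsc}, and Lemma~\ref{lem metriques coincident}, and you have spelled out how these pieces fit together. One small imprecision: Theorem~\ref{lem:minor lengthBP} is stated only for broken lines, so invoking it directly for arcs of bounded rotation is not quite literal; but this is harmless, since once Lemma~\ref{lem metriques coincident} identifies the distance induced by the bounded-rotation class with $\rho_\lambda$ (already known to be a genuine distance from the broken-line structure), the separation property \eqref{eq:loc dist} follows immediately.
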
 
%\begin{proof}
%As the function $ \tilde{s}_\lambda$ is still lower semicontinuous over the set of arcs of bounded rotation (see the proof of Lemma~\ref{lem: Iomega lsc}), Lemma~\ref{prop:= length admissible} applies and gives the result.
%\end{proof}

 \begin{remark}{\rm
 As a byproduct of the Distances Converging Theorem~\ref{thm: distances convergence theorem}, a more general result than Lemma~\ref{lem:long coincide} holds, see Theorem~\ref{thm length coincide}.
% It is proved in \cite{R63} that for arcs of bounded rotation, $\tilde{s}_\lambda(\arc)=s_\lambda(\arc)$ occurs, without our restriction about $\mes$. The proof uses the Distances Converging Theorem~\ref{thm: distances convergence theorem}.
% Actually, it is proved in \cite{R63} that $\tilde{s}_\lambda(\arc)=s_\lambda(\arc)$ occurs for \emph{any} arc, but of course such an equality has a meaning only for a more general definition of $\tilde{s}_\lambda$, that is presented in  \cite{R63}.
 }\end{remark}
 
 For an intrinsic distance, the distance between two points is the infimum of the lengths of the arcs joining these points. A question remains, to know if this infimum is reached. In this case, we have obtained a shortest arc. 
In other terms, an arc $c$ between two points at finite distance in the metric space $(X,d)$ is a  \emph{shortest arc}\index{shortest arc} if its intrinsic length is equal to the distance between its extremities.
 
%
%\begin{definition}\label{def shortest arcs}
%An arc $\arc$ for $\rho_\lambda$   from $z_1$ to $z_2$ is a \emph{shortest arc}\index{shortest arc} (for $\rho_\lambda$) if $\rho_\lambda(z_1,z_2)$ is finite and
%$$s_\lambda(\arc)=\rho_\lambda(z_1,z_2)~. $$ 
%\end{definition}

\begin{lemma}\label{lem existence shortest arcs}
Let $z\in M$ with $\mes^+(\{z\})<2\pi$. Then there 
is a neighborhood of $z$ onto which any pair of points is joined by a shortest arc.
\end{lemma}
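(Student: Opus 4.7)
The plan is the direct method in the calculus of variations: take a minimizing sequence of rectifiable paths, extract a subsequence converging uniformly via Arzel\`a--Ascoli, and conclude via lower semicontinuity of the intrinsic length that the limit is a shortest arc.

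First I would apply Lemma~\ref{proposition: tame point topologies coincide} to fix $r>0$ with $\overline{Q_r(z)}\subset M$ such that $\rho_\lambda$ is finite on $Q_r(z)$ and the $\rho_\lambda$-topology coincides there with the Euclidean one. Since $\overline{Q_{r/2}(z)}$ is Euclidean-compact, both metrics are uniformly equivalent on it (the identity is uniformly continuous in each direction). I would then pick $\delta>0$ so small that the $\rho_\lambda$-ball $B_{2\delta}(z)$ is contained in $Q_{r/2}(z)$, and take the promised neighborhood to be $B_{\delta/3}(z)$.

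Fix $z_1,z_2\in B_{\delta/3}(z)$ and set $d:=\rho_\lambda(z_1,z_2)<2\delta/3$. A minimizing sequence of broken lines $L_n$ from $z_1$ to $z_2$ in $M$ with $\tilde s_\lambda(L_n)\to d$ can, by Lemma~\ref{lem: trick discs}, be taken inside $B_{2\delta}(z)$. Recalling that $\tilde s_\lambda=s_\lambda$ on broken lines with finite length, I would reparametrize each $L_n:[0,1]\to\overline{B_{2\delta}(z)}$ proportionally to $\rho_\lambda$-arc length, so that $\rho_\lambda(L_n(s),L_n(t))\le s_\lambda(L_n)|s-t|\le (d+1)|s-t|$. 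The family is thus uniformly $\rho_\lambda$-Lipschitz, hence $\rho_\lambda$-equicontinuous, and by the uniform equivalence of the two metrics on $\overline{Q_{r/2}(z)}$ it is also Euclidean-equicontinuous with values in the Euclidean compact $\overline{B_{2\delta}(z)}$. Arzel\`a--Ascoli yields a subsequence, still denoted $L_n$, converging uniformly---in either metric---to a continuous path $K:[0,1]\to\overline{B_{2\delta}(z)}$ from $z_1$ to $z_2$.

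The standard lower semicontinuity of the intrinsic length under uniform metric convergence then gives, for any partition $0=t_0<\cdots<t_N=1$,
\begin{equation*}
\sum_i \rho_\lambda(K(t_i),K(t_{i+1}))
=\lim_n \sum_i \rho_\lambda(L_n(t_i),L_n(t_{i+1}))
\le \liminf_n s_\lambda(L_n)=d,
\end{equation*}
whence $s_\lambda(K)\le d$ after taking the supremum over partitions. Combined with $s_\lambda(K)\ge \rho_\lambda(z_1,z_2)=d$ from \eqref{eq:dis plus petite long}, this forces $s_\lambda(K)=d$. Finally, extracting from $K$ an injective sub-arc with the same endpoints (possible since, as recalled in Section~\ref{sec:rot arc pla}, any continuous path contains an arc joining its endpoints, and the intrinsic length of a sub-path is at most that of the whole) produces the desired shortest arc. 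The main obstacle is converting the $\rho_\lambda$-Lipschitz estimate into genuine Euclidean equicontinuity so that Arzel\`a--Ascoli can be invoked; this is exactly where the hypothesis $\mes^+(\{z\})<2\pi$ enters, via the coincidence of topologies on a compact neighborhood of $z$.
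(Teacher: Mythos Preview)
Your proof is correct and follows essentially the same route as the paper: take a minimizing sequence of broken lines confined to a $\rho_\lambda$-ball via Lemma~\ref{lem: trick discs}, parametrize proportionally to $\rho_\lambda$-arc length to get equi-Lipschitz maps, and apply Arzel\`a--Ascoli. Your write-up is in fact more careful than the paper's, spelling out the uniform equivalence of the two metrics on the compact set, the lower-semicontinuity step, and the extraction of an injective sub-arc---all points the paper leaves implicit.
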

\begin{proof}
Let $r>0$ such that $B_{r}(z)\subset M$, $\rho_\lambda$ is finite over $B_{r}(z)$ and the induced topology is the Euclidean topology, see Lemma~\ref{proposition: tame point topologies coincide}.
Let $z_1,z_2\in B_{r/2}(z)$.

 Let us take a sequence of broken lines from $z_1$ to $z_2$ whose lengths are converging to $\rho_\lambda(z_1,z_2)$. 
By Lemma~\ref{lem: trick discs}, we can consider only the arcs from $z_1$ to $z_2$ contained in $B_{r}(z)$, and of length $<r$ by the triangle inequality. 
 If we consider parameterizations proportional to   arc length for the distance $\rho_\lambda$,  as the lengths are uniformly bounded from above,  these parameterizations are equi-Lipschitz, and by Arzelà–Ascoli theorem, there is a converging subsequence.
\end{proof}

So far, $M$ was a domain of the plane. But the distance $\rho_\lambda$ may also be defined over a closed set ---it is straightforward if we take the zero function as a harmonic function over $M$.

The following statement is the one of the improvement given by Lemma 6.3 in \cite{R60I}. Its proof is contained in \cite{R60II}.
\begin{theo}
\begin{theorem}[{Basic Lemma II ---Estimate of Shortest Arcs Rotation  \cite{R60II}}]\label{them:basic II}
For all $n\in \mathbb{N}^*$, let   $(\mes_n^1)_n$ and $(\mes_n^2)_n$ be  two sequences of positive
 measures  with $C^1$ Lebesgue densities, whose supports are contained in a same disc. Suppose that those sequences
 converge weakly to $\mes_0^1$ and $\mes_0^2$
respectively.  
  Let us define
  $\mes_n=\mes_n^1-\mes_n^2$, $\mes_0=\mes_0^1-\mes_0^2$,  $\lambda_n=\lambda(\mes_n)$.
  
Let $M$ be the closure of a bounded domain of the plane, whose boundary is a union of a finite number of simple closed curves of bounded rotation.

Let $F$ be a closed subset of $M$ such that
\begin{equation}\label{eq:intermediate cv thm}(\mes_0^1+\mes_0^2)(\{z\})<2\pi,\, \forall z\in F~.\end{equation}

  Then there exists $A<\infty$ such that for any  shortest arc $\arc\subset F$ for $\rho_{\lambda_n}$, for any $n$, we have $|\kappa|(K)<A$.
\end{theorem}
\end{theo}

Recall that the measure $\mes_0$ may be decomposed as
$$\mes_0=\mes_0^1 -\mes_0^2=\mes_0^+ - \mes_0^-  $$
and that $\mes_0^+\leq \mes_0^1$. Recall also Remark~\ref{rem:def weak cv mes} about weak convergence. The main properties of weak convergence that are needed in the sequel are (see e.g., \cite{Ash}, \cite{mattila}): 
\begin{itemize}
\item for any compact set $C$, $\mes_0^i (C)\geq \limsup_n \mes_n^i(C)$;
\item for any open set $O$, $\mes_0^i (O)\leq \liminf_n \mes_n^i(O)$.
\end{itemize}

Note that we know that shortest arcs exist by Lemma~\ref{lem existence shortest arcs}.  
The proof of Theorem~\ref{them:basic II} is mainly based on properties of arcs of bounded rotations stated in Section­~\ref{sec abs rot}.
As the densities are $C^1$, then $\lambda$ are $C^2$ (see Section 3.5 in \cite{jost} for precise results), 
 hence the metrics $\lambda|\D z|^2$ are Riemannian. One of the step for the proof of Theorem~\ref{them:basic II} is to note that if $M$ is endowed with a Riemannian conformal metric, then any shortest arc has bounded rotation. This is obvious if the arc is in the interior of $M$, but the arc may contain parts of the boundary of $M$, which is made of general arcs of bounded rotation. This is the content of Lemma~9.1 and its Corollary in \cite{R60II}. Basically, the necessity of condition \eqref{eq:intermediate cv thm} in Theorem~\ref{them:basic II} comes from \eqref{eq: rot shortest riem}, see Theorem~9.1 in   \cite{R60II} for details.

We finally obtain:

\begin{theorem}[Intermediate Distances Convergence Theorem]\label{thm: intermediate convergence}
For all $n\in \mathbb{N}^*$, let   $(\mes_n^1)_n$ and $(\mes_n^2)_n$ be  two sequences of positive
 measures,   whose supports are contained in a same disc, 
  that converge weakly to $\mes_0^1$ and $\mes_0^2$
respectively.  
  Let us define
  $\mes_n=\mes_n^1-\mes_n^2$, $\mes_0=\mes_0^1-\mes_0^2$,  $\lambda_n=\lambda(\mes_n)$.
  
  Let $M$ be the closure of a bounded domain of the plane, whose boundary is a union of a finite number of simple closed curves of bounded rotation.
  
  Suppose that  \begin{equation}\label{eq:somme conditionb}(\mes^1_0+\mes_0^2)(\{z\})<2\pi~,\forall z\in M~.\end{equation}

Then $(\rho_{\lambda_n})_n$  converge uniformly to $\rho_{\lambda_0}$.
\end{theorem}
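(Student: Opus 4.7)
The plan is to establish the uniform convergence by proving pointwise convergence at every pair $(z_1,z_2) \in M \times M$ via two-sided estimates, and then upgrading to uniform convergence by equicontinuity.

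First I would handle the upper estimate $\limsup_n \rho_{\lambda_n}(z_1,z_2) \leq \rho_{\lambda_0}(z_1,z_2)$, which is the easy half. Given $\epsilon>0$, choose a broken line $L \subset M$ from $z_1$ to $z_2$ with $\tilde{s}_{\lambda_0}(L) \leq \rho_{\lambda_0}(z_1,z_2)+\epsilon$. Then $L$ has (trivially) bounded rotation, and since $(\mes_0^1+\mes_0^2)(\{z\}) < 2\pi$ for every $z \in M$, in particular $\mes_0^1(\{z\}) \leq (\mes_0^1+\mes_0^2)(\{z\}) < 2\pi$ for $z \in L$. Basic Lemma~I (Theorem~\ref{thm: basic I}) applied to the constant sequence $\arc_n \equiv L$ yields $\tilde{s}_{\lambda_n}(L) \to \tilde{s}_{\lambda_0}(L)$, hence $\limsup_n \rho_{\lambda_n}(z_1,z_2) \leq \rho_{\lambda_0}(z_1,z_2)+\epsilon$; then let $\epsilon \to 0$.

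The lower bound $\liminf_n \rho_{\lambda_n}(z_1,z_2) \geq \rho_{\lambda_0}(z_1,z_2)$ is the hard half, because a near-minimizing sequence of broken lines for $\rho_{\lambda_n}$ may have arbitrarily large rotation. To control this I would reduce to smooth densities by mollification: set $\mes_{n,k}^i := \mes_n^i \star \alpha_{h_k}$ for the smooth bumps of Proposition~\ref{prop:approx potentiel}, and extract a diagonal $k=k_n \to \infty$ so that $\tilde\mes_n^i := \mes_{n,k_n}^i$ still converges weakly to $\mes_0^i$ and the broken-line lengths $\tilde s_{\tilde\lambda_n}(L)$ stay within $o(1)$ of $\tilde s_{\lambda_n}(L)$ on a dense countable family of broken lines, which via the upper half above implies $|\rho_{\tilde\lambda_n}-\rho_{\lambda_n}| \to 0$. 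The smoothed measures have $C^\infty$ densities, so Basic Lemma~II (Theorem~\ref{them:basic II}) applies to $(\tilde\mes_n^i)$, giving a uniform bound $A<\infty$ on the absolute rotation of any shortest arc in $M$ for $\rho_{\tilde\lambda_n}$. For each $n$ pick such a shortest arc $\arc_n$ from $z_1$ to $z_2$ (existence ensured by concatenating local shortest arcs from Lemma~\ref{lem existence shortest arcs} along a nearly optimal chain). Proposition~\ref{prop: rot diam long} bounds the Euclidean lengths $s(\arc_n)$ uniformly, so by the compactness property of arcs of bounded length in a bounded domain, a subsequence converges, $\arc_n \to \arc_0$, with $\arc_0 \subset M$ joining $z_1$ to $z_2$ and $|\kappa|(\arc_0) \leq A$ by Lemma~\ref{lem:liminf kappa}. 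Basic Lemma~I applied to the sequences $(\tilde\mes_n^i)$ and $(\arc_n)$ gives $\tilde s_{\tilde\lambda_n}(\arc_n) \to \tilde s_{\lambda_0}(\arc_0)$; since $\tilde s_{\tilde\lambda_n}(\arc_n) = \rho_{\tilde\lambda_n}(z_1,z_2)$ and $\tilde s_{\lambda_0}(\arc_0) \geq \rho_{\lambda_0}(z_1,z_2)$, the desired inequality follows after using the smoothing error.

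Finally, to upgrade pointwise convergence to uniformity on $M \times M$, I would use Theorem~\ref{prop:fun estimate} applied to Euclidean segments: for any $z \in M$ the hypothesis $(\mes_0^1+\mes_0^2)(\{z\})<2\pi$, together with regularity of the measures and the inequality $\mes_n^+(K(r)) \leq \limsup_n \mes_n^+(K(r)) + o(1)$ on small neighborhoods, yields a uniform Hölder-type bound on the $\lambda_n$-length of short segments, hence a uniform modulus of continuity for the family $\{\rho_{\lambda_n}\}$ on $M$. Pointwise convergence plus equicontinuity on the compact set $M \times M$ yields uniform convergence via Arzelà--Ascoli. The main obstacle is the diagonal-smoothing step in the lower bound, since verifying that $\rho_{\tilde\lambda_n}$ stays close to $\rho_{\lambda_n}$ is essentially the same kind of convergence statement as the theorem itself, and must therefore be obtained by direct length comparison along a fixed countable family of broken lines rather than by reapplying the theorem.
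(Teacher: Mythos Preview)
Your proposal follows essentially the same strategy as the paper: the upper estimate via Basic Lemma~I (Theorem~\ref{thm: basic I}) applied to a near-optimal broken line, the reduction to smooth densities by a diagonal mollification argument, and the lower estimate via Basic Lemma~II (Theorem~\ref{them:basic II}) to bound the rotation of shortest arcs, extract a convergent subsequence, and reapply Basic Lemma~I. The core two-sided argument is the same.

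The one substantive difference is in how you pass to uniform convergence. You prove \emph{pointwise} convergence at each fixed pair $(z_1,z_2)$ and then upgrade via an equicontinuity bound coming from Theorem~\ref{prop:fun estimate} plus Arzel\`a--Ascoli. The paper instead proves the two inequalities directly along \emph{moving} sequences: for any $z_n\to z_0$ and $\zeta_n\to\zeta_0$ in $M$ it shows $\limsup_n \rho_{\lambda_n}(z_n,\zeta_n)\le \rho_{\lambda_0}(z_0,\zeta_0)$ and $\liminf_n \rho_{\lambda_n}(z_n,\zeta_n)\ge \rho_{\lambda_0}(z_0,\zeta_0)$, and then invokes the sequential characterization of uniform convergence on the compact $M\times M$ (using that $\rho_{\lambda_0}$ is continuous for the Euclidean topology, which follows from $\mes_0^1(\{z\})<2\pi$ and Lemma~\ref{proposition: tame point topologies coincide}). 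This buys a shorter argument, since Basic Lemma~I is already stated for converging sequences of arcs, so letting the endpoints move costs nothing, while your route requires a separate equicontinuity step whose justification (the uniform smallness of $\mes_n^+(K(r))$ near every point for large $n$) is exactly the same weak-convergence input the paper uses implicitly. Your approach is correct but slightly less economical; the two are logically equivalent reformulations of the same endgame.
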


We already have a theorem about convergence of length, Theorem~\ref{thm: basic I}. This is the first main tool for proving Theorem~\ref{thm: intermediate convergence}. Using classical manipulations (see the paragraphs before Lemma~6.1 in \cite{R60II}, Theorem~\ref{thm: basic I} gives that if  $z_n\to z_0$ and $\zeta_n\to \zeta_0$ in $M$, then
\begin{equation}\label{cor:lumisup distances}\rho_{\lambda_0}(z_0,\zeta_0)\geq \limsup_n \rho_{\lambda_n}(z_n,\zeta_n)~. \end{equation}

By smooth approximation of measures and a classical 
diagonal argument \cite[Lemma 5.2]{R60II}, 
the proof of Theorem~\ref{thm: intermediate convergence} reduces to the case when the measures $\mes_n$, $n>0$, have $C^1$ Lebesgue densities. In this case, it is rather straightforward to deduce from Theorem~\ref{thm: basic I} and Theorem~\ref{them:basic II}  (see the paragraph next to the proof of Lemma~6.1 in \cite{R60I}) that if  $z_n\to z_0$ and $\zeta_n\to \zeta_0$ in $M$, then
\begin{equation}\label{cor:lumiinf distances}\rho_{\lambda_0}(z_0,\zeta_0)\leq \liminf_n \rho_{\lambda_n}(z_n,\zeta_n)~. \end{equation}
This together with \eqref{cor:lumisup distances} gives the uniform convergence of the distances. Indeed, recall that a sequence of distances $d_n$ on a space $X$ is said to \emph{converges uniformly}\index{uniform convergence (distances)} to a distance $d$ if it converges uniformly as functions 
on $X\times X$:
$$\sup_{x,x'\in X}\vert d_n(x,x') - d(x,x')\vert \xrightarrow[n\to \infty]{} 0~.$$
As \eqref{eq:somme conditionb} 
 implies that for all $z\in M$, $\mes^1_0(\{z\})
<2\pi$, we have that  $\rho_{\lambda_0}$ is continuous for the Euclidean topology by Proposition~\ref{proposition: tame point topologies coincide}. Hence  the sequential characterization of uniform continuity gives an equivalent definition: when $z_n\to z_0$ and $\zeta_n\to\zeta_0$ in $M$, then
we require that $\rho_{\lambda_n}(z_n,\zeta_n)$ converge to 
  $\rho_{\lambda_0}(z_0,\zeta_0)$. But it is exactly what is given by \eqref{cor:lumiinf distances} and \eqref{cor:lumisup distances}.
 
The main result of the theory, Theorem~\ref{thm: distances convergence theorem}, is an improvement of Theorem~\ref{thm: intermediate convergence}, whose proof is based on the concept of stretching of a distance, that we present in the next section.

\subsection{Canonical stretching, points at infinity}\label{sec can stretch}

Canonical stretching allows to compare the local behavior of a distance with the distance of a flat cone. It is the main  tool used to deal with the case when points have a measure $\geq 2\pi$. 
%In the following we will call them \emph{peak points}\index{peak points}.
 Although it is the byproduct of the Intermediate Distances Convergence Theorem~\ref{thm: intermediate convergence}, the Localization Theorem~\ref{thm:localization}, and an affine change of variable, we have preferred to make a comprehensive presentation.

Let $\lambda=\lambda(\mes,h)$. For $r>0$, let us look at the transformation of the plane given by $A_{z_0,r}(z)=z_0+rz$. This sends the circle $C_h(0)$ to the circle 
$C_{rh}(z_0)$. Also,  for any rectifiable arc $\arc$,
$$\int_\arc (r^2\lambda\circ A_{z_0,r})^{1/2} =  \int_\arc \lambda^{1/2}\circ A_{z_0,r}  |DA_{z_0,r}| = \int_{A_{z_0,r}\circ \arc} \lambda^{1/2}~.$$
In turn, the function $A_{z_0,r}$  preserves  the distance from  $Q_h(0)$ with the distance $\rho_{\tilde{\lambda}_{r,z_0}}$, where
$\tilde{\lambda}_{r,z_0}:=r^2\lambda\circ A_{z_0,r}$, to $Q_{rh}(z_0)$ with the distance $\rho_\lambda$. Let 
%$$\int_{C_1(0)} (c(r)\tilde{\lambda}_r)^{1/2}=2\pi $$
%i.e.
 $$c(r):=\frac{2\pi}{\tilde{s}_\lambda(C_r(z_0))}$$ and define 
$\lambda_{r,z_0}:=c(r)\tilde{\lambda}_{r,z_0}$. The function $A_{z_0,r}$ is a similarity  
from $Q_h(0)$ with the distance $\rho_{\lambda_{r,z_0}}$ to $Q_{rh}(z_0)$ with the distance  $\rho_\lambda$, with similarity coefficient equal to $c(r)$. Moreover,
\begin{equation}\label{eq:def cr}\tilde{s}_{\lambda_{r,z_0}}(C_1(0))=2\pi~. \end{equation}

To sum up, for $(r,z_0)\in ]0,1[\times M$, the \emph{canonical stretching}\index{canonical stretching} (with factor $(r,z_0)$) of the distance $\rho_\lambda$
is the distance $\rho_{\lambda_{r,z_0}}$\index{$\rho_{\lambda_{r,z_0}}$}, with
\begin{equation}\label{eq: can stre def}\lambda_{r,z_0}(z)=c(r)r^2\lambda(z_0+rz)~. \end{equation}

Note that, if $\mes^+(\{z_0\})<2\pi$, by Theorem~\ref{prop:fun estimate}, 
$c(r)\to\infty$ when $r\to 0$.
\begin{theo}
\begin{theorem}[{Canonical Stretching \cite[Theorem 11.1]{R60II}, \cite[Lemma 11]{R63III}}]\label{thm:canoncial streching}
Let $z_0\in \C$ and let $\delta_{z_0}$ be the Dirac measure concentrated at $z_0$ and let $\mes_0=\mes(\{z_0\})\delta_{z_0}$. 
\begin{itemize}
\item Let us define $\rho_{\lambda_{r,z_0}}$ over  
$R_{\epsilon,1}=\{z\in \mathbb{C} : 0<\epsilon \leq |z| \leq 1\}$. Then when $r\to 0$, the distances $\rho_{\lambda_{r,z_0}}$  converge uniformly to $\rho_{\lambda(\mes_0)} $ over $R_{\epsilon,1}$. 
\item Let us define $\rho_{\lambda_{r,z_0}}$ over  
$Q_1(0)$. If $\mes(\{z_0\})<2\pi$, then when $r\to 0$, the distances $\rho_{\lambda_{r,z_0}}$  converge uniformly to $\rho_{\lambda(\mes_0)} $ over $Q_1(0)$.
\end{itemize}
\end{theorem}
\end{theo}

\begin{proof}
Let us prove the first point. The proof of the second point is similar. By the Localization Theorem~\ref{thm:localization}, we know that there exists a signed measure $\tilde{\mes}_r$ with compact support such that
\begin{itemize}
\item $\vert\tilde{\mes}_r\vert(Q_r(z_0)\setminus \{z_0\})\xrightarrow[r\to 0]{} 0$
\item on $Q_r(z_0)$, there is a constant $k_r$ such that 
$p(\mes)=p(\tilde{\mes}_r)+k_r$.
\end{itemize}

Let us define $\mes_r=(A_{z_0,r}^{-1})_*(\tilde{\mes}_r)$, i.e., for any Borel set $E$, $\mes_r(E)=\tilde{\mes}_r(rE+z_0)$.
We have  $\lambda=\E^{-2(p_\mes+h)}$, and   for $z\in Q_r(z_0)$, there is a constant $\alpha(r)$ such that
$$\ln \lambda(z)=-\frac{1}{\pi}\iint \ln|z-\zeta |\D\tilde{\mes}_r(\zeta) + \alpha(r)
~. $$

By a change of variable, there is a constant $\beta(r)$ such that 
\begin{equation*}
\begin{split}
 \ln \lambda(z_0+rz)&= -\frac{1}{\pi}\iint \ln\left|r\left(z-\frac{\zeta-z_0}{r} \right)\right|\D\tilde{\mes}_r(\zeta) + \alpha(r)\\
 &=-\frac{1}{\pi}\iint \ln |z-\zeta| \D\mes_r(\zeta) + \beta(r)~. 
 \end{split} 
\end{equation*}

So there is a positive constant $A(r)$ such that for $z\in Q_1(0)$,

\begin{equation}\label{eq stretch fin} \lambda_{r,z_0}(z) = A(r)^2\E^{-2 p_{\mes_r}(z)}~. \end{equation}

 It is rather clear that the positive and negative parts of $\mes_r$  converge weakly to the positive and the negative parts of $\mes_0$ respectively. Also, over $R_{\epsilon,1}$, the condition \eqref{eq:intermediate cv thm}
is trivially satisfied by the measure $\mes_0$. Hence by  Theorem~\ref{thm: intermediate convergence}, the distances $\rho_{\lambda(\mes_r)}$  converge uniformly to 
$\rho_{\lambda(\mes_0)}$ over $R_{\epsilon,1}$. 

On the other hand, $\rho_{\lambda_{r,z_0}}=A(r)\rho_{\lambda(\mes_r)}$. By Lengths Convergence Lemma Theorem~\ref{thm: basic I},  $$\tilde{s}_{\lambda(\mes_r)}(C_1(0)) \xrightarrow[r\to 0]{} \tilde{s}_{\lambda(\mes_0)}(C_1(0))~$$
and by Remark~\ref{rem lon cercle}, $\tilde{s}_{\lambda(\mes_0)}(C_1(0))=2\pi$.
But by \eqref{eq:def cr} and  \eqref{eq stretch fin},
 $2\pi=A(r)\tilde{s}_{\lambda(\mes_r)}(C_1(0))$. Hence $A(r)\to 1$ when $r\to 0$, so $\rho_{\lambda_{r,z_0}}$ uniformly converge to $\rho_{\lambda(\mes_0)}$ over  $R_{\epsilon,1}$.
\end{proof}

Let us present direct applications of the canonical stretching. The detail of the proofs are in paragraphs 12 and 13 of \cite{R60II}. 
Recall that we have encounter a \emph{point at infinity}\index{point at infinity} in Example~\ref{ex:cylindre 1}, that is,  a point $z\in M$ such that for any point $z'\in M$, $\rho_\lambda(z,z')=+\infty$. 
%The following result is obtained by canonical stretching.
\begin{theo}
\begin{theorem}[{\cite[Theorem 12.1,Theorem 12.2, Theorem 13.2]{R60II}}]\label{thm:fintie points}

Let us denote
$\sigma_r=\tilde{s}_\lambda (C_r(z)).$
\begin{itemize}
\item If $\sigma_r \xrightarrow[r\to 0]{} l\in]0,+\infty]$, then 
$z$ is a point at infinity.
\item If $\mes(\{z\})>2\pi$, then $\sigma_r \xrightarrow[r\to 0]{} \infty$. In particular, $z$ is a point at infinity.
\item If $z$ is not a point at infinity, then the diameter of $Q_r(z)$ for the distance $\rho_\lambda$ goes to zero when $r\to 0$.
\end{itemize}
\end{theorem}
\end{theo}

Let us call a \emph{finite point}\index{finite point} a point $z\in M$ which is at finite distance from all the other points in a neighborhood. We saw in Lemma~\ref{proposition: tame point topologies coincide} that if $\mes(\{z\})<2\pi$, then 
$z$ is a finite point. 

\begin{example}{\rm
If $\mes(\{z\})=2\pi$, Example~\ref{ex:cylindre 1} is a simple example where $z$ is a point at infinity. A more involved example (Example 2 in \cite{R60II}) shows that there exists a subharmonic  distance  with $\omega(\{z\})=2\pi$ and  a segment starting at $z$ with finite $s_\lambda$.
 Hence $z$ is not a point at infinity, and, as it will be noted later, it follows that $z$ is a finite point. 

A simple example of these different behaviors is in \cite[2.2]{HT}. 
} \end{example}
 
 So nothing can be said a priori in the case $\mes(\{z\})=2\pi$,
  except that  only two cases can occur. Indeed, 
a byproduct of Theorem 13.1 in \cite{R60II}  is that for any subharmonic distance, a point which is not at infinity is a finite point.
Behavior of curves may be wild when points with $\mes(\{z\})=2\pi$ enters the picture, see \cite{AZ}, \cite{Zal}.

Let us note that things are easier in the case of positive curvature.

\begin{lemma}\label{lem:mes pos point infini}
Suppose that 
$\mes$ is a finite positive measure. A point $z_0\in M$ with
$\mes(\{z_0\})\geq 2\pi$ is a point at infinity.
\end{lemma}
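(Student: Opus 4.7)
The plan is to reduce to the already-computed cone case \eqref{eq:dist cone zero} by pulling out the atom of $\mes$ at $z_0$ and showing the remaining factor stays bounded below near $z_0$. Write $\mes = \mes(\{z_0\})\delta_{z_0} + \nu$, where $\nu$ is the positive Borel measure obtained by restricting $\mes$ to $\C \setminus \{z_0\}$; this is still a finite positive measure with compact support since $\mes$ is. Setting $\beta_0 = \mes(\{z_0\})/(2\pi) \geq 1$, equation \eqref{eq:decomp mes lambda point} (or a direct computation) gives
\begin{equation*}
\sqrt{\lambda(z;\mes,h)} = |z-z_0|^{-\beta_0}\,\exp\bigl(-p(z;\nu) - h(z)\bigr).
\end{equation*}

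Next I would produce a local lower bound of the form $\sqrt{\lambda(z)} \geq C\,|z-z_0|^{-\beta_0}$ on a neighborhood of $z_0$. Since $\nu$ is a positive measure with compact support, $p(\nu)$ is upper semicontinuous (Lemma~\ref{lem: pmu lower}), hence bounded above on any compact neighborhood of $z_0$; equivalently $-p(\nu)$ is bounded below there. The harmonic function $h$ is smooth, so $-h$ is also bounded below near $z_0$. Combining these gives the desired constant $C > 0$ and radius $r_0 > 0$ such that the lower bound holds on $\overline{Q}_{r_0}(z_0)$.

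Now take any broken line $L \subset M$ starting at $z_0$ and ending at some $z \neq z_0$; its initial segment has the form $\gamma(s) = z_0 + sv$, $s \in [0,s_1]$, for some unit vector $v$ and some $s_1 > 0$, and on this segment $|\gamma(s)-z_0| = s$. Choosing $s_1$ smaller if necessary so that $s_1 \leq r_0$, the definition of $\tilde{s}_\lambda$ combined with the lower bound yields
\begin{equation*}
\tilde{s}_\lambda(L) \;\geq\; \int_0^{s_1} \sqrt{\lambda(\gamma(s))}\,ds \;\geq\; C\int_0^{s_1} s^{-\beta_0}\,ds,
\end{equation*}
and this last integral is $+\infty$ for every $\beta_0 \geq 1$ (logarithmic divergence at $\beta_0 = 1$, power divergence at $\beta_0 > 1$), exactly as in \eqref{eq:dist cone zero}. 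Hence every broken line from $z_0$ to $z$ has infinite $\tilde{s}_\lambda$-length, so $\rho_\lambda(z_0, z) = +\infty$ for every $z \in M$, which is the definition of a point at infinity.

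The only delicate point is the local lower bound on $\sqrt{\lambda}$, and even that is immediate from upper semicontinuity of the potential once one has extracted the atom at $z_0$; the positivity of $\mes$ is what prevents an additional negative part from producing a competing singularity at $z_0$ that could cancel the divergence. With $\mes$ allowed to be signed, this argument would fail, which is consistent with the discussion following Theorem~\ref{thm:fintie points} about the subtleties in the general case.
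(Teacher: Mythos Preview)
Your proof is correct and follows essentially the same approach as the paper: extract the atom at $z_0$ via \eqref{eq:decomp mes lambda point}, bound the remaining potential from above (you invoke Lemma~\ref{lem: pmu lower}, the paper invokes Lemma~\ref{lem:potential bfb}, to the same effect), and conclude that $\sqrt{\lambda}\geq C|z-z_0|^{-\beta_0}$ forces every broken line from $z_0$ to have infinite length. The only cosmetic difference is that the paper first disposes of the strict case $\mes(\{z_0\})>2\pi$ by citing Theorem~\ref{thm:fintie points} and then treats $\mes(\{z_0\})=2\pi$ by this argument, whereas you handle both at once via $\beta_0\geq 1$; your version is if anything slightly more self-contained.
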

\begin{proof}
The case $>2\pi$ is general and comes from Theorem~\ref{thm:fintie points}.
From \eqref{eq:decomp mes lambda point} and Lemma~\ref{lem:potential bfb},
there is a positive constant $C$ such that, in a neighborhood of $z_0$,

$$\lambda^{1/2}(z)=\vert z-z_0\vert^{-1}\E^{-\frac{1}{2\pi}\int_{M\setminus \{z_0\}}\ln \vert z-\zeta\vert \D\mes(\zeta)-h(z)}\geq \vert z-z_0\vert^{-1} C~,$$
so for any arc $\arc$ with extremity $z_0$, $\tilde{s}_\lambda(\arc)=+\infty$.
\end{proof}

Eventually, the following improvement of Lemma~\ref{proposition: tame point topologies coincide} is obtained. Note that one direction is a general fact, see Lemma~\ref{lem:comp top struct dist}.

\begin{theo}
\begin{theorem}[{\cite[Theorem 13.3]{R60II}}]\label{thm:met coinc}
Any finite point has  a  neighborhood over which  the topology induced by  $\rho_\lambda$ coincides with the Euclidean topology.
\end{theorem}
\end{theo}

\begin{example}[A distance which is not subharmonic]{\rm
This example is from p. 24 in \cite{AZ}. Le us consider a 
flat cylinder with boundary on one side. Let us consider the metric space obtained from it by identifying the boundary to a single point $O$. Let us consider the distance on this space which is the smallest number between the original distance and the sum of the distances of the points to $O$. By definition, it is an intrinsic metric space. Moreover, it is easy to see that in a small neighborhood of a point different from $O$, this new distance is the distance of the original cylinder. It follows that a neighborhood of $O$ is homeomorphic to a plane domain. Let us suppose that this is a subharmonic distance.   Clearly, $O$ is not a point at infinity, that contradicts Theorem~\ref{thm:fintie points}, as the lengths of the circles centered at $O$ are bounded from below by a positive constant.

Another way to consider this example, is to note that the plane minus a disc with non-empty interior is not conformally equivalent to the plane minus a point, see \cite{tro-ouvert}.
}\end{example}

\subsection{Distances Convergence Theorem and consequences}

\label{sec cv}

\subsubsection{Convergence of distances}

The first consequence of the results of Section~\ref{sec can stretch}
is the following enhancement of the Intermediate Distances Convergence Theorem~\ref{thm: intermediate convergence}. Its proof may be found  after the statement of Lemma~C in \cite{R60I}.\footnote{This theorem is called \emph{Metrics Convergence Theorem} in \res articles, as the definitions do not fit with ours, see the introduction.} 

 \begin{theo}
 \begin{theorem}[{Distances Convergence Theorem, Theorem III in \cite{R60I}}]\label{thm: distances convergence theorem}
For all $n\in \mathbb{N}^*$, let   $(\mes_n^1)_n$ and $(\mes_n^2)_n$ be  two sequences of positive
 measures,   whose supports are contained in a same disc, 
  that converge weakly to $\mes_0^1$ and $\mes_0^2$
respectively.  
  Let us define
  $\mes_n=\mes_n^1-\mes_n^2$, $\mes_0=\mes_0^1-\mes_0^2$,  $\lambda_n=\lambda(\mes_n)$.
  
  Let $M$ be the closure of a bounded domain of the plane, whose boundary is a union of a finite number of simple closed curve of bounded rotation.
  
  Suppose  that 
\begin{equation}\label{eq:toujours la meme}\mes_0^1(\{z\})<2\pi~, \forall z\in M~.\end{equation}

Then $(\rho_{\lambda_n})_n$  converge uniformly to $\rho_{\lambda_0}$. 
\end{theorem}
\end{theo}

%As for Proposition~\ref{prop: cv lg}, the condition \eqref{eq:toujours la meme} is optimal without further assumption. Indeed, considering a constant sequence of measures, the simple case of the cylinder still gives a counterexample.

As for the Intermediate Distances Convergence Theorem~\ref{thm: intermediate convergence}, to prove  Theorem~\ref{thm: distances convergence theorem} one proves that when $z_n\to z_0$ and $\zeta_n\to\zeta_0$ in $M$, then
$\rho_{\lambda_n}(z_n,\zeta_n)$ converge to 
  $\rho_{\lambda_0}(z_0,\zeta_0)$. In the second part of the proof of  Theorem~\ref{thm: distances convergence theorem} in \cite{R60II}, \res checks that 
  it suffices to prove the theorem under the additional assumption that $(\mes^1_0+\mes_0^2)(\{z_0\})<2\pi$ and 
$(\mes^1_0+\mes_0^2)(\{\zeta_0\})<2\pi$. Then the Intermediate Distances Convergence Theorem~\ref{thm: intermediate convergence} can be used, in a careful way around the points for which  $(\mes^1_0+\mes_0^2)\geq 2\pi$. The behavior  of shortest arcs in the neighborhood of such points is known from the results of Section~\ref{sec can stretch}.

The next statement gives a criterion to relax 
\eqref{eq:toujours la meme}. 
\begin{theo}
\begin{theorem}[{\cite[Proof of Lemma 7.1]{R60I}}]\label{prop:conv increas}
For all $n\in \mathbb{N}^*$, let   $(\mes_n^1)_n$ and $(\mes_n^2)_n$ be  two sequences of positive
 measures,   whose supports are contained in a same disc, 
  that converge weakly to $\mes_0^1$ and $\mes_0^2$
respectively.  
  Let us define
  $\mes_n=\mes_n^1-\mes_n^2$, $\mes_0=\mes_0^1-\mes_0^2$,  $\lambda_n=\lambda(\mes_n)$.
  
  Let $M$ be the closure of a bounded domain of the plane, whose boundary is a union of a finite number of simple closed curves of bounded rotation.
  
  Suppose that
\begin{equation}
\label{eq:remp croiss tjs mm}
\lambda_n\leq \lambda_0~,\end{equation}
then $\rho_{\lambda_n}$ converge to
$\rho_{\lambda_0}$ uniformly over any closed set, contained in  the interior of $M$, on which $\rho_{\lambda_0}$ is finite.
\end{theorem}
\end{theo}

 Basically, \eqref{eq:remp croiss tjs mm} allows to control the radii of discs around  points with measure $\geq 2\pi$. This is why the proof of Theorem~\ref{prop:conv increas} cannot be done for points on the boundary of $M$. For the  details of the proof, see the second part of the proof of Lemma~7.1 in \cite{R60I}. In the first part of Lemma~7.1 in \cite{R60I}  is proved the following statement, that says that subharmonic distances are limits of Riemannian  distances.

\begin{theo}
\begin{theorem}[{\cite[Lemma 7.1]{R60I} }]\label{thm:approx lisse met}
Let $M$ be the closure of a bounded domain of the plane, whose boundary is a union of a finite number of simple closed curves of bounded rotation.
Let $\mes$ be a signed measure with compact support.

There exists a sequence of $C^\infty$ Lebesgue densities $\mes_n$ such that $\mes_n^+\to\mes^+$ and $\mes_n^-\to\mes^-$ weakly, the $C^\infty$ functions $\lambda(\mes_n)$ converge pointwise to $\lambda(\mes)$, and
$\rho_{\lambda(\mes_n)}$ converge to
$\rho_{\lambda(\mes)}$ uniformly over any closed set contained in the interior of $M$ on which $\rho_{\lambda(\mes)}$ is finite.
\end{theorem}
\end{theo}

Theorem~\ref{thm:approx lisse met} is simply obtained 
by approximation of  
$\mes^+$ and $\mes^-$ by the sequences given by Proposition~\ref{prop:approx potentiel}. 
More precisely, we  approximate $\mes^+$ by smooth Lebesgue densities $\mes^+_n$ using Proposition~\ref{prop:approx potentiel}. Its conclusion allows to use Theorem~\ref{prop:conv increas}, so that $\rho_{\lambda(\mes_n^+-\mes^-)}$ converge to
$\rho_{\lambda(\mes)}$ uniformly over any closed set contained in the interior of $M$ on which $\rho_{\lambda(\mes)}$ is finite. Now for any $n$, $(\mes_n^+-\mes^-)(\{z\})<2\pi$ for any $z$, so 
we can take a smooth approximation of $\mes^-$, uses the Distances Convergence Theorem~\ref{thm: distances convergence theorem}, and a diagonal argument as in Lemma~5.1 in \cite{R60I} allows to conclude the proof of Theorem~\ref{thm:approx lisse met}.

Note that the statements  above remain true if the functions $\lambda_n$ are defined also by a harmonic function defined over the plane, which does not depend on $n$. It is the way theses results are presented in \cite{R60I}.

Theorem~\ref{thm:approx lisse met} can be modified as follows.

\begin{theo}
\begin{theorem}[{Riemannian Approximation Theorem \cite[Proof of Theorem~I]{R60I}}]\label{thm:approx met lisse bord}
Any point in an open domain $M$, which is not a point at infinity for the distance $\rho_{\lambda(\mes,h)}$, has a neighborhood homeomorphic to a disc that is the uniform limit of Riemannian distances with uniformly bounded total variation  of the curvature measure and uniformly bounded  total variation of the turn of the boundary.
\end{theorem}
\end{theo}

Of course, the interest in Theorem~\ref{thm:approx met lisse bord} is that the sequence of Riemannian metrics has uniformly bounded curvature, as many metric spaces can be approximated by Riemannian surfaces, see e.g., \cite{cassorla}.

To prove  Theorem~\ref{thm:approx met lisse bord}, one first 
skips the harmonic term using the localization procedure (see Remark~\ref{rem:localization harmonic}) on a closed disc contained in $M$ and centered at a finite point, and then use a smooth approximation.
The fact that  the absolute turn of the boundary 
is uniformly bounded comes from the expression of the turn given by \eqref{eq:turn riem harm}.% and the fact that the total variation of the approximating sequence is non-decreasing, hence bounded from above.

The next theorem is stated in a stronger form than in \cite{R60I}, but the proof is the same. If the $\lambda_n$ below are taken to be smooth, this gives a somehow converse to Theorem~\ref{thm:approx met lisse bord}. 

\begin{theo}
\begin{theorem}[{\cite[$\S 8$]{R60I}}]\label{thm:limit met bc disque}
Let $\rho$ be a finite intrinsic distance over $\overline{Q}_1(0)$, such that both topologies agree. 
Let  $\lambda_n=\lambda(\mes_n,h_n)$ 
be such that
\begin{itemize}
\item $h_n$ are harmonic functions defined on domains containing $\overline{Q}_1(0)$,
\item $\rho_{\lambda_n}$ is a finite distance over $\overline{Q}_1(0)$,
\item $\vert \mes_n \vert$ are uniformly bounded and their support is contained in a same disc,
\item the total variation of the (left or right) turn of $C_1(0)$ for $\rho_{\lambda_n}$ is uniformly bounded, 
\item $\rho_{\lambda_n}$  converge uniformly to $\rho$.
\end{itemize}
Then $\rho$ is a subharmonic distance over $\overline{Q}_1(0)$. 
\end{theorem}
\end{theo}

The proof is based on the following theorem, which is called Prokhorov Theorem if the measures are normalized 
to be probability measures, or is more generally a consequence of the Alaoglu's Theorem in functional analysis \cite{folland}. 
See \cite[Theorem 1.23]{mattila} for a direct proof in the case of positive measures.

\begin{theorem}\label{thm:alaoglu}
Let $(\mes_n)_n$ be a sequence of signed measures with supports contained in a same compact set, and such that 
 $\vert \mes_n \vert$ are uniformly bounded. Then  the sequence subconverge to a  signed measure.
\end{theorem}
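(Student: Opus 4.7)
The plan is to reduce the signed case to the positive case by invoking the Jordan decomposition, and then apply the cited weak-compactness result for positive measures (\cite[Theorem~1.23]{mattila}) twice, along with a diagonal extraction.

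First, write $\mes_n=\mes_n^+-\mes_n^-$ with $\mes_n^\pm$ the positive and negative variations. Since $|\mes_n|=\mes_n^++\mes_n^-$, the uniform bound $|\mes_n|(\C)\leq C$ gives
\[
\mes_n^+(\C)\leq C,\qquad \mes_n^-(\C)\leq C,
\]
and both $\mes_n^\pm$ are supported in the same compact set $K\subset\C$ containing the supports of all $\mes_n$. Thus $(\mes_n^+)_n$ and $(\mes_n^-)_n$ are sequences of positive measures, with supports in a common compact and with uniformly bounded total mass.

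Next, apply the cited positive-measure weak-compactness result to $(\mes_n^+)_n$: there exist a subsequence (still indexed by $n$ after relabeling) and a finite positive measure $\mes_0^+$, supported in $K$, such that $\mes_n^+\to\mes_0^+$ weakly in the sense of Remark~\ref{rem:def weak cv mes}. Apply the same result to the corresponding subsequence of $(\mes_n^-)_n$ to extract a further subsequence with $\mes_n^-\to\mes_0^-$ weakly, for some finite positive $\mes_0^-$ supported in $K$. Set $\mes_0:=\mes_0^+-\mes_0^-$; this is a finite signed measure with compact support.

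Finally, for any continuous $f$ with compact support (which is all that is needed here by Remark~\ref{rem:def weak cv mes}, since all supports lie in $K$), linearity of the integral gives
\[
\iint f\,d\mes_n=\iint f\,d\mes_n^+-\iint f\,d\mes_n^-\longrightarrow \iint f\,d\mes_0^+-\iint f\,d\mes_0^-=\iint f\,d\mes_0,
\]
so $\mes_n\to\mes_0$ weakly along the extracted subsequence. There is no real obstacle: the only subtle point is to make sure the notion of ``weak convergence'' is consistent for the signed case, but since all measures are supported in the same compact $K$, testing against $C_c(\C)$ is equivalent to testing against $C(K)$, and linearity transports convergence from the Jordan parts to their difference.
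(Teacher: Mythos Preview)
Your proof is correct. The paper does not give its own proof of this theorem but simply cites it as a consequence of Alaoglu's theorem, pointing to \cite[Theorem~1.23]{mattila} for the positive case; your reduction via the Jordan decomposition and two successive extractions is exactly the natural way to fill in that reference, and matches the route the paper implicitly suggests.
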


It follows from Theorem~\ref{thm:alaoglu} that, under the hypothesis of Theorem~\ref{thm:limit met bc disque}, the sequence
$(\mes_n)_n$ subconverges. Then, we write $h_n$ as the potential of a measure $\psi_n$ plus a constant as in Lemma~\ref{lem:localisation harmonic}. The definition \eqref{eq:def turn} of the turn implies that $\psi_n$ are also uniformly bounded, hence subconverge.  A direct argument shows that the associated sequence of constants also subconverges. In turn, there are  a measure $\mes$ and a constant $c$ such that  $\rho_{\lambda_n}$  uniformly subconverge to $\rho_{\lambda(\mes,c)}$ on every compact set without  points with measure $\geq 2\pi$. It is possible to prove that the uniform convergence is actually on the whole disc, and to conclude that $\rho_{\lambda(\mes,c)}=\rho$. 
%This is done in the last section  of \cite{R60I}.

It is necessary to suppose that the limit distance in Theorem~\ref{thm:limit met bc disque} is a distance on a plane domain. Indeed, it is easy to construct a sequence of cones in the Euclidean $3$-space, which satisfies the other hypothesis of Theorem~\ref{thm:limit met bc disque}, but converges to the distance induced on the union of a plane and a segment. See \cite[p. 145]{R93} for more details.

\subsubsection{Consequences on curves}\label{sec:turn rotation}

Recall the notion of turn of a curve in a domain endowed with a subharmonic distance introduced in Section~\ref{sec:tot rot and bound rot}. Actually, the notion of turn only requires a measure and not a distance, however  turn and distance are related as we will see. The two theorems below are first proved in the cases $\mes= \mes_0\delta_0+\psi$, with $\mes_0<2\pi$ and $\psi$ a $C^\infty$ Lebesgue density, with some condition on $|\psi|$ insuring that
$$|\psi|<2\pi~,$$
see Section 7 of \cite{R63III}. By an approximation argument, the assumption about the regularity of $\psi$ can be relaxed in the statements. 
Then the proof of the theorems below for  the  case of a general measure 
 follows form the Localization Theorem~\ref{thm:localization}. Indeed, let us write the function $\tilde{\omega}$ obtained in this latter theorem in the following way:
 $$\tilde{\omega}(E)=\tilde{\omega}(E\setminus \{z_0\})+\tilde{\omega}(\{z_0\}\cap E)~.$$
  Hence, the Localization Theorem~\ref{thm:localization} says that, locally, one may write any signed measure with compact support under the form
 
  $$\psi + \mes_0\delta_{z_0}~.$$

By the properties of $\tilde{\mes}$, the measure $\psi$ is zero outside of a disc, and its total variation is arbitrary small.\footnote{We are using  the notation $\psi$ here to fit with the one from \cite{R63III}, but it is not the function used in Lemma~\ref{lem:localisation harmonic} or in the proof of the Localization Theorem~\ref{thm:localization}.}

\begin{theo}
\begin{theorem}[{\cite[Theorem 8]{R63III}}]\label{thm:shortest boundedturn} If a shortest arc has no point $z$ with $\mes(\{z\})\geq 2\pi$, then it is an arc of bounded turn, and its left and right turns
 are non-positive.
 \end{theorem}
 \end{theo}

%The same procedure together with Theorem~\ref{thm:shortest boundedturn} gives  
The following result  is the converse result to
 Theorem~\ref{thm:br implique bt}, which said that an arc of bounded rotation is an arc of bounded turn.   

\begin{theo}
\begin{theorem}[{\cite[Theorem~10]{R63III}}]\label{thm:BT BR}
If an arc of bounded turn has no point $z$ with $\mes(\{z\})\geq 2\pi$, then it is an arc of bounded rotation.
\end{theorem}
\end{theo}

These results are also described at the end of Section~8.1 in \cite{R93}.

It follows from Lemma~\ref{lem:mes pos point infini} that if $\mes$ is positive, then a shortest arc $\arc$ has no point $z$ with $\mes(\{z\})\geq 2\pi$. Moreover, in this case, by  \eqref{eq:left+right} and Theorem~\ref{thm:shortest boundedturn},   $\kappa_l(K)=\kappa_r(K)=\mes(K^\circ)=0$.
We hence obtain the following result.
% so we can deduce the following from Theorem~\ref{thm:BT BR} and Theorem~\ref{thm:shortest boundedturn}.
\begin{corollary}
If $\mes$ is a finite positive measure, then 
a shortest arc $\arc$ has bounded rotation, zero left and right turn, and $\mes(\arc^\circ)=0$.
\end{corollary}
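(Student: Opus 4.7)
The plan is to chain together the two theorems preceding the corollary, observing that the positivity of $\mes$ rules out the exceptional points with mass $\geq 2\pi$ on any shortest arc.

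First I would check that a shortest arc $\arc$ contains no point $z$ with $\mes(\{z\}) \geq 2\pi$. Since $\mes$ is a positive finite measure, Lemma~\ref{lem:mes pos point infini} implies that any such $z$ is a point at infinity, i.e., $\rho_\lambda(z, z') = +\infty$ for every $z' \in M$. But $\arc$ is a shortest arc, hence its intrinsic length $s_\lambda(\arc)$ equals $\rho_\lambda(z_1, z_2)$ for its endpoints $z_1, z_2$, which is finite. If $\arc$ passed through such a $z$, then the subarc from $z_1$ to $z$ would have intrinsic length at least $\rho_\lambda(z_1, z)  =+\infty$, contradicting the finiteness of $s_\lambda(\arc)$. (The endpoints themselves are finite points since $\rho_\lambda(z_1,z_2) < \infty$.)

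Having ruled out the bad points, I would invoke Theorem~\ref{thm:shortest boundedturn} to conclude that $\arc$ is of bounded turn with $\kappa_l(\arc) \leq 0$ and $\kappa_r(\arc) \leq 0$, and then Theorem~\ref{thm:BT BR} to upgrade this to bounded rotation.

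Finally, for the vanishing statements, I would use the identity \eqref{eq:left+right}:
\begin{equation*}
\kappa_l(\arc) + \kappa_r(\arc) = \mes(\arc^\circ).
\end{equation*}
Since $\mes \geq 0$, the right-hand side is non-negative, while both summands on the left are non-positive by the previous step. This forces $\kappa_l(\arc) = \kappa_r(\arc) = 0$ and $\mes(\arc^\circ) = 0$ simultaneously. No step here is really the obstacle since all the heavy lifting is done by the two cited theorems; the only mild subtlety is justifying that a shortest arc avoids points at infinity, which follows immediately from the definition of intrinsic length together with finiteness of the distance between the endpoints.
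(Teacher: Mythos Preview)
Your proof is correct and follows essentially the same approach as the paper: use Lemma~\ref{lem:mes pos point infini} to exclude points of mass $\geq 2\pi$ from a shortest arc, invoke Theorem~\ref{thm:shortest boundedturn} for bounded turn with $\kappa_l,\kappa_r\leq 0$, then combine \eqref{eq:left+right} with the positivity of $\mes$ to force $\kappa_l=\kappa_r=\mes(\arc^\circ)=0$. You are slightly more explicit than the paper in citing Theorem~\ref{thm:BT BR} for the passage from bounded turn to bounded rotation, which is indeed needed for that part of the statement.
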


Recall that a subharmonic distance between two points is defined as the infimum 
of the length $\tilde{s}_\lambda$ of the broken lines joining them.  
So far, the results we mentioned were based on the case when no point has a curvature measure $\geq 2\pi$, and by Lemma~\ref{lem metriques coincident}, the distance can be  defined using curves of bounded rotation, that is necessary in the arguments for proving the Distances Convergence Theorem~\ref{thm: distances convergence theorem}. From Theorem~\ref{prop:conv increas}, the following result is deduced.

\begin{theo}
\begin{theorem}[{\cite[Theorem 5]{R63}}]\label{thm length coincide}
For any rectifiable curve, the length $\tilde{s}_\lambda(K)$ and
the intrinsic length for the distance $\rho_\lambda$, $s_\lambda(K)$, coincide.
\end{theorem}
\end{theo}

Actually, the result given by \cite[Theorem 5]{R63}  is more general: it says that both lengths agree for \emph{any} curve. Of course, it is necessary to generalize the definition of $\tilde{s}_\lambda(K)$, that is done in \cite{R63}.

\subsection{Contraction onto a cone}

Let $\overline{Q}$ be a closed disc in the plane. 
A distance $\rho_n$ onto  $\overline{Q}$ is \emph{polyhedral}\index{polyhedral distance}, or equivalently the couple $(\overline{Q},\rho_n)$ is a \emph{(two dimensional) polyhedron}\index{polyhedron} if 
\begin{itemize}
\item it is intrinsic and induces the  topology of the disc;
\item any interior point has a neighborhood isometric to an open subset of a cone with positive cone angle;
\item any boundary point has a neighborhood isometric to an open subset of a circular sector ---in particular the boundary is a geodesic broken line.
\end{itemize}
In turn, a polyhedral distance is finite.

\begin{remark}{\rm
A polyhedral distance may also be defined as a gluing of 
Euclidean triangles along isometric edges. Such a gluing gives a polyhedral distance on the disc, although
that may be not so straightforward ---for example, one has to check that the intrinsic distance induced by the gluing is a genuine distance and not only a pseudo-distance. See e.g. Theorem~4.3 in \cite{bonahon}, and \cite{troyanov-euclidean}, for more details. (The general notion of gluing of metric spaces was developed by A. D.~Alexandrov, we refer to \cite{alexandrovintr}, \cite{AZ}, \cite{R93}.) 
Actually, any polyhedral metric can be obtained in this way, see e.g., Theorem~5.2.2 in \cite{R93}.
%Conversely, a polyhedral distance on $\overline{Q}$  such that the boundary is a geodesic broken line is a polyhedron. This follows from the fact that the disc with the distance can be decomposed by geodesic triangles, such that the points in the interior with non-zero curvature and the points on the boundary where the turn is non zero,  are vertices of the triangulation. See e.g. Theorem~5.2.2 in \cite{R93}. %%%%XXXXXX trouver une reference
}\end{remark}

A distance $\rho$ over a space $U$ homeomorphic to a closed disc  \emph{admits a polyhedral approximation}\index{polyhedral approximation}  if
\begin{itemize}
\item it is a finite intrinsic distance that induces the topology of $U$;
\item there exists a sequence $(\rho_n)_n$ of polyhedral distances  over $\overline{Q}$,
uniformly converging to $\rho$, and 
there is a constant $A<\infty$ such that for all $n$, the positive part of the curvature of the polyhedron $R_n$ satisfies $\omega_n^+(R_n)<A$.
\end{itemize}
From Theorem~\ref{thm:alaoglu}, we know that up to extract a subsequence, 
$(\omega_n^+)_n$   converge weakly to a positive measure $\omega_0$. We imply that it is so when speaking about polyhedral approximation.

A \emph{convex cone} is a polyhedron with a unique interior vertex, such that when the turn of the boundary is non-zero at a point of the boundary, then it is positive.

The  following theorem is proved by approximation from 
 polyhedra. The polyhedral version of the following theorem is proved in \cite{R61b}  ---one may consult Section~5.6 and 6.5 in \cite{R93} for more around \cite{R61b} and \cite{R62}  respectively. The content of \cite{R61b}  and \cite{R62}  is independent of the theory of subharmonic distances.

\begin{theo}
\begin{theorem}[{Contraction Onto a Cone \cite[Theorem~1*]{R62} }]\label{thm contraction}
Let $\rho$ be an intrinsic distance over a closed disc $\overline{Q}$ that admits a  polyhedral approximation, and let $D$ be a proper subset of $\overline{Q}$ homeomorphic to a closed disc, whose boundary $\partial D$ is rectifiable for $\rho$.

If for the limit measure $\omega_0$, $\omega_0(D^\circ)<2\pi$, then there exists a convex cone  $Q$ such that 
$$\omega(Q)\leq \omega_0(D^\circ) $$
and there is a contracting mapping from $Q$ onto $D$, mapping $\partial Q$ onto $\partial D$, preserving the arc length.
\end{theorem}
\end{theo}

Recall that a \emph{contracting mapping}\index{contracting mapping}\footnote{The following alternative terminologies are also used for contracting mapping:  
\emph{inextensible}\index{inextensible},
 \emph{non-extensible}\index{non-extensible}, \emph{non-expansive}\index{non-expansive}, \emph{short}\index{short}, \emph{metric}\index{metric map}.} is a mapping $\varphi$ from a metric spaces $(M_1,\rho_1)$ to a metric space $(M_2,\rho_2)$ such that
$$\rho_2(\varphi(X),\varphi(Y))\leq \rho_1(X,Y)~. $$

We say that  a domain $D$ that satisfies the conclusion of Theorem~\ref{thm contraction} \emph{admits a contraction onto a cone}\index{contraction onto a cone}.

Actually, any subharmonic distance locally admits a polyhedral approximation, as says the following theorem. The proof is parallel to the one for
Riemannian approximation (Theorem~\ref{thm:approx met lisse bord}), and can be found  pp. 113--115 of \cite{R93}.
\begin{theorem}[Polyhedral Approximation Theorem]\label{tm:poly approx}
Any point in an open domain $M$, which is not a point at infinity for a subharmonic distance $\rho_{\lambda(\mes,h)}$, has a neighborhood homeomorphic to a disc that is the uniform limit of polyhedral distances, with uniformly bounded total variation  of the curvature measure, and uniformly bounded  total variation of the turn of the boundary.
\end{theorem}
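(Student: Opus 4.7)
The plan is to mirror the proof of the Riemannian Approximation Theorem~\ref{thm:approx met lisse bord}, replacing the smooth Sobolev-type approximation of the curvature measure by an atomic approximation. A finite sum of weighted Dirac masses (plus a constant harmonic term) yields a flat-cone metric, which in turn defines a polyhedral distance on any inscribed geodesic polygon. The remaining work is to check that the Distances Convergence Theorem applies and to replace the round circular boundary by a geodesic broken line without losing control of the turn.

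First I would normalize the harmonic term. Let $z_0\in M$ be a point which is not at infinity, hence a finite point; by the results around Theorem~\ref{thm:fintie points} one has $\mes(\{z_0\})<2\pi$, and by Theorem~\ref{thm:met coinc} there is $\delta>0$ with $\bar Q_{\delta}(z_0)\subset M$, on which $\rho_{\lambda(\mes,h)}$ is finite and induces the Euclidean topology. Apply the Localization Theorem~\ref{thm:localization} with small $\epsilon,\eta>0$ to obtain a constant $k$ and a signed measure $\tilde\omega$ supported in $\bar Q_{\delta(1+\eta)}(z_0)$ with $\tilde\omega=\omega$ on $Q_\delta(z_0)$, $|\tilde\omega|(Q_{\delta(1+\eta)}(z_0)\setminus\{z_0\})<\epsilon$, and $p(\mes,h)=p(\tilde\omega)+k$ on $Q_\delta(z_0)$. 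Thus up to the fixed multiplicative constant $e^{-k}$, it is enough to approximate $\rho_{\lambda(\tilde\omega)}$.

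Next I would discretize. Write $\tilde\omega=\alpha\delta_{z_0}+\psi$ with $\alpha=\mes(\{z_0\})$ and $|\psi|(\C)<\epsilon$. Approximate $\psi^+$ and $\psi^-$ weakly by sequences $\psi_n^+$, $\psi_n^-$ of finite sums of Dirac masses (say, by subdividing the support into a fine cellular partition and concentrating each cell's mass at an interior point different from $z_0$), with $|\psi_n^\pm|(\C)\le |\psi^\pm|(\C)$. Set $\mes_n=\alpha\delta_{z_0}+\psi_n^+-\psi_n^-$, so that $\mes_n^+\to\alpha^+\delta_{z_0}+\psi^+$ and $\mes_n^-\to\alpha^-\delta_{z_0}+\psi^-$ weakly. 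The only potential atom of $\mes_n^+$ of mass $\ge 2\pi$ would sit at $z_0$, but $\alpha^+<2\pi$. Hence Theorem~\ref{thm: distances convergence theorem} (and for atoms of mass exactly $2\pi$, its refinement Theorem~\ref{prop:conv increas}) applies and $\rho_{\lambda(\mes_n)}\to\rho_{\lambda(\tilde\omega)}$ uniformly on $\bar Q_\delta(z_0)$.

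Each metric $\lambda(\mes_n)|dz|^2$ is flat away from the finitely many atoms of $\mes_n$, so it is intrinsically polyhedral in any simply connected subdomain with geodesic broken line boundary. For each $n$, pick vertices on $C_\delta(z_0)$ disjoint from the atoms and mutually at small $\rho_{\lambda(\mes_n)}$-distance, join them by shortest arcs to get a geodesic broken line $L_n$ bounding a closed disc $D_n$, and let $\rho_n$ be the induced distance on $D_n$. Then $(D_n,\rho_n)$ is a genuine polyhedron, $L_n\to C_\delta(z_0)$ uniformly, and the uniform convergence $\rho_{\lambda(\mes_n)}\to\rho_{\lambda(\tilde\omega)}$ transfers to $\rho_n\to\rho_{\lambda(\tilde\omega)}$. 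The total variation of the curvature measure is $|\mes_n|(\C)\le |\alpha|+\epsilon$, hence uniformly bounded. The total variation of the turn of $L_n$ is controlled by the (Euclidean) absolute rotation of $L_n$ plus $|\mes_n|(\C)$ via Lemma~\ref{lem:smooth turn} and \eqref{eq:def turn}, and the rotation of the inscribed polygon $L_n$ converges to the rotation $2\pi$ of the circle, hence stays uniformly bounded.

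The main obstacle will be step 3: constructing the geodesic broken lines $L_n$ so that both of the following hold simultaneously. On the one hand $L_n$ must converge to $C_\delta(z_0)$ (uniform convergence of the domains, to carry over the distance convergence), and on the other hand its absolute rotation must stay bounded independently of $n$ (otherwise the turn of the boundary could blow up). Controlling both requires exploiting that, by the Basic Lemma~II (Theorem~\ref{them:basic II}) applied on an annulus around $C_\delta(z_0)$ that avoids $z_0$, shortest arcs in $\rho_{\lambda(\mes_n)}$ between nearby boundary points have uniformly bounded absolute rotation, and by the Length Convergence Lemma (Theorem~\ref{thm: basic I}) they become nearly Euclidean chords. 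A diagonal choice of vertex mesh then yields the desired $L_n$; checking this diagonal argument is the delicate technical point, and is already carried out in the proof of the Riemannian version (Theorem~\ref{thm:approx met lisse bord}), so one can follow pp.~113--115 of \cite{R93} verbatim once the atomic substitution above is in place.
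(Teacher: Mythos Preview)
Your approach is essentially the one the paper indicates: run the argument of the Riemannian Approximation Theorem~\ref{thm:approx met lisse bord} with the smooth Sobolev averaging replaced by an atomic (Dirac) approximation, then pass to geodesic polygonal boundaries, referring to pp.~113--115 of \cite{R93} for the diagonal construction of the $L_n$.

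One inaccuracy: from ``$z_0$ is not a point at infinity'' you only get $\mes(\{z_0\})\le 2\pi$, not strict inequality; the paper explicitly notes (after Theorem~\ref{thm:fintie points}) that there exist finite points with $\mes(\{z_0\})=2\pi$. In that borderline case the hypothesis~\eqref{eq:toujours la meme} of the Distances Convergence Theorem~\ref{thm: distances convergence theorem} fails at $z_0$, so you must genuinely fall back on Theorem~\ref{prop:conv increas}; but this requires $\lambda_n\le\lambda_0$, i.e.\ a \emph{monotone} approximation $p(\mes_n^+)\nearrow p(\mes^+)$, which is automatic for the Sobolev averaging (Proposition~\ref{prop:approx potentiel}) but not for a generic atomic discretization by cell-concentration. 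You should either arrange the Dirac approximation of $\mes^+$ to be monotone in the potential (this can be done but needs an argument), or first smooth $\mes^+$ monotonically and then discretize the smooth density, combining both steps via the diagonal argument you already invoke.
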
 

From Theorem~\ref{thm contraction} and Theorem~\ref{tm:poly approx} one immediately gets that if a domain does not have too much positive measure, then the subharmonic distance admits a contraction onto a cone, see Theorem~1 in \cite{R62}.   
%
%the following. Recall  that as the positive measures are regular,  if
%$\mes(\{z\})<2\pi$, then  there exists a neighborhood $D$ of $z$ such that $\mes(U)<2\pi$. 
%
%\begin{theorem}
%Any point $z$ in an open domain endowed with a subharmonic distance $\rho_{\lambda(\mes,h)}$, such that $\mes(\{z\})<2\pi$, has a neighborhood that admits a contraction onto a cone.
%\end{theorem}

Let $(X,d)$ be a metric space, $x\in X$, and let $\param_1,\param_2:[0,1]\to X$  be  parameterizations of  arcs $\arc_1$ and $\arc_2$, with $\param_1(0)=\param_2(0)=x$. For any $t\in(0,1]$, there is a unique, up to global isometries, triangle  in the Euclidean plane with edge-length $d(x,\param_1(t))$,  $d(x,\param_2(t))$,  $d(\param_1(t),\param_2(t))$. Let $\alpha_t$ be the angle of the triangle opposite to the edge of length $d(\param_1(t),\param_2(t))$. The \emph{upper angle}\index{upper angle} between $\arc_1$ and $\arc_2$ at $x$ is 
\begin{equation}\label{eq upper angle}\overline{\alpha}_x(\arc_1,\arc_2)=\limsup_{t\to 0}\alpha_t~. \end{equation}
As an angle in a Euclidean triangle can be written in term of edge-lengths, it is easy to check that the definition above does not depend on the choice of the parameterization. See e.g. \cite{AZ}, \cite{R93}, \cite{bbi} for details.

\begin{remark}{\rm \label{rem:angle}
A theorem of \cite{AZ} (see IV. 3 there) says that, if a distance admits a polyhedral approximation, 
 then the angle between two shortest arcs issued from the same point exists. That means that in \eqref{eq upper angle}, the $\limsup$ can be replaced by $\lim$, and the limit exists. 
By Theorem~\ref{tm:poly approx}, angles exist at any finite point of a subharmonic metric, and so we will speak about \emph{angle}\index{angle} instead of upper angle.  The same remark holds
for two-dimensional manifolds of bounded curvature,
that we will introduce soon.
}\end{remark}

A \emph{triangle}\index{triangle} in $S$ is the data of three distinct points together with three shortest paths joining them.
We first have that a contraction onto a cone also contracts angles, see $\S 7$ in \cite{R62}.

\begin{lemma}\label{lemm:comp angle cone}
Under the assumption of the Contraction Onto a Cone Theorem~\ref{thm contraction}, let us suppose that $D$ is a triangle $T$, and let $\overline{\alpha}$ be the  angle at a vertex of $T$, and let $\alpha$ be the angle at the corresponding vertex of $Q$. Then
$$\overline{\alpha}\leq \alpha~. $$

\end{lemma}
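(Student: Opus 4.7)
The plan is to exploit three facts in combination: the boundary arc length preservation forces distances from $x'$ in $Q$ to match distances from $x$ in $T$ along the shortest arcs at $x$; the contraction property forces the ``opposite side'' to grow weakly; and monotonicity of the Euclidean cosine rule in the opposite side then gives the inequality for the comparison angles.

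First I would set up notation. Let $\varphi:Q\to T$ be the contracting mapping, let $x$ be the chosen vertex of $T$, and let $x'\in\partial Q$ be the point with $\varphi(x')=x$ provided by the arc-length preserving correspondence on the boundary. The two shortest arcs emanating from $x$ along which the angle $\overline{\alpha}$ is computed are the two edges of $T$ incident to $x$; call them $\arc_1,\arc_2$ and parameterize them proportionally to arc length by $\param_1,\param_2:[0,1]\to T$. Because $\varphi$ preserves arc length on $\partial Q$, these edges lift to boundary subarcs $\arc_1',\arc_2'$ of $\partial Q$ starting at $x'$ and of the same lengths, parameterized proportionally to arc length by $\param_1',\param_2'$, and the angle $\alpha$ at $x'$ in $Q$ is the angle between $\arc_1'$ and $\arc_2'$.

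The key step is to show $d_Q(x',\param_i'(t))=d_T(x,\param_i(t))$ for $i=1,2$ and every $t\in[0,1]$. On the one hand, $\arc_i$ is a shortest arc (edges of a triangle are shortest arcs by the definition recalled in the excerpt), so $d_T(x,\param_i(t))=t\,s(\arc_i)$. On the other hand, the boundary subarc of $\arc_i'$ from $x'$ to $\param_i'(t)$ has length $t\,s(\arc_i)$, so $d_Q(x',\param_i'(t))\leq t\,s(\arc_i)$. Combining with the contraction inequality $d_T(x,\param_i(t))\leq d_Q(x',\param_i'(t))$ forces equality. The contraction property also directly gives $d_T(\param_1(t),\param_2(t))\leq d_Q(\param_1'(t),\param_2'(t))$.

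Now for each $t>0$ form the two Euclidean comparison triangles, with angles $\alpha_t^T$ and $\alpha_t^Q$ at the vertices corresponding to $x$ and $x'$. By the above, the two adjacent side lengths coincide while the side opposite satisfies $d_T(\param_1(t),\param_2(t))\leq d_Q(\param_1'(t),\param_2'(t))$. The law of cosines
\[
\cos\alpha_t = \frac{a^2+b^2-c^2}{2ab}
\]
is strictly decreasing in $c$, so $\alpha_t^T\leq \alpha_t^Q$. Passing to $\limsup$ as $t\to 0$ (using Remark~\ref{rem:angle} to know that these are genuine limits on both sides, since $Q$ and $T$ both carry distances admitting a polyhedral approximation) yields $\overline{\alpha}\leq \alpha$.

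The main obstacle I anticipate is the first step, establishing that the radial distances from $x'$ in $Q$ actually equal the corresponding distances from $x$ in $T$. This requires simultaneously that the edges of $T$ be shortest arcs, that the boundary subarcs be realized as paths of the correct length in $Q$, and that the contracting map $\varphi$ forces the inequality in the remaining direction. Once this rigidity of the radial distances is secured, the rest reduces to the elementary monotonicity property of Euclidean triangles in the opposite side.
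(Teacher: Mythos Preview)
Your proof is correct and is the standard argument for this lemma. The paper does not give its own proof here, deferring instead to \cite{R93} (below Theorem~8.2.2 there); the argument one finds there is the same sandwich you wrote down: the contraction inequality together with the arc-length-preserving boundary correspondence forces equality of the two radial distances, the contraction alone controls the opposite side, and the monotonicity of the Euclidean comparison angle in the opposite side finishes after passing to the limit via Remark~\ref{rem:angle}.
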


Lemma~\ref{lemm:comp angle cone} implies the following. 
A \emph{comparison triangle}\index{comparison triangle} 
of a triangle $T$ is the (unique up to isometry) Euclidean triangle with the same edge-length than $T$.

\begin{theorem}[First Comparison Theorem]\label{thm:first comparaison theorem}
Under the assumption of Lemma~\ref{lemm:comp angle cone},
let  $\overline{\alpha}$ be the  angle at a vertex of $T$, and let $\alpha_0$ be corresponding angle of a comparison triangle. Then
$$\overline{\alpha}-\alpha_0\leq \mes^+(T^\circ)~. $$
\end{theorem}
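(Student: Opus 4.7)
The plan is to unpack everything through the Contraction Onto a Cone Theorem~\ref{thm contraction}: applying it to the triangle $T$ yields a convex cone $Q$ and an arc-length preserving, boundary-to-boundary contracting map $\varphi : Q \to T$ with $\omega(Q) \le \omega_0(T^\circ) \le \mes^+(T^\circ)$, the last inequality coming from the construction of $\omega_0$ as the weak limit of the positive parts of the curvatures of the approximating polyhedra. First I would check that $\partial Q$ is a geodesic triangle in $Q$ whose three side-lengths match those of $T$: on each side, the boundary arc of $\partial Q$ has the same length $\ell$ as the corresponding side of $T$ (arc length is preserved), and this arc cannot be longer than the shortest arc in $Q$ joining its endpoints (by $\varphi$ being contracting, such a shortest arc would project to a curve in $T$ of length $\le \ell$, but the corresponding side of $T$ is itself a shortest arc of length exactly $\ell$). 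Hence the boundary arcs already realize the $Q$-distances between vertices.

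Let $\alpha^Q_1,\alpha^Q_2,\alpha^Q_3$ be the interior angles of $\partial Q$ measured in $Q$, and $\alpha^0_1,\alpha^0_2,\alpha^0_3$ the corresponding angles of the Euclidean comparison triangle (the labeling is canonical since $Q$ and the Euclidean triangle share side-lengths). The Gauss--Bonnet Formula~\eqref{eq:GB}, applied to $\partial Q$, which encloses the unique interior vertex of $Q$ and has all boundary turns concentrated at its three vertices, gives $\sum_i \alpha^Q_i = \pi + \omega(Q)$, whereas $\sum_i \alpha^0_i = \pi$; subtracting,
\[ \sum_{i=1}^3 (\alpha^Q_i - \alpha^0_i) = \omega(Q) \le \mes^+(T^\circ). \]
The decisive step is to upgrade this to the individual inequality $\alpha^Q_i \ge \alpha^0_i$ for each $i$. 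This is the angle comparison for surfaces of non-negative curvature: by the definition of \emph{convex cone} the cone angle at the apex is at most $2\pi$ (so the apex carries non-negative curvature), and the turn along $\partial Q$ is non-negative as well, so $Q$ is an Alexandrov space of curvature $\ge 0$ in which each triangle angle dominates its Euclidean comparison angle. Given this monotonicity, each excess $\alpha^Q_i - \alpha^0_i$ lies in $[0,\omega(Q)]$. Combining with Lemma~\ref{lemm:comp angle cone}, which asserts $\overline{\alpha} \le \alpha^Q$ at the distinguished vertex, one concludes $\overline{\alpha} - \alpha_0 \le \alpha^Q - \alpha_0 \le \omega(Q) \le \mes^+(T^\circ)$.

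The main obstacle will be justifying the one-sided comparison $\alpha^Q_i \ge \alpha^0_i$ cleanly, since it is not itself proved in the excerpt. Fortunately, a convex cone is flat away from the apex, so the inequality can be made elementary by developing: cutting $Q$ along any shortest ray from the apex that avoids the vertices of $\partial Q$ unfolds $Q$ isometrically onto a planar sector of total angle $2\pi - \omega(Q) \le 2\pi$, in which $\partial Q$ becomes a (generally non-convex) Euclidean polygon sharing the three side-lengths of $T$; a direct computation with the law of cosines then expresses each $\alpha^Q_i$ as the angle of a Euclidean triangle with two sides at most as long as, and third side equal to, those of the comparison triangle, from which $\alpha^Q_i \ge \alpha^0_i$ follows. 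With this lemma in hand, the remaining bookkeeping above is straightforward.
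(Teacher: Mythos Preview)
Your overall architecture matches the paper's: first establish the inequality $\alpha^Q-\alpha_0\le\omega(Q)$ for the convex cone $Q$, then transfer it to $T$ via Lemma~\ref{lemm:comp angle cone}. The paper states exactly this two-step plan and defers the cone computation to \cite{R93}; you supply that computation yourself via Gauss--Bonnet plus the per-vertex inequality $\alpha^Q_i\ge\alpha^0_i$. Your reduction of $\partial Q$ to a genuine geodesic triangle (using that $\varphi$ is contracting and arc-length preserving on the boundary) is correct and worth keeping.

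There is, however, a genuine slip in your unfolding step. The monotonicity you invoke, ``two sides at most as long and the third equal $\Rightarrow$ opposite angle at least as large'', is \emph{false} in general: with $(a,b,c)=(1,10,9.5)$ one has $\cos C=0.5375$, while shrinking $a$ to $0.9$ gives $\cos C'\approx 0.587$, so $C'<C$. What the development actually delivers is the other configuration. If you cut $Q$ along a ray from the apex to a point on the side \emph{opposite} the vertex $A'$, the two sides $A'B'$ and $A'C'$ unfold to straight Euclidean segments of exactly the comparison lengths, while the Euclidean distance between the images of $B'$ and $C'$ is at least the cone distance $|BC|_T$ (gluing the sector's radial edges can only shorten). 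Now the law of cosines with the two adjacent sides \emph{equal} and the opposite side \emph{at least} the comparison length gives $\alpha^Q_A\ge\alpha^0_A$ directly, since $\cos C=(a^2+b^2-c^2)/(2ab)$ is strictly decreasing in $c$. Repeat with a different cut for each vertex. With this correction your argument goes through; also note that your phrase ``polygon sharing the three side-lengths of $T$'' is inaccurate for the same reason---only the two uncut sides keep their lengths after a single development.
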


The proof of the First Comparison Theorem~\ref{thm:first comparaison theorem} goes as follows: it is first proved in the case when $T$ is a convex cone with total curvature $<2\pi$, then this is applied to the general case with the help of Lemma~\ref{lemm:comp angle cone}. All details are provided below Theorem~8.2.2 in \cite{R93}. 

\subsection{Two-dimensional manifolds of bounded curvature}\label{sec:sh est bic}

%We now define surfaces with bounded integral curvature. The definition of those surfaces requires to define the upper-angle and simple triangles. As we won't use those definitions in the sequel, we refer to \cite{AZ} for more details. 

 A triangle is said to be \emph{simple}\index{triangle (simple)} if the union of the three shortest paths is the boundary of a domain homeomorphic to a disc, and also if it is convex with respect to its boundary. We won't be more precise, and refer to \cite{AZ}, \cite{R93}, \cite{creutz2021triangulating} for more details.

For a simple triangle $T$, its \emph{upper excess}\index{upper excess} is
\begin{equation}\label{eq:upper excess}\overline{\delta}(T)=\overline{\alpha}+\overline{\beta}+\overline{\gamma}-\pi~,\end{equation}
where $\overline{\alpha}$, $\overline{\beta}$, $\overline{\gamma}$ are the upper angles at the vertices of $T$ between its edges (see \eqref{eq upper angle}).

\begin{definition}\label{def:bic}
A metric space  $(M,\rho)$, where $M$ is a surface, is a \emph{two-dimensional manifolds of bounded curvature}\index{two-dimensional manifolds of bounded curvature} if:
\begin{itemize}
\item $\rho$ is a finite intrinsic distance that induces the topology of the surface;
\item for any point of  $M$, there exists a neighborhood $U$ of this point homeomorphic to a disc, 
and a constant $C(U)$, such that, for any set of pairwise non-overlapping simple triangles $T_i$ contained in $G$, then
$$\sum_i \overline{\delta}(T) \leq C(U)~.$$ 
\end{itemize}
\end{definition}

The distance on a two-dimensional manifold of bounded curvature is said to be \emph{of bounded curvature}\index{distance of bounded curvature} (or \emph{of bounded integral curvature}\index{distance of bounded integral curvature}). In the present section, we will restrict ourselves to the case when $M$ is a plane domain.

It is almost trivial that a polyhedral distance is a distance of bounded curvature. For a Riemannian distance, the upper angles are well-defined angles, and by the Gauss--Bonnet Formula (see e.g., \cite{spivak3}), the upper excess of a triangle is equal to the curvature measure of that triangle. It follows that Riemannian distances over $M$ are distances of bounded curvature.

The following theorem is a  simple consequence of the Contraction Onto a Cone Theorem~\ref{thm contraction}. The proof can also be found page 96 in \cite{R93}.\footnote{At page 96 in \cite{R93}, ``Theorem~6.3.2'' should be read ``Theorem 6.2.2''.}

\begin{theo}
\begin{theorem}[{\cite[Theorem 2]{R62} }]\label{thm:approx poly implique bic}
Any distance over a bounded domain admitting a polyhedral approximation is a distance of bounded curvature.
\end{theorem}
\end{theo}

Theorem~\ref{thm:approx poly implique bic} was first proved in \cite{AZ} (see Chapter~IV there), with a more intricate argument.

Theorem~\ref{thm:approx poly implique bic} together 
with Theorem~\ref{tm:poly approx} gives the following result, that is one half of the starting  motivation 
 for introducing subharmonic distances at the time.

\begin{theo}
\begin{theorem}[{Theorem I  in \cite{R60I}}]\label{thm:sh is bic}
Any finite subharmonic distance is a distance of bounded curvature.
\end{theorem}
\end{theo}

The argument in the proof of Theorem~\ref{thm:approx poly implique bic}  allows to be more precise: if the curvature measure of the subharmonic distance is $\mes$, then, with notation of Definition~\ref{def:bic}, one can take
 $$C(U)=\mes^+(U)~. $$

There exists a converse to Theorem~\ref{thm:sh is bic}, see Section~\ref{sec:iso coord on bic}.

\section{Conformal aspects of subharmonic distances}\label{sec:conforme}
In the first part of this section, we present two theorems. The first one (Theorem~\ref{thm huber 1}) was proved simultaneously by A. Huber and  Y. Reshetnyak. 
It  says that a distance preserving mapping between domains with subharmonic distances is a conformal (or anti-conformal) mapping. 
The two proofs are different, but both uses the  Distances Convergence Theorem~\ref{thm: distances convergence theorem}.
%We present the proof  contained in  \cite{R63III}.

The second theorem is a formula proved by Huber 
(Theorem~\ref{thm uber chgt coord}), that will imply, in the second part of this section, that 
a subharmonic metric can be defined on a Riemann surface. Roughly speaking, under a conformal mapping, a subharmonic metric changes in the same way as 
a Riemannian conformal metric does.% --- Recall that a \PL function is a function defined up to a set of Hausdorff dimension zero and such that its logarithm is the difference of two subharmonic functions.

Conversely to what we did until now, we  use a result from 
theory of surfaces of bounded curvature to present Theorem~\ref{thm uber chgt coord}, see Remark~\ref{rem:huber rem aires}.

\subsection{Conformal mappings}\label{sec:conformal}

As we have an orientation for simple closed curves in the plane,
we say that for a domain $M$, a homeomorphism from $M$ onto another domain of the plane  is \emph{orientation preserving}\index{orientation preserving},  
if any simple closed curve in $M$ and its image have the same orientation. If the orientation is always different, we say that the mapping \emph{reverse the orientation}. The (complex) conjugate mapping reverses the orientation, and the conjugate of an orientation preserving mapping reverses the orientation, and vice-versa.

A mapping $f$ from a domain $M$ of the plane to $\C$ is a \emph{conformal mapping}\index{conformal mapping}, if it is injective and holomorphic. By a standard result of complex analysis, 
this implies that for any $z\in M$, $f'(z)\not= 0$. By the invariance of domain, $f$ is a homeomorphism onto its image, and we have that the inverse mapping is holomorphic. 
By the Cauchy--Riemann equations, the Jacobian is positive, hence a conformal mapping preserves orientation. The conjugate of a conformal mapping is called an \emph{anti-conformal mapping}\index{anti-conformal mapping}.

Let $M, M'$ be two domains of the plane, and let 
$f:M\to M'$ be an  orientation preserving homeomorphism. 
Let $z_0\in M$ and
\begin{equation}\label{eq:vapr psi} \varphi(\delta)=\max_{\vert z-z_0\vert=\delta }\vert f(z)-f(z_0)\vert~,\,
\psi(\delta)=\min_{\vert z-z_0\vert=\delta }\vert f(z)-f(z_0)\vert~,\end{equation}
see Figure~\ref{fig:cercles}. The \emph{circular dilatation}\index{circular dilatation} of $f$ at $z_0$ is 
$$H(z)=\limsup_{\delta\to 0^+} \frac{\varphi(\delta)}{\psi(\delta)}~.$$

\begin{theorem}[D. E. Men'shov]\label{thm:mensh}
Let $M, M'$ be two domains of the plane, and let 
$f:M\to M'$ be an  orientation preserving homeomorphism. If $H\leq 1$ over $M$, except maybe for a finite number of points, 
then $f$ is a conformal mapping.
\end{theorem}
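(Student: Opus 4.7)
The plan is to first reduce to the case where the exceptional set is empty and then, under the assumption that $H(z)\leq 1$ for \emph{every} $z\in M$, to show in three steps that $f$ is a holomorphic homeomorphism. The reduction is handled at the very end by Riemann's removable singularity theorem: once $f$ is known to be conformal on $M\setminus F$ for a finite set $F$, and is bounded near every point of $F$ (because it is a homeomorphism onto a plane domain), the holomorphic extension is automatic, and the extended map remains injective since it is already a homeomorphism on all of $M$.

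The first step, which I expect to be the main obstacle, is to prove that $f$ is ACL (absolutely continuous on lines) and admits a total differential almost everywhere. This is the classical passage from the \emph{metric} definition of quasiconformality, i.e.\ local boundedness of $H$, to the \emph{analytic} one. In dimension two it is essentially due to Men'shov himself and later simplified by Gehring and V\"ais\"al\"a; no tool from the subharmonic theory developed in the present text short-circuits it. Its proof rests on delicate Vitali-type coverings, a version of the Loewner--Pfluger length-area lemma controlling the oscillation of $f$ on rectangles by the area of the image, and the area formula for monotone functions.

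The second step is purely pointwise. At a point $z_0$ where $f$ is differentiable, write the differential as $df_{z_0}=\alpha\,dz+\beta\,d\bar{z}$ with $\alpha=f_z(z_0)$, $\beta=f_{\bar{z}}(z_0)$. Since $f$ preserves orientation, the Jacobian $|\alpha|^2-|\beta|^2$ is nonnegative, and the image of the infinitesimal circle of radius $\delta$ is, to first order, an ellipse with semi-axes $\delta(|\alpha|+|\beta|)$ and $\delta(|\alpha|-|\beta|)$; thus the quantities in \eqref{eq:vapr psi} satisfy
$$\frac{\varphi(\delta)}{\psi(\delta)}\xrightarrow[\delta\to 0^+]{}\frac{|\alpha|+|\beta|}{|\alpha|-|\beta|},$$
so that $H(z_0)=(|\alpha|+|\beta|)/(|\alpha|-|\beta|)$. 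The hypothesis $H(z_0)\leq 1$ forces $\beta=0$, and consequently $f_{\bar{z}}=0$ almost everywhere on $M$.

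The third step is the analytic conclusion. By Step~1 the classical partial derivatives of $u=\operatorname{Re}f$ and $v=\operatorname{Im}f$ coincide almost everywhere with their distributional counterparts, so the identity $f_{\bar{z}}=0$ translates into the Cauchy--Riemann equations $u_x=v_y$, $u_y=-v_x$ holding in the sense of distributions. In particular $\Delta u=\Delta v=0$ distributionally, so by the Weyl Lemma (Theorem~\ref{thm:Weyl's Lemma}) $u$ and $v$ agree almost everywhere with harmonic functions; continuity of $f$ then upgrades this to equality everywhere, and $f$ is holomorphic on $M$. Combined with the injectivity of the homeomorphism $f$, this yields conformality on $M$ minus the finitely many exceptional points, and the reduction described in the first paragraph completes the proof.
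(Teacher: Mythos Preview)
The paper does not actually prove this theorem: immediately after the statement it simply attributes the result to Men'shov, points to the standard quasiconformal literature (Lehto--Virtanen, Gardiner--Lakic, Ahlfors) for the fact that a $1$-quasiconformal homeomorphism is conformal, and then notes---exactly as you do---that Riemann's removable singularity principle disposes of the finitely many exceptional points. So there is no ``paper's own proof'' to compare against; the theorem is used as a black box.

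Your sketch is essentially the classical argument found in those references, and its architecture (ACL $\Rightarrow$ a.e.\ differentiability $\Rightarrow$ $f_{\bar z}=0$ a.e.\ $\Rightarrow$ Weyl) is sound. Two small remarks. In Step~2, the displayed limit $(|\alpha|+|\beta|)/(|\alpha|-|\beta|)$ presupposes that the differential is nondegenerate; you should say a word about the degenerate case (there $\alpha=\beta=0$, so $f_{\bar z}(z_0)=0$ trivially, and for an orientation-preserving homeomorphism the set where the Jacobian vanishes has measure zero anyway). In Step~3, to invoke Weyl's lemma you need the classical partials to be locally integrable so that they genuinely represent the distributional derivatives; this is part of the ACL package but deserves explicit mention, since without it the passage from ``$f_{\bar z}=0$ a.e.'' to ``$\bar\partial f=0$ as a distribution'' is exactly where naive arguments fail.
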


This theorem was  proved in \cite{menshov}. Nowadays, 
an orientation preserving  homeomorphism with $H\leq 1$ 
is called a \emph{$1$-quasi conformal mapping}. This is not the most usual definition, but it is equivalent \cite[Theorem 4.2 p.178]{lehto-virtanen}. With a standard definition, the fact that a $1$-quasi conformal mapping is conformal is a basic result, see \cite{lehto-virtanen}, \cite{gardiner-lakic}, \cite{ahlfors-qc}. 
If the condition $H\leq 1$ fails at a finite number of points of $M$, as $f$ is continuous over $M$, by the Riemann’s Principle of Removable Singularities, $f$ is actually holomorphic over all $M$.

\begin{definition}
A \emph{distance preserving mapping}\index{distance preserving mapping} $f$ between
 two metric spaces $(X_1,d_1)$ and $(X_2,d_2)$ is a mapping satisfying
 $f^*d_2=d_1$, i.e.
 $$\forall x,y\in X_1~,\,\, d_2(f(x),f(y))=d_1(x,y)~.$$
 
\end{definition}

If the metric spaces are homeomorphic to domains of the plane, a distance preserving mapping is continuous and injective, so by the Invariance of Domain  Theorem, it is a homeomorphism onto its image.
If the distance preserving mapping is moreover surjective, the two metric spaces are \emph{isometric}\index{isometry between metric spaces}. 
 
Theorem~\ref{thm:mensh} will give the following result.

\begin{theo}
\begin{theorem}[{\cite{huber}, \cite[Theorem 7]{R63III}}]\label{thm huber 1}
Let $\mes_1, \mes_2$ be signed measures with compact support, $M_1,M_2$ be two bounded domains in the plane and $h_1,h_2$ be harmonic  over $M_1$ and $M_2$ respectively. Let $\lambda_1=\lambda(\mes_1,h_1)$, $\lambda_2=\lambda(\mes_2,h_2)$.  Suppose that $\rho_{\lambda_1}$ and $\rho_{\lambda_2}$ are finite.

If $$f:(M_1,\rho_{\lambda_1})\to  (M_2,\rho_{\lambda_2})$$ is distance preserving, then $f$ is conformal or anti-conformal onto its image.
\end{theorem}
\end{theo}

Note that by the results of Section~\ref{sec can stretch}, we know that  a subharmonic  distance has only a finite number of points at infinity, as such a point $z$ must satisfy $\mes(\{z\})\geq 2\pi$.
 By Theorem~\ref{thm:met coinc}, 
the topologies induced by $\rho_{\lambda_1}$ and $\rho_{\lambda_2}$ coincide with the Euclidean topology, 
so $f$ is a homeomorphism onto its image. Let us suppose that it is orientation preserving.  Up to exclude a finite number of points, one can consider that all points in $M$
have measure $<2\pi$ and similarly for $M'$. 
Using canonical stretching (Theorem~\ref{thm:canoncial streching}), it follows that $f$ has circular dilatation $\leq 1$. This is the argument in \cite{R63III}.
Theorem~\ref{thm huber 1} was first proved in \cite{huber}, by a different argument, but this proof also uses the Distances Convergence Theorem~\ref{thm: distances convergence theorem}.

We now want to see how a distance preserving mapping changes the metric $\lambda \vert \D z\vert^2$. The following is a generalization
of Lemma~\ref{lem:fonctions radiales}.

\begin{theo}
\begin{theorem}[{\cite[Lemma 2]{huber}}]\label{prop:huber 1 aire}
let $\mes$ be a signed measure, $h$ be a harmonic mapping over a bounded domain $M$, and $\lambda=\lambda(\mes,h)$. For any $z_0\in M$ for which $\lambda$ is defined, then

$$\frac{1}{\pi r^2}\iint_{Q_r(z_0)} \lambda \xrightarrow[r\to 0]{} \lambda(z_0)~. $$
\end{theorem}
\end{theo}

So if we know how $\iint_{Q_r(z_0)} \lambda $ changes under a distance preserving mapping, it should help to understand how $\lambda$ changes under the same mapping.  Basically, it will suffice to notice that 
 $\iint_{Q_r(z_0)} \lambda $ is ``the area'' of $Q_r(z_0)$ for the distance
 $\rho_\lambda$.
 
Let us first recall the following classical result of Riemannian geometry.

\begin{lemma}\label{lem:hausdorff lisse}
Let $\lambda$ be $C^\infty$ over a neighborhood of a bounded domain $M$. Then,  for any Borel set $E\subset M$,
$\iint_E \lambda=\mathcal{H}_{\rho_\lambda}^2(E)$, where $\mathcal{H}_{\rho_\lambda}^2$ is the Hausdorff measure for the distance $\rho_\lambda$,  see Section~\ref{sec: hausdorff sub}. 
\end{lemma}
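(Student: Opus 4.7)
The plan is to reduce the global statement to a purely local pointwise comparison, and then glue by a covering argument. The constant $c_2$ entering the definition of $\mathcal{H}^2$ is fixed by Proposition~\ref{prop: pte Haus} so that $\mathcal{H}^2$ on the Euclidean plane coincides with the Lebesgue measure $\mathcal{L}$; this is what ties the two sides of the equality together.

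First I would establish the following \emph{infinitesimal bi-Lipschitz comparison}: for every $z_0 \in M$ and every $\varepsilon>0$, there exists $r_0=r_0(z_0,\varepsilon)>0$ such that for all $z,z' \in Q_{r_0}(z_0)$,
\begin{equation*}
(1-\varepsilon)\sqrt{\lambda(z_0)}\,|z-z'| \;\leq\; \rho_\lambda(z,z') \;\leq\; (1+\varepsilon)\sqrt{\lambda(z_0)}\,|z-z'|.
\end{equation*}
Indeed, since $\lambda$ is $C^\infty$ and positive on a neighborhood of $\bar M$, for $r_0$ sufficiently small one has $(1-\varepsilon)^2\lambda(z_0)\leq \lambda \leq (1+\varepsilon)^2\lambda(z_0)$ on $Q_{2r_0}(z_0)$. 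The upper bound then follows by computing $\tilde s_\lambda$ on the Euclidean segment from $z$ to $z'$, which stays inside $Q_{2r_0}(z_0)$ by Lemma~\ref{lem: trick discs}. The lower bound follows by restricting attention to broken lines which do not leave $Q_{2r_0}(z_0)$ (again by Lemma~\ref{lem: trick discs}, valid once $|z-z'|$ is small compared to $r_0$) and integrating the lower bound of $\sqrt{\lambda}$ along them.

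Next, I would apply Lemma~\ref{lem:haus lip} to the identity map viewed as a bi-Lipschitz homeomorphism between $(Q_{r_0}(z_0),|\cdot|)$ and $(Q_{r_0}(z_0),\rho_\lambda)$, together with Proposition~\ref{prop: pte Haus}, to deduce that for every Borel $E\subset Q_{r_0}(z_0)$,
\begin{equation*}
(1-\varepsilon)^2\lambda(z_0)\,\mathcal{L}(E) \;\leq\; \mathcal{H}^2_{\rho_\lambda}(E) \;\leq\; (1+\varepsilon)^2\lambda(z_0)\,\mathcal{L}(E).
\end{equation*}
On the other hand, the inequality $(1-\varepsilon)^2\lambda(z_0)\mathcal{L}(E)\leq \iint_E\lambda \leq (1+\varepsilon)^2 \lambda(z_0)\mathcal{L}(E)$ is immediate from the pointwise control of $\lambda$ on $Q_{r_0}(z_0)$. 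Combining the two bracketings yields
\begin{equation*}
(1-\varepsilon)^4 \iint_E \lambda \;\leq\; \mathcal{H}^2_{\rho_\lambda}(E) \;\leq\; (1+\varepsilon)^4 \iint_E \lambda
\end{equation*}
for every Borel $E$ contained in a sufficiently small disc around a point of $M$.

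Finally, I would globalize: both $E\mapsto \iint_E\lambda$ and $E\mapsto \mathcal{H}^2_{\rho_\lambda}(E)$ are Borel measures on $M$ (finite on compact subsets, since $\lambda$ is bounded on $\bar M$ and $\rho_\lambda$ is bi-Lipschitz equivalent to the Euclidean distance on compact subsets). Given an arbitrary Borel set $E\subset M$ with $\bar E$ compact in $M$, fix $\varepsilon>0$, choose a Vitali covering of $\bar E$ by open discs $Q_{r_{z_j}}(z_j)$ satisfying the local comparison above (Lemma~\ref{lem: vitali covering}), extract a countable disjoint subfamily whose $5$-enlargements cover $\bar E$, partition $E$ accordingly into disjoint Borel pieces $E_j$ each contained in one of the discs, sum the local bracketings, and let $\varepsilon\to 0$. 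The main technical point that deserves care is verifying that $\mathcal{H}^2_{\rho_\lambda}$ is countably additive on the Borel $\sigma$-algebra of $M$, but this is standard once one knows that $\rho_\lambda$ is a Borel-compatible distance on $M$ (which follows from Proposition~\ref{proposition: tame point topologies coincide} in this smooth, finite setting). The extension to Borel sets not contained in a compact subset of $M$ is by exhausting $M$ from inside by compact sets.
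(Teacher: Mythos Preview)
Your proof is correct and follows essentially the same architecture as the paper's: establish a local bi-Lipschitz comparison between $\rho_\lambda$ and a flat metric, invoke Lemma~\ref{lem:haus lip} together with Proposition~\ref{prop: pte Haus} to translate this into a sandwich on Hausdorff measures, and then globalize by compactness.

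The one noteworthy difference is in how the local flat comparison metric is chosen. The paper pulls back by the Riemannian exponential map to obtain normal coordinates, so that the comparison is with the standard Euclidean metric on a ball in $T_{z_0}M$; the sandwich $(1\pm\epsilon)\mathcal{L}(E)$ around $\iint_{\exp(E)}\lambda$ then comes from the Jacobian of $\exp$. You instead stay in the original conformal chart and compare with the constant-coefficient metric $\lambda(z_0)\,|dz|^2$, using only the continuity of $\lambda$. Your route is more elementary---it avoids invoking the exponential map and normal coordinates altogether---and is arguably the more natural choice in a conformal setting. The paper's globalization is a one-line appeal to compactness of $\bar M$; your Vitali-based partition is more explicit but not needed at that level of detail, since both $\iint_{(\cdot)}\lambda$ and $\mathcal{H}^2_{\rho_\lambda}$ are Borel measures and the local bracketing with arbitrary $\varepsilon$ already forces equality on small balls, hence on all Borel sets. (Minor quibble: your constants $(1\pm\varepsilon)^4$ should really be $(1\pm\varepsilon)^2/(1\mp\varepsilon)^2$, but this is immaterial once $\varepsilon\to 0$.)
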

\begin{proof}
Let $z\in M$. There exists an open disc $B\subset T_{z}M\simeq \mathbb{R}^2$ such that the exponential map  is a diffeomorphism onto its image. %, and such that the exponential map is a diffeomorphism onto its image $U\subset M$. %Let $\vaprhi : U\to B$ be the inverse mapping. 
In particular, the exponential map  provides local coordinates around $z$, which are \emph{normal coordinates}, in the sense that the pull-back of the metric is equal to the identity at $0$ and the first derivatives of its coefficients are zero, see e.g, \cite{josteriem}. That gives  
$$\operatorname{exp}^*(\lambda\vert \D z\vert^2)=\vert \D z\vert^2+o(\epsilon)~,$$
i.e., if $\epsilon$ is sufficiently small for that $\vert o(\epsilon)\vert \leq \epsilon$, we have
$$1-\epsilon\leq \lambda\circ \operatorname{exp} \vert \operatorname{Jac}(\operatorname{exp}) \vert\leq 1+\epsilon~,$$
so for a Borel subset $E\subset B$, 
$$(1-\epsilon)\mathcal{L}(E)\leq \iint_{ \operatorname{exp}(E)} \lambda\leq (1+\epsilon)\mathcal{L}(E)  $$
where $\mathcal{L}$ is the Lebesgue measure. By Proposition~\ref{prop: pte Haus}, if $\mathcal{H}^2$ is the Hausdorff measure of the plane,
\begin{equation}\label{eq:prem com hau lamd}(1-\epsilon)\mathcal{H}^2(E)\leq \iint_{ \operatorname{exp}(E)} \lambda\leq (1+\epsilon)\mathcal{H}^2(E)~.\end{equation}

Comparing in the same way the length of smooth paths, we have 
$$\sqrt{1-\epsilon} s \leq \tilde{s}_\lambda \leq \sqrt{1+\epsilon}s~,$$
and by 
Lemma~\ref{lem: trick discs}, we have that over 
a concentric disc $B'\subset B$, 
$$\sqrt{1-\epsilon} d \leq \rho_\lambda \leq \sqrt{1+\epsilon}d~,$$
where $d$ is the Euclidean distance. As $\sqrt{1-\epsilon}\sqrt{1+\epsilon}\leq 1$,
we obtain $\sqrt{1-\epsilon}\rho_\lambda\leq d$
and by Lemma~\ref{lem:haus lip},
$$(1-\epsilon)\mathcal{H}_{\rho_\lambda}^2 \leq \mathcal{H}^2~.$$
In the same way, $d\leq \frac{1}{\sqrt{1-\epsilon}} \rho_\lambda$, hence
$$\mathcal{H}^2 \leq  \frac{1}{1-\epsilon}\mathcal{H}_{\rho_\lambda}^2~,$$
and using these last two inequalities with \eqref{eq:prem com hau lamd}, for a Borel subset $E\subset B'$,
$$(1-\epsilon)^2\mathcal{H}_{\rho_\lambda}^2(\operatorname{exp}(E))\leq \iint_{ \operatorname{exp}(E)} \lambda \leq \frac{1+\epsilon}{1-\epsilon}\mathcal{H}_{\rho_\lambda}^2(\operatorname{exp}(E))~. $$

The equation above holds over a neighborhood of $z$, that depends on $z$, and for any $\epsilon$ sufficiently small. The result follows by compactness of $\overline{M}$.
\end{proof}

\begin{lemma}\label{lem: iso preserve aire}
Under the conditions of Theorem~\ref{thm huber 1}, if $f:(M_1,\rho_{\lambda_1})\to  (M_2,\rho_{\lambda_2})$ is distance preserving, then for any Borel set $E\subset M$,
$$\iint_{f(E)}\lambda_2 = \iint_E \lambda_1~.  $$
\end{lemma}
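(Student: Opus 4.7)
The approach is to identify $\iint_E \lambda_i$ with the two-dimensional Hausdorff measure $\mathcal{H}^2_{\rho_{\lambda_i}}(E)$, and then to exploit the fact that distance-preserving mappings automatically preserve Hausdorff measures.

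First, because $f:(M_1,\rho_{\lambda_1})\to (M_2,\rho_{\lambda_2})$ is distance preserving (hence an isometry onto its image), for every Borel set $E\subset M_1$ one has
\[
\mathcal{H}^2_{\rho_{\lambda_1}}(E) = \mathcal{H}^2_{\rho_{\lambda_2}}(f(E)),
\]
straight from the definition of Hausdorff measure via diameters of coverings. So the whole problem reduces to establishing, for $i=1,2$, the identity
\[
\iint_E \lambda_i = \mathcal{H}^2_{\rho_{\lambda_i}}(E)
\]
for any bounded Borel $E\subset M_i$ (which, since both distances are assumed finite, avoids any exceptional peak points up to a finite set of Lebesgue measure zero).

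For the smooth Riemannian case this identity is exactly Lemma~\ref{lem:hausdorff lisse}. To extend it to a subharmonic $\lambda_i = \lambda(\mes_i,h_i)$, I would invoke Theorem~\ref{thm:approx lisse met} to produce smooth approximants $\lambda_{i,n}$ with $\mes_{i,n}^\pm \to \mes_i^\pm$ weakly, $\rho_{\lambda_{i,n}}\to \rho_{\lambda_i}$ uniformly on the relevant compact sets, and $\lambda_{i,n}\to \lambda_i$ pointwise (the pointwise monotone approximation coming from Proposition~\ref{prop:approx potentiel}). For every $n$, Lemma~\ref{lem:hausdorff lisse} gives $\iint_E \lambda_{i,n}=\mathcal{H}^2_{\rho_{\lambda_{i,n}}}(E)$, and the left-hand side passes to $\iint_E \lambda_i$ by monotone convergence.

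The main obstacle is the convergence of the Hausdorff measures themselves, $\mathcal{H}^2_{\rho_{\lambda_{i,n}}}(E)\to \mathcal{H}^2_{\rho_{\lambda_i}}(E)$, under merely uniform convergence of the distances; this is delicate because Hausdorff measure is not continuous under arbitrary uniform limits of metrics. I would handle this locally: around any point $z_0$ with $\omega_i^+(\{z_0\})<2\pi$, Lemma~\ref{proposition: tame point topologies coincide} and Theorem~\ref{prop:fun estimate} provide a Euclidean disc on which the distances $\rho_{\lambda_{i,n}}$ and $\rho_{\lambda_i}$ are bi-Lipschitz equivalent to the Euclidean distance with constants that become arbitrarily close to each other as $n\to\infty$. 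Lemma~\ref{lem:haus lip} then forces the corresponding Hausdorff measures to agree in the limit on Borel subsets of such a disc. A covering of $M_i$ minus the finite set of exceptional points by such discs, combined with Carath\'eodory extension, delivers the identity on all bounded Borel sets, and the finite exceptional set contributes nothing to either side. Combining this with the first observation about Hausdorff measure invariance completes the proof.
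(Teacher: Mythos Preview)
Your overall strategy---reduce to $\iint_E\lambda_i=\mathcal{H}^2_{\rho_{\lambda_i}}(E)$ and use that isometries preserve Hausdorff measure---is exactly the paper's route; see Remark~\ref{rem:huber rem aires} and Theorem~\ref{thm:formule aire}. The difference lies in how the identity $\iint_E\lambda=\mathcal{H}^2_{\rho_\lambda}(E)$ is obtained in the non-smooth case, and here your argument has a real gap.

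You correctly flag that uniform convergence of distances does not force convergence of Hausdorff measures, but your proposed repair does not work. Lemma~\ref{proposition: tame point topologies coincide} and Theorem~\ref{prop:fun estimate} do give local comparability of $\rho_{\lambda_{i,n}}$ and $\rho_{\lambda_i}$ with the Euclidean distance, but the comparability constants reflect the geometry of $\lambda$ and do not tend to~$1$; and even if the Euclidean bi-Lipschitz constants for $\rho_{\lambda_{i,n}}$ and $\rho_{\lambda_i}$ were close \emph{to each other}, Lemma~\ref{lem:haus lip} would only trap $\mathcal{H}^2_{\rho_{\lambda_{i,n}}}$ between fixed multiples of $\mathcal{H}^2_{\rho_{\lambda_i}}$, not force equality in the limit. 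What you would actually need is that $\rho_{\lambda_{i,n}}$ and $\rho_{\lambda_i}$ are mutually bi-Lipschitz with ratio tending to~$1$; this is Lipschitz (proportional) convergence, which is strictly stronger than uniform convergence and is the content of Burago's theorem (Theorem~\ref{them:burago}), not something one reads off the estimates you cite. The paper explicitly says it knows no ``straightforward'' approximation argument here (Remark~\ref{rem:huber rem aires}) and instead proves $\iint_E\lambda=\mathcal{H}^2_{\rho_\lambda}(E)$ in Theorem~\ref{thm:formule aire} by combining the smooth case (Lemma~\ref{lem:hausdorff lisse}), dominated convergence for $\iint_E\lambda_n$, and the Convergence of Area Theorem~\ref{thm cv area} from the Alexandrov--Zalgaller theory to handle the Hausdorff side.
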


\begin{remark}\label{rem:huber rem aires}{\rm Lemma~\ref{lem: iso preserve aire}  is a direct consequence of Lemma~\ref{lem:hausdorff lisse} when $\lambda$ is $C^\infty$, as, as it obviously follows from the definition, a distance preserving mapping preserves the Hausdorff measure. In the general case, it is noted in the footnote 7 in  \cite{huber} that Lemma~\ref{lem: iso preserve aire}  follows by approximation. We are not aware of a straightforward argument,  but Lemma~\ref{lem: iso preserve aire} follows 
from Theorem~\ref{thm:formule aire}, itself proved using a result from the theory of two-dimensional manifolds of bounded curvature.}\end{remark}
%
%
%\begin{proof}
%Let $\lambda_{1,n}$ and $\lambda_{2,n}$ be smooth approximations of respectively $\lambda_1$ and $\lambda_2$, given by Proposition~\ref{prop:approx potentiel} and Theorem~\ref{thm:approx lisse met}. As $\rho_{\lambda_1}=f^*\rho_{\lambda_2}$, and by uniform convergence of the smooth approximations, we have that $\frac{\rho_{\lambda_{1,n}}}{f^*\rho_{\lambda_{2,n}}}\to 1$ uniformly, \textcolor{red}{XXXXvrai??XXX} so that for any $\epsilon$ and $n$ sufficiently large, 
%$$(1-\epsilon) f^*\rho_{\lambda_{2,n}}\leq \rho_{\lambda_{1,n}} \leq (1+\epsilon) f^*\rho_{\lambda_{2,n}}~, $$
%so by Lemma~\ref{lem:hausdorff lisse} and Lemma~\ref{lem:haus lip},
%for any Borel set $E$,
%$$(1-\epsilon)^2 \iint_{f(E)} \lambda_{2,n}\leq \iint_{E}\lambda_{1,n} \leq (1+\epsilon)^2 \iint_{f(E)}\lambda_{2,n}~. $$
%The result follows by the dominated convergence theorem (using the fact that $-p(\mes^+_n)\leq - p(\mes^+)$ by Proposition~\ref{prop:approx potentiel}, and Lemma~\ref{lem: pmu lower} for $p(\mes^-)$), and by the fact that $\epsilon$ is arbitrary.
%\end{proof}

\begin{theo}
\begin{theorem}[{\cite{huber}}]\label{thm uber chgt coord}
Let $\mes_1, \mes_2$ be signed measures with compact support, $M_1,M_2$ be two bounded domains in the plane and $h_1,h_2$ be harmonic functions over $M_1$ and $M_2$ respectively. Let $\lambda_1=\lambda(\mes_1,h_1)$, $\lambda_2=\lambda(\mes_2,h_2)$.  Suppose that $\rho_{\lambda_1}$ and $\rho_{\lambda_2}$ are finite. 
If $f:(M_1,\rho_{\lambda_1})\to  (M_2,\rho_{\lambda_2})$ is distance preserving, then 
\begin{equation}\label{eq uber chgt coord}\lambda_1=\vert f'\vert^2\lambda_2\circ f  ~.\end{equation}
\end{theorem}
\end{theo}

The proof of Theorem~\ref{thm uber chgt coord} is contained in \cite{huber}, even if its statement is not explicitly written. 
%We know that the functions $\lambda_1$ and $\lambda_2$ are quasi-everywhere (see Section~\ref{sec: hausdorff sub}). 
Let us first do the following observation.

\begin{lemma}\label{lem:haus conf}
Let $f:M\to \C$ be a conformal mapping, and let $X\subset M$ be a set of Hausdorff dimension zero. Then
$f(M)$ has 
Hausdorff dimension zero.
\end{lemma}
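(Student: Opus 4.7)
The plan is to reduce the statement to Lemma~\ref{lem:haus lip} (the Lipschitz estimate for Hausdorff measures), since that is the only tool from the text that directly controls the Hausdorff measure of the image of a set under a mapping. (I read the conclusion as ``$f(X)$ has Hausdorff dimension zero'', since ``$f(M)$'' seems to be a typo: for instance if $M$ is a disc then $f(M)$ has Hausdorff dimension $2$ regardless of $X$.)

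A conformal mapping $f$ is holomorphic with $f' \neq 0$ everywhere, so in particular $f'$ is continuous on $M$. Hence on every compactly contained subdomain of $M$ the derivative is bounded, and integrating along segments shows that $f$ is Lipschitz on each such piece. The strategy is thus: exhaust $M$ by a countable increasing family of compact subsets $(K_n)_n$ chosen so that each $K_n$ is contained in a relatively compact convex (or at least quasi-convex) subdomain $U_n \subset M$; set $L_n = \sup_{U_n} |f'|$, so that $f$ is $L_n$-Lipschitz on $K_n$. For instance, $K_n = \{z\in M : |z|\leq n, \, \operatorname{dist}(z,\partial M)\geq 1/n\}$ will do after a harmless enlargement of the ambient $U_n$.

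Now I would fix $\alpha > 0$ and argue: since $X$ has Hausdorff dimension zero, $\mathcal{H}^\alpha(X) = 0$, hence $\mathcal{H}^\alpha(X\cap K_n)=0$ for every $n$. Lemma~\ref{lem:haus lip} applied to the restriction $f|_{K_n}$ gives
$$\mathcal{H}^\alpha(f(X\cap K_n)) \leq L_n^\alpha\, \mathcal{H}^\alpha(X\cap K_n) = 0~.$$
By countable subadditivity of $\mathcal{H}^\alpha$,
$$\mathcal{H}^\alpha(f(X)) \leq \sum_{n} \mathcal{H}^\alpha(f(X\cap K_n)) = 0~,$$
and since $\alpha>0$ was arbitrary, $f(X)$ has Hausdorff dimension zero.

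There is no real obstacle; the only point to watch is the local Lipschitz property, but as noted it is immediate from continuity of $f'$ and integration along segments inside each $U_n$. The argument also makes it clear that the conclusion generalises: any locally Lipschitz mapping preserves the class of sets of Hausdorff dimension zero, and this is what is actually being used about $f$.
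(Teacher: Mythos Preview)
Your proof is correct and follows exactly the paper's approach: the paper's proof is the one-liner ``On any compact subset of $M$, $f$ is Lipschitz so the result follows from Lemma~\ref{lem:haus lip}'', and you have simply made the compact exhaustion and countable subadditivity explicit. Your observation that the conclusion should read $f(X)$ rather than $f(M)$ is also correct.
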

\begin{proof}
On any compact subset of $M$, $f$ is Lipschitz so the result follows from Lemma~\ref{lem:haus lip}.
\end{proof}
 
\begin{proof}[Proof of Theorem~\ref{thm uber chgt coord}]
%By Corollary~\ref{cor:lambda haus}, the isometry $f$ preserves the area  (as it clearly preserves the Hausdorff measure): 
%$$\mathcal{A}_{\lambda_2}(f(Q_\delta(z_0)))=\mathcal{A}_{\lambda_1}(Q_\delta(z_0))~.$$
Recall the notation of \eqref{eq:vapr psi}.  Clearly (see Figure~\ref{fig:cercles})
$$Q_{\psi(\delta)}(f(z_0))\subset f(Q_\delta(z_0)) \subset Q_{\varphi(\delta)}(f(z_0))~,$$
hence by Lemma~\ref{lem: iso preserve aire},
$$\frac{\frac{1}{\pi \psi(\delta)^2} \iint_{Q_{\psi(\delta)}(f(z_0))}\lambda_2}
{\frac{1}{\pi \delta^2}\iint_{Q_{\delta}(z_0)}\lambda_1}\leq \frac{\delta^2}{\psi(\delta)^2}$$
%$$\frac{\frac{1}{\pi \psi(\delta)^2}\mathcal{A}_{\lambda_2}(Q_{\psi(\delta)}(f(z_0))}
%{\frac{1}{\pi \delta^2}\mathcal{A}_{\lambda_1}(Q_{\delta}(z_0))}\leq \frac{\delta^2}{\psi(\delta)^2}$$
and
$$\frac{\frac{1}{\pi \delta^2}\iint_{Q_{\delta}(z_0)}\lambda_1}{\frac{1}{\pi \varphi(\delta)^2}\iint_{Q_{\psi(\delta)}(f(z_0))}\lambda_2} \geq \frac{\varphi(\delta)^2}{\delta^2}~.$$

The result follows taking $\delta \to 0$: by Theorem~\ref{prop:huber 1 aire}  and 
by the fact that as $f$ is derivable by Theorem~\ref{thm:mensh}, 
$\lim_{\delta\to 0} \frac{\varphi(\delta)}{\delta}=\lim_{\delta\to 0} \frac{\psi(\delta)}{\delta}=\vert f'\vert$.
~\end{proof}

%\begin{figure}
%\begin{center}
%\footnotesize
%\psfrag{f}{$f$}
%\psfrag{z}{$z_0$}
%\psfrag{zb}{$f(z_0)$}
%\psfrag{p}{$\psi(\delta)$}
%\psfrag{v}{$\varphi(\delta)$}
%\psfrag{q}{$f(Q_\delta(z_0))$}
%\psfrag{b2}{$\beta'$}
%\psfrag{d}{$\delta$}
%\includegraphics[width=0.8\textwidth]{conforme.eps}
%\caption{To Theorem~\ref{thm uber chgt coord}.}\label{fig:cercles}
%\end{center}
%\normalsize
%\end{figure}

\begin{figure}
\begin{center}
\includegraphics{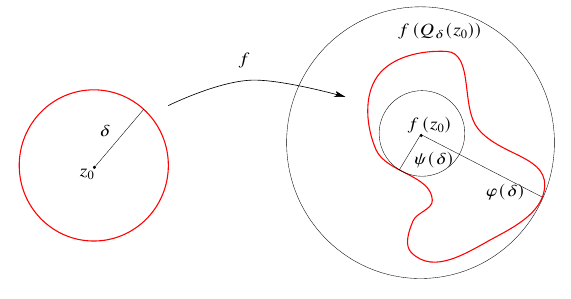}
\caption{To Theorem~\ref{thm uber chgt coord}.}\label{fig:cercles}
\end{center}
\end{figure}

Let us summarize those results in a more classical form. 

\begin{definition}
A surjective mapping $f$ between two domains endowed with finite subharmonic distances $\rho_{\lambda_1}$ 
and $\rho_{\lambda_2}$ is an \emph{isometry}\index{isometry} if $f$ is conformal and 
satisfies \eqref{eq uber chgt coord}.
\end{definition}

\begin{theorem}[Myers–Steenrod Theorem]
A  surjective mapping between  domains endowed with finite subharmonic distances is an isometry if and only if it is distance preserving.
\end{theorem}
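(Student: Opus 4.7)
The reverse implication is already in hand. If $f$ is surjective and distance-preserving, Theorem~\ref{thm huber 1} gives that $f$ is conformal or anti-conformal, and Theorem~\ref{thm uber chgt coord} yields the identity $\lambda_1 = |f'|^2 \lambda_2 \circ f$; together with surjectivity, this is precisely the definition of an isometry (the anti-conformal case is reduced to the conformal one by pre-composing with complex conjugation, which manifestly preserves the metric $\lambda_2 |dz|^2$).

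For the forward implication, the plan is to show that a conformal surjection $f$ satisfying $\lambda_1 = |f'|^2 \lambda_2 \circ f$ preserves the $\lambda$-length of every rectifiable arc, then conclude by taking infima. Given a rectifiable arc $K \subset M_1$ parameterized by $z:[0,1]\to M_1$ proportionally to Euclidean arc length, the image $f \circ z$ is a Lipschitz parameterization of $f(K) \subset M_2$ (rectifiable because $f$ is holomorphic and hence locally Lipschitz on compact subsets), and by the chain rule for holomorphic maps $|(f\circ z)'(t)| = |f'(z(t))| \, |z'(t)|$ almost everywhere. Using the standard reparameterization invariance of $\tilde{s}_\lambda$ for Lipschitz parameterizations and the conformal identity, one computes
\begin{align*}
\tilde{s}_{\lambda_2}(f(K)) &= \int_0^1 \lambda_2^{1/2}(f(z(t)))\,|f'(z(t))|\,|z'(t)|\,dt \\
&= \int_0^1 \bigl(|f'|^2 \lambda_2 \circ f\bigr)^{1/2}(z(t))\,|z'(t)|\,dt = \tilde{s}_{\lambda_1}(K).
\end{align*}
Since $f$ is a homeomorphism inducing a bijection between rectifiable arcs in $M_1$ and those in $M_2$, and since by Theorem~\ref{thm length coincide} the distance $\rho_{\lambda_i}$ equals the infimum of $\tilde{s}_{\lambda_i}$ over rectifiable arcs joining the two endpoints, taking the infimum on both sides produces $\rho_{\lambda_2}(f(x), f(y)) = \rho_{\lambda_1}(x,y)$.

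The one point that requires care is the quasi-everywhere nature of the identity $\lambda_1 = |f'|^2 \lambda_2 \circ f$: both $\lambda_1$ and $\lambda_2 \circ f$ may be undefined on the polar sets of the respective measures, and the computation above must ignore them. This is legitimate because polar sets have Hausdorff dimension zero (Proposition~\ref{prop:haus zero}), their preimages under the arc-length parameterization $z$ have Lebesgue measure zero in $[0,1]$ (Lemma~\ref{lem: measure 0 intervalle}), and $f$ being locally Lipschitz preserves zero Hausdorff dimension (Lemma~\ref{lem:haus conf}), so $(f \circ z)^{-1}$ of the polar set of $\mes_2$ is again Lebesgue null. I expect this polar-set bookkeeping, together with confirming the reparameterization invariance of $\tilde{s}_\lambda$ under $f$, to be the only non-cosmetic technicality; the heart of the argument is the one-line conformal change of variables.
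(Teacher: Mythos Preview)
Your proposal is correct and follows essentially the same route as the paper. For the direction ``distance preserving $\Rightarrow$ isometry'' you invoke exactly the two results the paper uses (Theorem~\ref{thm huber 1} and Theorem~\ref{thm uber chgt coord}); for ``isometry $\Rightarrow$ distance preserving'' the paper's one-line argument (``an isometry is length preserving, and hence distance preserving as the distances are intrinsic'') is precisely the conformal change-of-variables computation you wrote out in detail. The only cosmetic difference is that the paper packages the fact that $f$ and $f^{-1}$ carry rectifiable arcs to rectifiable arcs as Lemma~\ref{lem:conf BR}, whereas you establish it in-line and then appeal to Theorem~\ref{thm length coincide} to identify $\rho_\lambda$ with the infimum of $\tilde{s}_\lambda$ over rectifiable arcs; both accomplish the same thing.
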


One direction is given by Theorem~\ref{thm huber 1} and Theorem~\ref{thm uber chgt coord}.  For the other direction, it is clear that an isometry is length preserving, and hence distance preserving as the distances are intrinsic (we use Lemma~\ref{lem:conf BR} below). 

\begin{lemma}\label{lem:conf BR}
The image of a rectifiable arc (resp. of bounded rotation)  by a conformal mapping is a rectifiable (resp. of bounded rotation) arc. 
\end{lemma}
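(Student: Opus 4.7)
The plan is to handle rectifiability by a direct Lipschitz argument, and to reduce the bounded rotation case to the $\delta$-convex characterization of Proposition~\ref{prop dc curve} combined with Proposition~\ref{prop:tatrix kapap}.

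For rectifiability, let $f: U \to V$ denote the $C^2$ diffeomorphism and $K \subset U$ a rectifiable arc parameterized by arc length as $z : [0, s(K)] \to U$, which is $1$-Lipschitz. Since $K$ is compact there is a compact neighborhood $K' \subset U$ of $K$ on which $Df$ is bounded, so $f|_{K'}$ is $L$-Lipschitz for some finite $L$; hence $f \circ z$ is $L$-Lipschitz and $f(K)$ is rectifiable with $s(f(K)) \leq L\, s(K)$.

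For bounded rotation, assume $|\kappa|(K) < \infty$. By Proposition~\ref{prop dc curve}, $z$ is $\delta$-convex, so by Lemma~\ref{lem: BR derivee} its right derivative $z'_r$ exists on $[0, s(K))$, is of unit norm, and has bounded variation by Proposition~\ref{prop:tatrix kapap}. I would then consider the parameterization $w := f \circ z$ of $f(K)$ and show that its right derivative $w'_r(t) = Df(z(t))\, z'_r(t)$ is of bounded variation. Indeed, $t \mapsto Df(z(t))$ is Lipschitz (hence BV) since $Df$ is $C^1$ on the compact set $K'$ and $z$ is Lipschitz, and the product of bounded BV functions is BV. Since $f$ is a diffeomorphism, there is $c > 0$ such that $|Df(p) v| \geq c |v|$ uniformly for $p \in K'$ and $v \in \mathbb{R}^2$; hence $|w'_r(t)| \geq c$ and the unit tangent $\tilde T(t) := w'_r(t)/|w'_r(t)|$ is itself BV.

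To conclude I would reparameterize $w$ by arc length using the bi-Lipschitz increasing map $\phi(t) := \int_0^t |w'_r(s)|\, ds$. This does not alter the tantrix pointwise (it only rescales its argument), so the right derivative of the arc length parameterization of $f(K)$ is of bounded variation. Integrating componentwise, exactly as in the proof of Proposition~\ref{prop dc curve}, this arc length parameterization is $\delta$-convex, and the corollary following Proposition~\ref{prop:tatrix kapap} then yields that $f(K)$ is of bounded rotation.

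The main obstacle will be the careful verification of the chain rule for the one-sided derivative $(f \circ z)'_r = Df(z)\, z'_r$ at the (at most countably many) points where $z'_r$ has a jump, which requires appealing to \eqref{eqRV} and the $C^2$ regularity of $f$, together with the check that the bi-Lipschitz reparameterization $\phi$ indeed transports a BV right derivative into the BV right derivative of the arc length parameterization of $f(K)$. Neither point is genuinely difficult, but both demand care because $z'_r$ is only pointwise defined up to a set of jumps.
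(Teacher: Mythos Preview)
Your proposal is correct and follows the same line as the paper's (very terse) proof: establish that the right tangent direction of $f(K)$ has bounded variation and invoke Proposition~\ref{prop:tatrix kapap}. One simplification worth noting: the arc-length reparameterization is unnecessary, since once $w'_r = Df(z)\,z'_r$ is shown to be of bounded variation, $w=f\circ z$ is already a $\delta$-convex parameterization of $f(K)$ and the Corollary after Proposition~\ref{prop:tatrix kapap} applies directly---this sidesteps both obstacles you flag at the end.
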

\begin{proof}
First note that the mapping is Lipschitz over any arc, so the image of a rectifiable arc is rectifiable. 
As by assumption the derivative of the mapping is never zero, the image arc has a right tantrix. Moreover, partial derivatives of  the mapping are also Lipschitz over any arc, and then it is easy to see that the right tantrix of the image of the arc has bounded variation, see Proposition~\ref{prop:tatrix kapap}. We leave the details to the reader. 
\end{proof}

Let us end this section by checking that subharmonicity is invariant by composition by a conformal mapping.
For a $C^2$ function $h:M'\to \R$ and a $C^2$ mapping $f:M\to M'$, a tedious computation shows that
$$\Delta (h\circ f)= \frac{\partial h}{\partial x}\Delta f_1 + \frac{\partial h}{\partial y}\Delta f_2 + 
\frac{\partial^2 h}{\partial x^2}\left(\left(\frac{\partial f_1}{\partial x}\right)^2 +  \left(\frac{\partial f_1}{\partial y}\right)^2\right)$$
$$+ 
\frac{\partial^2 h}{\partial y^2}\left(\left(\frac{\partial f_2}{\partial x}\right)^2 +  \left(\frac{\partial f_2}{\partial y}\right)^2\right)
+2\frac{\partial^2 h}{\partial x\partial y}\left(\frac{\partial f_1}{\partial x}  \frac{\partial f_2}{\partial x} + \frac{\partial f_1}{\partial y}  \frac{\partial f_2}{\partial y}  \right)~.
$$

Now, if $f$ is conformal, its real and imaginary parts are harmonic, i.e., $\Delta f_1=\Delta f_2=0$, and the Cauchy--Riemann equations say that 
$$\frac{\partial f_1}{\partial x}  \frac{\partial f_2}{\partial x} + \frac{\partial f_1}{\partial y}  \frac{\partial f_2}{\partial y}  =0$$
and also that 
$$\left(\frac{\partial f_1}{\partial x}\right)^2 +  \left(\frac{\partial f_1}{\partial y}\right)^2=\left(\frac{\partial f_2}{\partial x}\right)^2 +  \left(\frac{\partial f_2}{\partial y}\right)^2=\det Df$$
so in this case we have
\begin{equation}\label{eq:lapl composition conforme}
\Delta \left( h\circ f\right)= \det Df \Delta h~.
\end{equation}

Let us note the following consequence.

\begin{lemma}\label{lem:comp harm conf}
The right composition of a harmonic function by a conformal mapping is harmonic.
\end{lemma}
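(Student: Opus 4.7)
The plan is to invoke directly the formula \eqref{eq:lapl composition conforme} derived immediately above the lemma. Since $f$ is conformal, its real and imaginary parts are harmonic, hence $C^\infty$, and a harmonic function $h$ on $M'$ is itself $C^\infty$ (a standard regularity property, which follows for example from the mean value equality combined with the smoothing by convolution used in Proposition~\ref{prop:approx potentiel}, or from the identification with the real part of a holomorphic function on simply connected subdomains). In particular $h\circ f$ is $C^2$ on $M$, so the computation leading to \eqref{eq:lapl composition conforme} applies and gives
$$\Delta(h\circ f) = \det(Df)\,\Delta h = 0,$$
since $\Delta h=0$ by assumption. Thus $h\circ f$ is harmonic on $M$.

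Alternatively, one may argue locally and avoid the direct computation altogether. On any simply connected open subset $U\subset M'$, the harmonic function $h$ can be written as $h=\operatorname{Re}(g)$ for some holomorphic $g:U\to\C$ (the existence of the harmonic conjugate $h^*$ was recalled around \eqref{eq:CR}). Then, on $f^{-1}(U)$, the composition $g\circ f$ is holomorphic as the composition of two holomorphic maps, and therefore its real part $\operatorname{Re}(g\circ f)=h\circ f$ is harmonic. As harmonicity is a local property, this yields the conclusion on all of $M$. There is no real obstacle: both approaches are immediate, the only (mild) point to keep in mind being the $C^2$ regularity required to apply \eqref{eq:lapl composition conforme}, which however comes for free from the smoothness of harmonic and holomorphic functions.
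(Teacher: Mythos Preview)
Your proposal is correct and your first approach is exactly the paper's: the lemma is stated immediately after \eqref{eq:lapl composition conforme} as a direct consequence of that formula, with no further proof given. Your alternative argument via local holomorphic primitives is also valid but not needed here.
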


With Cauchy--Riemann equations, it is easy to check the following lemma. Actually, this property characterizes conformal mappings, see e.g., \cite{Rudin}, \cite{beardon}, \cite{bak-newman}.

\begin{lemma}\label{lem:harm angle}
A conformal mapping preserves the angle between any two crossing smooth arcs.
\end{lemma}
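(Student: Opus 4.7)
The plan is to use the complex-analytic structure, specifically the fact that for a conformal $f$ the differential $Df(z_0)$ acts on the tangent plane $T_{z_0}M \cong \C$ as multiplication by the non-zero complex number $f'(z_0)$, and that multiplication by a non-zero complex number is a direct similarity (rotation composed with positive scaling), which preserves Euclidean angles.

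First, I would parametrize the two crossing smooth arcs by $z_1:(-\epsilon,\epsilon)\to M$ and $z_2:(-\epsilon,\epsilon)\to M$ with $z_1(0)=z_2(0)=z_0$ and with nonzero derivatives $z_1'(0), z_2'(0)\in \C\setminus\{0\}$ (such parametrizations exist because the arcs are smooth and actually crossing, i.e.\ have well-defined tangent directions). The angle between the two arcs at $z_0$ is then, by definition, $\arg(z_2'(0)/z_1'(0))$ (taken modulo $2\pi$, or in $[0,\pi]$ if one considers the unoriented angle). Next, the image arcs $f\circ z_1$ and $f\circ z_2$ are smooth (since $f$ is holomorphic and hence $C^\infty$), cross at $f(z_0)$, and by the chain rule applied to the holomorphic $f$,
\[
(f\circ z_j)'(0)=f'(z_0)\,z_j'(0)\qquad (j=1,2),
\]
both of which are nonzero since $f'(z_0)\neq 0$.

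The key step is then the purely algebraic observation that for any $c\in \C^*$ and any $w_1,w_2\in\C^*$,
\[
\arg\!\left(\frac{cw_2}{cw_1}\right)=\arg\!\left(\frac{w_2}{w_1}\right)\pmod{2\pi}~.
\]
Applying this with $c=f'(z_0)$, $w_j=z_j'(0)$ gives that the angle between the image arcs at $f(z_0)$ equals the angle between the original arcs at $z_0$. The only nontrivial ingredient here is that the $\R$-linear map $Df(z_0):\R^2\to\R^2$ is indeed given by multiplication by the single complex scalar $f'(z_0)$; this is exactly equivalent to the Cauchy--Riemann equations, which hold because $f$ is holomorphic.

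There is essentially no obstacle: the whole content is that $Df(z_0)$ is a direct similarity, a fact built into the definition of holomorphicity. The only point requiring minor care is the definition of ``the angle between two crossing arcs'', which should be taken as the angle between their tangent directions at the crossing point; this requires that the two arcs have well-defined, nonzero tangent vectors there, which is guaranteed by smoothness and transversality (or, more weakly, by the hypothesis that they cross in the sense that each has a tangent direction at $z_0$).
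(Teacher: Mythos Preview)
Your proof is correct and follows essentially the same approach as the paper, which simply remarks that the result is easy to check using the Cauchy--Riemann equations. Your version spells out explicitly that these equations make $Df(z_0)$ a direct similarity (multiplication by $f'(z_0)$), which is exactly the content needed.
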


\begin{lemma}\label{lem:image measure conformal map}
Let $f:M\to \C$ be a conformal mapping and $\mes$ be a positive measure with compact support. Then
$$\Delta \left(p(\mes)\circ f\right) = f^*\mes~.$$
\end{lemma}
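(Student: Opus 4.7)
\textbf{Proof plan for Lemma~\ref{lem:image measure conformal map}.}

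The plan is to reduce to the smooth case by approximation and then use the conformal change-of-variable formula \eqref{eq:lapl composition conforme} for the Euclidean Laplacian. First, by Proposition~\ref{prop:approx potentiel}, pick a sequence $(\mes_n)_n$ of positive measures with $C^\infty$ Lebesgue densities $k_n$ (and supports in a fixed compact set) such that $\mes_n \to \mes$ weakly and $p(\mes_n) \nearrow p(\mes)$ pointwise. For each $n$, Proposition~\ref{propr: potentiel l1} gives $\Delta p(\mes_n)=k_n$ as a smooth function, so, applying \eqref{eq:lapl composition conforme} to the $C^\infty$ harmonic components $f_1,f_2$ of $f$, I obtain the pointwise identity
\begin{equation*}
\Delta\bigl(p(\mes_n)\circ f\bigr) = \det Df\cdot (k_n\circ f) = |f'|^2\,k_n\circ f .
\end{equation*}
A direct change of variables shows that the right-hand side is exactly the Lebesgue density of the pull-back measure $f^*\mes_n$: for any Borel set $E\subset M$,
\begin{equation*}
\iint_E |f'|^2\,k_n\circ f\,dz = \iint_{f(E)} k_n\,dw = \mes_n(f(E)) = (f^*\mes_n)(E).
\end{equation*}
Hence $\Delta(p(\mes_n)\circ f) = f^*\mes_n$ as positive measures.

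Next, I pass to the limit on both sides. On the measure side, for any continuous function $\varphi$ with compact support in $M$, the function $\varphi\circ f^{-1}$ is continuous with compact support in $f(M)$, so the weak convergence $\mes_n\to\mes$ yields
\begin{equation*}
\iint \varphi\,d(f^*\mes_n) = \iint \varphi\circ f^{-1}\,d\mes_n \xrightarrow[n\to\infty]{} \iint \varphi\circ f^{-1}\,d\mes = \iint \varphi\,d(f^*\mes) ,
\end{equation*}
i.e.\ $f^*\mes_n\to f^*\mes$ weakly. On the function side, because $p(\mes_n)-p(\mes_1)\geq 0$ increases monotonically to $p(\mes)-p(\mes_1)$, and $p(\mes)-p(\mes_1)\in L^1_{\mathrm{loc}}$ by Lemma~\ref{lem:pot Lp}, the monotone convergence theorem gives $p(\mes_n)\to p(\mes)$ in $L^1_{\mathrm{loc}}$. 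Composing with the local $C^\infty$ diffeomorphism $f$ (the change of variables transfers local $L^1$ convergence, since $|f'|^{-2}$ is locally bounded on $M$), I conclude that $p(\mes_n)\circ f\to p(\mes)\circ f$ in $L^1_{\mathrm{loc}}(M)$, hence $\Delta(p(\mes_n)\circ f)\to \Delta(p(\mes)\circ f)$ in the sense of distributions.

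Combining the two convergences, $\Delta(p(\mes)\circ f) = f^*\mes$ as distributions. Since $f^*\mes$ is a positive Borel measure, this equality extends to the identification of the weak Laplacian with a positive Radon measure (the same Riesz-representation argument used after Lemma~\ref{lem:potential dirac}), establishing the claim. The main delicate step is the $L^1_{\mathrm{loc}}$ convergence $p(\mes_n)\circ f \to p(\mes)\circ f$: one must simultaneously exploit the monotonicity furnished by Proposition~\ref{prop:approx potentiel} and the fact that $f$ is a bi-Lipschitz homeomorphism on compact subsets of $M$ so that composition with $f$ preserves local integrability. Everything else is a routine application of the conformal formula \eqref{eq:lapl composition conforme} and weak convergence of measures.
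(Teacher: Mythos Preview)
Your approximation argument is correct and constitutes a genuinely different route from the paper's proof. The paper instead works directly: it first treats the Dirac case $\mes=\delta_\zeta$ by an explicit Green-formula computation on the annular region $M\setminus f^{-1}(Q_\epsilon(\zeta))$, showing that $\Delta(p(\delta_\zeta)\circ f)(\varphi)=\varphi(f^{-1}(\zeta))$, and then passes to a general $\mes$ by a Fubini argument in the spirit of Remark~\ref{rem:lap convolution}. Your approach trades that boundary-integral calculation for the smooth conformal identity \eqref{eq:lapl composition conforme} combined with the monotone approximation already built in Proposition~\ref{prop:approx potentiel}; this is cleaner in that it avoids manipulating normals and limits along $f^{-1}(C_\epsilon(\zeta))$, at the cost of needing the $C^2$ regularity of $p(\mes_n)$ (available from Proposition~\ref{prop:approx potentiel} and Remark~\ref{rem:C1C2curvature}) and a small change-of-variables argument to transfer $L^1_{\mathrm{loc}}$ convergence through $f$. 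Both proofs ultimately rely on the same ingredient---the conformal covariance of the Laplacian---but yours invokes it in its smooth form and then passes to the limit, while the paper applies it at the level of the fundamental solution.
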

\begin{proof}
Let us first consider the case $\mes=\delta_\zeta$, 
with $p:=p(\delta_\zeta)=\frac{1}{2\pi} \ln \vert \cdot-\zeta |$.
As $\ln \vert \cdot-\zeta |$ is harmonic on $\C\setminus \{\zeta\}$, by Lemma~\ref{lem:comp harm conf}, $p\circ f$ is harmonic on $\C\setminus \{f^{-1}(\zeta)\}$.
By Green formula \eqref{eq:green},
$$ \iint_{\C\setminus f^{-1}\left(Q_\epsilon(\zeta)\right)}  p\circ f \Delta \varphi  =\int_{f^{-1}\left(C_\epsilon(\zeta)\right)}
p\circ f\frac{\partial \varphi}{\partial \normal} - \varphi\frac{\partial p\circ f}{\partial \normal}~,$$
where  $\normal$ is the  exterior unit normal to $\partial (\C \setminus f^{-1}\left(Q_\epsilon(\zeta)\right))$.

The function $\frac{\partial \varphi}{\partial \normal}$ is continuous, hence bounded  by some $A$ on $ f^{-1}\left(C_\epsilon(\zeta)\right)$, so  letting $\param(t)=\zeta+\epsilon \E^{\I t}$, %and using \eqref{eq:diff conf}
\begin{equation*}
\begin{split}
 \left|\int_{f^{-1}\left(C_\epsilon(\zeta)\right)} p\circ f\frac{ \partial \varphi}{\partial \normal}\right| & \leq  A \int_{f^{-1}\left(C_\epsilon(\zeta)\right)} p\circ f\\ 
&=  \frac{A}{2\pi} \ln |\epsilon|\epsilon \int_0^{2\pi}\left| (f^{-1})'(\param(t))\right|\D t
   \xrightarrow[\epsilon\to 0]{}  0~. 
     \end{split}
 \end{equation*}
 
The exterior unit normal to $\partial (\C \setminus C_\epsilon(\zeta))$ at a point $z$ is $-(z-\zeta)/\epsilon$, and so by Lemma~\ref{lem:harm angle}, 
$$\nu(z)=X/\vert X \vert~, \, X=Df^{-1}(f(z))\left(-\frac{f(z)-\zeta}{\epsilon}\right)~,$$
and as $f$ is conformal,
$|X|=|\left(f^{-1}\right)'(f(z))|\left\vert \frac{f(z)-\zeta}{\epsilon} \right\vert = |\left(f^{-1}\right)'(f(z))|~. $
With this and  \eqref{eq:grad ln} we compute
$\frac{\partial p\circ f}{\partial \normal}=-\frac{1}{2\pi\epsilon}|\left(f^{-1}\right)'\circ f|^{-1}$, so parameterizing $C_\epsilon(\zeta)$ by 
$=\zeta+\epsilon \E^{\I t}$,
\begin{equation*}
\begin{split}
\Delta \left(p\circ f\right) (\varphi) &=\lim_{\epsilon\to 0}\iint_{\C \setminus f^{-1}\left( Q_\epsilon(\zeta)\right)} p\circ f \Delta \varphi \\
&=\lim_{\epsilon\to 0} -\int_{ f^{-1}\left( C_\epsilon(\zeta)\right)} \varphi\frac{\partial p\circ f}{\partial \normal} \\
& =\lim_{\epsilon\to 0} \frac{1}{2\pi\epsilon}\int_{f^{-1}\left(C_\epsilon(\zeta)\right)} \varphi |\left(f^{-1}\right)'\circ f|^{-1}  \\
& =\lim_{\epsilon\to 0} \frac{1}{2\pi\epsilon}\int_0^{2\pi} \epsilon \varphi (f^{-1}(\zeta+\epsilon \E^{\I t}))\D t = \varphi (f^{-1}(\zeta))\end{split}
\end{equation*}
so
$$\Delta \left(p(\delta_\zeta)\circ f\right)=\delta_{f^{-1}(\zeta)}~. $$

Then, similarly to Remark~\ref{rem:lap convolution}, we have that, for a smooth function $\varphi$ with compact support,
$$\Delta (p(\mes)\circ f)(\varphi)= \iint \varphi \circ f^{-1} \D\mes~.$$
~\end{proof}

\begin{lemma}\label{lem: ssh conf}
The right composition of a subharmonic function by a conformal mapping is subharmonic.
\end{lemma}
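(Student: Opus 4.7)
The plan is to reduce to the potential part, pull the measure back by $f$, and then use Weyl's Lemma together with the super-mean inequality (obtained by smooth approximation) to identify the result with a genuine subharmonic function in the sense of Definition~\ref{def:subh}. Precisely, write the subharmonic function on $M'$ as $u = p(\mes) + h$, where $\mes$ is a finite positive measure with compact support in $M'$ and $h$ is harmonic on $M'$. Then $u \circ f = p(\mes) \circ f + h \circ f$ (quasi-everywhere on $M$), and by Lemma~\ref{lem:comp harm conf} the term $h \circ f$ is harmonic on $M$. It therefore suffices to show that $p(\mes) \circ f$ is subharmonic on $M$.

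Define the positive finite Borel measure $\nu := f^{*}\mes$, i.e. $\nu(E) = \mes(f(E))$ for every Borel set $E \subset M$. Since $f$ is a homeomorphism onto $M'$ and the support of $\mes$ is compact in $M'$, the measure $\nu$ has compact support in $M$. By Lemma~\ref{lem:image measure conformal map}, the weak Laplacian of $p(\mes) \circ f$ equals $\nu$. On the other hand, Proposition~\ref{propr: potentiel l1} gives $\Delta p(\nu) = \nu$. Both $p(\mes) \circ f$ and $p(\nu)$ are locally $L^1$ on $M$ (the second by Lemma~\ref{lem:pot Lp}, the first by Lemma~\ref{lem:pot Lp} and the local change-of-variables $\iint_{Q} |p(\mes) \circ f| = \iint_{f(Q)} |p(\mes)|\,|f^{-1}{}'|^2$). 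Hence $\Delta\bigl(p(\mes)\circ f - p(\nu)\bigr) = 0$ as a distribution on $M$, and Weyl's Lemma (Theorem~\ref{thm:Weyl's Lemma}) yields a harmonic function $\tilde h$ on $M$ with
\begin{equation*}
p(\mes)\circ f \;=\; p(\nu) + \tilde h \qquad \text{almost everywhere on } M.
\end{equation*}

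Both functions above are upper semicontinuous: $p(\mes)\circ f$ because $p(\mes)$ is u.s.c.\ (Lemma~\ref{lem: pmu lower}) and $f$ is continuous, and $p(\nu) + \tilde h$ because it is subharmonic. I would then show that $p(\mes) \circ f$ also satisfies the super-mean inequality, by approximation: choose smooth positive Lebesgue densities $\mes_n$ given by Proposition~\ref{prop:approx potentiel} with $p(\mes_n) \uparrow p(\mes)$ pointwise; using the formula \eqref{eq:lapl composition conforme} for the Laplacian under a conformal change,
\begin{equation*}
\Delta \bigl(p(\mes_n)\circ f\bigr) \;=\; |f'|^{2}\,\bigl(\Delta p(\mes_n)\bigr)\circ f \;\geq\;0,
\end{equation*}
so each $p(\mes_n)\circ f$ is $C^{\infty}$ subharmonic on $M$ by Remark~\ref{rem:c2ssh}, and in particular satisfies the super-mean inequality. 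Levi's monotone convergence then transfers the inequality to $p(\mes)\circ f$. Finally, to upgrade the almost-everywhere equality with $p(\nu) + \tilde h$ to an everywhere equality, I use circle-averages: for $z_0 \in M$ and any sufficiently small $r$ we have $\tfrac{1}{2\pi r}\int_{C_r(z_0)} p(\mes)\circ f = \tfrac{1}{2\pi r}\int_{C_r(z_0)} (p(\nu)+\tilde h)$ for a.e.\ $r$ (as $p(\mes)\circ f$ and $p(\nu)+\tilde h$ agree a.e.\ on each such circle), while the super-mean inequality for $p(\mes)\circ f$ combined with the limit of circle-averages for $p(\nu)+\tilde h$ (Lemma~\ref{lem:fonctions radiales}) and the u.s.c.\ bound $\limsup_{r\to 0}\tfrac{1}{2\pi r}\int_{C_r(z_0)} g \leq g(z_0)$ for u.s.c.\ $g$ force $p(\mes)\circ f(z_0) = (p(\nu)+\tilde h)(z_0)$. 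Thus $p(\mes)\circ f = p(\nu)+\tilde h$ everywhere on $M$, which is subharmonic in the sense of Definition~\ref{def:subh}, concluding the proof.

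The main obstacle is the last step: bridging from a.e.\ equality (automatic from Weyl's Lemma) to pointwise equality. The natural shortcut via Lemma~\ref{lem:weak identity} is unavailable because we do not yet know that $p(\mes)\circ f$ is subharmonic in the paper's sense; this is precisely why the super-mean inequality has to be established independently via the smooth approximation in \eqref{eq:lapl composition conforme}, and combined with upper semicontinuity to squeeze both functions onto the same value at every point.
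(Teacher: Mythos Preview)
Your argument is correct, but it takes a different and longer route than the paper. Both proofs begin identically: reduce to potentials via Lemma~\ref{lem:comp harm conf}, use Lemma~\ref{lem:image measure conformal map} and Weyl's Lemma to get $p(\mes)\circ f = p(f^*\mes) + \tilde h$ almost everywhere. The divergence is in how to upgrade this to an everywhere equality. You establish the super-mean inequality for $p(\mes)\circ f$ by mollifying $\mes$ and passing to the limit, then run a circle-average squeeze using upper semicontinuity on one side and Lemma~\ref{lem:fonctions radiales} on the other. The paper instead observes directly that the difference $p(\mes)\circ f - p(f^*\mes)$ is \emph{continuous}: writing it as $\tfrac{1}{2\pi}\iint\bigl(\ln|f(z)-\zeta| - \ln|z - f^{-1}(\zeta)|\bigr)\,d\mes(\zeta)$, the integrand $\ln\bigl|\tfrac{f(z)-\zeta}{z-f^{-1}(\zeta)}\bigr|$ has a removable singularity at $\zeta = f(z)$ since the ratio tends to $1/(f^{-1})'(f(z)) \neq 0$. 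A continuous function equal a.e.\ to a harmonic function is that harmonic function, and the result follows at once.

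The paper's approach is shorter and more direct, avoiding the approximation machinery entirely; your approach has the merit of not needing the explicit integrand manipulation and would generalize to settings where such a closed form is unavailable. One small slip: the monotone convergence in Proposition~\ref{prop:approx potentiel} is \emph{decreasing} ($p(\mes_n)\downarrow p(\mes)$, since circle averages of a subharmonic function decrease as the radius shrinks), not increasing as you wrote; fortunately your limiting argument works equally well with decreasing convergence.
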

\begin{proof}
By Lemma~\ref{lem:comp harm conf}, it suffices to check the result for potentials. 
By Lemma~\ref{lem:image measure conformal map} and Weyl's lemma (Theorem~\ref{thm:Weyl's Lemma}), $p(\mes)\circ f - p(f^*\mes)$ is equal almost everywhere to
a harmonic function. 
For a given $z$ we have 
$$\frac{f^{-1}(\zeta)-z}{\zeta-f(z)}=\frac{f^{-1}(\zeta)-f^{-1}(f(z))}{\zeta - f(z)}\xrightarrow[\zeta\to f(z)]{} \left( f^{-1}\right)' \left(f(z)\right) $$ 
which is non-zero as $f$ is conformal. Hence the function
$$\zeta \mapsto \ln\left\vert \zeta - f(z)\right\vert - \ln\left\vert f^{-1}(\zeta)-z\right\vert$$
is continuous, that implies that the function $p(\mes)\circ f - p(f^*\mes)$ is continuous, hence harmonic.
\end{proof}

\subsection{A glimpse to Riemann surfaces}

%XXXXXXX si ln(f) et ln(g) sh alors ln(g+h) sh donc on peut multiplier par un facteur conforme !!! XXXXXX

Recall that a (connected) (topological) \emph{surface}\index{surface} $S$ is a connected second countable Hausdorff topological space such that each point has a neighborhood  homeomorphic to an open subset of the plane.   If a point $x\in S$ belongs to two such neighborhoods $U_1, U_2$, which are sent respectively by $f_1$ and $f_2$ into the plane, then the homeomorphism
$$f_{21}:f_2\circ f_1^{-1}:   f_1(U_1\cap U_2) \to f_2(U_1\cap U_2)  $$ 
is called a \emph{transition mapping}\index{transition mapping}. A pair $(U_i,f_i)$ is called a \emph{chart}\index{chart}. A collection of charts covering the surface is an \emph{atlas}. The surface is \emph{orientable} if the transition mappings are orientation preserving. Actually, a surface is more precisely an equivalence class of atlases, but as we don't need such consideration, we refer to any textbook about differential geometry for more details.
%In the sequel, by surface we will mean a  connected orientable surface.

A \emph{Riemann surface}\index{Riemann surface} is a  connected surface having an atlas whose 
transition mappings are conformal.  Note that as conformal mappings preserve the orientation, Riemann surfaces are orientable. 
(The hypothesis of second countability can be relaxed. It is  obvious in the compact case, and given by Rad\'o Theorem in the general case, see e.g. \cite{jost}, \cite{forster}, \cite{ahlfors-riemann}.)
A chart of such an atlas is called a \emph{conformal chart}\index{conformal chart}.

Let $S$ be a connected orientable surface.
We will say that an intrinsic  distance on  $S$ has  \emph{isothermal coordinates}, \index{isothermal coordinates} if $S$ has an atlas such that 
each chart gives a distance preserving mapping to a 
plane domain with a subharmonic distance. Here, the distance considered on the domain of the chart is the induced intrinsic distance.
As  the transition mappings must be distance preserving,   if follows from  Theorem­~\ref{thm huber 1} that  a distance with such an atlas
turns $S$ into a Riemann surface. 
%From Lemma~\ref{lem:haus conf} and Lemma~\ref{lem: ssh conf} the notion of \PL function on a Riemann surface makes sense.

Note that from Lemma~\ref{lem:haus conf}, the notion of quasi-everywhere (see Section~\ref{sec: hausdorff sub}) holds on a Riemann surface.

\begin{definition}
Let $S$ be a Riemann surface. A \emph{subharmonic metric}\index{subharmonic metric} on $S$ 
is a $(0,2)$-tensor defined quasi-everywhere on $S$, having the form  $\lambda \vert \D z\vert^2$ in any conformal chart, where $\ln \lambda$ is  $\delta$-subharmonic.
%A \emph{\res surface}\index{\res surface} is a Riemann surface together with a Huber differential.
\end{definition}

In conclusion, a distance with isothermal coordinates on a surface defines a subharmonic metric by Theorem~\ref{thm uber chgt coord}.

A subharmonic metric will play the role of a ``generalized Riemannian metric''. 
Note that a subharmonic metric is defined only over a Riemann surface, conversely to a Riemannian metric that can be defined over any smooth surface,
but things agree due to the existence of isothermal coordinates on Riemannian surfaces, see Remark~\ref{rem:C1C2curvature}.

\begin{example}{\rm
A classical example of subharmonic metric which is not a Riemannian metric comes from \emph{holomorphic quadratic differentials} on surfaces, that are locally of the form $f\D z^2$, and play a prominent role in the theory of Riemann surfaces \cite{kapovich}, \cite{farb}. They give  subharmonic metrics of the form $|f||\D z|^2$, which are flat except maybe at a finite number of conical singularities, see Lemma~\ref{lem:ln module holo}.
See \cite{troyanov-euclidean1} for more details (the content of this last reference may be found in \cite{nikolaev}).

Another related class of examples is given by Riemannian metrics with simple singularities, see \cite{tro-ouvert}. }
\end{example}

%Note that, conversly to, but we  use the term differential instead of metric  to emphasis the fact that a Huber differential can be defined only on a Riemann surface.
%Things agree due to the following classical result that relates the theory of Riemann surfaces and the Riemannian geometry of surfaces, see e.g. \cite{wintner}.

%\begin{theorem}[Gauss]\label{thm:gauss}
%Let $g$ be a Riemannian ($C^\infty$) metric over $S$ and let $x\in S$. Then there exists   a neighborhood $U$ of $x$, a $C^1$-diffeomorphism $f:U\to f(U)\subset \R^2$, a $C^2$ function $\lambda$ over $f(U)$ such that $g=f^{*}(\lambda\vert dz\vert^2)$. 
%\end{theorem}
%
%Weaker regularity conditions can be given for the Riemannian metric $g$ \cite{chern},  but we are restricted by the regularity of $\delta$-subharmonic functions, see Remark~\ref{remark:regularity}. 

%In turn, a Riemannian metric on a surface defines a Riemann surface and a Huber differential on it.
\begin{remark}\label{rem:construction sshmet}{\rm
Any Riemann surface supports a subharmonic metric, as  it is easy to construct a Riemannian metric on any compact Riemann surface which is conformal, i.e., that has the form $\lambda|\D z|^2$ in a conformal chart, see e.g., \cite[Lemma 2.3.3]{jost}. }\end{remark}
%The area measure and the sectional curvature of a Riemannian metric are defined in local chart. As the Riemannian metric has isothermal local coordinates, those definitions are the ones given in  Section~\ref{sec:Laplacian and curvature}. Comparing \eqref{def:mes riem} and \eqref{eq:mesure PL}, it follows that 
%the curvature measure of a Riemannian metric 
%is $\curv A$, where $\curv$ is the sectional curvature and $A$ is the area measure.

Now, let us start from a Riemann surface $S$ together with a subharmonic metric $g$.
Let  $U_1, U_2$ be two overlapping conformal charts of $S$,  and let $p\in U_1\cap U_2$. Let $z=f_1(p)$, $f=f_{12}$
and  $X$ be the image of an element $Y$ of $T_pS$ into $U_1$. Then by 
definition, 
\begin{equation*}\label{eq:chgt met conf dif}\lambda_1(z)| X |^2=\lambda_2(f(z))|Df(z)(X)|^2~, \end{equation*}
and then
%As $f$ is conformal, a direct computation shows that \begin{equation*}\label{eq:diff conf}|f'|^2|X|^2=|df(X)|^2\end{equation*} 
%hence \eqref{eq:chgt met conf dif} becomes
\begin{equation}\label{eq:chgt met conf dif2}\lambda_1=\vert f'\vert^2 \lambda_2\circ f~. \end{equation}

Let $\arc$ be a rectifiable  arc  between two points of $S$ (that is well-defined by Lemma~\ref{lem:conf BR}). We can define the length for $g$ of $\arc$ by decomposing the arc into subarcs which meet a chart, and then summing. By \eqref{eq:chgt met conf dif2} this is well-defined. We define the distance $d_g$ as the distance induced by this length structure. We say that $d_g$ is a \emph{subharmonic distance}\index{subharmonic distance (on a Riemann surface)}. 
 By construction, $d_g$ is a distance with isothermal coordinates.

Let us consider two overlapping  conformal charts
over $S$, with the subharmonic metric defined respectively by $\lambda(\mes_1,h_1)$
and $\lambda(\mes_2,h_2)$ in those charts, and let $f$ be the conformal change of chart. Over the domain of coordinates change $M$,  \eqref{eq:chgt met conf dif2} gives
\begin{equation}\label{eq:huber develope}p(\mes_1)+h_1=p(\mes_2)\circ f +h_2\circ f + 2\ln \vert f' \vert  \end{equation}
and  as $\ln \vert f'\vert$ is harmonic as $f$ is conformal (Lemma~\ref{lem:ln module holo}), and by Lemma~\ref{lem:comp harm conf},
over $M$
$$\Delta p(\mes_1) = \Delta \left(p(\mes_2)\circ f\right)$$
and by Lemma~\ref{lem:image measure conformal map},
$$\mes_1\vert_U = f^*\mes_2\vert_U~. $$

In turn, the following definition makes sense.

\begin{definition}
The \emph{curvature measure}\index{curvature measure}
of  a subharmonic metric on a Riemann surface is the Borelian measure  $\mes$, 
such that if $E$ is included in a conformal chart $(U,f)$, then
$\mes(E)=\mes_U (f(E))$, where $\mes_U$ is the curvature measure of the subharmonic distance over $f(U)$. 
\end{definition}

Using the maximum principle (Lemma~\ref{princip max}), it is easy to see that a conformal function over a compact Riemann surface must be constant. In turn, 
Weyl Lemma Theorem~\ref{thm:Weyl's Lemma} 
  implies the following, which is the uniqueness  part of Theorem~7.3 in \cite{troyanov-alexandrov}.

 \begin{proposition}\label{prop:unique measure riem}
 Two subharmonic metrics on a compact Riemann surface with the same curvature measure differ by a multiplicative constant.
 \end{proposition}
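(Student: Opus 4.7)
The plan is to exhibit the logarithm of the ratio $\lambda_1/\lambda_2$ as a globally well-defined harmonic function on $S$, which must be constant on the compact surface. Concretely, fix a conformal chart and write $g_i = \lambda_i |dz|^2$ with $\lambda_i = \lambda(\mes, h_i)$; the curvature measures agreeing means the same $\mes$ appears in both decompositions, so locally
$$u := \ln \lambda_1 - \ln \lambda_2 = -2(h_1 - h_2)$$
should be harmonic. Without yet using the explicit decomposition, I would first observe that by \eqref{eq:mesure PL},
$$-\tfrac{1}{2}\Delta \ln \lambda_1 = \mes = -\tfrac{1}{2}\Delta \ln \lambda_2$$
in the sense of distributions, hence $\Delta u = 0$ weakly on the chart.

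Next I would apply the Weyl Lemma (Theorem~\ref{thm:Weyl's Lemma}) to obtain a harmonic function $\tilde u$ on the chart with $u = \tilde u$ almost everywhere. To upgrade this into a global object on $S$, observe that the transformation rule \eqref{eq:chgt met conf dif2} applies identically to $\lambda_1$ and $\lambda_2$: under a conformal change of coordinates $f$, each $\lambda_i$ picks up the same factor $|f'|^2$, so the ratio $\lambda_1/\lambda_2$ is chart-independent and $u$ descends to a function on $S$ defined quasi-everywhere. The local harmonic representatives agree almost everywhere on any overlap, and since the pullback of a harmonic function by a conformal map is harmonic (Lemma~\ref{lem:comp harm conf}), both representatives are harmonic on the overlap; by continuity they then agree everywhere on overlaps, producing a genuine (smooth) harmonic function $\tilde u : S \to \R$.

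Finally, since $S$ is compact, $\tilde u$ attains its maximum, and applying the Maximum Principle Lemma~\ref{princip max} in a conformal chart around such a maximum point forces $\tilde u \equiv c$ for some constant $c$. Reading back on each chart, $\lambda_1 = e^c \lambda_2$ quasi-everywhere, so the two subharmonic metrics differ by the multiplicative constant $e^c$. The only real subtlety I anticipate is bookkeeping around the quasi-everywhere nature of $u$: one must check that the harmonic representatives produced by Weyl's Lemma in different charts genuinely glue, rather than merely coinciding modulo null sets — but this is handled cleanly because two harmonic functions that agree almost everywhere on a connected open set are identical (Lemma~\ref{lem:Identity principle}), and because compactness lets a finite atlas cover $S$ so the gluing is a finite procedure.
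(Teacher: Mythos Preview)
Your proposal is correct and follows exactly the approach the paper indicates just before the proposition: the difference $\ln\lambda_1-\ln\lambda_2$ has vanishing weak Laplacian, Weyl's Lemma (Theorem~\ref{thm:Weyl's Lemma}) upgrades it to a genuine harmonic function, and the maximum principle (Lemma~\ref{princip max}) on the compact surface forces it to be constant. The paper gives only this one-line sketch, so your added detail on the gluing across charts is a welcome expansion rather than a deviation.
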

 
 We would like to know if an existence result holds, i.e., if a measure over a Riemann surface may define a subharmonic metric. We first have classical Gauss--Bonnet necessary condition that we now present. 
 
Let $z$ be the interior point of an arc $\arc$, having semitangents at both directions at this point. Recall from Section~\ref{sec:rot arc pla} that there is a notion of left side of the curve. By considering small circles centered at $z$, and taking a limit when the radii go the zero, we can define the 
\emph{(left) angle}\index{left angle}\index{angle (left)} $\theta_l(z)$ of $\arc$ at $z$, see \cite{R63III} for details. Let $\rho_\lambda$ be a subharmonic distance over a domain $M$ containing $\arc$. The (left) \emph{subharmonic angle}\index{subharmonic angle}\index{angle (subharmonic)} for the distance $\rho_\lambda$ of $\arc$ at $z$ is 
\begin{equation}\label{eq:left angle}\theta_{l,\lambda}(z)=\left(1-\frac{\mes(\{z\})}{2\pi}\right)\theta_l(z)~. \end{equation}
By Lemma~\ref{lem:harm angle}, the subharmonic angle does not depend on the choice of a conformal chart.
This, and the two following results, imply that the
\emph{left turn}\index{turn (on a surface)} of an arc can be defined on a surface with a subharmonic metric. Of course, similar definitions and result hold when replacing ``left'' by ``right'', and $\theta_l+\theta_r=2\pi$.

\begin{theo}
\begin{theorem}[{\cite[Theorem 9]{R63III}}]\label{thm cv angles turn}
Let $\arc$ be a simple arc with a right semitangent at its starting point, and a left semitangent at its endpoint. Let $(L_n)_n$  be a sequence of geodesic broken lines converging on the left to $\arc$, non-overlapping with $\arc$, and with the same extremities. Then
\begin{equation}\label{eq:def limite turn}
\sum_i \left(\pi  - \theta_{l,\lambda}(z_{n,i})\right) +\alpha_n+ \beta_n \xrightarrow[n\to \infty]{}  \kappa_l(K)~,\end{equation}
where $z_{n,i}$ are the vertices of $L_n$, $\alpha_n$ and $\beta_n$ are the interior angles of $K\cup L_n$ at the extremities of both curves. 
\end{theorem}
\end{theo}

 \begin{theo}
 \begin{theorem}[{\cite[Theorem 2]{R63III}}]\label{thm:angle split}
Let $\arc$ be an arc of bounded rotation in the plane, and let $z_0$ be an interior point of $\arc$ that splits it into
subarcs $\arc_1$ and $\arc_2$. Then 
\begin{equation}\label{eq:split turn}\kappa_l(\arc)=\kappa_l(\arc_1)+\kappa_l(\arc_2)+\pi - \theta_{l,\lambda}(z_0)~. \end{equation}
\end{theorem}
\end{theo}

% %By completing an arc such as above,
%to obtain an arc $\arc'$ enclosing  a domain $D$ homeomorphic to a disc, and remembering Gauss--Bonnet Formula
%\eqref{eq:GB}: $\kappa_l(K')+\mes(D)=2\pi$, it follows from Lemma~\ref{lem:harm angle} that the
%\emph{left turn}\index{turn (on a surface)} of an arc can be defined on a surface with a subharmonic metric.

Let us consider a triangulation in the case the surface is compact (see \cite{jost} for triangulations of surfaces). Denoting by 
$\mathcal{T}$,  $\mathcal{E}$, $\mathcal{V}$, respectively, the set of triangles (which are homeomorphic to open discs), edges (without their extremities) and vertices  of the triangulation, we have
$$\omega(S)=\sum_{T\in\mathcal{T}} \omega(T) + \sum_{E\in\mathcal{E}} \omega(E)+\sum_{V\in\mathcal{V}} \omega(V)~. $$
Using \eqref{eq:left+right}, Gauss--Bonnet Formula \eqref{eq:GB} and \eqref{eq:split turn}, a direct calculation gives the 
\emph{Gauss--Bonnet Formula for surfaces}:\index{Gauss--Bonnet formula (surfaces)}
\begin{equation}\label{eq:GB surface}
\mes(S)=2\pi\chi(S)~.
\end{equation}

Now the question is: does any signed measure satisfying Gauss--Bonnet Formula \eqref{eq:GB surface} define a subharmonic metric on a compact Riemann surface? We will need a result about the Laplacian.
%We have seen that a subharmonic metric defines a curvature measure. Also,
%Lemma~\ref{lem:comp harm conf}
%shows that it is meaningful to speak about a conformal mapping over a Riemann surface. But 
%from \eqref{eq:huber develope}, a  subharmonic metric does not  define a harmonic function on the whole surface. 
Indeed,  the Euclidean Laplacian is not invariant under the action of a conformal mapping, but from
\eqref{eq:lapl composition conforme}, the differential form  $\Delta f \D x\D y$ 
is. We will denote by $\Delta_S$ the form over the Riemann surface $S$, which is such that in a chart,
$\Delta_S f=\Delta f \D x\D y$.  We will use the following result, whose name is taken from \cite{donaldson}. 
We refer to \cite{royden67}, \cite{donaldson}, \cite{chirka} for more details.

\begin{theorem}[Main Theorem for Compact Riemann Surfaces]\label{thm:main thm}
Let $\nu$ be a signed Borelian measure over a compact Riemann surface $S$, such that $\nu(S)=0$. Then there exists $v\in L^1(S)$ such that 
$\Delta_S v= \nu, $
that is, for any $\varphi \in C^\infty(S)$,
$$\int_S v \Delta_S \varphi = \int_S \varphi \D\nu~. $$
\end{theorem}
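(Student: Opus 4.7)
The strategy is to solve the equation by constructing a Green's function for $\Delta_S$ and convolving it against $\nu$. By Remark~\ref{rem:construction sshmet}, $S$ carries a smooth conformal Riemannian metric; normalize its volume form to obtain a smooth probability measure $\mu_0$ on $S$. I aim to produce a symmetric function $G:S\times S\to[-\infty,+\infty)$ satisfying, in the distributional sense in the first variable,
\[
\Delta_{S,z}\, G(\cdot,w)=\delta_w-\mu_0\quad\text{for every }w\in S,
\]
with local expansion $G(z,w)=\tfrac{1}{2\pi}\ln|z-w|+H(z,w)$ in any conformal chart, $H$ being continuous. Given such a $G$, set $v(z):=\int_S G(z,w)\,d\nu(w)$. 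Local $L^1$-integrability of $v$ follows from Lemma~\ref{lem:pot Lp} applied to $|\nu|$ in charts (the logarithmic singularity is locally $L^p$ for all $p\geq 1$) plus continuity of $H$, and compactness of $S$ upgrades this to $v\in L^1(S)$. For $\varphi\in C^\infty(S)$, Fubini together with the defining property of $G$ gives
\[
\int_S v\,\Delta_S\varphi=\iint_{S\times S} G(z,w)\,\Delta_S\varphi(z)\,d\nu(w)=\int_S\Bigl(\varphi(w)-\int_S\varphi\,d\mu_0\Bigr)d\nu(w)=\int_S\varphi\,d\nu,
\]
where the last equality uses $\nu(S)=0$ to kill the constant term $\int\varphi\,d\mu_0$.

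The construction of $G$ proceeds by a partition-of-unity argument. Cover $S$ by finitely many conformal charts $(U_\alpha,f_\alpha)$ with $\overline{U_\alpha}$ relatively compact in larger concentric charts, let $(\rho_\alpha)$ be a smooth partition of unity subordinate to $(U_\alpha)$, and choose smooth cutoffs $\chi_\alpha$ equal to $1$ on a neighborhood of $\overline{U_\alpha}$ and supported in the larger chart. Define
\[
G_0(z,w):=\sum_\alpha\rho_\alpha(w)\,\chi_\alpha(z)\,\tfrac{1}{2\pi}\ln\bigl|f_\alpha(z)-f_\alpha(w)\bigr|,
\]
which carries the correct logarithmic singularity along the diagonal. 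A direct computation based on Lemma~\ref{lem:potential dirac} yields $\Delta_{S,z}G_0(\cdot,w)=\delta_w+F(\cdot,w)$ for some $F\in C^\infty(S\times S)$. Pairing this identity against the constant $1$ (and using $\Delta_S 1=0$) forces $\int_S F(z,w)\,d\mu_0(z)=-1$ for all $w$, so the smooth function $f_w:=-F(\cdot,w)-1$ has zero mean against $\mu_0$ and depends smoothly on $w$. Solving the smooth Poisson equation $\Delta_S H(\cdot,w)=f_w\,d\mu_0$ on $S$ with normalization $\int_S H(\cdot,w)\,d\mu_0=0$ and setting $G:=G_0+H$ produces the Green's function; symmetry can be enforced by replacing $G$ with $\tfrac12(G(z,w)+G(w,z))$, which still satisfies the characterizing equation.

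The main obstacle is the classical elliptic input: solving $\Delta_S u=f$ smoothly on $S$ for $f\in C^\infty(S)$ with $\int_S f\,d\mu_0=0$, with smooth dependence on parameters. This lies outside the pure potential-theoretic framework developed in the paper and requires either Hodge theory, the Fredholm alternative for elliptic operators on compact manifolds, or a variational minimization of the Dirichlet energy among functions of prescribed mean. Self-adjointness of $\Delta_S$ with respect to $\mu_0$ together with its one-dimensional kernel (the constants, by the Maximum Principle Lemma~\ref{princip max}) identifies both kernel and cokernel with $\R$; elliptic regularity then yields joint smoothness of $H(z,w)$ in both variables, and the logarithmic singularity together with continuity of $H$ gives the integrability needed at the start. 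Granted this classical input, every other step is a straightforward globalization of the Euclidean potential-theoretic computations carried out in Sections~\ref{sec: 3.1 def}--\ref{sec:approx}.
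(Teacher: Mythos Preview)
The paper does not actually prove this theorem: it states it with references to \cite{royden67}, \cite{donaldson}, \cite{chirka} and uses it as a black box. So there is no ``paper's own proof'' to compare against.

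Your approach is the standard parametrix construction and is essentially correct. A few comments. First, the symmetrization of $G$ is unnecessary here: you only ever apply $\Delta_S$ in the first variable, so the unsymmetrized $G=G_0+H$ already suffices; the claim that the symmetrized kernel ``still satisfies the characterizing equation'' would require a separate argument (symmetry of Green's functions is usually proved \emph{after} existence, via self-adjointness), but you do not need it. Second, the Fubini step deserves one line of justification: $|G(z,w)\,\Delta_S\varphi(z)|$ is dominated on $S\times S$ by a locally logarithmic singularity along the diagonal plus a bounded term, hence is integrable with respect to $d\mu_0(z)\,d|\nu|(w)$ by the local $L^1$ estimate you invoke. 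Third, and most importantly, you are candid that the real work is the smooth Poisson equation $\Delta_S u=f\,d\mu_0$ for $f\in C^\infty(S)$ with $\int f\,d\mu_0=0$: this is precisely the Hodge/Fredholm input that the cited references supply, and your reduction to it is exactly the point of the parametrix. So the proposal is a correct outline that honestly isolates the classical elliptic step it cannot avoid.
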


%The awaiting result follows.

\begin{theo}
\begin{theorem}[{\cite[Theorem~7.3]{troyanov-alexandrov}}]
Let $S$ be a compact Riemann surface, and let $\mes$ be a signed measure, such that $\mes(S)=2\pi\chi(S)$. Then there exists a subharmonic metric over $S$ with curvature measure $\mes$.

The metric is unique up to multiplication by a positive constant.
\end{theorem}
\end{theo}

%XXXXXX condition masse point ????? XXXXX
\begin{proof}
Let $\lambda_0|\D z|^2$ be any subharmonic metric over $S$ (see Remark~\ref{rem:construction sshmet}), with curvature measure $\mes_0$. By Gauss--Bonnet Formula \eqref{eq:GB surface}, we have $\mes_0(S)=2\pi\chi(S)$, in particular,
for $\nu=\mes-\mes_0$, $\nu(S)=0$. Let $v$ be the function given by the Main Theorem~\ref{thm:main thm}. The wanted metric is $\E^{-2\nu}\lambda_0|\D z|^2$. The uniqueness part is Proposition~\ref{prop:unique measure riem}.
\end{proof}

The argument above can be done using a conformal Riemann metric over the Riemann surface, together with the associated Riemannian Laplacian. The result follows using the potential of this Laplacian, see  \cite{troyanov-alexandrov}. Actually,
in a conformal chart into which the metric is $\lambda |\D z|^2$, the Riemannian Laplacian is $\frac{1}{\lambda}\Delta$. As the volume form of the Riemann metric is then $\lambda \D x\D y$, it follows that $\Delta_S$ is the Riemannian Laplacian times the volume form. 

%
%***************
%
%
%
%\begin{proposition}\label{prop:mes diff harmonic}
%Let $\mes$ be the curvature measure of a subharmonic metric $g_1$ over a compact Riemann surface $S$.
%If $\mes$ is the curvature measure of another subharmonic metric $g_2$ over $S$, then there exists a conformal mapping $h$ over $S$ such that $g_1=e^hg_2$.
%\end{proposition}
%\begin{proof}
%Let us take a common conformal chart such that 
%in this chart both metrics are defined respectively by $\lambda(\mes_1,h_1)$ and
%$\lambda(\mes_2,h_2)$. 
%By Lemma~\ref{lem:struc conf riem}, there exists a \PL function $\varphi$ such that 
%$$p(\mes_1)+h_1=p(\mes_2)+h_2+\ln \varphi~. $$
% 
%Taking  the weak Laplacian on both side, as by assumption  $\Delta p(\mes_1)=\Delta p(\mes_2)$, we arrive at $\Delta \ln \varphi=0$ in the weak sense. But $\ln \varphi$ is $\delta$-subharmonic, so there exist a measure $\Omega$ and a harmonic function $h$ over a suitable domain such that $\ln \varphi=p(\Omega)+h$. In turn, $\ln \varphi=h$. 
%\end{proof}
%
%
%Proposition~\ref{prop:mes diff harmonic} and Lemma~\ref{lesm:harm cpt cst}
%immediately imply the following, which is a part of Theorem~7.3 in \cite{troyanov-alexandrov}.

Let us state   the most famous result about Riemann surfaces.
\begin{theorem}[Uniformization Theorem]\label{thm:unif}
Let $M$ be a connected, simply connected, non-compact, Riemann surface. Then there is a conformal bijective mapping from  $M$ onto the unit disc or to $\C$. 
\end{theorem}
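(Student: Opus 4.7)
The plan is to use potential-theoretic methods, exploiting the theory of subharmonic and harmonic functions developed in the paper. The statement as given excludes the compact case (which would yield the Riemann sphere); I therefore focus on non-compact $M$. Once this is settled, the dichotomy ``disc versus $\C$'' is classical and amounts to the existence or non-existence of a Green's function on $M$.

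First I would exhaust $M$ by an increasing sequence of relatively compact, simply connected subdomains $M_n$ with smooth boundary, $\overline{M_n}\subset M_{n+1}$ and $\bigcup_n M_n=M$ (such an exhaustion exists since $M$ is a second-countable surface). Fix a reference point $z_0\in M_1$ and, on each $M_n$, solve the Dirichlet problem to produce the Green's function $g_n(\cdot,z_0)$, i.e.\ the unique function which is harmonic on $M_n\setminus\{z_0\}$, has the singularity $-\tfrac{1}{2\pi}\ln|z-z_0|$ at $z_0$ in a local conformal chart, and vanishes on $\partial M_n$. The maximum principle (Lemma~\ref{princip max}) shows that $g_n$ is monotone increasing in $n$. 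The key dichotomy, independent of $z_0$, is that either $g_n(z,z_0)\to g(z,z_0)$ pointwise to a finite positive superharmonic function (the \emph{hyperbolic} case) or $g_n\to+\infty$ everywhere (the \emph{parabolic} case).

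In the hyperbolic case, since $M$ is simply connected, the harmonic function $g(\cdot,z_0)$ admits a global harmonic conjugate $g^\ast$ (the period argument used after equation \eqref{eq:CR} globalizes on a simply connected surface), and $F(z)=\exp(-(g+ig^\ast)(z))$ is holomorphic on $M\setminus\{z_0\}$, extends across $z_0$ by $F(z_0)=0$, and takes values in the open unit disc. The crucial step is showing $F$ is biholomorphic onto this disc: injectivity follows from analysis of the level sets of $g$ (which are Jordan curves enclosing $z_0$, by the minimum principle and simple connectivity), and surjectivity follows from the argument principle applied along the level curves $\{g=\epsilon\}$ and a proper-map argument using $g(z)\to 0$ as $z$ leaves compact sets. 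In the parabolic case one instead constructs, again as a renormalized limit over the exhaustion, a ``dipole'' harmonic function $u$ on $M$ with singularities $\pm \tfrac{1}{2\pi}\ln|z-z_i|$ at two chosen points; its harmonic conjugate together with $u$ produces a holomorphic function which one shows is a biholomorphism $M\to\C$.

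The main obstacle is the hyperbolic case: proving that the Perron limit $g$ is genuinely finite (rather than identically $+\infty$) whenever \emph{some} non-constant bounded superharmonic function exists on $M$, and then establishing that $F$ is globally injective and surjective. Injectivity reduces to showing that $g(\cdot,z_0)$ has $z_0$ as its unique ``singularity'' and that each level set $\{g=c\}$ is a single Jordan curve, which uses the strong maximum principle together with the simply connected topology; surjectivity requires a careful exhaustion argument showing that $F$ is proper onto the disc. These arguments lie outside the scope of the present text (they belong to classical potential theory on Riemann surfaces), but every analytic input---regularity of potentials, behavior under conformal change of chart as in \eqref{eq:huber develope}, Weyl's Lemma (Theorem~\ref{thm:Weyl's Lemma}), and the maximum principle---is already available.
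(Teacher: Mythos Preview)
The paper does not supply its own proof of this theorem; it merely states the result and defers to the literature (\cite{donaldson}, \cite{beardon}, \cite{farkas-kra}), remarking only that it can be derived from the Main Theorem for Compact Riemann Surfaces (Theorem~\ref{thm:main thm}). So there is no ``paper's own proof'' to compare against.

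Your sketch follows the classical Green's-function route (the Koebe--Perron approach), which is indeed one of the standard proofs and is close in spirit to what one finds in the references the paper cites. As a high-level outline the argument is sound, and you correctly isolate the genuinely hard points: finiteness of the Perron limit $g$ in the hyperbolic case, and the injectivity and properness of the map $F=\exp(-(g+ig^\ast))$. The only comment worth making is that the route alluded to via \cite{donaldson} is organized differently: rather than building a Green's function on the open surface by exhaustion, one compactifies (or works on closed surfaces from the start), solves $\Delta_S v=\nu$ there using the Main Theorem, and deduces uniformization from the resulting global potential theory. Both approaches ultimately rest on the same analytic core (existence of harmonic functions with prescribed singularities), but the compact-surface route packages the hard analysis into a single PDE existence statement, whereas your approach keeps the exhaustion and the maximum-principle arguments explicit.
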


The Uniformization Theorem~\ref{thm:unif} can be derived from Theorem~\ref{thm:main thm}, see \cite{donaldson}.   See also e.g., \cite{beardon}, \cite{farkas-kra}. The Uniformization Theorem implies that  
we can take  conformal charts ``as large as possible''. This appears useful for convergence result, because it avoids possible degeneration of domain of charts.

\begin{corollary}\label{cor:disque iso}
On a Riemann surface with a subharmonic metric, in any domain $U$ homeomorphic to a disc, there are isothermal coordinates, i.e., coordinates such that the metric can be written 
$\lambda|\D z|^2$.
\end{corollary}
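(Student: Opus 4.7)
The plan is to produce a single global conformal chart for $U$ by invoking the Uniformization Theorem~\ref{thm:unif}, and then check that the metric, rewritten in this chart, satisfies the $\delta$-subharmonicity required by the definition of a subharmonic metric. Since $U$ is homeomorphic to a disc it is simply connected, and carries an induced Riemann surface structure from $S$, so Theorem~\ref{thm:unif} furnishes a conformal bijection $\phi\colon V \to U$ where $V$ is either the open unit disc or the whole plane $\C$. The inverse $\phi^{-1}\colon U \to V$ is then a conformal chart covering all of $U$.

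In this chart the subharmonic metric $g$ takes the form $\lambda \vert dz\vert^2$ for a function $\lambda\colon V\to [0,+\infty]$ defined quasi-everywhere. Explicitly, pick any point $z_0\in V$, choose a small local conformal chart $(U_0, f_0)$ of $S$ around $\phi(z_0)$ in which $g = \lambda_0\vert dw\vert^2$ with $\ln \lambda_0$ $\delta$-subharmonic, and set $g_0 = f_0\circ\phi$, which is a conformal map from $\phi^{-1}(U_0\cap U)$ onto $f_0(U_0\cap U)$. The transformation rule \eqref{eq:chgt met conf dif2} gives $\lambda = \vert g_0'\vert^2\, \lambda_0\circ g_0$, whence
$$\ln \lambda \;=\; \ln\lambda_0\circ g_0 \;+\; 2 \ln \vert g_0'\vert~.$$
The second summand is harmonic by Lemma~\ref{lem:ln module holo}, as $g_0'$ is a nowhere-zero holomorphic function; the first summand is $\delta$-subharmonic by Lemma~\ref{lem: ssh conf} applied to the conformal map $g_0$ and the $\delta$-subharmonic function $\ln \lambda_0$. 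Hence $\ln \lambda$ is locally $\delta$-subharmonic at every point of $V$.

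The step I expect to be the main obstacle is the passage from local to global: Definition~\ref{def:subh} of a subharmonic function requires a single measure with compact support together with a harmonic function defined on the entire (bounded) domain, whereas the decomposition above depends on the auxiliary local chart $(U_0,f_0)$ and is a priori valid only on its intersection with $U$. On any relatively compact subdomain of $V$ this can be remedied by combining a finite cover by such local charts with the Localization procedure of Remark~\ref{rem:localization harmonic} and Theorem~\ref{thm:localization}, which lets the chart-dependent harmonic pieces be absorbed into modifications of the measures, yielding a genuine global $\delta$-subharmonic representation there. Since the notion of subharmonic metric on a Riemann surface is itself local, this suffices to give isothermal coordinates on $U$.
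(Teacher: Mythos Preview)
Your proof is correct and follows the same route as the paper: restrict the Riemann surface structure to $U$, apply the Uniformization Theorem to obtain a single global conformal chart, and then observe that the subharmonic metric, read in this chart, still has $\delta$-subharmonic logarithm. The paper's argument is terser---it simply pushes the metric forward through the local isothermal charts into the uniformizing coordinate---whereas you spell out the verification via the transformation rule~\eqref{eq:chgt met conf dif2}, Lemma~\ref{lem:ln module holo}, and Lemma~\ref{lem: ssh conf}, and you explicitly flag and address the local-to-global issue for $\delta$-subharmonicity that the paper leaves implicit.
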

\begin{proof}
Restricting to $U$ the conformal charts covering it turns $U$ into a Riemann surface. By the Uniformization Theorem, there is a conformal mapping from $U$ onto $X$, where $X$ is the unit disc or $\C$. 
Let $(U_1,f_1)$ be an isothermal chart with $U_1$ contained in $U$. By definition, there exists a conformal mapping from $f_1(U_1)$ into $X$. Let us endow the image of this mapping by the pushforward of the metric.
We do this for all the isothermal charts in $U$. 
\end{proof}

\begin{remark}{\rm \label{remark:angle coincident}
Let us check that the subharmonic angle \eqref{eq:split turn} between two shortest arcs, intersecting at only one point $z_0$, is the same than the angle we  defined previously (see  Remark~\ref{rem:angle}).\footnote{The following argument was suggested to the author by Alexander Lytchak.}  In the following, one of the sectors formed by the shortest arcs is implicitly chosen, and the angle is computed for the induced intrinsic distance on the sector. Also, we 
consider only angles $<\pi$ (see Theorem 12 p. 128 in \cite{AZ}), and also that $\mes(\{z_0\})<2\pi$.
%Let $z_0$ be a point on a domain $M$, endowed with a subharmonic distance
%defined by $\lambda=\lambda(\omega,h)$. For the use we will make of such estimate, there is no loss of generality to consider that  Let $K$ and $L$ be two distinct shortest arcs starting at $z$. 
By Theorem~\ref{thm:shortest boundedturn} and Theorem~\ref{thm:BT BR}, $K$ and $L$ are arcs of bounded rotation, in particular they have semitangent lines at extremities. We will use a canonical stretching of $\rho_\lambda$, and refer to Section~\ref{sec can stretch} for the notation. The result of the canonical stretching 
is a flat cone  with curvature measure 
$\mes(\{z_0\})\delta_0$. It is easy to see that  the image of $K$ and $L$ under the stretching are half lines, namely the respective derivative of $K$ and $L$ at their starting points. It follows from its definition \eqref{eq:left angle} that the subharmonic  angle for the distance $\rho_\lambda$ between $K$ and $L$ is equal to the subharmonic angle  between the images of $K$ and $L$ for the flat cone distance.  
 Clearly, on a flat cone, for such curves, both the subharmonic  angle and the angle \eqref{eq upper angle}, denoted by  $\overline{\alpha}$,  coincide. So we are left by proving that the angle $\alpha$ for $\rho_\lambda$ is equal to  $\overline{\alpha}$.

Let $\epsilon>0$. 
Let $X$ and $Y$ be points on $K$ and $L$. For $r$ sufficiently small, 
if $X$ and $Y$ are at Euclidean distance $r$ from $z_0$, then
by  Theorem~\ref{prop:fun estimate} they are close to $z_0$ for the distance $\rho_\lambda$, because $\mes(\{z_0\})<2\pi$. 
Hence, by definition of the angle, 
we may suppose that  $|\alpha-\alpha_0|<\epsilon$,
where $\alpha_0$ is the angle at the vertex corresponding to $z_0$ of the Euclidean comparison triangle of $z_0XY$.
Under the inverse of the mapping $z\mapsto z_0+rz$, $X_r$ and $Y_r$ are sent to points $X'_r$ and $Y'_r$ on the unit Euclidean circle. As the distance $\lambda_{r,z_0}$ is obtained by a similarity from $\rho_\lambda$, it follows that the angle of the comparison triangle $0X'_rY'_r$ is equal to $\alpha_0$. As by Theorem~\ref{thm:canoncial streching}, $\rho_{\lambda_{r,z_0}}$ converge uniformly to the flat cone metric, comparison triangles converge. In turn, $\alpha_0$ is equal to the corresponding angle of the comparison triangle of the flat cone. But for such a metric space, this is $\overline{\alpha}$. At the end of the day, $|\alpha-\overline{\alpha}|<\epsilon$, and $\epsilon$ was arbitrary.
}\end{remark}

\begin{remark}{\rm \label{remark:unicite geod courbure negative}
Let us consider a subharmonic distance over a plane domain homeomorphic to a disc, with non-positive curvature measure.
In this case,   shortest arcs have non-positive turn (Theorem~\ref{thm:shortest boundedturn}), and the topology induced by the distance is the topology of the plane (Theorem~\ref{thm:met coinc}). In turn, from the formula of Theorem~\ref{thm:angle split}, together with Gauss--Bonnet Formula \eqref{eq:GB}, one can see that there is a unique shortest arc between distinct points.  
}\end{remark}\section{Other results}\label{sec-other}
In this last section, we present some more results about subharmonic distances, mainly related to results about two-dimensional manifolds of bounded curvature, see Section~\ref{sec:sh est bic}.

\subsection{Isothermal coordinates on two-dimensional manifolds of bounded curvature}\label{sec:iso coord on bic}

We want a  converse to Theorem~\ref{thm:sh is bic}. We will use the following  converse to Theorem~\ref{thm:approx poly implique bic}.

\begin{theorem}\label{thm:pol approx of bic}
Any point in a domain endowed with a distance  of bounded curvature has a neighborhood that admits a polyhedral approximation.\footnote{Here and in similar statements, the distance that is approximated is the induced intrinsic distance over the neighborhood.}
\end{theorem}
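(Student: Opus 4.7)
The plan is to follow the approximation-by-geodesic-triangulations approach that goes back to Alexandrov--Zalgaller, reversing the direction of Theorem~\ref{thm:approx poly implique bic}. First I would fix a point in the domain and, using Definition~\ref{def:bic}, select a small neighborhood $U$ homeomorphic to a closed disc on which the bounded-excess condition holds with constant $C(U)$, and on which shortest arcs between sufficiently close points exist. Shrinking $U$ if necessary, I may assume the boundary $\partial U$ is itself a geodesic polygon (for instance by building $U$ as the domain enclosed by shortest arcs joining a few points of a slightly larger neighborhood), so that $(U,\rho|_U)$ is topologically a disc with rectifiable boundary.

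Second, for each $n$ I would pick a finite $\varepsilon_n$-net $\{x_1^n,\dots,x_{N_n}^n\}$ in $U$ with $\varepsilon_n\to 0$, include the vertices of $\partial U$, and construct a geodesic triangulation $\mathcal{T}_n$ of $U$ whose edges are shortest arcs and whose faces are simple triangles of diameter $\lesssim \varepsilon_n$. This is the delicate step and is precisely the content of \cite{AZ}, refined in \cite{creutz2021triangulating}, since in a length space shortest arcs may a priori branch, fail to be unique, or produce non-simple triangles; one has to argue that at small enough scales the net can be perturbed so that the resulting triangulation consists of simple triangles. Then I would define the polyhedral distance $\rho_n$ on $U$ by replacing each triangle $T\in\mathcal{T}_n$ with its Euclidean comparison triangle (same side-lengths) and gluing along shared edges; the result is a polyhedral metric on a disc whose boundary is a geodesic broken line, hence $(U,\rho_n)$ is a polyhedron.

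Third, I would prove uniform convergence $\rho_n\to \rho$. The key inputs are: (i) the First Comparison Theorem~\ref{thm:first comparaison theorem} (which is itself valid for $2$-manifolds of bounded curvature, or, circularly, its Alexandrov--Zalgaller analogue) gives $\overline{\alpha}-\alpha_0\leq \omega^+(T^\circ)$ for each triangle $T\in\mathcal{T}_n$; (ii) on each edge the two distances agree; and (iii) any two points in $U$ are joined by a shortest arc that can be approximated by a broken line traveling along edges of $\mathcal{T}_n$, whose $\rho_n$-length is computed inside comparison triangles and differs from its $\rho$-length by an error that is small uniformly thanks to (i) together with the fact that triangle diameters and excesses are small for fine triangulations. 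A standard diagonal/compactness argument upgrades pointwise to uniform convergence.

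Finally, I would bound the positive curvature $\omega_n^+$ uniformly. At an interior vertex $v\in\mathcal{T}_n$ the polyhedral curvature equals $2\pi$ minus the sum of comparison angles at $v$; since the actual angles in $(U,\rho)$ at an interior vertex sum to $2\pi$ (using Remark~\ref{remark:angle coincident} and the local disc structure), this polyhedral curvature equals the sum over incident triangles of $(\overline{\alpha}_i-\alpha_{0,i})$, hence its positive part is bounded by $\sum_i \overline{\delta}(T_i)$ over triangles in the star of $v$. Summing over interior vertices, each triangle is counted at most three times, and the bounded-excess hypothesis $\sum \overline{\delta}(T_i)\leq C(U)$ yields $\omega_n^+(U)\leq 3C(U)$, independently of $n$. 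The main obstacle, by some margin, is the second step: producing honest geodesic triangulations by \emph{simple} triangles at arbitrarily fine scales; the subsequent convergence and curvature bounds are, once the triangulations are in hand, essentially an exercise in Euclidean comparison.
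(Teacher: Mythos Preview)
Your proposal follows essentially the approach the paper itself sketches (without giving a proof), namely the Alexandrov--Zalgaller argument from \cite{AZ}: decompose a neighborhood into non-overlapping simple triangles of small diameter, replace each by its Euclidean comparison triangle, and verify uniform convergence together with a uniform bound on the positive curvature. You correctly identify the construction of the simple-triangle decomposition as the delicate step; the paper makes exactly the same point and cites \cite{creutz2021triangulating} for a rigorous treatment of that step.

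One caution about circularity. In your step (iii) and in your curvature bound you invoke Theorem~\ref{thm:first comparaison theorem} and Remark~\ref{remark:angle coincident}, but in this text both are established only \emph{after} a polyhedral approximation is already known to exist (the former via the Contraction Onto a Cone Theorem~\ref{thm contraction}, the latter only for subharmonic distances). To make the argument non-circular you must use the purely synthetic angle-comparison lemmas proved directly in \cite{AZ} from Definition~\ref{def:bic}. Relatedly, your claim that ``the actual angles in $(U,\rho)$ at an interior vertex sum to $2\pi$'' is not automatic: in a two-dimensional manifold of bounded curvature the full angle at a point can be any value in $(0,\infty)$, and equals $2\pi$ only at points carrying no curvature atom. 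The correct bookkeeping in step~(5) therefore requires the full-angle theory of \cite{AZ} (and possibly a choice of vertices avoiding atoms) rather than the identity you wrote.
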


Theorem~\ref{thm:pol approx of bic} was originally proved in Chapter~III of \cite{AZ} (see  Remark 2 p. 88 and Theorem 15 p. 134 and p. 90 there), and the proof is also sketched in \cite{R93} (see Theorem~6.2.1 there). The general idea is to decompose a neighborhood of a point by non-overlapping simple triangles with arbitrary small diameters, to replace each of the triangles by a Euclidean triangles with same edge length (so we get a polyhedral distance), and check that this new distance is close to the original one for the uniform topology. Note that this construction is not the same than the one to obtain polyhedral approximation of subharmonic distances (Theorem~\ref{tm:poly approx}). 

Unfortunately it seems that, beside the fact that the arguments are sometime hard to follow in \cite{AZ} and \cite{R93}, some of them are false. This was pointed out in \cite{creutz2021triangulating}.
Fortunately, a stronger result is proved in this latter reference.

\begin{theorem}[{Creutz--Romney \cite{creutz2021triangulating}}]  \emph{Any} intrinsic finite distance on a plane domain can be decomposed by simple non-overlapping triangles with arbitrary small diameters.
\end{theorem}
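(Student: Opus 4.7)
The plan is to reduce the problem to a local question and then build the triangulation in three stages: produce shortest arcs, untangle them into a planar graph of non-crossing arcs, and refine each face into simple triangles.

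First, I would establish local existence of shortest arcs. Since $\rho$ is a finite intrinsic distance inducing the plane topology on $M$, the identity map $M \to (M,\rho)$ and its inverse are continuous, so every point $z_0 \in M$ has a $\rho$-neighborhood whose $\rho$-closure is compact in the plane. Applied to parameterizations proportional to $\rho$-arc length of minimizing sequences, the Arzelà–Ascoli argument used in Lemma~\ref{lem existence shortest arcs} yields a shortest arc between any two sufficiently $\rho$-close points. Next, fix $\epsilon>0$ and pick a bounded Jordan region $R \subset M$ of $\rho$-diameter at most $\epsilon/10$; choose a finite $\delta$-net $\{p_1,\dots,p_N\}$ for $\rho$ in $R$ with $\delta \ll \epsilon$, and for every pair at $\rho$-distance $<\epsilon/10$ pick a shortest arc $K_{ij}$.

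The second stage is an uncrossing procedure. If two shortest arcs $K_{ij}$ and $K_{kl}$ meet at an interior point $q$, replace them by the two concatenations $(p_i{\to}q{\to}p_l)$ and $(p_k{\to}q{\to}p_j)$; by the triangle inequality applied to shortest arcs, these new curves have the same total $\rho$-length, so each can be re-minimized to a shortest arc between its endpoints, and a suitable ordering of the choices strictly decreases the number of crossings. Iterating and passing to a limit along Zorn's lemma (or by an exhaustive combinatorial argument on $\binom{N}{2}$ arcs) produces a family of shortest arcs that meet only at common vertices; this is a planar straight-line embedding of a graph in the Jordan–Schoenflies sense, with bounded faces each of $\rho$-diameter $<\epsilon$.

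The third stage subdivides each such face $F$ into simple triangles. Add an interior point $q_F$ whose $\rho$-distance to each boundary vertex of $F$ is less than $\epsilon$ (possible by the net), and join $q_F$ to every vertex of $F$ by a shortest arc contained in $F$ (using the intrinsic distance induced on $F$). Using the angle estimates of Remark~\ref{remark:angle coincident} together with the First Comparison Theorem~\ref{thm:first comparaison theorem}-style convexity argument of Section~\ref{sec abs rot}, one verifies that each resulting three-arc region is a simple triangle in the sense of Section~\ref{sec:sh est bic}. Repeating the construction on an exhaustion of $M$ by such Jordan regions and taking the union yields the desired global decomposition.

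The main obstacle is the uncrossing step: two shortest arcs for a general intrinsic distance may share a whole subarc, touch along a Cantor-type set, or contain wild sub-geodesics, so the notion of a transverse crossing and the termination of the combinatorial uncrossing procedure are delicate. A secondary difficulty is verifying that the small regions produced really are \emph{simple} triangles (convexity with respect to the boundary), since this is a genuinely metric condition that does not follow from the topological fact that the three shortest arcs bound a disc; this is where the local intrinsic-compactness and the fine scale $\delta\ll\epsilon$ must be exploited carefully.
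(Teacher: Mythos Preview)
The paper does not prove this statement; it is quoted as a result of Creutz and Romney and is invoked precisely because the earlier arguments in \cite{AZ} and \cite{R93} for the bounded-curvature case are considered incomplete. So there is no ``paper's own proof'' to compare with.

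Your sketch, however, has a structural gap: the theorem concerns an \emph{arbitrary} finite intrinsic distance inducing the surface topology, yet your third stage relies on tools that are only available once one already knows the space has bounded curvature. You invoke Remark~\ref{remark:angle coincident} and the First Comparison Theorem~\ref{thm:first comparaison theorem}, but both of these presuppose a subharmonic (equivalently, bounded-curvature) distance; in a general intrinsic metric space angles between shortest arcs need not exist, and there is no comparison inequality to appeal to. Likewise, Lemma~\ref{lem existence shortest arcs} is stated under the hypothesis $\mes^+(\{z\})<2\pi$ for a subharmonic distance; for a general intrinsic metric one must instead use a Hopf--Rinow-type argument directly, which you allude to but do not separate from the subharmonic setting.

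The uncrossing step is also genuinely problematic, as you yourself flag. In a general length space two shortest arcs can meet along a set that is neither finite nor a single subarc, so ``replace by the two concatenations at $q$'' is not a well-defined operation, and there is no obvious integer-valued crossing count that strictly decreases. The Creutz--Romney proof proceeds quite differently: it works at the level of Jordan curves and monotone approximations rather than by uncrossing geodesics, and it does not use any angle or curvature comparison. Your outline would at best recover the triangulation for bounded-curvature surfaces, which is exactly the case the paper says was already handled (modulo the noted gaps) in \cite{AZ}, not the stronger statement being cited.
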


Let us emphasize the fact that these results are not about triangulation in the usual sense, as for example a vertex of a triangle may be contained in the interior of the edge of another triangle.

Actually, it is noted in \cite{R93} that a stronger version of Theorem~\ref{thm:pol approx of bic} holds, see Theorem~6.2.1 there.

\begin{theorem}\label{thm:poly approx bic}
Any point in a domain endowed with a distance  of bounded curvature has a neighborhood that admits a polyhedral approximation such that the total variations of the turn of the boundary of the polyhedra are uniformly bounded.
\end{theorem}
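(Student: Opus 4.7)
My plan is to strengthen the construction already underlying Theorem~\ref{thm:pol approx of bic} by selecting the boundary of the approximating polyhedra to be a geodesic broken line of the original distance, and then to control its turn via Gauss--Bonnet. Concretely, given a point $z$ in a domain $M$ equipped with a distance of bounded curvature $\rho$, I would first pick a neighborhood $U$ of $z$ homeomorphic to a closed disc, small enough that the finite constant $C(U)$ from Definition~\ref{def:bic} bounds the excess of any collection of pairwise non-overlapping simple triangles in $U$, and so that shortest arcs joining nearby points of $U$ exist. I would then take $\partial U$ to be a geodesic broken line $\gamma$, consisting of finitely many shortest arcs of $\rho$ joined at a finite collection of vertices $v_1,\dots,v_N$ whose interior angles in $U$ are bounded away from $0$ (for instance, a geodesic broken line inscribed in a small metric circle around $z$).

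Next, I would invoke the triangulation-based construction underlying Theorem~\ref{thm:pol approx of bic} (or the more robust triangulation result of \cite{creutz2021triangulating}) to decompose $U$ into simple non-overlapping triangles with arbitrarily small diameters, arranging that each arc of $\gamma$ between consecutive $v_i$ is a concatenation of edges of the triangulation. Replacing every such triangle by its Euclidean comparison triangle with the same edge-lengths yields a polyhedron $P_n$ whose boundary $\partial P_n$ is a polygonal curve with the same combinatorial structure as $\gamma$. The proof of Theorem~\ref{thm:pol approx of bic} then guarantees that the resulting polyhedral distances $\rho_n$ converge uniformly to $\rho$ on $U$ with curvatures $|\omega_n|(P_n)$ uniformly bounded (by $C(U)$ up to constants).

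The main obstacle is to bound the total absolute turn $|\kappa|(\partial P_n)$ \emph{uniformly} in $n$. By Gauss--Bonnet on $P_n$, the signed sum of turns satisfies
\[
\sum_{v\in\partial P_n}\bigl(\pi-\theta_v^{(n)}\bigr)=2\pi-\omega_n(P_n),
\]
which is already uniformly bounded. Writing $|\kappa|(\partial P_n)=\bigl(2\pi-\omega_n(P_n)\bigr)+2\sum_{\theta_v^{(n)}>\pi}\bigl(\theta_v^{(n)}-\pi\bigr)$, the task reduces to controlling the reflex-angle contribution. I would argue that any reflex angle at a vertex $v$ not among $\{v_1,\dots,v_N\}$ forces a concentration of positive curvature near $v$ inside $P_n$ (since the triangle angles on both sides of $\gamma$ at $v$ exceed $\pi$ combined on one side), so these contributions are bounded by $\omega_n^+(P_n)\le C(U)$. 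The reflex angles at the finitely many preexisting vertices $v_i$ converge to the corresponding angles in $\rho$ (using the convergence of angles between shortest arcs under polyhedral approximation, itself a consequence of the definition of bounded curvature via triangle excess), which are themselves finite. Adding these bounds yields a uniform bound on $|\kappa|(\partial P_n)$.

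The delicate point, and principal technical difficulty, is ensuring that the triangulation construction can be made rigid enough to preserve $\gamma$ as $\partial P_n$ while keeping all relevant angle and curvature estimates compatible across $n$; this is precisely the sort of control that motivates passing to the triangulation results of \cite{creutz2021triangulating} rather than relying on the older and partially flawed arguments of \cite{AZ}.
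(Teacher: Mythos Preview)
The paper does not actually prove this theorem: it only records that the stronger statement is Theorem~6.2.1 in \cite{R93} and then uses it as a black box. So there is no ``paper's own proof'' to compare against; your proposal is being measured against the standard Alexandrov--Zalgaller/\res construction as sketched in \cite{AZ} and \cite{R93}.

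Your overall strategy---take the neighborhood to be bounded by a geodesic broken line $\gamma$, triangulate so that $\gamma$ is a union of triangulation edges, replace by comparison triangles, and control the turn of $\partial P_n$ via Gauss--Bonnet---is indeed the standard one, and the reduction
\[
|\kappa|(\partial P_n)=\bigl(2\pi-\omega_n(P_n)\bigr)+2\sum_{\theta_v^{(n)}>\pi}\bigl(\theta_v^{(n)}-\pi\bigr)
\]
is correct. The genuine gap is in your bound on the reflex-angle sum at the subdivision vertices $v\notin\{v_1,\dots,v_N\}$. You claim that a reflex angle at such a $v$ ``forces a concentration of positive curvature near $v$ inside $P_n$''. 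This is not right: $v$ is a \emph{boundary} vertex of the polyhedron $P_n$, and the curvature $\omega_n$ of a polyhedron lives only at interior cone points; a large interior angle at a boundary vertex imposes no constraint on $\omega_n^+$. The parenthetical ``triangle angles on both sides of $\gamma$'' is also off, since the triangulation only lives on the interior side of $\gamma$.

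The correct mechanism is different. At a subdivision point $v$ lying in the interior of a shortest segment of $\gamma$, the left turn in the original metric is non-positive (this is the basic property of shortest arcs in spaces of bounded curvature, see Theorem~\ref{thm:shortest boundedturn} for the subharmonic side), so $\theta_l(v)\ge\pi$ and $\sum_{v}\bigl(\theta_l(v)-\pi\bigr)\le -\kappa_l(\gamma)\le \omega^-(\gamma^\circ)$, a fixed finite quantity. Then one compares $\theta_v^{(n)}$ with $\theta_l(v)$: the difference is a sum of $(\alpha_0-\overline{\alpha})$ over the comparison triangles meeting $v$, and the total of these over all boundary vertices is controlled by the total excess, hence by $|\omega|(U)$. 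Combining the two gives the uniform bound. In other words, the reflex contribution is bounded by negative curvature along $\gamma$ in the \emph{original} metric plus the total angle defect from the comparison procedure---not by positive curvature in $P_n$.
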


Theorem~\ref{thm:poly approx bic} and Theorem~\ref{thm:limit met bc disque} give the following result, which the other half of the main motivation 
 for introducing subharmonic distances at the time, see Theorem~\ref{thm:sh is bic}.

\begin{theo}
\begin{theorem}[{\cite[Theorem II]{R60I} }]\label{thm:bic is sh}
Any point in a metric space of bounded curvature has a neighborhood homeomorphic to a closed disc that is isometric to a subharmonic distance.\footnotemark
\end{theorem}\end{theo}\footnotetext{Here also, the distance over the neighborhood is the induced intrinsic distance. Actually, a stronger result is stated in \cite{R60I}, namely that the interior of \emph{any} neighborhood homeomorphic to a closed disc has isothermal coordinates, see Corollary~\ref{cor:disque iso}. In  \cite{R60I} a stronger (smooth) version of Theorem~\ref{thm:poly approx bic} is stated and used.}

Theorem~\ref{thm:bic is sh} is a generalization of the classical result that Riemannian surfaces 
 admits isothermal coordinates, see Remark~\ref{rem:C1C2curvature}.

Note that in \cite{R60I}, results about Riemannian approximation of distances of bounded curvature are presented. We don't know if their proofs are available somewhere, but we can obtain them \emph{a posteriori}. Indeed, as we now know that, locally, (finite) subharmonic distances and distances of bounded curvature are locally isometric, we can uses the results of Section~\ref{sec cv}.
\begin{theo}
\begin{theorem}[{\cite[Theorem A and Theorem B]{R60I}}]
An intrinsic finite distance over a plane domain inducing the same topology is a distance of bounded curvature if and only if each point has a neighborhood  that can be uniformly approximated by Riemannian distances with uniformly bounded absolute curvature.

Moreover, for a distance of bounded curvature, such an approximating sequence can be chosen so that the total variations of the turn of the boundaries are uniformly bounded.
\end{theorem}
\end{theo}

A more direct way to prove that a distance of bounded curvature can be locally approximated by Riemannian distances with uniform bound of the variation of the curvature measure and turn of the boundary is to ``smooth'' polyhedral distances and use Theorem~\ref{thm:poly approx bic}. This is done at page 116 of \cite{R93}.

\subsection{Turn and curvature in a two-dimensional manifold of bounded curvature}

This section gives some details about $\S 8$ of \cite{R63III}.

The \emph{(left) turn of a curve $\arc$ in a two-dimensional manifold of bounded curvature},\footnote{In \cite{AZ}, what we are calling \emph{turn} was translated by \emph{rotation}, see the Introduction of the present article.}\index{turn  (in a two-dimensional manifold of bounded curvature)} $\tau_l$ is  defined by approximation by geodesic broken lines. Actually, this definition of the turn is formally   the same as \eqref{eq:def limite turn}, with instead of subharmonic angle,
the angle between shortest paths issuing from a same point.
It follows that, as notions of angles coincide in this setting (see Remark~\ref{remark:angle coincident}), we have the following result.\footnote{The fact that the angles coincide is implicit in $\S 8$ of \cite{R63III}.}

\begin{theo}
\begin{theorem}[{\cite[\S 8]{R63III} }]\label{thm turn coincide}
The notions of turn of a curve for a finite subharmonic distance and for a two-dimensional manifold of bounded curvature coincide.
\end{theorem}
\end{theo}

For a two-dimensional manifold of bounded curvature, a positive measure $\mes^+_{\operatorname{AZ}}$ is defined, saying that, for an open set $G$, the quantity $\mes^+_{\operatorname{AZ}}(G)$ is the supremum for all the decomposition of $G$ by a certain kind of non-overlapping triangles of the sum of the positive excesses (see \eqref{eq:upper excess}). 
Here the triangles under consideration must be of a certain kind, and the angles are not exactly the ones given by \eqref{eq:upper excess}, as in the definition of $\mes^+_{\operatorname{AZ}}(G)$, it is the angles between  the ``sectors'' defined by two edges that enter the picture. 
We won't enter details here and refer to \cite{AZ} for precise definitions. Another positive measure $\mes^-_{\operatorname{AZ}}$ is defined, by considering negative excesses instead of positive ones, and finally the 
\emph{curvature (in the Alexandrov sense)}\index{curvature (in the Alexandrov sense)} is the measure $\mes_{\operatorname{AZ}}:=\mes^+_{\operatorname{AZ}}-\mes^-_{\operatorname{AZ}}$. The curvature is related to the turn in the following way.

\begin{theorem}[{Gauss--Bonnet Formula, \cite[Theorem 5 p.192]{AZ} }]
Let $\arc$ be a simple closed curve in a two-dimensional manifold of bounded curvature, bounding a domain $D$ homeomorphic to a disc, then (for a suitable orientation)
$$\mes_{\operatorname{AZ}}(D)+\tau_l(\arc)=2\pi~. $$
\end{theorem}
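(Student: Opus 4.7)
The plan is to reduce the statement to the Gauss--Bonnet formula already established for subharmonic distances, by passing to a subharmonic chart, identifying the notion of turn, and finally identifying the Alexandrov curvature measure with the curvature measure of the subharmonic distance.

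First, since the statement is local around $\bar D$ and $\bar D$ is compact, I would use Theorem~\ref{thm:bic is sh} (every point of a two-dimensional manifold of bounded curvature has a neighborhood isometric to a subharmonic distance, and moreover by Corollary~\ref{cor:disque iso}, the interior of \emph{any} neighborhood homeomorphic to a closed disc has isothermal coordinates) to realize a neighborhood $U$ containing $\bar D$ as isometric to a domain $M\subset \C$ equipped with a subharmonic distance $\rho_{\lambda(\mes,h)}$, with curvature measure $\mes$. Inside $M$, the curve $\arc$ bounds a domain $D'$ corresponding to $D$ under the isometry.

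Next, I would apply the Gauss--Bonnet formula for subharmonic distances. The version \eqref{eq:GB} in the paper is stated for regular arcs, but the general version (``A more general version of \eqref{eq:GB} may be found in Theorem~2 in \cite{R63III}'') together with the decomposition property \eqref{eq:split turn} applied to the finite set of non-smooth points of $\arc$ yields
\[
\kappa_l(\arc) + \mes(D') = 2\pi.
\]
By Theorem~\ref{thm turn coincide}, the subharmonic turn $\kappa_l(\arc)$ equals the turn $\tau_l(\arc)$ defined in the two-dimensional manifold of bounded curvature. Hence it remains to show that $\mes(D')$ equals $\mes_{\operatorname{AZ}}(D)$ under the isometry.

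The main obstacle is this identification of curvature measures. The strategy would be to check it on sufficiently fine ``admissible'' triangulations by simple triangles. For a simple triangle $T\subset D'$ whose edges are shortest arcs, Theorem~\ref{thm:shortest boundedturn} gives that the edges have zero (left) turn, so the subharmonic Gauss--Bonnet applied to $T$, combined with Theorem~\ref{thm:angle split} at the three vertices, collapses to
\[
\mes(T^\circ) = \overline{\alpha}+\overline{\beta}+\overline{\gamma}-\pi = \overline{\delta}(T),
\]
where Remark~\ref{remark:angle coincident} ensures that subharmonic angles and metric angles coincide at vertices with $\mes(\{z\})<2\pi$. Summing over a triangulation of a relatively compact open subset $G\subset D'$ and taking the supremum over decompositions recovers exactly the definition of $\mes_{\operatorname{AZ}}^+(G)$ from its positive excesses, and similarly for $\mes_{\operatorname{AZ}}^-$, yielding $\mes_{\operatorname{AZ}} = \mes$ on Borel subsets of $D'$. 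The delicate points to treat carefully are the (finitely many) atoms $z$ of $\mes$ with $\mes(\{z\})\ge 2\pi$ (which would be points at infinity and cannot lie in $D'$ since $\rho_\lambda$ is finite on $\bar D'$ by hypothesis), and the verification that Alexandrov's restricted class of ``admissible'' triangles is rich enough to approximate $\mes$ from below and above in the same way as arbitrary simple triangles. Once these points are handled, combining the three steps gives $\mes_{\operatorname{AZ}}(D)+\tau_l(\arc) = \mes(D')+\kappa_l(\arc) = 2\pi$.
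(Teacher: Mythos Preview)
The paper does not give a proof of this statement: it is a black-box citation from Alexandrov--Zalgaller \cite{AZ}. In the paper's logical architecture, this Gauss--Bonnet formula is an \emph{input}: together with the coincidence of turns (Theorem~\ref{thm turn coincide}) and the subharmonic Gauss--Bonnet \eqref{eq:GB}, it is used to \emph{deduce} Theorem~\ref{thm measures coincide}, i.e., that $\mes_{\operatorname{AZ}}=\mes$. Your proposal runs in the opposite direction: you attempt to prove $\mes_{\operatorname{AZ}}=\mes$ first and then read off the Gauss--Bonnet formula. So relative to the paper, your argument is not a comparison of proofs but an attempt to reverse the dependency, and the step you flag as ``the main obstacle'' is precisely the theorem that the paper obtains as a \emph{corollary} of the statement you are trying to prove.

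Independently of this structural point, your identification $\mes(T^\circ)=\overline{\delta}(T)$ has a gap. Theorem~\ref{thm:shortest boundedturn} only gives that the left and right turns of a shortest arc are \emph{non-positive}, not zero; the conclusion $\kappa_l=\kappa_r=0$ and $\mes(K^\circ)=0$ is stated in the paper only under the additional hypothesis that $\mes$ is a \emph{positive} measure (see the Corollary following Theorem~\ref{thm:BT BR}). For a general signed $\mes$ the edges of a geodesic triangle may carry nonzero turn and nonzero measure, so the collapse to $\overline{\alpha}+\overline{\beta}+\overline{\gamma}-\pi$ is not justified. Moreover, $\mes_{\operatorname{AZ}}^{\pm}$ are defined in \cite{AZ} as suprema of positive/negative excesses over restricted families of triangles, and upgrading ``agrees on simple triangles'' to ``equal as Borel measures'' is itself a nontrivial step that you leave open. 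In short, the route you sketch would require substantial extra work beyond what is available in the paper, and the paper itself simply imports the result from \cite{AZ}.
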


As the notions of turn coincide for both two-dimensional manifolds of bounded curvature and subharmonic distances, and as 
Gauss--Bonnet Formula holds for both notions of curvature 
(see \eqref{eq:GB}),\footnote{Actually, \eqref{eq:GB} is stated only for regular arcs. It is easy to extend to arcs of the class $\Delta$, i.e., arcs having 
semitangent at each point, see \cite{R93} before Theorem~8.1.7. Anyway, when the two Borelian measures coincide on Euclidean balls, they are equal, see e.g., \cite{measure-ball}, \cite[2.20]{mattila}.} we obtain the following.

\begin{theo}
\begin{theorem}[{\cite[\S 8]{R63III} }]\label{thm measures coincide}
The notions of curvature measure for a finite subharmonic distance and for a two-dimensional manifold of bounded curvature coincide.
\end{theorem}
\end{theo}

Let us mention another argument to prove Theorem~\ref{thm measures coincide}, which is based on the following result.

\begin{theorem}[{Convergence of Curvature, \cite[Theorem 6 p. 240]{AZ}}]\label{thm: cv measure AZ}
Let a sequence of two-dimensional manifolds of bounded curvature converge uniformly to a two-dimensional manifold of bounded curvature, such that the total variation of the curvatures is uniformly bounded. Then the curvatures  converge weakly to the curvature of the limit metric.\footnote{In Theorem~\ref{thm: cv measure AZ} and Theorem~\ref{thm cv area},  weak convergence is for functions with a same compact support, see Remark~\ref{rem:def weak cv mes}.}
\end{theorem}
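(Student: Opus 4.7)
The plan is to reduce the global statement to a local one on plane domains using the already-established equivalence between two-dimensional manifolds of bounded curvature and subharmonic distances, and then to identify the weak limit of curvatures via the uniqueness built into the Distances Convergence Theorem.

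First I would localize. Pick a point $x$ in the limit surface and a neighborhood $U$ homeomorphic to a closed disc. By Theorem~\ref{thm:bic is sh} there is an isometry of $U$ onto a plane domain $D$ carrying a subharmonic distance $\rho_\lambda$, with $\lambda=\lambda(\omega,h)$; by Theorem~\ref{thm measures coincide}, the Alexandrov curvature of $U$ is the push-forward of $\omega$. For $n$ large, uniform convergence and Theorem~\ref{thm:met coinc} let me transport $U$ into $M_n$ and, shrinking $U$ slightly if needed, obtain analogous subharmonic representatives $\rho_{\lambda_n}$ on $D$ with $\lambda_n=\lambda(\omega_n,h_n)$, so that $\rho_{\lambda_n}\to\rho_\lambda$ uniformly on $\bar D$. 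The hypothesis then translates to: $|\omega_n|(D)$ is uniformly bounded, and (after choosing $\partial U$ to be a rectifiable curve of bounded turn and using Theorem~\ref{thm turn coincide}) the total variation of the turn of $\partial D$ for $\rho_{\lambda_n}$ is uniformly bounded.

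Second, I would apply Theorem~\ref{thm:alaoglu} to extract a subsequence along which $\omega_{n_k}$ converges weakly to some signed measure $\omega_\infty$ with compact support in $\bar D$. Using the Localization Lemma~\ref{lem:localisation harmonic} to represent $h_{n_k}$ as a potential of a boundary signed measure plus a constant, the assumed uniform bound on the turn of $\partial D$ forces those boundary measures to be uniformly bounded in total variation (this is exactly the step inside the proof of Theorem~\ref{thm:limit met bc disque}); a further extraction makes them converge weakly and the corresponding constants converge as well. Theorem~\ref{thm:limit met bc disque} now applies and identifies the uniform limit of $\rho_{\lambda_{n_k}}$ as a subharmonic distance $\rho_{\lambda(\omega_\infty,h_\infty)}$. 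By uniqueness of the uniform limit, $\rho_{\lambda(\omega_\infty,h_\infty)}=\rho_\lambda$, and comparing weak Laplacians via \eqref{eq:mesure PL} (which is a statement about distributions depending only on $\lambda$) gives $\omega_\infty=\omega$ on $D^\circ$.

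Third, since every weakly convergent subsequence of $(\omega_n|_D)_n$ has the same limit $\omega|_D$, the full sequence converges weakly to $\omega$ on $D^\circ$. Covering the limit surface by countably many such neighborhoods and using a standard partition-of-unity / exhaustion argument (together with Remark~\ref{rem:def weak cv mes}, which for uniformly supported measures reduces weak convergence to testing against continuous compactly supported functions) upgrades the local weak convergences to global weak convergence of $\omega_n$ to $\omega$.

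The main obstacle will be the identification step: ruling out that a harmonic term absorbs part of the missing curvature when passing to the limit. Concretely, I need the turn of $\partial U$ to be controlled uniformly in $n$, because this is precisely what bounds the total variation of the Dini measure representing the harmonic part, and hence what allows Theorem~\ref{thm:limit met bc disque} to be invoked. Securing this uniform turn bound on a well-chosen $\partial U$—e.g. by picking $U$ so that $\partial U$ is a union of shortest arcs meeting at angles bounded away from $0$ and $2\pi$ for the limit metric, and then transferring that estimate to the approximating metrics via uniform convergence of angles (Remark~\ref{remark:angle coincident})—is the delicate part; everything else is compactness plus the uniqueness already packaged in Reshetnyak's convergence theory.
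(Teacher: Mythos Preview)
The paper does not prove this theorem: it is quoted from \cite{AZ}, and immediately afterwards the paper poses as an open question (the question following the theorem) whether a proof can be given purely within the framework of subharmonic distances. Your proposal is, in effect, a sketch of an answer to that open question rather than a reconstruction of an existing proof in the paper.

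There is a genuine gap in your reduction to the plane. You fix the conformal chart $\phi:U\to D$ coming from the \emph{limit} distance $\rho$, and then speak of ``subharmonic representatives $\rho_{\lambda_n}$ on $D$'' for the approximating distances. But the pushforward of $\rho_n$ by $\phi$ is not, a priori, a subharmonic distance on $D$: the chart $\phi$ is conformal for $\rho$, not for $\rho_n$. Each $\rho_n$ has its own isothermal chart $\phi_n:U\to D_n$ (Theorem~\ref{thm:bic is sh}), and the transition maps $\phi\circ\phi_n^{-1}$ are merely homeomorphisms. Consequently you cannot write $\phi_*\rho_n=\rho_{\lambda(\omega_n,h_n)}$ on $D$, and Theorem~\ref{thm:limit met bc disque} does not apply as stated: its hypotheses require a sequence of genuine subharmonic distances $\rho_{\lambda_n}$ on the \emph{fixed} disc.

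To rescue the argument one would need to show that the conformal charts $\phi_n$ can be chosen so that (after normalization, say via the Riemann mapping theorem onto the unit disc with three boundary points fixed) $\phi_n\to\phi$ uniformly; only then does uniform convergence of $\rho_n$ to $\rho$ on $U$ transfer to uniform convergence of $\rho_{\lambda_n}$ to $\rho_\lambda$ on $D$. Establishing this stability of isothermal charts under uniform convergence with bounded $|\omega_n|$ is exactly the missing analytic step, and is not provided by any result quoted in the paper. This is presumably why the question is left open.
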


Indeed, a first observation is that, if a two-dimensional manifold of bounded curvature  (resp. a subharmonic distance) comes from a Riemannian metric, then it is clear that notions of angles coincide, and there is a unique notion of turn of a curve. It follows that both notions of measure coincide. 
Then it suffices to consider a Riemannian approximation of the subharmonic distance with convergence of the curvature measure (Theorem~\ref{thm:approx lisse met}),
and Theorem~\ref{thm: cv measure AZ} implies  Theorem~\ref{thm measures coincide}.

We end this section with the following question.
\begin{question}{Question}
Is it possible to prove the Convergence of Curvature Theorem~\ref{thm: cv measure AZ}  within the framework of subharmonic distances? 
\end{question}

\subsection{Lipschitz approximation and the area measure}
\label{sec:area sh}

The \emph{area}\index{area measure} $\sigma$\index{$\sigma$} of a two-dimensional manifold of bounded curvature  is a positive measure that is defined by polyhedral approximation, see \cite{AZ} for details. We won't say more about the definition, but we will use the following result.

\begin{theorem}[{Convergence of Area, \cite[Theorem 9 p.269]{AZ}}]\label{thm cv area}
Let a sequence of distances of bounded  curvature on a plane domain converge uniformly to a distance of bounded curvature, such that the total variation of the curvature is uniformly bounded. Then the areas converge weakly.
\end{theorem}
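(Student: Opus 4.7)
The plan is to localize to a subharmonic chart and then reduce the convergence of the areas to weak convergence of the curvature measures together with an equi-integrability argument for the conformal factors. By Theorem~\ref{thm:bic is sh}, around any point of the domain we find a neighborhood isometric to $(U,\rho_{\lambda_n})$ with $\lambda_n=\lambda(\omega_n,h_n)$; in such a chart $\omega_n$ is the (restricted) curvature measure and, for polyhedral or Riemannian smoothings, the Alexandrov area coincides with $\lambda_n\mathcal{L}$. Combining Theorem~\ref{thm:approx met lisse bord}, Theorem~\ref{thm cv area} (applied to smooth approximants) and the identification of area measures in the smooth case shows that the area measure in the sense of \cite{AZ} is $\lambda_n\mathcal{L}$ on such a chart. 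By Theorem~\ref{thm measures coincide} and Theorem~\ref{thm: cv measure AZ} the curvature measures $\omega_n$ converge weakly to $\omega_0$, and by (the compactness argument used in) Theorem~\ref{thm:limit met bc disque} one may assume, after possibly invoking Theorem~\ref{thm:alaoglu} and the Localization Theorem~\ref{thm:localisation harmonic}--\ref{thm:localization} on the harmonic parts, that $h_n\to h_0$ locally uniformly away from the finitely many points with $\omega_0^+(\{z\})\geq 2\pi$.

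The second step is to pass from weak convergence of $\omega_n$ to weak convergence of $\lambda_n \mathcal{L}$ on the locus where $\omega_0^+(\{z\})<2\pi$. Pointwise (quasi-everywhere) convergence $p(\omega_n)\to p(\omega_0)$ follows, as in the proof of Lemma~\ref{lem:P_n cv f_n}, from the locally uniform convergence of the truncated logarithmic potentials on compact sets disjoint from the supports, combined with upper-semicontinuity and Fatou's lemma to handle the singular part. Hence $\lambda_n\to\lambda_0$ almost everywhere. The key ingredient is equi-integrability: Lemma~\ref{re-tro inequality} applied with exponent $\alpha=\tfrac12+\tfrac{\epsilon}{2}$ and the uniform bound on $\omega_n^+(\mathbb{C})$ produces, on any compact subset $E$ of the chart with $\omega_n^+(E)<2\pi$ uniformly in $n$, a uniform bound $\int_E \lambda_n^{1+\epsilon}\leq C(E)$. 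Together with the a.e.\ convergence, a Vitali-type argument (precisely as in Theorem~\ref{thm:LVP}, applied here to $\lambda_n$ rather than to $\sqrt{\lambda_n}$) yields $\int \varphi\,\lambda_n\,d\mathcal{L}\to\int\varphi\,\lambda_0\,d\mathcal{L}$ for every continuous compactly supported test function $\varphi$ whose support avoids the critical points.

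The main obstacle is the behavior near the finitely many points $z_0$ with $\omega_0^+(\{z_0\})\geq 2\pi$, where $\lambda_0$ may blow up and the equi-integrability estimate above fails. The plan here is to use the canonical stretching of Theorem~\ref{thm:canoncial streching}, together with Theorem~\ref{thm:fintie points} and Lemma~\ref{lem:mes pos point infini}, to control $\iint_{Q_r(z_0)}\lambda_n$ uniformly in $n$ for $r$ small: after rescaling, the metrics $\lambda_{r,z_0,n}$ converge to the flat cone with mass $\omega_0(\{z_0\})$, so the $\lambda_n$-area of $Q_r(z_0)$ is bounded by that of a small disc in the limiting cone, which tends to zero with $r$ (uniformly in $n$, via a diagonal argument). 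Hence the contribution of arbitrarily small neighborhoods of the critical points is uniformly negligible, and the convergence on the complement from the previous paragraph completes the proof.
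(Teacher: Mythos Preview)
The paper does \emph{not} prove this theorem: it is quoted verbatim from \cite[Theorem~9 p.~269]{AZ}, and immediately afterwards the paper poses as an open question whether the Convergence of Area Theorem can be established within the subharmonic framework. So there is no ``paper's own proof'' to compare against; what you have written is an attempt to answer that open question.

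Your argument has a circularity that is fatal as stated. In the first paragraph you invoke ``Theorem~\ref{thm cv area} (applied to smooth approximants)'' in order to identify the Alexandrov area with $\lambda_n\mathcal{L}$ in the chart. But Theorem~\ref{thm cv area} is the very statement you are trying to prove, and in the paper the identification $\sigma_\lambda=\lambda\,\mathcal{L}$ (Theorem~\ref{thm:formule aire}) is obtained \emph{as a consequence} of Theorem~\ref{thm cv area}, not prior to it. You would need an independent proof that the Alexandrov area equals $\iint\lambda$ before you can reduce the problem to weak convergence of $\lambda_n\mathcal{L}$; Lemma~\ref{lem:hausdorff lisse} gives this only in the smooth case, and extending it to general $\lambda$ without using Theorem~\ref{thm cv area} is precisely the missing step (see Remark~\ref{rem:huber rem aires}).

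The treatment of the critical points is also wrong. You claim that after canonical stretching the $\lambda_n$-area of $Q_r(z_0)$ is controlled by the area of a small disc in the limiting flat cone, and that this tends to zero with $r$. But for a cone of curvature $\omega_0\geq 2\pi$, i.e.\ weight $\beta\leq -1$, one has $\iint_{Q_r(0)}|z|^{2\beta}\,d\mathcal{L}=2\pi\int_0^r t^{2\beta+1}\,dt=+\infty$, so the cone area of any Euclidean disc around the apex is infinite, not small. Moreover the canonical stretching of Theorem~\ref{thm:canoncial streching} gives uniform convergence of \emph{distances} on annuli $R_{\epsilon,1}$, not control of \emph{areas} on full discs; passing from one to the other near a point with $\omega_0^+(\{z_0\})\geq 2\pi$ is exactly where the difficulty lies, and your diagonal argument does not address it.
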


\begin{question}{Question}
Is it possible to prove the Convergence of Area Theorem~\ref{thm cv area} within the framework of subharmonic distances? 
\end{question}

We say that a sequence $(\rho_n)_n$ of distances over a same set $M$ \emph{converges proportionally}\index{proportional convergence}  to a distance $\rho$ if, uniformly,
$\frac{\rho_n}{\rho}\to 0$, or, equivalently, for any $\epsilon >0$, for all $x,y\in M$ there is $N$ such that when $n\geq N$,
\begin{equation}\label{eq:def proportional}(1-\epsilon) \rho(x,y) \leq \rho_n(x,y) \leq (1+\epsilon) \rho(x,y)~. \end{equation}

Proportional convergence is nowadays called \emph{Lipschitz convergence}\index{Lipschitz convergence}, and, for compact metric spaces, implies uniform convergence, see e.g., \cite{bbi}.

\begin{theorem}[Burago Lipschitz Approximation]\label{them:burago}
On a compact two-dimensional manifold of bounded curvature, if the curvature is 
$<2\pi$ at any point, then there exists a 
sequence of polyhedral metrics converging proportionally to it. 
\end{theorem}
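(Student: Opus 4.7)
The plan is to construct polyhedral approximations locally in isothermal coordinates by discretizing the curvature measure, to upgrade weak convergence of measures into proportional convergence of distances via a uniform control of logarithmic potentials, and finally to patch the local approximations into a global one.

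First, by compactness and Theorem~\ref{thm:bic is sh} together with Corollary~\ref{cor:disque iso}, cover $M$ by finitely many closed conformal disks $\overline{U}_\alpha$ on which the induced intrinsic metric has the form $\lambda(\omega_\alpha,h_\alpha)|dz|^2$ for a signed measure $\omega_\alpha$ with compact support and a harmonic function $h_\alpha$ defined on a neighborhood of $\overline{U}_\alpha$. Since $\omega$ is a finite Borel measure on a compact surface, the hypothesis $\omega(\{z\})<2\pi$ at every $z\in M$, together with regularity, gives a uniform $\eta>0$ such that $\omega(\{z\})<2\pi-\eta$ for all $z$ and $\omega^+(B_r(z))<2\pi-\eta/2$ for $r$ smaller than some uniform $r_0>0$; in particular $\omega_\alpha$ has at most finitely many atoms in $\overline{U}_\alpha$.

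Second, in each chart split $\omega_\alpha=\omega_\alpha^{\mathrm{at}}+\omega_\alpha^{\mathrm{c}}$ into its atomic and continuous parts, and approximate $\omega_\alpha^{\mathrm{c}}$ by discrete measures $\omega_{\alpha,n}^{\mathrm{c}}$ supported on a fine grid of $\overline{U}_\alpha$ (Riemann-sum discretization), whose atom masses tend to $0$ uniformly in $n$. Set $\omega_{\alpha,n}:=\omega_\alpha^{\mathrm{at}}+\omega_{\alpha,n}^{\mathrm{c}}$, so that $\omega_{\alpha,n}\to\omega_\alpha$ weakly and for large $n$ each atom of $\omega_{\alpha,n}$ has mass bounded by $2\pi-\eta/4$. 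By Lemma~\ref{lem:ln module holo} the conformal factor $e^{-2p(\omega_{\alpha,n})}$ is locally of the form $|f_n|^{-\omega_{\alpha,n}(\{z_j\})/\pi}$ near each atom and smooth elsewhere, which means that $\lambda(\omega_{\alpha,n})|dz|^2$ is flat away from its finitely many conical singularities (all with cone angle in $(0,2\pi)$). Absorbing the harmonic term $h_\alpha$ into a conformal change of coordinates (which is possible by Lemma~\ref{lem:comp harm conf} and the fact that $h_\alpha=\operatorname{Re} F_\alpha$ locally) yields a genuine polyhedral distance $\rho_{\alpha,n}$ on $\overline{U}_\alpha$.

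The third, and principal, difficulty is to upgrade weak convergence to \emph{proportional} convergence. The difference $p(\omega_{\alpha,n})-p(\omega_\alpha)=p(\omega_{\alpha,n}^{\mathrm{c}}-\omega_\alpha^{\mathrm{c}})$ is the logarithmic potential of a signed measure that tends to $0$ weakly, has bounded total variation, and whose atoms go to $0$; because the atomic part is preserved exactly, this difference is the potential of a purely non-atomic signed measure and, by choosing the grid fine relative to a modulus of continuity of the continuous part of $p(\omega_\alpha)$, it can be made uniformly small on $\overline{U}_\alpha$. Uniform smallness of $p(\omega_{\alpha,n})-p(\omega_\alpha)$ translates into $\lambda(\omega_{\alpha,n},h_\alpha)/\lambda(\omega_\alpha,h_\alpha)\in[1-\varepsilon,1+\varepsilon]$, hence into proportional closeness of $\tilde s_{\lambda_n}$ and $\tilde s_\lambda$ on every rectifiable arc, and therefore of the intrinsic distances by passing to the infimum (using Theorem~\ref{thm length coincide}). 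The hardest part I expect is the uniform control of $p(\omega_{\alpha,n})-p(\omega_\alpha)$ near the atoms of $\omega_\alpha$, where $p(\omega_\alpha)$ blows up; the strategy is to invoke the Canonical Stretching Theorem~\ref{thm:canoncial streching} to compare, in a small neighborhood of each atom, the geometry of $\rho_{\lambda_n}$ and $\rho_\lambda$ with the common flat cone defined by the shared atom, obtaining proportional comparison on that neighborhood. A secondary obstacle is the global patching: the local polyhedral metrics must be combined into a single polyhedral metric on $M$ compatibly, which requires a diagonal argument together with Theorem~\ref{prop:conv increas} to transfer the proportional estimates across chart overlaps.
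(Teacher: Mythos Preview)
The paper does not prove this theorem: it only cites Burago's external work \cite{burago-proportional,burago-lip}, and in fact Question~\ref{ques:brago} explicitly asks whether a proof within the subharmonic framework is possible. So your attempt is not a reconstruction of the paper's argument but an attack on an open question raised there.

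There is a genuine gap in your third step. You claim that the difference $p(\omega_{\alpha,n})-p(\omega_\alpha)=p(\omega_{\alpha,n}^{\mathrm{c}}-\omega_\alpha^{\mathrm{c}})$ is ``the potential of a purely non-atomic signed measure'' and can be made uniformly small. This is false: $\omega_{\alpha,n}^{\mathrm{c}}$ is itself a finite sum of Dirac masses (that is precisely how you built it), so the difference $\omega_{\alpha,n}^{\mathrm{c}}-\omega_\alpha^{\mathrm{c}}$ carries all of those new atoms. Consequently $p(\omega_{\alpha,n}^{\mathrm{c}})$ has a logarithmic singularity at each grid point, whereas $p(\omega_\alpha^{\mathrm{c}})$ is typically finite there; the difference blows up and cannot be uniformly small on $\overline{U}_\alpha$. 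In terms of the conformal factors, $\lambda(\omega_{\alpha,n})/\lambda(\omega_\alpha)$ tends to $0$ or $+\infty$ at each new grid atom according to its sign, so the ratio is never contained in $[1-\varepsilon,1+\varepsilon]$ over the whole chart. This kills the passage from ``pointwise ratio close to $1$'' to ``lengths of all rectifiable arcs proportionally close'', since an arc can be made to pass arbitrarily close to a grid atom.

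What is really needed for proportional (Lipschitz) convergence is not uniform closeness of potentials, which cannot hold for polyhedral approximants, but a uniform bilipschitz comparison of the induced distances that survives the singularities; this is exactly the hard content of Burago's argument and the reason the paper flags it as open in the subharmonic setting. Your Canonical Stretching idea handles the \emph{original} atoms of $\omega_\alpha$, but you would also need to control, uniformly in $n$, the geometry near the \emph{new} (and increasingly numerous) atoms produced by discretization, and your sketch does not address this.
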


Burago Lipschitz Approximation Theorem~\ref{them:burago} is a consequence of the statements of Theorem~28 and Theorem~29 in \cite{Res1959}. Unfortunately, proofs of these theorems were never published \cite{res-personal}. A sketch of the proof of Theorem~\ref{them:burago} was then published by Yu.~D.~Burago in \cite{burago-proportional}. Finally, a complete proof of Theorem~\ref{them:burago} was published by Burago in \cite{burago-lip}, see Lemma~6 there.   
\begin{question}{Question}\label{ques:brago}
Is it possible to prove the Burago Lipschitz Approximation Theorem~\ref{them:burago} within the framework of subharmonic distances? 
\end{question}
A proof of Theorem~28 and Theorem~29 in \cite{Res1959}  would provide a positive answer to Question~\ref{ques:brago}.

\begin{remark}{\rm It was noted to the author by Giona Veronelli that there is no  Lipschitz approximation of any  
compact two-dimensional manifold of bounded curvature by Riemannian surfaces.
}\end{remark}

From \eqref{eq:def proportional}, the identity mapping is a bi-Lipschitz mapping from $(M,\rho_n)$ to $(M,\rho)$. Hence, under the assumption of  Burago Lipschitz Approximation Theorem~\ref{them:burago}, it follows from Lemma~\ref{lem:haus lip} that the associated sequences of two-dimensional Hausdorff measures converge weakly. Together with  the Convergence of Area Theorem~\ref{them:burago}, one obtains the following statement.  

\begin{theorem}[Stratilatova]\label{thm: stratilatova}
The area measure of a two-dimensional manifold of bounded curvature is the two-dimensional Hausdorff measure.
\end{theorem}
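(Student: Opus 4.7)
The plan is to use exactly the recipe indicated in the paragraph preceding the statement, with one preliminary verification (that both measures agree on polyhedra) and one reduction (to handle points where the curvature reaches $2\pi$).

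First I would verify that on a (two-dimensional) polyhedron $(\overline{Q},\rho_P)$ the area measure $\sigma_P$ coincides with the two-dimensional Hausdorff measure $\mathcal{H}^2_{\rho_P}$. Outside the finite set of vertices, the polyhedron is locally isometric to an open subset of the Euclidean plane, so Lemma~\ref{lem:hausdorff lisse} (applied with $\lambda\equiv 1$, its argument being purely local) yields $\sigma_P=\mathcal{H}^2_{\rho_P}$ there. At a vertex, a single point has zero $\mathcal{H}^2_{\rho_P}$-measure, and also zero $\sigma_P$-measure (it is an atom of the curvature measure, not of the area). Thus the two Borel measures agree on the polyhedron.

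Next I would reduce the general statement to the case where the curvature satisfies $\mes_{\operatorname{AZ}}(\{z\})<2\pi$ everywhere. Since the positive part of the curvature has finite total variation, the set $E=\{z:\mes_{\operatorname{AZ}}(\{z\})\geq 2\pi\}$ is at most countable, hence $\mathcal{H}^2_\rho(E)=0$, and by the same atom-free argument the area measure $\sigma$ of a single point is also zero, so $\sigma(E)=0$. Both measures are regular Borel, so if they agree on every open set missing $E$ they agree globally; this reduces us to a compact two-dimensional manifold of bounded curvature with all atoms of curvature strictly less than $2\pi$.

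Then I would invoke the Burago Lipschitz Approximation Theorem~\ref{them:burago} to obtain a sequence of polyhedral distances $\rho_n$ on $M$ converging proportionally to $\rho$. By \eqref{eq:def proportional} the identity mapping is $(1+\epsilon_n)$-bi-Lipschitz between $(M,\rho_n)$ and $(M,\rho)$ with $\epsilon_n\to 0$. Applying Lemma~\ref{lem:haus lip} in both directions gives
\[
(1-\epsilon_n)^2\,\mathcal{H}^2_{\rho_n}(A)\leq \mathcal{H}^2_{\rho}(A)\leq (1+\epsilon_n)^2\,\mathcal{H}^2_{\rho_n}(A)
\]
for every Borel $A$, which forces the weak convergence $\mathcal{H}^2_{\rho_n}\to \mathcal{H}^2_{\rho}$. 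On the other hand, the Convergence of Area Theorem~\ref{thm cv area} (applicable because the curvatures have uniformly bounded total variation, a standard byproduct of proportional convergence combined with the Convergence of Curvature Theorem~\ref{thm: cv measure AZ}) gives $\sigma_{\rho_n}\to \sigma_\rho$ weakly. By the first step, $\sigma_{\rho_n}=\mathcal{H}^2_{\rho_n}$ for every $n$, so passing to the weak limit yields $\sigma_\rho=\mathcal{H}^2_\rho$.

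The main obstacle I anticipate is the hypothesis control needed to apply Burago's theorem and the Convergence of Area theorem simultaneously: Theorem~\ref{them:burago} asks that $\mes_{\operatorname{AZ}}(\{z\})<2\pi$ at every point, and Theorem~\ref{thm cv area} asks that the total variation of the curvature along the approximating sequence be uniformly bounded. The reduction step above secures the first, and verifying that a proportionally convergent sequence of polyhedra automatically has uniformly bounded total curvature (via Gauss--Bonnet and the bi-Lipschitz control of lengths, which controls the turn of the boundary) is where the delicate bookkeeping lies; everything else is a routine limit argument.
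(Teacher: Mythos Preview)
Your proposal is correct and follows essentially the same route as the paper's argument: Burago's Lipschitz approximation gives polyhedral metrics $\rho_n\to\rho$ proportionally, the bi-Lipschitz control via Lemma~\ref{lem:haus lip} forces $\mathcal{H}^2_{\rho_n}\to\mathcal{H}^2_\rho$ weakly, the Convergence of Area Theorem~\ref{thm cv area} gives $\sigma_{\rho_n}\to\sigma_\rho$ weakly, and equality on polyhedra passes to the limit. The paper presents this as a one-paragraph sketch; you have supplied the two steps it leaves implicit (the verification on polyhedra and the reduction away from points of curvature $\geq 2\pi$), and you have correctly flagged the one genuine loose end---the uniform bound on $|\mes_n|$ needed to invoke Theorem~\ref{thm cv area}---which the paper does not address either and which must ultimately come from the construction in Burago's theorem rather than from any abstract argument.
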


It is noted in \cite[p. 266]{AZ} that Theorem~\ref{thm: stratilatova} was first proved by M.V. Stratilatova in \cite{Stratilatova}.

In turn, Convergence of Area Theorem~\ref{thm cv area} is a result about weak convergence of Hausdorff measure under an assumption of uniform convergence of the distances. 
We can derive the following result (see Remark~\ref{rem:huber rem aires}). Another proof can be found in \cite[Section 8]{lytchak-wenger}.

\begin{theorem}\label{thm:formule aire}
Let $(M,\rho_\lambda)$ be a subharmonic distance over a bounded plane domain. Then the two-dimensional Hausdorff measure of a Borel subset $E$ is equal to $\iint_E \lambda$.
\end{theorem}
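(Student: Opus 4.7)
The plan is to bootstrap from the smooth case (Lemma~\ref{lem:hausdorff lisse}) via the Riemannian Approximation Theorem~\ref{thm:approx lisse met}, using the identification of Hausdorff measure with the Alexandrov area measure (Stratilatova's Theorem~\ref{thm: stratilatova}) together with the Convergence of Area Theorem~\ref{thm cv area}. Since both $\mathcal{H}^2_{\rho_\lambda}$ and the set-function $E\mapsto \iint_E \lambda$ define Borel (outer) measures on $M$ minus the finite set of points at infinity (points $z$ with $\mes(\{z\})\geq 2\pi$, which form a set of zero $\mathcal{H}^2$ and on which $\lambda$ is integrated to zero or handled separately), it suffices to prove equality on relatively compact open sets $U\Subset M$ avoiding these points.

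First I would fix such a $U$ and invoke Theorem~\ref{thm:approx lisse met} (or its local version Theorem~\ref{thm:approx met lisse bord}) to produce a sequence $\lambda_n = \lambda(\mes_n, h)$ of $C^\infty$ conformal factors with $\mes_n^\pm\to \mes^\pm$ weakly, $\lambda_n\to \lambda$ pointwise, and $\rho_{\lambda_n}\to \rho_\lambda$ uniformly on $\bar U$, with uniformly bounded total variation of curvature. For each $n$, the metric $\lambda_n |dz|^2$ is genuinely Riemannian and $C^2$, so Lemma~\ref{lem:hausdorff lisse} gives
\begin{equation*}
\iint_E \lambda_n \;=\; \mathcal{H}^2_{\rho_{\lambda_n}}(E)\qquad \text{for every Borel }E\subset \bar U.
\end{equation*}
By Theorem~\ref{thm:sh is bic} each $\rho_{\lambda_n}$, and also the limit $\rho_\lambda$, is a distance of bounded curvature on $U$, hence by Stratilatova's Theorem~\ref{thm: stratilatova} the left-hand side equals the Alexandrov area $\sigma_n(E)$, and similarly $\mathcal{H}^2_{\rho_\lambda}=\sigma$ on $U$.

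Next I would apply the Convergence of Area Theorem~\ref{thm cv area}: the sequence of distances of bounded curvature $\rho_{\lambda_n}$ converges uniformly to $\rho_\lambda$ with uniformly bounded total variation of curvature, so the area measures $\sigma_n$ converge weakly to $\sigma$ on $U$. Thus $\lambda_n\mathcal{L}\rightharpoonup \mathcal{H}^2_{\rho_\lambda}$ weakly on $U$. To conclude that $\mathcal{H}^2_{\rho_\lambda}=\lambda\mathcal{L}$ on $U$, it remains to show that also $\lambda_n\mathcal{L}\rightharpoonup \lambda\mathcal{L}$, i.e.\ that $\lambda_n\to \lambda$ in $L^1_{\mathrm{loc}}(U)$. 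Since $\lambda_n\to \lambda$ pointwise (Proposition~\ref{prop:approx potentiel} applied to $\mes^\pm$), this reduces to equi-integrability of $(\lambda_n)$ on compacts of $U$.

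The main obstacle is precisely this equi-integrability step. I would obtain it from the fundamental estimate Theorem~\ref{prop:fun estimate}: since $\mes^+(\{z\})<2\pi$ for every $z\in \bar U$ (we removed the points at infinity), regularity of $\mes^+$ and the weak convergence $\mes_n^+\to \mes^+$ give, for every compact $K\subset U$, a neighbourhood on which $\mes_n^+$ has mass uniformly less than $2\pi$. Applied with $\alpha=1+\varepsilon$ for some small $\varepsilon>0$ (where here $\alpha$ replaces the factor $1/2$ in the length estimate, by integrating $\lambda^\alpha$ on $2$-dimensional subsets via Fubini on segments), Theorem~\ref{prop:fun estimate} yields $\sup_n\iint_K \lambda_n^{1+\varepsilon}<\infty$, hence equi-integrability by the de la Vallée-Poussin criterion (compare the use in the proof of Proposition~\ref{prop: cv lg}). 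Together with pointwise a.e.\ convergence this gives $L^1_{\mathrm{loc}}$ convergence of $\lambda_n$ to $\lambda$, and therefore $\iint_E \lambda = \mathcal{H}^2_{\rho_\lambda}(E)$ for every Borel $E\Subset U$; exhausting $M$ minus the finite set of points at infinity by such $U$ concludes the proof.
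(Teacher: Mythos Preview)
Your overall architecture coincides with the paper's: Riemannian approximation (Theorem~\ref{thm:approx lisse met}), the smooth case Lemma~\ref{lem:hausdorff lisse}, Stratilatova's identification, and the Convergence of Area Theorem~\ref{thm cv area}, reducing everything to showing $\iint_E \lambda_n \to \iint_E \lambda$.

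Where you diverge is precisely in this last step. You argue equi-integrability of $(\lambda_n)$ via an $L^{1+\varepsilon}$ bound extracted from the fundamental estimate Theorem~\ref{prop:fun estimate}. That theorem, however, bounds $\int_K \lambda^\alpha$ along \emph{arcs} of small rotation; your ``Fubini on segments'' to pass to area integrals is plausible but is not what Theorem~\ref{prop:fun estimate} says, and making it uniform in $n$ and over a full $2$-dimensional compact would require some additional work you have not written. The paper avoids this entirely: because the approximation of Proposition~\ref{prop:approx potentiel} is \emph{monotone}, one has pointwise $p(\mes_n^\pm)\nearrow p(\mes^\pm)$, and hence on any compact $K\subset U$
\[
\lambda_n \;=\; e^{-2p(\mes_n^+)}\,e^{2p(\mes_n^-)}\,e^{-2h}\;\le\; e^{-2p(\mes_1^+)}\,e^{2p(\mes^-)}\,e^{-2h},
\]
which is bounded on $K$ (the first factor because $\mes_1^+$ has smooth density so $p(\mes_1^+)$ is continuous, the second by upper semicontinuity of $p(\mes^-)$, Lemma~\ref{lem: pmu lower}). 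Dominated convergence then gives $\iint_E \lambda_n \to \iint_E \lambda$ immediately. So your route is correct in spirit but strictly more laborious, and the step you flag as ``the main obstacle'' dissolves once you exploit the monotonicity built into the smooth approximation.
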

\begin{proof}
We use a Riemannian approximation given by Theorem~\ref{thm:approx lisse met}. By Lemma~\ref{lem:hausdorff lisse} and Theorem~\ref{thm cv area}, it suffices to check that for a Borel set $E$, $\iint_E \lambda_n \to \iint_E \lambda$.
The result follows by the Dominated Convergence Theorem (using the fact that $-p(\mes^+_n)\leq - p(\mes^+)$ by Proposition~\ref{prop:approx potentiel}, and Lemma~\ref{lem: pmu lower} for $p(\mes^-)$).
\end{proof}

\subsection{Non-positive curvature and isoperimetric inequality}

Let us end this section mentioning a last result about subharmonic distances. Note that the original result is more general. 
\begin{theo}
\begin{theorem}[{\cite[Theorem 1]{R61} }]
Let  $\rho_\lambda$ be a subharmonic distance on a plane domain $M$, with area measure $\sigma_\lambda$. Let us suppose that it \emph{satisfies the isoperimetric inequality}, i.e.,  any compact set $F\subset M$ has finite area, and  there is $\delta>0$ such that for any $z_0\in F$ and $r<\delta$,
$$s^2_\lambda(C_r(z_0))  \geq 4\pi \sigma_\lambda(Q_r(z_0))~. $$

Then the curvature measure of $\rho_\lambda$ is non-positive.
\end{theorem}
\end{theo}
Heuristically, the other direction goes at follows. 
Let us consider a subharmonic distance $\rho_\lambda$ in a plane domain. Let us suppose that its curvature measure is non-positive. In other words, let us suppose that 
$-\ln \lambda$ is subharmonic.\footnote{Functions such that their logarithm is subharmonic are called \emph{PL}\index{PL}, see e.g., \cite{rado,B-rado}.}
Suppose that there is a harmonic function $h$  over $M$ such that $-\ln \lambda=h$ over $C_r(z_0)$ (see e.g. \cite[Exercise 4.5.2]{ransford}). Then $-\ln \lambda \leq h$ over $Q_r(z_0)$ (see Remark~\ref{rem:harmonic majorant}). Let us denote 
$\lambda_0=\E^{h}$. Then
$$s_{\lambda}^2(C_r(z_0))=s_{\lambda_0}^2(C_r(z_0)) \geq
4\pi \iint_{Q_r(z_0)}\lambda_0 \geq 4\pi \iint_{Q_r(z_0)}\lambda
=4\pi \sigma_\lambda(Q_r(z_0))~,$$
where  the first $\geq$ is the isoperimetric inequality in the Euclidean plane, as $\lambda_0 |\D z|^2$ is a flat metric, and the last equality is Theorem~\ref{thm:formule aire}.
This is a classical observation, see \cite{ivan}, \cite{lytchak-wenger} for more references.

There is another classical notion of metric spaces with non-positive curvature, which are locally compact intrinsic metric spaces, such that the upper-angle  at the vertex of any triangle in a given neighborhood is not larger that the angle at the corresponding vertex of a comparison triangle \cite{bbi}, \cite{BH}, \cite{alexander-kapovich-petrunin}. 
To avoid confusion, for this latter class of metric space, we will
speak about \emph{non-positive curvature in the sense of Alexandrov}\index{non-positive curvature in the sense of Alexandrov}. 
It follows from the definition that such a distance, say on a plane domain, is a distance of bounded  curvature, and that the curvature measure  is then non-positive.
% It is tempting to say that a subharmonic distance with non-positive curvature is a distance with non-positive curvature in the sense of Alexandrov using First Comparison Theorem~\ref{thm:first comparaison theorem}. But in this result, the fact that the triangle is supposed to bound a domain homeomorphic to a disc cannot be omitted, as the example of a flat cylinder shows, see \cite[p. 140]{R93}. Actually, this is true in the case of non-positive measure on a domain homeomorphic to a disc, see Remark~\ref{remark:unicite geod courbure negative}.

A theorem of Alexandrov \cite{alex58} says that 
an isoperimetric inequality holds in metric spaces with non-positive curvature in the sense of Alexandrov (see \cite{BH} for a proof). It may also follows from the \emph{Reshetnyak Majorization Theorem}, which is, loosely speaking, an enhancement of the Contraction Onto a Cone Theorem~\ref{thm contraction} for metrics spaces with upper curvature bound. See \cite{resh68}, \cite{alexandrov-book} for a proof.  It is stated (for the case of bounded specific curvature, see below) as Theorem~13 in \cite{Res1959}. 
However, it was then proved in \cite{lytchak-wenger} that, loosely speaking,  isoperimetric inequality (which has to be defined in this general case) characterizes metric spaces with non-positive curvature in the sense of Alexandrov.

Similar definitions and results hold for \emph{metric space with curvature bounded from above by $\curv$ in the sense of Alexandrov}, where $\curv$ is a real number. 
For $\curv<0$, as for $\curv=0$, it is immediate from the definition that such a distance on a plane domain is a distance of bounded curvature. It was considered as folklore that the same fact holds for $\curv>0$, but explicit arguments were recently given in \cite{romney2}.

There is also a symmetric notion of \emph{metric space with curvature bounded from below by $\curv$ in the sense of Alexandrov}. Over a plane domain, they also are distances of bounded curvature, see \cite{richard}.

%One says that  a subharmonic distance on a plane domain has \emph{specific curvature bounded from above (resp. below) by $\curv$}\index{specific curvature} if the measure
%$\mes - \curv \sigma$ is non-positive (resp. non-negative). 

%It is considered as folklore that the different notions of bounded curvature which are analogous coincide. As far as we know, this is not stated (even less, proved) anywhere. One may try  to prove things directly from definitions, or to use more recent advanced results, such as contained in e.g., \cite{Lytchak-wenger-2}, \cite{Lytchak3}.

%\tiny	

\printindex

\bibliographystyle{apalike}

\bibliography{resh}

\begin{thebibliography}{}

\bibitem[Adamowicz and Veronelli, 2022]{Veronelli}
Adamowicz, T. and Veronelli, G. (2022).
\newblock Isoperimetric inequalities and geometry of level curves of harmonic
  functions on smooth and singular surfaces.
\newblock {\em Calc. Var. Partial Differential Equations}, 61(1):Paper No. 2,
  30.

\bibitem[Ahlfors, 2006]{ahlfors-qc}
Ahlfors, L.~V. (2006).
\newblock {\em Lectures on quasiconformal mappings}, volume~38 of {\em
  University Lecture Series}.
\newblock American Mathematical Society, Providence, RI, second edition.
\newblock With supplemental chapters by C. J. Earle, I. Kra, M. Shishikura and
  J. H. Hubbard.

\bibitem[Ahlfors and Sario, 1960]{ahlfors-riemann}
Ahlfors, L.~V. and Sario, L. (1960).
\newblock {\em Riemann surfaces}.
\newblock Princeton Mathematical Series, No. 26. Princeton University Press,
  Princeton, N.J.

\bibitem[{Alexander} et~al., 2019]{alexandrov-book}
{Alexander}, S., {Kapovitch}, V., and {Petrunin}, A. (2019).
\newblock {Alexandrov geometry: preliminary version no. 1}.
\newblock {\em arXiv e-prints}, page arXiv:1903.08539.

\bibitem[Alexander et~al., 2019]{alexander-kapovich-petrunin}
Alexander, S., Kapovitch, V., and Petrunin, A. (2019).
\newblock {\em An invitation to {A}lexandrov geometry}.
\newblock SpringerBriefs in Mathematics. Springer, Cham.
\newblock CAT(0) spaces.

\bibitem[Alexandrov, 1957]{alex58}
Alexandrov, A.~D. (1957).
\newblock \"{U}ber eine {V}erallgemeinerung der {R}iemannschen {G}eometrie.
\newblock {\em Schr. Forschungsinst. Math.}, 1:33--84.

\bibitem[Alexandrov, 2006]{alexandrovintr}
Alexandrov, A.~D. (2006).
\newblock {\em A. {D}. {A}lexandrov selected works. {P}art {II}}.
\newblock Chapman \& Hall/CRC, Boca Raton, FL.
\newblock Intrinsic geometry of convex surfaces, Edited by S. S. Kutateladze,
  Translated from the Russian by S. Vakhrameyev.

\bibitem[Alexandrov, 2012]{alexdc}
Alexandrov, A.~D. (2012).
\newblock On the surfaces representable as difference of convex functions
  [translation of mr0048059].
\newblock {\em Sib. \`Elektron. Mat. Izv.}, 9:360--376.

\bibitem[Alexandrov and Reshetnyak, 1989]{AR}
Alexandrov, A.~D. and Reshetnyak, Y.~G. (1989).
\newblock {\em General theory of irregular curves}, volume~29 of {\em
  Mathematics and its Applications (Soviet Series)}.
\newblock Kluwer Academic Publishers Group, Dordrecht.
\newblock Translated from the Russian by L. Ya. Yuzina.

\bibitem[Alexandrov and Zalgaller, 1965]{AZ2}
Alexandrov, A.~D. and Zalgaller, V.~A., editors (1965).
\newblock {\em Two-dimensional manifolds of bounded curvature}.
\newblock Proceedings of the Steklov Institute of Mathematics, No. 76 (1965).
  American Mathematical Society, Providence, R.I.
\newblock Translated from the Russian by J. M. Danskin.

\bibitem[Alexandrov and Zalgaller, 1967]{AZ}
Alexandrov, A.~D. and Zalgaller, V.~A. (1967).
\newblock {\em Intrinsic geometry of surfaces}.
\newblock Translated from the Russian by J. M. Danskin. Translations of
  Mathematical Monographs, Vol. 15. American Mathematical Society, Providence,
  R.I.

\bibitem[Ambrosio and Bertrand, 2016]{AB}
Ambrosio, L. and Bertrand, J. (2016).
\newblock On the regularity of {A}lexandrov surfaces with curvature bounded
  below.
\newblock {\em Anal. Geom. Metr. Spaces}, 4(1):282--287.

\bibitem[Ambrosio and Bertrand, 2018]{ABDC}
Ambrosio, L. and Bertrand, J. (2018).
\newblock D{C} calculus.
\newblock {\em Math. Z.}, 288(3-4):1037--1080.

\bibitem[Ambrosio et~al., 2000]{AFP}
Ambrosio, L., Fusco, N., and Pallara, D. (2000).
\newblock {\em Functions of bounded variation and free discontinuity problems}.
\newblock Oxford Mathematical Monographs. The Clarendon Press, Oxford
  University Press, New York.

\bibitem[Ambrosio and Tilli, 2004]{AT}
Ambrosio, L. and Tilli, P. (2004).
\newblock {\em Topics on analysis in metric spaces}, volume~25 of {\em Oxford
  Lecture Series in Mathematics and its Applications}.
\newblock Oxford University Press, Oxford.

\bibitem[Armitage and Gardiner, 2001]{AG}
Armitage, D.~H. and Gardiner, S.~J. (2001).
\newblock {\em Classical potential theory}.
\newblock Springer Monographs in Mathematics. Springer-Verlag London, Ltd.,
  London.

\bibitem[Arsove, 1953]{arsove}
Arsove, M.~G. (1953).
\newblock Functions representable as differences of subharmonic functions.
\newblock {\em Trans. Amer. Math. Soc.}, 75:327--365.

\bibitem[Ash, 1972]{Ash}
Ash, R.~B. (1972).
\newblock {\em Measure, integration, and functional analysis}.
\newblock Academic Press, New York-London.

\bibitem[Bak and Newman, 2010]{bak-newman}
Bak, J. and Newman, D.~J. (2010).
\newblock {\em Complex analysis}.
\newblock Undergraduate Texts in Mathematics. Springer, New York, third
  edition.

\bibitem[Bartle and Joichi, 1961]{BJ61}
Bartle, R.~G. and Joichi, J.~T. (1961).
\newblock The preservation of convergence of measurable functions under
  composition.
\newblock {\em Proc. Amer. Math. Soc.}, 12:122--126.

\bibitem[Bartolucci and Castorina, 2019]{bartolucci}
Bartolucci, D. and Castorina, D. (2019).
\newblock On a singular {L}iouville-type equation and the {A}lexandrov
  isoperimetric inequality.
\newblock {\em Ann. Sc. Norm. Super. Pisa Cl. Sci. (5)}, 19(1):35--64.

\bibitem[Beardon, 1984]{beardon}
Beardon, A.~F. (1984).
\newblock {\em A primer on {R}iemann surfaces}, volume~78 of {\em London
  Mathematical Society Lecture Note Series}.
\newblock Cambridge University Press, Cambridge.

\bibitem[Beckenbach and Rad\'{o}, 1933]{B-rado}
Beckenbach, E.~F. and Rad\'{o}, T. (1933).
\newblock Subharmonic functions and surfaces of negative curvature.
\newblock {\em Trans. Amer. Math. Soc.}, 35(3):662--674.

\bibitem[Bonahon, 2009]{bonahon}
Bonahon, F. (2009).
\newblock {\em Low-dimensional geometry}, volume~49 of {\em Student
  Mathematical Library}.
\newblock American Mathematical Society, Providence, RI; Institute for Advanced
  Study (IAS), Princeton, NJ.
\newblock From Euclidean surfaces to hyperbolic knots, IAS/Park City
  Mathematical Subseries.

\bibitem[Bridson and Haefliger, 1999]{BH}
Bridson, M.~R. and Haefliger, A. (1999).
\newblock {\em Metric spaces of non-positive curvature}, volume 319 of {\em
  Grundlehren der Mathematischen Wissenschaften [Fundamental Principles of
  Mathematical Sciences]}.
\newblock Springer-Verlag, Berlin.

\bibitem[Burago et~al., 2001]{bbi}
Burago, D., Burago, Y., and Ivanov, S. (2001).
\newblock {\em A course in metric geometry}, volume~33 of {\em Graduate Studies
  in Mathematics}.
\newblock American Mathematical Society, Providence, RI.

\bibitem[Burago, 1965a]{burago-proportional}
Burago, Y. (1965a).
\newblock On proportional approximation of a metric.
\newblock {\em Trudy Mat. Inst. Steklov.}, 76:120--123.
\newblock English translation in \cite{AZ2}.

\bibitem[Burago, 2004]{burago-lip}
Burago, Y. (2004).
\newblock Bi-{L}ipschitz-equivalent {A}leksandrov surfaces. {II}.
\newblock {\em Algebra i Analiz}, 16(6):28--52.

\bibitem[Burago and Buyalo, 1998]{burago-buyalo}
Burago, Y. and Buyalo, S. (1998).
\newblock Metrics with upper-bounded curvature on {$2$}-polyhedra. {II}.
\newblock {\em Algebra i Analiz}, 10(4):62--112.

\bibitem[Burago, 1965b]{burago-limit}
Burago, Y.~D. (1965b).
\newblock The closure of a class of manifolds with bounded curvature.
\newblock {\em Trudy Mat. Inst. Steklov.}, 76:141--147.
\newblock English translation in \cite{AZ2}.

\bibitem[Cartan, 1945]{cartan}
Cartan, H. (1945).
\newblock Th\'{e}orie du potentiel newtonien: \'{e}nergie, capacit\'{e}, suites
  de potentiels.
\newblock {\em Bull. Soc. Math. France}, 73:74--106.

\bibitem[Cassorla, 1992]{cassorla}
Cassorla, M. (1992).
\newblock Approximating compact inner metric spaces by surfaces.
\newblock {\em Indiana Univ. Math. J.}, 41(2):505--513.

\bibitem[Chern, 1955]{chern}
Chern, S.-s. (1955).
\newblock An elementary proof of the existence of isothermal parameters on a
  surface.
\newblock {\em Proc. Amer. Math. Soc.}, 6:771--782.

\bibitem[Chern et~al., 1954]{chern-hartman-wintner}
Chern, S.-s., Hartman, P., and Wintner, A. (1954).
\newblock On isothermic coordinates.
\newblock {\em Comment. Math. Helv.}, 28:301--309.

\bibitem[Chirka, 2018]{chirka}
Chirka, E.~M. (2018).
\newblock Potentials on a compact {R}iemann surface.
\newblock {\em Tr. Mat. Inst. Steklova}, 301(Kompleksny\u{\i} Analiz,
  Matematicheskaya Fizika i Prilozheniya):287--319.

\bibitem[Chowdhury et~al., 2022]{romney2}
Chowdhury, S., Hu, H., Romney, M., and Tsou, A. (2022).
\newblock On cat(k) surfaces.

\bibitem[Creutz and Romney, 2022]{creutz2021triangulating}
Creutz, P. and Romney, M. (2022).
\newblock Triangulating metric surfaces.
\newblock {\em Proc. Lond. Math. Soc.}

\bibitem[Dacorogna, 2009]{dacorogna}
Dacorogna, B. (2009).
\newblock {\em Introduction to the calculus of variations}.
\newblock Imperial College Press, London, second edition.
\newblock Translated from the 1992 French original.

\bibitem[Debin, 2020]{debin}
Debin, C. (2020).
\newblock A compactness theorem for surfaces with bounded integral curvature.
\newblock {\em J. Inst. Math. Jussieu}, 19(2):597--645.

\bibitem[Demailly, 2012]{demailly}
Demailly, J.-P. (2012).
\newblock {\em Complex analytic and differential geometry}.
\newblock Author's website.

\bibitem[do~Carmo, 2016]{docarmo}
do~Carmo, M.~P. (2016).
\newblock {\em Differential geometry of curves \& surfaces}.
\newblock Dover Publications, Inc., Mineola, NY.
\newblock Revised \& updated second edition of [ MR0394451].

\bibitem[Donaldson, 2011]{donaldson}
Donaldson, S. (2011).
\newblock {\em Riemann surfaces}, volume~22 of {\em Oxford Graduate Texts in
  Mathematics}.
\newblock Oxford University Press, Oxford.

\bibitem[Dudley, 1977]{dudley2}
Dudley, R.~M. (1977).
\newblock On second derivatives of convex functions.
\newblock {\em Math. Scand.}, 41(1):159--174.

\bibitem[Dudley, 1980]{dudley1}
Dudley, R.~M. (1980).
\newblock Acknowledgment of priority: ``{O}n second derivatives of convex
  functions'' [{M}ath. {S}cand. {\bf 41} (1977), no. 1, 159--174; {MR} {\bf 58}
  \#2250].
\newblock {\em Math. Scand.}, 46(1):61.

\bibitem[Falconer, 2014]{Falconer}
Falconer, K. (2014).
\newblock {\em Fractal geometry}.
\newblock John Wiley \& Sons, Ltd., Chichester, third edition.
\newblock Mathematical foundations and applications.

\bibitem[Falconer, 1986]{Falconer1}
Falconer, K.~J. (1986).
\newblock {\em The geometry of fractal sets}, volume~85 of {\em Cambridge
  Tracts in Mathematics}.
\newblock Cambridge University Press, Cambridge.

\bibitem[Farb and Margalit, 2012]{farb}
Farb, B. and Margalit, D. (2012).
\newblock {\em A primer on mapping class groups}, volume~49 of {\em Princeton
  Mathematical Series}.
\newblock Princeton University Press, Princeton, NJ.

\bibitem[Farkas and Kra, 1992]{farkas-kra}
Farkas, H.~M. and Kra, I. (1992).
\newblock {\em Riemann surfaces}, volume~71 of {\em Graduate Texts in
  Mathematics}.
\newblock Springer-Verlag, New York, second edition.

\bibitem[Fillastre and Slutskiy, 2023]{livre}
Fillastre, F. and Slutskiy, D., editors (2023).
\newblock {\em Reshetnyak's Theory of Subharmonic Metrics}.
\newblock CRM Montr\'eal--Springer.

\bibitem[Folland, 1999]{folland}
Folland, G.~B. (1999).
\newblock {\em Real analysis}.
\newblock Pure and Applied Mathematics (New York). John Wiley \& Sons, Inc.,
  New York, second edition.
\newblock Modern techniques and their applications, A Wiley-Interscience
  Publication.

\bibitem[Forster, 1991]{forster}
Forster, O. (1991).
\newblock {\em Lectures on {R}iemann surfaces}, volume~81 of {\em Graduate
  Texts in Mathematics}.
\newblock Springer-Verlag, New York.
\newblock Translated from the 1977 German original by Bruce Gilligan, Reprint
  of the 1981 English translation.

\bibitem[Gardiner and Lakic, 2000]{gardiner-lakic}
Gardiner, F.~P. and Lakic, N. (2000).
\newblock {\em Quasiconformal {T}eichm\"{u}ller theory}, volume~76 of {\em
  Mathematical Surveys and Monographs}.
\newblock American Mathematical Society, Providence, RI.

\bibitem[Hartman, 1959]{hartman}
Hartman, P. (1959).
\newblock On functions representable as a difference of convex functions.
\newblock {\em Pacific J. Math.}, 9:707--713.

\bibitem[Hartman and Wintner, 1955]{hartman-wintner}
Hartman, P. and Wintner, A. (1955).
\newblock On uniform {D}ini conditions in the theory of linear partial
  differential equations of elliptic type.
\newblock {\em Amer. J. Math.}, 77:329--354.

\bibitem[Hayman and Kennedy, 1976]{hayman}
Hayman, W.~K. and Kennedy, P.~B. (1976).
\newblock {\em Subharmonic functions. {V}ol. {I}}.
\newblock Academic Press [Harcourt Brace Jovanovich, Publishers], London-New
  York.
\newblock London Mathematical Society Monographs, No. 9.

\bibitem[Hoffmann-J{\o}rgensen, 1975]{measure-ball}
Hoffmann-J{\o}rgensen, J. (1975).
\newblock Measures which agree on balls.
\newblock {\em Math. Scand.}, 37(2):319--326.

\bibitem[H\"{o}rmander, 2005]{hor}
H\"{o}rmander, L. (2005).
\newblock {\em The analysis of linear partial differential operators. {II}}.
\newblock Classics in Mathematics. Springer-Verlag, Berlin.
\newblock Differential operators with constant coefficients, Reprint of the
  1983 original.

\bibitem[Huber, 1960]{huber}
Huber, A. (1960).
\newblock Zum potentialtheoretischen {A}spekt der {A}lexandrowschen
  {F}l\"{a}chentheorie.
\newblock {\em Comment. Math. Helv.}, 34:99--126.

\bibitem[{Hulin} and {Troyanov}, 1992]{HT}
{Hulin}, D. and {Troyanov}, M. (1992).
\newblock {Prescribing curvature on open surfaces}.
\newblock {\em {Math. Ann.}}, 293(2):277--315.

\bibitem[Imomkulov, 1992]{imomkulov}
Imomkulov, S.~A. (1992).
\newblock Twice differentiability of subharmonic functions.
\newblock {\em Izv. Ross. Akad. Nauk Ser. Mat.}, 56(4):877--888.

\bibitem[Izmestiev, 2015]{ivan}
Izmestiev, I. (2015).
\newblock A simple proof of an isoperimetric inequality for {E}uclidean and
  hyperbolic cone-surfaces.
\newblock {\em Differential Geom. Appl.}, 43:95--101.

\bibitem[Jost, 2005]{jost-post}
Jost, J. (2005).
\newblock {\em Postmodern analysis}.
\newblock Universitext. Springer-Verlag, Berlin, third edition.

\bibitem[Jost, 2006]{jost}
Jost, J. (2006).
\newblock {\em Compact {R}iemann surfaces}.
\newblock Universitext. Springer-Verlag, Berlin, third edition.
\newblock An introduction to contemporary mathematics.

\bibitem[Jost, 2011]{josteriem}
Jost, J. (2011).
\newblock {\em Riemannian geometry and geometric analysis}.
\newblock Universitext. Springer, Heidelberg, sixth edition.

\bibitem[Kapovich, 2009]{kapovich}
Kapovich, M. (2009).
\newblock {\em Hyperbolic manifolds and discrete groups}.
\newblock Modern Birkh\"{a}user Classics. Birkh\"{a}user Boston, Ltd., Boston,
  MA.
\newblock Reprint of the 2001 edition.

\bibitem[Kokarev, 2013]{kokarev}
Kokarev, G. (2013).
\newblock Curvature and bubble convergence of harmonic maps.
\newblock {\em J. Geom. Anal.}, 23(3):1058--1077.

\bibitem[Landkof, 1972]{landkof}
Landkof, N.~S. (1972).
\newblock {\em Foundations of modern potential theory}.
\newblock Springer-Verlag, New York-Heidelberg.
\newblock Translated from the Russian by A. P. Doohovskoy, Die Grundlehren der
  mathematischen Wissenschaften, Band 180.

\bibitem[Lehto and Virtanen, 1973]{lehto-virtanen}
Lehto, O. and Virtanen, K.~I. (1973).
\newblock {\em Quasiconformal mappings in the plane}.
\newblock Springer-Verlag, New York-Heidelberg, second edition.
\newblock Translated from the German by K. W. Lucas, Die Grundlehren der
  mathematischen Wissenschaften, Band 126.

\bibitem[Lytchak and Wenger, 2018]{lytchak-wenger}
Lytchak, A. and Wenger, S. (2018).
\newblock Isoperimetric characterization of upper curvature bounds.
\newblock {\em Acta Math.}, 221(1):159--202.

\bibitem[Mattila, 1995]{mattila}
Mattila, P. (1995).
\newblock {\em Geometry of sets and measures in {E}uclidean spaces}, volume~44
  of {\em Cambridge Studies in Advanced Mathematics}.
\newblock Cambridge University Press, Cambridge.
\newblock Fractals and rectifiability.

\bibitem[{Men'shov}, 1937]{menshov}
{Men'shov}, D.~E. (1937).
\newblock {Sur une g\'en\'eralisation d'un th\'eor\`eme de M. H. Bohr.}
\newblock {\em {Rec. Math. Moscou, n. Ser.}}, 2:339--354.

\bibitem[Milnor, 1950]{milnor}
Milnor, J.~W. (1950).
\newblock On the total curvature of knots.
\newblock {\em Ann. of Math. (2)}, 52:248--257.

\bibitem[Natanson, 1955]{natanson}
Natanson, I.~P. (1955).
\newblock {\em Theory of functions of a real variable}.
\newblock Frederick Ungar Publishing Co., New York.
\newblock Translated by Leo F. Boron with the collaboration of Edwin Hewitt.

\bibitem[Nikolaev, 2001]{nikolaev}
Nikolaev, I. (2001).
\newblock {\em Foliations on surfaces}, volume~41 of {\em Ergebnisse der
  Mathematik und ihrer Grenzgebiete. 3. Folge. A Series of Modern Surveys in
  Mathematics [Results in Mathematics and Related Areas. 3rd Series. A Series
  of Modern Surveys in Mathematics]}.
\newblock Springer-Verlag, Berlin.
\newblock With a foreword by Idel Bronshteyn and Chapter 16 by B. Piccoli.

\bibitem[Papadopoulos, 2014]{papadopoulos}
Papadopoulos, A. (2014).
\newblock {\em Metric spaces, convexity and non-positive curvature}, volume~6
  of {\em IRMA Lectures in Mathematics and Theoretical Physics}.
\newblock European Mathematical Society (EMS), Z\"{u}rich, second edition.

\bibitem[Perelman, ]{perelman}
Perelman, G.
\newblock D{C} structure on {A}lexandrov space.
\newblock Preprint, 1994.

\bibitem[Pogorelov, 1973]{pogorelov}
Pogorelov, A.~V. (1973).
\newblock {\em Extrinsic geometry of convex surfaces}.
\newblock Translations of Mathematical Monographs, Vol. 35. American
  Mathematical Society, Providence, R.I.
\newblock Translated from the Russian by Israel Program for Scientific
  Translations.

\bibitem[Pokorn\'{y} et~al., 2019]{PRZ}
Pokorn\'{y}, D., Rataj, J., and Zaj\'{\i}\v{c}ek, L. (2019).
\newblock On the structure of {WDC} sets.
\newblock {\em Math. Nachr.}, 292(7):1595--1626.

\bibitem[Rad\'{o}, 1971]{rado}
Rad\'{o}, T. (1971).
\newblock {\em On the problem of {P}lateau. {S}ubharmonic functions}.
\newblock Springer-Verlag, New York-Heidelberg.
\newblock Reprint.

\bibitem[{Radon}, 1919]{radon}
{Radon}, J. (1919).
\newblock {\"Uber die Randwertaufgaben beim logarithmischen Potential.}
\newblock {\em {Anzeiger Wien}}, 56:190.

\bibitem[Ransford, 1995]{ransford}
Ransford, T. (1995).
\newblock {\em Potential theory in the complex plane}, volume~28 of {\em London
  Mathematical Society Student Texts}.
\newblock Cambridge University Press, Cambridge.

\bibitem[Rao, 1977]{rao}
Rao, M. (1977).
\newblock {\em Brownian motion and classical potential theory}.
\newblock Matematisk Institut, Aarhus University, Aarhus.
\newblock Lecture Notes Series, No. 47.

\bibitem[Reshetnyak, 1954]{R1954}
Reshetnyak, Y.~G. (1954).
\newblock Isothermal coordinates on manifolds of bounded curvature.
\newblock {\em Doklady Akad. Nauk SSSR (N.S.)}, 94:631--633.

\bibitem[Reshetnyak, 1959]{Res1959}
Reshetnyak, Y.~G. (1959).
\newblock Study of manifolds of bounded curvature using isothermal coordinates.
\newblock {\em Izvestiia Sibirskogo otdeleniia Akademii nauk SSSR}.

\bibitem[Reshetnyak, 1960a]{R60I}
Reshetnyak, Y.~G. (1960a).
\newblock Isothermal coordinates on manifolds of bounded curvature {I}.
\newblock {\em Sibirsk. Mat. \v{Z}.}, 1:88--116.

\bibitem[Reshetnyak, 1960b]{R60II}
Reshetnyak, Y.~G. (1960b).
\newblock Isothermal coordinates on manifolds of bounded curvature {II}.
\newblock {\em Sibirsk. Mat. \v{Z}.}, 1:248--276.

\bibitem[Reshetnyak, 1961a]{R61b}
Reshetnyak, Y.~G. (1961a).
\newblock On a special mapping of a cone onto a polyhedron.
\newblock {\em Mat. Sb. (N.S.)}, 53 (95):39--52.

\bibitem[Reshetnyak, 1961b]{R61}
Reshetnyak, Y.~G. (1961b).
\newblock On isoperimetric property of two dimensional manifolds with curvature
  bounded from above by {$\curv$}.
\newblock {\em Vestnik Leningrad. Univ.}, 16(19):58--76.

\bibitem[Reshetnyak, 1962]{R62}
Reshetnyak, Y.~G. (1962).
\newblock On a special mapping of a cone in a manifold of bounded curvature.
\newblock {\em Sibirsk. Mat. \v{Z}.}, 3:256--272.

\bibitem[Reshetnyak, 1963a]{R63}
Reshetnyak, Y.~G. (1963a).
\newblock Arc length in a manifold of bounded curvature with an isothermal
  metric.
\newblock {\em Sibirsk. Mat. \v{Z}.}, 4:212--226.

\bibitem[Reshetnyak, 1963b]{R63III}
Reshetnyak, Y.~G. (1963b).
\newblock Turn of curves in a manifold of bounded curvature with isothermal
  metric.
\newblock {\em Sibirsk. Mat. \v{Z}.}, 4:870--911.

\bibitem[Reshetnyak, 1968]{resh68}
Reshetnyak, Y.~G. (1968).
\newblock Non-expansive maps in a space of curvature no greater than k.
\newblock {\em Sibirsk. Mat. \v{Z}.}, 9:918--927.

\bibitem[Reshetnyak, 1993]{R93}
Reshetnyak, Y.~G. (1993).
\newblock Two-dimensional manifolds of bounded curvature.
\newblock In {\em Geometry, {IV}}, volume~70 of {\em Encyclopaedia Math. Sci.},
  pages 3--163, 245--250. Springer, Berlin.

\bibitem[Reshetnyak, 2001]{R01}
Reshetnyak, Y.~G. (2001).
\newblock On the conformal representation of {A}lexandrov surfaces.
\newblock In {\em Papers on analysis}, volume~83 of {\em Rep. Univ.
  Jyv\"{a}skyl\"{a} Dep. Math. Stat.}, pages 287--304. Univ. Jyv\"{a}skyl\"{a},
  Jyv\"{a}skyl\"{a}.

\bibitem[Reshetnyak, 2005]{res2005}
Reshetnyak, Y.~G. (2005).
\newblock The theory of curves in differential geometry from the point of view
  of the theory of functions of a real variable.
\newblock {\em Uspekhi Mat. Nauk}, 60(6(366)):157--174.

\bibitem[Reshetnyak, 2021]{res-personal}
Reshetnyak, Y.~G. (2021).
\newblock Personal communication.

\bibitem[Richard, 2018]{richard}
Richard, T. (2018).
\newblock Canonical smoothing of compact {A}leksandrov surfaces via {R}icci
  flow.
\newblock {\em Ann. Sci. \'{E}c. Norm. Sup\'{e}r. (4)}, 51(2):263--279.

\bibitem[Roberts and Varberg, 1973]{RV}
Roberts, A.~W. and Varberg, D.~E. (1973).
\newblock {\em Convex functions}.
\newblock Academic Press [A subsidiary of Harcourt Brace Jovanovich,
  Publishers], New York-London.
\newblock Pure and Applied Mathematics, Vol. 57.

\bibitem[Royden, 1967]{royden67}
Royden, H.~L. (1967).
\newblock Function theory on compact {R}iemann surfaces.
\newblock {\em J. Analyse Math.}, 18:295--327.

\bibitem[Rudin, 1987]{Rudin}
Rudin, W. (1987).
\newblock {\em Real and complex analysis}.
\newblock McGraw-Hill Book Co., New York, third edition.

\bibitem[Schwartz, 1966]{schwartz}
Schwartz, L. (1966).
\newblock {\em Th\'{e}orie des distributions}.
\newblock Publications de l'Institut de Math\'{e}matique de l'Universit\'{e} de
  Strasbourg, No. IX-X. Nouvelle \'{e}dition, enti\'{e}rement corrig\'{e}e,
  refondue et augment\'{e}e. Hermann, Paris.

\bibitem[Spivak, 1979]{spivak3}
Spivak, M. (1979).
\newblock {\em A comprehensive introduction to differential geometry. {V}ol.
  {III}}.
\newblock Publish or Perish, Inc., Wilmington, Del., second edition.

\bibitem[Stratilatova, 1962]{Stratilatova}
Stratilatova, M.~B. (1962).
\newblock Area in two-dimensional manifolds of bounded curvature as {H}ausdorff
  measure.
\newblock {\em Vestnik Leningrad. Univ.}, 17(13):56--60.

\bibitem[Sullivan, 2008]{sullivan}
Sullivan, J.~M. (2008).
\newblock Curves of finite total curvature.
\newblock In {\em Discrete differential geometry}, volume~38 of {\em
  Oberwolfach Semin.}, pages 137--161. Birkh\"{a}user, Basel.

\bibitem[Szpilrajn, 1933]{szpilrajn}
Szpilrajn, E. (1933).
\newblock Remarques sur les fonctions sousharmoniques.
\newblock {\em Ann. of Math. (2)}, 34(3):588--594.

\bibitem[Troyanov, 1986]{troyanov-euclidean1}
Troyanov, M. (1986).
\newblock Les surfaces euclidiennes \`a singularit\'{e}s coniques.
\newblock {\em Enseign. Math. (2)}, 32(1-2):79--94.

\bibitem[Troyanov, 1991]{tro-principe}
Troyanov, M. (1991).
\newblock Un principe de concentration-compacit\'{e} pour les suites de
  surfaces riemanniennes.
\newblock {\em Ann. Inst. H. Poincar\'{e} Anal. Non Lin\'{e}aire},
  8(5):419--441.

\bibitem[{Troyanov}, 1993]{tro-ouvert}
{Troyanov}, M. (1993).
\newblock {Surfaces riemanniennes \`a singularit\'es simples}.
\newblock In {\em Differential geometry. Part 2: Geometry in mathematical
  physics and related topics. Proceedings of a summer research institute, held
  at the University of California, Los Angeles, CA, USA, July 8-28, 1990},
  pages 619--628. Providence, RI: American Mathematical Society.

\bibitem[Troyanov, 2007]{troyanov-euclidean}
Troyanov, M. (2007).
\newblock On the moduli space of singular {E}uclidean surfaces.
\newblock In {\em Handbook of {T}eichm\"{u}ller theory. {V}ol. {I}}, volume~11
  of {\em IRMA Lect. Math. Theor. Phys.}, pages 507--540. Eur. Math. Soc.,
  Z\"{u}rich.

\bibitem[Troyanov, 2009]{troyanov-alexandrov}
Troyanov, M. (2009).
\newblock Les surfaces {\`a} courbure int{\'e}grale born{\'ee} au sens
  d'{A}lexandrov.
\newblock In {\em Géométrie discrète, algorithmique, différentielle et
  arithmétique}, pages 1--18. Soci{\'e}t{\'e} Math{\'e}matique de France.

\bibitem[Tsuji, 1975]{tsuji}
Tsuji, M. (1975).
\newblock {\em Potential theory in modern function theory}.
\newblock Chelsea Publishing Co., New York.
\newblock Reprinting of the 1959 original.

\bibitem[Wintner, 1953]{wintner}
Wintner, A. (1953).
\newblock On the local role of the theory of the logarithmic potential in
  differential geometry.
\newblock {\em Amer. J. Math.}, 75:679--690.

\bibitem[Zalgaller, 1965]{Zal}
Zalgaller, V.~A. (1965).
\newblock Curves on a surface near a point of the spike type.
\newblock {\em Trudy Mat. Inst. Steklov.}, 76:64--66.
\newblock English translation in \cite{AZ2}.

\end{thebibliography}

\end{document}